\documentclass[reqno,11pt]{article}
\usepackage{a4wide}
\usepackage{color,eucal,enumerate,mathrsfs,amsthm}
\usepackage[normalem]{ulem}
\usepackage{amsmath,amssymb,amsthm,epsfig,bbm}
\numberwithin{equation}{section}

\usepackage{amsfonts}
\usepackage{dsfont}
\usepackage{mathtools}
\usepackage{leftidx}
\usepackage{graphicx}
\usepackage{esint}
\usepackage{authblk}


\usepackage[pdfborder={0 0 0}]{hyperref}  

\usepackage[latin1]{inputenc}
\usepackage[english]{babel}

\frenchspacing

\newtheorem{Definition}{Definition}[section]
\newtheorem{Proposition}[Definition]{Proposition}
\newtheorem{Lemma}[Definition]{Lemma}
\newtheorem{Theorem}[Definition]{Theorem}
\newtheorem{Corollary}[Definition]{Corollary}
\theoremstyle{definition}

\newtheorem{Remark}[Definition]{Remark}
\newtheorem{Setting}[Definition]{Setting}

\allowdisplaybreaks

%

\renewcommand{\H}{\mathbb{H}}

\newcommand{\N}{\mathbb{N}}

\newcommand{\R}{\mathbb{R}}


\newcommand{\hh}{{\mbox{\boldmath$h$}}}

\newcommand{\mm}{{\mbox{\boldmath$m$}}}







\newcommand{\ggamma}{{\mbox{\boldmath$\gamma$}}}

\newcommand{\ppi}{{\mbox{\boldmath$\pi$}}}

\newcommand{\sggamma}{{\mbox{\scriptsize\boldmath$\gamma$}}}


\newcommand{\sfd}{{\sf d}}

\newcommand{\sfh}{{\sf h}}

\newcommand{\Kliminf}{K\kern-3pt-\kern-2pt\mathop{\rm lim\,inf}\limits}  
\newcommand{\supp}{\mathop{\rm supp}\nolimits}   
\newcommand{\Lip}{\mathop{\rm Lip}\nolimits}          
\renewcommand{\d}{{\mathrm d}}
\newcommand{\dt}{{\d t}}
\newcommand{\ddt}{{\frac \d\dt}}

\newcommand{\restr}[1]{\lower3pt\hbox{$|_{#1}$}} 

\newcommand{\la}{\left<}                  
\newcommand{\ra}{\right>}
\newcommand{\eps}{\varepsilon}  
\newcommand{\nchi}{\chi}
\newcommand{\weakto}{\rightharpoonup}

\setlength{\marginparwidth}{3cm}

\newcommand{\limi}{\varliminf}
\newcommand{\lims}{\varlimsup}

\newcommand{\fr}{\penalty-20\null\hfill$\blacksquare$}                      



\newcommand{\adm}{{\rm{Adm}}}                   
\newcommand{\gopt}{{\rm{OptGeo}}}                   
\newcommand{\prob}[1]{\mathscr P(#1)}                   
\newcommand{\probt}[1]{\mathscr P_2(#1)}                   
\newcommand{\e}{{\rm{e}}}                           

\renewcommand{\mm}{\mathfrak m}                                



\renewenvironment{proof}{\removelastskip\par\medskip   
\noindent{\em proof} \rm}{\penalty-20\null\hfill$\square$\par\medbreak}

\newcommand{\bd}{{\mathbf\Delta}}

\newcommand{\testi}[1]{{\rm Test}^{\infty}(#1)}
\newcommand{\testl}[1]{{\rm Test}^\infty_{loc}(#1)}

\newcommand{\X}{{\rm X}}

\newcommand{\h}{{\sfh}}

\renewcommand{\ae}{{\textrm{\rm{-a.e.}}}}
\newcommand{\aeon}{{\textrm{\rm{-a.e. on }}}}

\newcommand{\CD}{{\sf CD}}
\newcommand{\RCD}{{\sf RCD}}
\newcommand{\Ggamma}{{\mathbf\Gamma}}

\newcommand{\lip}{{\rm lip}}

\newcommand{\HS}{{\lower.3ex\hbox{\scriptsize{\sf HS}}}}
\renewcommand{\H}[1]{{\rm Hess}(#1)}
\renewcommand{\div}{{\rm div}}

\newcommand{\Y}{{\rm Y}}
\newcommand{\hr}{{\sf r}}
\newcommand{\hR}{{\sf R}}

\newcommand{\I}{\mathcal I}
\newcommand{\CC}{\mathcal C}
\renewcommand{\hh}{{\sf H}}

\setcounter{tocdepth}{3}

\title{Second order differentiation formula on $\RCD^*(K,N)$ spaces}
\author{Nicola Gigli \thanks{SISSA, Trieste. email: ngigli@sissa.it} \quad Luca Tamanini \thanks{Institut f\"ur Angewandte Mathematik, Universit\"at Bonn. email: tamanini@iam.uni-bonn.de}}

\begin{document}

\maketitle

\begin{abstract}
Aim of this paper is to prove the second order differentiation formula for $H^{2,2}$ functions along geodesics in $\RCD^*(K,N)$ spaces with $N < \infty$. This formula is new even in the context of Alexandrov spaces, where second order differentiation is typically related to semiconvexity.

We establish this result by showing that $W_2$-geodesics can be approximated up to second order, in a sense which we shall make precise, by entropic interpolation. In turn this is achieved by proving new,  even in the smooth setting, estimates concerning entropic interpolations which we believe are interesting on their own. In particular we obtain:
\begin{itemize}
\item[-] equiboundedness of the densities along the entropic interpolations,
\item[-] local equi-Lipschitz continuity of the Schr\"odinger potentials,
\item[-] a uniform weighted $L^2$ control of the Hessian of such potentials.
\end{itemize}
Finally, the techniques adopted in this paper can be used to show that in the $\RCD$ setting the viscous solution of the Hamilton-Jacobi equation can be obtained  via a vanishing viscosity method, in accordance with the smooth case.

With respect to a previous version, where the space was assumed to be compact, in this paper the second order differentiation formula is proved in full generality.
\end{abstract}

\tableofcontents

\section{Introduction}

In the last ten years there has been a great interest in the study of metric measure spaces with Ricci curvature bounded from below, see for instance \cite{Lott-Villani09},  \cite{Sturm06I}, \cite{Sturm06II}, \cite{Gigli-Kuwada-Ohta10}, \cite{AmbrosioGigliSavare11}, \cite{AmbrosioGigliSavare11-2},  \cite{Gigli12}, \cite{AmbrosioGigliSavare12}, \cite{Rajala12}, \cite{RajalaSturm12}, \cite{Gigli-Mosconi12},  \cite{Gigli13}, \cite{Gigli13over}, \cite{Ketterer13}, \cite{AmbrosioMondinoSavare13}, \cite{Mondino-Naber14}, \cite{CavMon15}, \cite{CavMil16}. The starting points of this research line have been the seminal papers \cite{Lott-Villani09} and \cite{Sturm06I}, \cite{Sturm06II} which linked lower Ricci bounds on metric measure spaces to properties of entropy-like functionals in connection with $W_2$-geometry.  Later (\cite{AmbrosioGigliSavare11}) it emerged  that also Sobolev calculus is linked to $W_2$-geometry and building on top of this the original definition of $\CD$ spaces by Lott-Sturm-Villani has evolved into that of $\RCD$ spaces (\cite{AmbrosioGigliSavare11-2}, \cite{Gigli12}).

An example of link between Sobolev calculus and $W_2$-geometry is the following result (a minor variant of a statement in \cite{Gigli13}). It says that we can safely take one derivative of a $W^{1,2}(\X)$ function along an optimal geodesic test plan $\ppi$, i.e.\ a test plan satisfying
\[
\iint_0^1|\dot\gamma_t|^2\,\d t\,\d\ppi(\gamma) = W_2^2\big((\e_0)_*\ppi,(\e_1)_*\ppi\big).
\]

\begin{Theorem}[First order differentiation formula]\label{thm:1}
Let $(\X,\sfd,\mm)$ be an $\RCD(K,\infty)$ space, $\ppi$ an optimal geodesic test plan with bounded support (equivalently: such that $\{\gamma_t\,:\, t\in[0,1],\ \gamma\in\supp(\ppi)\}\subset\X$ is bounded) and $f\in W^{1,2}(\X)$. 

Then the map $[0,1] \ni t\mapsto f \circ \e_t \in L^2(\ppi)$ is in $C^1([0,1],L^2(\ppi))$ and we have
\[
\ddt \big( f\circ\e_t\big) = \la\nabla f,\nabla\phi_t\ra\circ\e_t,\\
\]
for every $t \in [0,1]$, where $\e_t : C([0,1],\X) \to \X, \gamma \mapsto \gamma_t$ is the evaluation map and $\phi_t$ is any function such that for some $s \neq t$, $s \in [0,1]$, the function $-(s-t)\phi_t$ is a Kantorovich potential from $(\e_t)_*\ppi$ to $(\e_s)_*\ppi$.
\end{Theorem}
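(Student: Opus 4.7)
The plan is to establish the formula first on a dense subclass of well-behaved functions, then extend to $W^{1,2}(\X)$ by approximation, exploiting the bounded-support hypothesis on $\ppi$ to secure the uniform-in-$t$ controls that such an extension requires.

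First I would collect the structural facts needed throughout. Since $\ppi$ is an optimal geodesic test plan with bounded support, the curve $\mu_t:=(\e_t)_*\ppi$ is a $W_2$-geodesic whose marginals have uniformly bounded densities with respect to $\mm$. Any $\phi_t$ as in the statement is, up to an affine rescaling, a Kantorovich potential, hence locally Lipschitz on a neighborhood of $\supp\mu_t$; since $\bigcup_{t\in[0,1]}\supp\mu_t$ is bounded in $\X$, one obtains a uniform Lipschitz bound (essentially by $\sfd(\gamma_0,\gamma_1)$, which is $\ppi$-a.s.\ bounded), and in particular
\[
\sup_{t\in[0,1]}\bigl\||\nabla\phi_t|\circ\e_t\bigr\|_{L^\infty(\ppi)}<\infty.
\]
Moreover $t\mapsto\phi_t$ is continuous in the locally-uniform sense by Arzelà-Ascoli, $t\mapsto\mu_t$ is $W_2$-continuous, and in the $\RCD$ setting $\nabla\phi_t$ is the velocity of $\mu_t$, in the sense that the continuity equation holds in duality with test functions.

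Next I would prove the formula for $f$ in the algebra $\test(\X)$ of test functions. The continuity equation yields the integrated identity
\[
\ddt\int f\,\d\mu_t=\int\langle\nabla f,\nabla\phi_t\rangle\circ\e_t\,\d\ppi,
\]
and applying the same identity to $f^2\in\test(\X)$ and expanding gives
\[
\Big\|\frac{f\circ\e_{t+h}-f\circ\e_t}{h}-\langle\nabla f,\nabla\phi_t\rangle\circ\e_t\Big\|_{L^2(\ppi)}^2\longrightarrow 0
\]
as $h\to 0$, i.e.\ differentiability in $L^2(\ppi)$ with the claimed derivative; continuity of $t\mapsto\langle\nabla f,\nabla\phi_t\rangle\circ\e_t$ in $L^2(\ppi)$, hence $C^1$ regularity, follows from the $L^\infty$ control above and the $W_2$-continuity of $\mu_t$.

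To extend to arbitrary $f\in W^{1,2}(\X)$ I would pick $f_n\in\test(\X)$ with $f_n\to f$ in $W^{1,2}$. The uniform density bound on $\mu_t$ gives $\|(f-f_n)\circ\e_t\|_{L^2(\ppi)}^2\le C\|f-f_n\|_{L^2(\mm)}^2$ uniformly in $t$, and the uniform bound on $|\nabla\phi_t|$ gives the analogous estimate
\[
\bigl\|\langle\nabla(f-f_n),\nabla\phi_t\rangle\circ\e_t\bigr\|_{L^2(\ppi)}\le C'\bigl\||\nabla(f-f_n)|\bigr\|_{L^2(\mm)}.
\]
Hence both sides of the identity for $f_n$ converge in $C([0,1],L^2(\ppi))$ to the corresponding objects for $f$, which concludes the argument. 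The main obstacle I anticipate is not the formal derivation, which is a standard continuity-equation computation once the structural facts are in place, but rather securing all these estimates uniformly in $t$; the bounded-support hypothesis on $\ppi$ is precisely what makes this uniformity available.
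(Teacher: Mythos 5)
The paper does not actually prove Theorem \ref{thm:1}: it imports it from \cite{Gigli13} as ``a minor variant of a statement'' there, so there is no in-paper proof to compare against. Judged on its own, your proposal has a genuine gap at its central step, namely the passage from the integrated identity $\frac{\d}{\d t}\int f\,\d\mu_t=\int\la\nabla f,\nabla\phi_t\ra\,\d\mu_t$ to strong $L^2(\ppi)$ convergence of the difference quotients $D_h:=h^{-1}(f\circ\e_{t+h}-f\circ\e_t)$.

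Expanding $\|D_h-\la\nabla f,\nabla\phi_t\ra\circ\e_t\|_{L^2(\ppi)}^2$ you must in particular compute
\[
\frac{1}{h^2}\int\big(f\circ\e_{t+h}-f\circ\e_t\big)^2\,\d\ppi
=\frac{1}{h^2}\int\Big(f^2\circ\e_{t+h}+f^2\circ\e_t-2\,(f\circ\e_{t+h})(f\circ\e_t)\Big)\,\d\ppi,
\]
and the cross term $(f\circ\e_{t+h})(f\circ\e_t)$ is a two-time correlation: it is not of the form $g\circ\e_s$ for any single function $g$ and single time $s$, so applying the continuity equation to $f^2$ (or to anything else) cannot reach it -- the continuity equation only sees the one-time marginals $\mu_s$. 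This is precisely where the theorem is strictly stronger than its integrated version \eqref{eq:bonn}, and it is why the known proof is delicate: one bounds $|f(\gamma_{t+h})-f(\gamma_t)|\le\int_t^{t+h}|\nabla f|(\gamma_s)|\dot\gamma_s|\,\d s$ along $\ppi$-a.e.\ curve (the definition of Sobolev functions via test plans), gets weak $L^2(\ppi)$ compactness of the $D_h$, identifies the weak limit, and upgrades to strong convergence by exploiting the equality case $|\nabla\phi_t|(\gamma_t)=|\dot\gamma_t|$ $\ppi$-a.e.\ (metric Brenier, point $(iii)$ of Theorem \ref{thm:bm}) applied to perturbations $f+\lambda\phi_t$. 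The naive norm bound only gives $\limsup_h\|D_h\|^2_{L^2(\ppi)}\le\int|\nabla f|^2(\gamma_t)|\dot\gamma_t|^2\,\d\ppi(\gamma)$, which by Cauchy--Schwarz \emph{exceeds} $\|\la\nabla f,\nabla\phi_t\ra\circ\e_t\|^2_{L^2(\ppi)}$ unless $\nabla f$ is parallel to $\nabla\phi_t$, so convergence of norms does not come for free. Two secondary points: the identification of the geodesic's velocity with $\nabla\phi_t$ (rather than some abstract $X_t$) and the $L^2(\ppi)$-continuity in $t$ of $\la\nabla f,\nabla\phi_t\ra\circ\e_t$ both require arguments (cf.\ Lemma \ref{lem:gradpot} and the mechanism of Lemma \ref{lem:percon}); ``Arzel\`a--Ascoli'' yields only subsequential limits of potentials, which are in any case determined only up to additive constants and off $\supp(\mu_t)$. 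Your final approximation from $\testi\X$ to $W^{1,2}(\X)$ is sound, but it rests on a base case that has not been established.
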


Recall that on $\RCD(K,\infty)$ spaces every $W_2$-geodesic $(\mu_t)$ between measures with bounded density and support is such that $\mu_t \leq C\mm$ for every $t \in [0,1]$ and some $C>0$ (\cite{RajalaSturm12}), so that between two such measures there always exists a (unique) optimal geodesic test plan with bounded support. Thus the theorem also says that we can find `many' $C^1$ functions on $\RCD$ spaces. We remark that such $C^1$ regularity - which was crucial in \cite{Gigli13} - is non-trivial even if the function $f$ is assumed to be Lipschitz and that statements about $C^1$ smoothness are quite rare in metric geometry.

Furthermore, projecting from $\ppi$ to $\mu_t := (\e_t)_*\ppi$ one can see that Theorem \ref{thm:1} immediately implies
\begin{equation}\label{eq:bonn}
\frac\d{\d t}\int f\,\d\mu_t = \int\la\nabla f,\nabla\phi_t\ra\,\d\mu_t
\end{equation}
and one might think of this identity as an `integrated' version of the basic formula
\[
\frac{\d}{\d t}f(\gamma_t) = \d f(\gamma'_t)
\]
valid in the smooth framework; at the technical level the proof of the claim has to do with the fact that the geodesic $(\mu_t)$ solves the continuity equation
\begin{equation}
\label{eq:cint}
\frac\d{\d t}\mu_t + \div(\nabla\varphi_t\mu_t)=0,
\end{equation}
where the $\varphi_t$'s are appropriate choices of Kantorovich potentials (see also \cite{GigliHan13} in this direction), and with the fact that $\nabla\varphi_t = \nabla\phi_t$ (see Lemma \ref{lem:gradpot} below).

\bigskip

In \cite{Gigli14}, the first author developed a second-order calculus on $\RCD$ spaces, in particular defining the space $H^{2,2}(\X)$ and for $f \in H^{2,2}(\X)$ the Hessian $\H f$, see \cite{Gigli14} and the Appendix. It is then natural to ask whether an `integrated' version of the second order differentiation formula
\[
\frac{\d^2}{\d t^2}f(\gamma_t) = \H f(\gamma'_t,\gamma'_t) \qquad \text{for $\gamma$ geodesic}
\]
holds in this framework. In this paper we provide affirmative answer to this question, our main result being:

\begin{Theorem}[Second order differentiation formula]\label{thm:2}
Let $(\X,\sfd,\mm)$ be an $\RCD^*(K,N)$ space, $N<\infty$, $\ppi$ an optimal geodesic test plan with bounded support and $f \in H^{2,2}(\X)$. 

Then the map $[0,1] \ni t \mapsto f \circ \e_t \in L^2(\ppi)$ is in $C^2([0,1],L^2(\ppi))$ and we have
\begin{equation}
\label{eq:th1}
\frac{\d^2}{\d t^2} \big( f\circ\e_t\big) = \H{f}(\nabla\phi_t,\nabla\phi_t)\circ\e_t,\\
\end{equation}
for every $t \in [0,1]$, where $\phi_t$ is as in Theorem \ref{thm:1}. 
\end{Theorem}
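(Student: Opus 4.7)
The plan is to obtain \eqref{eq:th1} by regularisation: approximate the $W_2$-geodesic underlying $\ppi$ by entropic interpolations, where the analogue of the formula can be derived by explicit PDE manipulations, and then pass to the limit using the three uniform estimates announced in the abstract. By Theorem \ref{thm:1} we already have the $C^1$ regularity of $t \mapsto f\circ\e_t$ in $L^2(\ppi)$ with derivative $\langle\nabla f,\nabla\phi_t\rangle\circ\e_t$, so the real task is to differentiate this quantity once more in $t$ and identify the outcome with $\H{f}(\nabla\phi_t,\nabla\phi_t)\circ\e_t$.

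For each $\eps>0$ I would construct the entropic interpolation $(\mu^\eps_t=\rho^\eps_t\mm)_{t\in[0,1]}$ between $(\e_0)_*\ppi$ and $(\e_1)_*\ppi$, together with its Hopf--Cole Schr\"odinger potentials $\varphi^\eps_t,\psi^\eps_t$ satisfying, in a suitable weak sense,
\[
\partial_t\varphi^\eps_t+\tfrac12|\nabla\varphi^\eps_t|^2=\tfrac\eps2\Delta\varphi^\eps_t,\qquad -\partial_t\psi^\eps_t+\tfrac12|\nabla\psi^\eps_t|^2=\tfrac\eps2\Delta\psi^\eps_t,
\]
with $\rho^\eps_t=\exp(-(\varphi^\eps_t+\psi^\eps_t)/\eps)$. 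Then $\mu^\eps_t$ solves a continuity equation with effective velocity potential $\vartheta^\eps_t:=\tfrac12(\psi^\eps_t-\varphi^\eps_t)$, which in the limit should coincide with the Kantorovich potential $\phi_t$. Differentiating $t\mapsto\int f\,\d\mu^\eps_t$ twice, using the chain rules of the second-order calculus of \cite{Gigli14} and integrating by parts against $\Delta$, should yield an identity of the shape
\[
\frac{\d^2}{\d t^2}\int f\,\d\mu^\eps_t=\int\H{f}(\nabla\vartheta^\eps_t,\nabla\vartheta^\eps_t)\,\d\mu^\eps_t+\eps^2\, R^\eps(t),
\]
where $R^\eps(t)$ is controlled by weighted $L^2$ norms of $\H{\varphi^\eps_t}$ and $\H{\psi^\eps_t}$.

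Passing to the limit $\eps\downarrow0$ is the heart of the argument, and this is exactly where the three uniform estimates enter: the equiboundedness of $\rho^\eps_t$ stabilises integrals against $L^1$ quantities, the local equi-Lipschitz continuity of $\varphi^\eps_t,\psi^\eps_t$ yields Arzel\`a--Ascoli compactness and gives $\nabla\vartheta^\eps_t\to\nabla\phi_t$ in a sufficiently strong sense, and the uniform weighted $L^2$ bound on $\H{\varphi^\eps_t},\H{\psi^\eps_t}$ both renders $\eps^2 R^\eps(t)$ negligible and enables passage to the limit in the Hessian term against $\H{f}\in L^2$. This produces the integrated identity $\frac{\d^2}{\d t^2}\int f\,\d\mu_t=\int\H{f}(\nabla\phi_t,\nabla\phi_t)\,\d\mu_t$, and the desired $C^2$ regularity in $t$ is then inherited from continuity of the limit objects.

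To upgrade this integrated statement to the pointwise-in-$\gamma$ formula \eqref{eq:th1} in $L^2(\ppi)$, I would lift the previous analysis to path space by working with the Schr\"odinger bridge $\ppi^\eps$, which narrowly converges to $\ppi$, and applying the computation conditionally on the endpoints $(\gamma_0,\gamma_1)$; alternatively one can apply the integrated formula with $f$ replaced by $f\chi$ for a rich family of cut-offs $\chi$ adapted to the bounded support of $\ppi$ and conclude via Theorem \ref{thm:1}. The main obstacle I anticipate is exactly the joint limit in the Hessian term: since $\H{f}$ is only an $L^2$ tensor, the convergence $\int\H{f}(\nabla\varphi^\eps_t,\nabla\varphi^\eps_t)\rho^\eps_t\,\d\mm\to\int\H{f}(\nabla\phi_t,\nabla\phi_t)\,\d\mu_t$ requires simultaneously an $L^\infty$ density bound and a quantitative strong convergence of $\nabla\varphi^\eps_t$, and this dual requirement is precisely why the finite-dimensional assumption $\RCD^*(K,N)$ is indispensable, since the three uniform estimates ultimately rest on Li--Yau-type bounds that fail in the $\RCD(K,\infty)$ setting.
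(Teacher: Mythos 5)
Your overall strategy---approximate the geodesic by entropic interpolations, differentiate twice along them so that an explicit $\eps$-dependent acceleration term appears, and pass to the limit using the three uniform estimates---is exactly the paper's, and you correctly tie the finite-dimensionality requirement to the Gaussian heat-kernel estimates behind the Hamilton and Li--Yau bounds.

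There is nonetheless a genuine gap in the step you yourself flag as the crux, namely the limit passage in the leading Hessian term $\int \H{f}(\nabla\vartheta^\eps_t,\nabla\vartheta^\eps_t)\rho^\eps_t\,\d\mm$. You claim that equi-Lipschitzness plus Arzel\`a--Ascoli already give $\nabla\vartheta^\eps_t\to\nabla\phi_t$ ``in a sufficiently strong sense,'' with the weighted $L^2$ Hessian bound further helping to close this limit; but Arzel\`a--Ascoli only yields locally uniform convergence of the potentials, and by closure of the differential this gives only \emph{weak} $L^2$ convergence of the gradients. Since the Hessian term is quadratic in $\nabla\vartheta^\eps_t$, this does not suffice, and the Hessian bound alone cannot upgrade weak to strong convergence (the weight $\rho^\eps_t$ depends on $\eps$ and the space is non-compact, so a Rellich-type argument is unavailable). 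In the paper the Hessian bound is used \emph{only} to make the acceleration term vanish (Lemma \ref{lem:vanish}). The Hessian term is instead closed by a separate device: one proves that the weighted $L^2$ \emph{norms} $\iint e^{-V}|\d\varphi^\eps_t|^2$ converge to the correct limit (Proposition \ref{thm:10}), and weak convergence plus norm convergence forces strong $L^2$ convergence of the gradients and of their tensor squares (Corollary \ref{cor:convd}). That norm convergence in turn relies on integrating the HJB equation against a Gaussian weight and identifying the limit via the Hopf--Lax representation \eqref{eq:hl1} of $\varphi_t$ (Proposition \ref{pro:9}) together with the $c$-concavity of the limit potentials, which forces $\lip\varphi_t=|\d\varphi_t|$ $\mm$-a.e.\ (Lemma \ref{lem:12}). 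Your outline contains none of this, and without it the Hessian term does not pass to the limit. As a smaller point, the paper does not condition the Schr\"odinger bridge on endpoints to upgrade the integrated formula to \eqref{eq:th1}: it applies the integrated result to the normalized restrictions $\ppi(\Gamma)^{-1}(\e_t)_*\ppi\restr{\Gamma}$ for Borel $\Gamma$ (with $\phi_t$ independent of $\Gamma$) and identifies the Bochner derivatives via density of indicators in $L^2(\ppi)$; your alternative of replacing $f$ by cut-offs $f\chi$ does not isolate subsets of paths and would not give the pointwise-in-$\gamma$ statement.
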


An equivalent formulation, which is the one we shall actually prove (see Theorem \ref{thm:main}) and is more in the spirit of \eqref{eq:bonn}, is the following:

\begin{Theorem}[Second order differentiation formula (2nd form)]\label{thm:2bis}
Let $(\X,\sfd,\mm)$ be an $\RCD^*(K,N)$ space, $N<\infty$, $\mu_0,\mu_1 \in \probt\X$ be such that $\mu_0,\mu_1 \leq C\mm$ for some $C>0$, with compact supports and let $(\mu_t)$ be the unique $W_2$-geodesic connecting $\mu_0$ to $\mu_1$. Also, let $f \in H^{2,2}(\X)$.

Then the map 
\[
[0,1] \ni \ t\quad \mapsto\quad \int f\,\d\mu_t\ \in\R
\]
belongs to $C^2([0,1])$ and it holds
\begin{equation}\label{eq:th2}
\frac{\d^2}{\d t^2}\int f\,\d\mu_t = \int \H f(\nabla\phi_t,\nabla\phi_t)\,\d\mu_t,
\end{equation}
for every $t \in [0,1]$, where $\phi_t$ is any function such that for some $s\neq t$, $s\in[0,1]$, the function $-(s-t)\phi_t$ is a Kantorovich potential from $\mu_t$ to $\mu_s$. 
\end{Theorem}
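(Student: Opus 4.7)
The plan is to prove \eqref{eq:th2} by approximating the $W_2$-geodesic $(\mu_t)$ with entropic interpolations $(\mu_t^\eps)_{\eps>0}$, establishing the analogue of \eqref{eq:th2} at fixed $\eps>0$ by a direct computation that exploits the heat-flow regularity of the building blocks, and finally passing to the limit $\eps\downarrow 0$ by means of the three estimates announced in the abstract: equiboundedness of the densities $\rho_t^\eps$, local equi-Lipschitz regularity of the Schr\"odinger potentials, and a uniform weighted $L^2$ bound on their Hessians. For each $\eps>0$ one solves the Schr\"odinger problem with reference kernel $\hR_\eps$ (the heat kernel at time $\eps$) and marginals $\mu_0,\mu_1$. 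The optimizer factorizes as $f_0^\eps(x)\,g_1^\eps(y)\,\hR_\eps(x,y)$, from which one defines $f_t^\eps:=\heatl_{t\eps/2}f_0^\eps$, $g_t^\eps:=\heatl_{(1-t)\eps/2}g_1^\eps$, $\rho_t^\eps:=f_t^\eps g_t^\eps$, $\mu_t^\eps:=\rho_t^\eps\mm$, together with the Schr\"odinger potentials $\varphi_t^\eps:=\eps\log f_t^\eps$, $\psi_t^\eps:=\eps\log g_t^\eps$ and the velocity potential $\vartheta_t^\eps:=\tfrac12(\psi_t^\eps-\varphi_t^\eps)$. The latter plays, for the entropic interpolation, the role that $\phi_t$ plays for the geodesic $(\mu_t)$: a short computation with the forward/backward heat equations for $f_t^\eps,g_t^\eps$ shows that $\mu_t^\eps$ solves the continuity equation $\partial_t\mu_t^\eps+\div(\mu_t^\eps\nabla\vartheta_t^\eps)=0$, and the equi-Lipschitz estimate on $\varphi_t^\eps,\psi_t^\eps$ together with the zero-noise limit of the Schr\"odinger problem gives $\vartheta_t^\eps\to\phi_t$ in a strong enough sense as $\eps\downarrow 0$.

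\textbf{Second order formula at fixed $\eps$.} Using the heat equations for $f_t^\eps,g_t^\eps$ and the Hessian calculus of \cite{Gigli14}, one first derives the entropic analogue of \eqref{eq:bonn},
\begin{equation*}
\frac{\d}{\d t}\int f\,\d\mu_t^\eps = \int \la\nabla f,\nabla\vartheta_t^\eps\ra\,\d\mu_t^\eps \qquad\text{for } f\in H^{2,2}(\X),
\end{equation*}
and then, differentiating once more and using the Hamilton--Jacobi--Bellman equation satisfied by $\vartheta_t^\eps$ (which at finite $\eps$ carries a viscous correction proportional to $\eps^2$),
\begin{equation*}
\frac{\d^2}{\d t^2}\int f\,\d\mu_t^\eps = \int \H{f}(\nabla\vartheta_t^\eps,\nabla\vartheta_t^\eps)\,\d\mu_t^\eps + \eps^2\,\mathcal R^\eps(f,t),
\end{equation*}
where $\mathcal R^\eps(f,t)$ collects the noise-induced remainder terms, all involving $\nabla\log\rho_t^\eps$ and the Hessians of $\varphi_t^\eps,\psi_t^\eps$. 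Its precise form is not important: what matters is that $\int_0^1\mathcal R^\eps(f,t)\,\d t$ stays bounded uniformly in $\eps$, thanks to the weighted $L^2$ Hessian estimate combined with the $L^\infty$ bound on $\rho_t^\eps$.

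\textbf{Passage to the limit.} Integrating the previous identity twice on $[s,r]\subset[0,1]$ one obtains, for every $s<r$,
\begin{equation*}
\int f\,\d\mu_r^\eps - \int f\,\d\mu_s^\eps - (r-s)\frac{\d}{\d t}\bigg|_{t=s}\!\!\int f\,\d\mu_t^\eps = \int_s^r\!\!\int_s^u\!\Bigl(\int\H f(\nabla\vartheta_t^\eps,\nabla\vartheta_t^\eps)\,\d\mu_t^\eps + \eps^2 \mathcal R^\eps(f,t)\Bigr)\d t\,\d u.
\end{equation*}
The left-hand side converges to its natural counterpart for $(\mu_t)$ by weak convergence $\mu_t^\eps\weakto\mu_t$ (first and third terms) and by the convergence $\nabla\vartheta_t^\eps\to\nabla\phi_t$ combined with Theorem \ref{thm:1} (second term). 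On the right-hand side, the equiboundedness of $\rho_t^\eps$, the equi-Lipschitz bound on $\vartheta_t^\eps$ and the $L^2$ convergence $\nabla\vartheta_t^\eps\to\nabla\phi_t$ together yield
\begin{equation*}
\int\H f(\nabla\vartheta_t^\eps,\nabla\vartheta_t^\eps)\,\d\mu_t^\eps \;\longrightarrow\; \int\H f(\nabla\phi_t,\nabla\phi_t)\,\d\mu_t,
\end{equation*}
while $\eps^2\mathcal R^\eps$ vanishes by the uniform bound above. Dividing the resulting limit identity by $(r-s)^2$ and letting $r\to s$ produces both \eqref{eq:th2} and the $C^2$ regularity of $t\mapsto\int f\,\d\mu_t$.

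\textbf{Main obstacle.} The three steps above are conceptually clean; the technical heart of the argument lies instead in the proof of the estimates on which the limit passage crucially rests. Equiboundedness of $\rho_t^\eps$, equi-Lipschitz regularity of $\varphi_t^\eps,\psi_t^\eps$ and the uniform weighted $L^2$ bound on their Hessians are derived by Bakry-\'Emery-type manipulations along the heat flow and by a careful analysis of the Hamilton--Jacobi--Bellman equation for the potentials, and this is where the finite-dimensionality assumption $N<\infty$ intervenes, typically through Li--Yau and Hamilton-type gradient inequalities. Without such estimates, passing to the limit in the quadratic quantity $\int\H f(\nabla\vartheta_t^\eps,\nabla\vartheta_t^\eps)\,\d\mu_t^\eps$ would be out of reach, even in a smooth Riemannian framework.
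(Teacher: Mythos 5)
Your outline follows the same architecture as the paper's actual proof (entropic interpolation, second-order identity at fixed $\eps$, limit passage resting on the zeroth/first/second order estimates), but there is a genuine gap at the single step you dismiss as unimportant: the treatment of the remainder $\mathcal R^\eps(f,t)$. Writing out the acceleration $a^\eps_t=-\tfrac{\eps^2}{8}(2\Delta\log\rho^\eps_t+|\nabla\log\rho^\eps_t|^2)$ and integrating by parts, one finds that $\mathcal R^\eps(f,t)$ contains, among others, the term $\int\la\nabla f,\nabla\log\rho^\eps_t\ra|\nabla\log\rho^\eps_t|^2\rho^\eps_t\,\d\mm$, hence is controlled only through $\iint\rho^\eps_t|\nabla\log\rho^\eps_t|^3$. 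Your claim that $\int\mathcal R^\eps(f,t)\,\d t$ is uniformly bounded is not available: the a priori estimates give $\eps^2\iint\rho^\eps_t|\nabla\log\rho^\eps_t|^2\lesssim 1$ (see \eqref{eq:boundenergia}) and $\eps^2\iint\rho^\eps_t(|\H{\log\rho^\eps_t}|_\HS^2+|\Delta\log\rho^\eps_t|^2)\lesssim 1$ (see \eqref{eq:bhess}, \eqref{eq:blap}), from which one only extracts $\iint\rho^\eps_t|\nabla\log\rho^\eps_t|^3=O(\eps^{-3/2})$, i.e.\ $\mathcal R^\eps$ blows up. Moreover, the vanishing of $\eps^2\mathcal R^\eps$ does \emph{not} follow from "the weighted $L^2$ Hessian estimate combined with the $L^\infty$ bound on $\rho^\eps_t$" by a crude domination: a pointwise bound $|\nabla\log\rho^\eps_t|\lesssim\eps^{-1}$ combined with the energy bound only yields $\eps^2\iint\rho^\eps_t|\nabla\log\rho^\eps_t|^3=O(1)$, which is not enough. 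One needs the cancellations of Lemma \ref{lem:vanish}: the identity $\rho|\nabla\log\rho|^2=-\rho\Delta\log\rho+\Delta\rho$ together with $\int\Delta\rho^\eps_t\,\d\mm=0$ reduces the quadratic term to the Laplacian one, and the cubic term requires a further integration by parts producing $\H{\log\rho^\eps_t}(\nabla\log\rho^\eps_t,\nabla\log\rho^\eps_t)$, which is then closed via Cauchy--Schwarz against \eqref{eq:bhess} and \eqref{eq:b4}. This is precisely "the precise form" you set aside, and without it the limit passage does not close.

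Two smaller points. First, the second-order bounds hold only on $[\delta,1-\delta]$, so your double integration over arbitrary $[s,r]\subset[0,1]$ is not justified near the endpoints; the paper works distributionally on $(0,1)$ and recovers $C^2$ regularity up to $t=0,1$ separately. Second, $C^2([0,1])$ regularity requires the \emph{continuity} of $t\mapsto\int\H f(\nabla\phi_t,\nabla\phi_t)\,\d\mu_t$, which does not follow from dividing by $(r-s)^2$; in the paper this is Lemma \ref{lem:percon}, whose proof upgrades weak to strong continuity of $t\mapsto\rho_t\,\d\varphi_t\otimes\d\varphi_t$ using the identity $\int|\d\varphi_t|^2\,\d\mu_t=W_2^2(\mu_0,\mu_1)$ along the geodesic. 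Both points should be addressed explicitly.
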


Let us comment about the assumptions in Theorem \ref{thm:2} and Theorem \ref{thm:2bis}:
\begin{itemize}
\item[-] The first order differentiation formula is valid on general $\RCD(K,\infty)$ spaces, while for the second order one we need to assume finite dimensionality. This is due to the strategy of our proof, which among other things uses the Li-Yau inequality.
\item[-] There exist optimal geodesic test plans without bounded support (if $K=0$ or the densities of the initial and final marginals decay sufficiently fast) but in this case the functions $\phi_t$ appearing in the statement(s) are not Lipschitz. As such it seems hard to have $\H{h}(\nabla\phi_t,\nabla\phi_t)\circ\e_t\in L^1(\ppi)$ and thus we cannot really hope for anything like \eqref{eq:th1}, \eqref{eq:th2} to hold: this explains the need of the assumption on bounded supports.
\end{itemize}

Having at disposal such second order differentiation formula is interesting not only at the theoretical level, but also for applications to the study of the geometry of $\RCD$ spaces. For instance, the proofs of both the splitting theorem \cite{Gigli13} and of the `volume cone implies metric cone' \cite{DePhilippisGigli16} in this setting can be greatly simplified by using such formula (in this direction, see \cite{Tamanini17} for comments about the splitting). Also, one aspect of the theory of $\RCD$ spaces which is not yet clear is whether they have constant dimension: for Ricci-limit spaces this is known to be true by a result of Colding-Naber \cite{ColdingNaber12} which uses second order derivatives along geodesics in a crucial way. Thus our result is necessary to replicate Colding-Naber argument in the non-smooth setting (but not sufficient: they also use a calculus with Jacobi fields which as of today does not have  a non-smooth counterpart).

\bigskip

Let us discuss the strategy of the proof. Our starting point is a related second order differentiation formula obtained in \cite{Gigli14}, available under proper regularity assumptions:

\begin{Theorem}\label{thm:1i}
Let $(\mu_t)$ be a $W_2$-absolutely continuous curve solving the continuity equation
\[
\frac\d{\d t}\mu_t+\div(X_t\mu_t)=0,
\] 
for some vector fields $(X_t)\subset L^2(T\X)$ in the following sense: for every $f\in W^{1,2}(\X)$ the map $t\mapsto\int f\,\d\mu_t$  is absolutely continuous and it holds
\[
\frac\d{\d t}\int f\,\d\mu_t=\int\la\nabla f,X_t\ra\,\d\mu_t.
\]
Assume that 
\begin{itemize}
\item[(i)] $t \mapsto X_t \in L^2(T{\X})$ is absolutely continuous,
\item[(ii)] $\sup_t\{ \|X_t\|_{L^2} + \|X_t\|_{L^{\infty}}+ \|\nabla X_t\|_{L^2} \} < +\infty$.
\end{itemize} 
Then for $f\in H^{2,2}(\X)$ the map $t\mapsto\int f\,\d\mu_t$ is $C^{1,1}$ and the formula
\begin{equation}
\label{eq:secondsmooth}
\frac{\d^2}{\d t^2}\int f\d\mu_t = \int \H{f}(X_t,X_t) + \la\nabla f,\tfrac\d{\d t} X_t  + \nabla_{X_t} X_t\ra  \d\mu_t
\end{equation}
holds for a.e.\ $t \in [0,1]$.
\end{Theorem}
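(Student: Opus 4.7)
The strategy is to apply the first-order differentiation identity (which is the defining hypothesis of the theorem) twice in succession. A first application with $f \in H^{2,2}(\X) \subset W^{1,2}(\X)$ yields
\[
\ddt \int f \,\d\mu_t = \int \la \nabla f, X_t \ra \,\d\mu_t,
\]
so the task reduces to differentiating the right-hand side once more.

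For this I first establish a time-dependent analogue of the continuity equation: for any family $(g_t) \subset W^{1,2}(\X)$ which is absolutely continuous in $t$ and uniformly bounded in the norms compatible with the hypothesis (say $\|g_t\|_{L^\infty}$, $\|\nabla g_t\|_{L^2}$ and $\|\partial_t g_t\|_{L^2}$), the identity
\[
\ddt \int g_t \,\d\mu_t \;=\; \int \partial_t g_t \,\d\mu_t \;+\; \int \la \nabla g_t, X_t \ra \,\d\mu_t
\]
should hold for a.e.\ $t \in [0,1]$. This is derived via a telescoping trick: one writes $\int g_{t_{i+1}}\d\mu_{t_{i+1}} - \int g_{t_i}\d\mu_{t_i}$ as $\int (g_{t_{i+1}} - g_{t_i})\,\d\mu_{t_i} + \int_{t_i}^{t_{i+1}} \int \la \nabla g_{t_{i+1}}, X_t \ra \,\d\mu_t \,\d t$, sums over a refining partition of $[0,1]$, and passes to the limit using the uniform bounds. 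By (i)-(ii) the choice $g_t := \la \nabla f, X_t \ra$ is admissible with $\partial_t g_t = \la \nabla f, \tfrac{\d}{\d t} X_t \ra$, yielding
\[
\frac{\d^2}{\d t^2} \int f \,\d\mu_t = \int \la \nabla f, \tfrac{\d}{\d t} X_t \ra \,\d\mu_t + \int \la \nabla \la \nabla f, X_t \ra, X_t \ra \,\d\mu_t.
\]

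The last integrand is then rewritten via the Leibniz rule that characterizes the Hessian of an $H^{2,2}$ function in the framework of \cite{Gigli14}: for suitably regular vector fields $X, Y$,
\[
\la \nabla \la \nabla f, X \ra, Y \ra = \H f (X, Y) + \la \nabla f, \nabla_Y X \ra.
\]
Plugging in $X = Y = X_t$, with the extension beyond test fields justified by the $L^2$-Sobolev regularity from (ii), produces exactly \eqref{eq:secondsmooth}. The $C^{1,1}$ regularity of $t \mapsto \int f \,\d\mu_t$ is then immediate, since (ii) provides a uniform-in-$t$ bound on the right-hand side of \eqref{eq:secondsmooth}.

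The main obstacle is extending the two auxiliary tools from their native smooth/test-function settings to the present one. The time-dependent continuity equation requires controlling the discretization error $\int \la \nabla g_{t_{i+1}} - \nabla g_t, X_t \ra \,\d\mu_t$ through an $L^2$-modulus of continuity of $t \mapsto \nabla g_t$, which follows from (i)-(ii) combined with the $H^{2,2}$-regularity of $f$. The Leibniz identity for the Hessian requires $X_t$ to lie in the natural Sobolev space of vector fields in the $\RCD$ setting in which test fields are dense; passing from the defining identity on test fields to the statement for $X_t$ is typically carried out by a heat-semigroup mollification together with closability of the covariant derivative, and none of these steps uses more than the bounds assumed in (ii).
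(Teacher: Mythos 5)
The paper does not actually prove Theorem \ref{thm:1i}: it is imported verbatim from \cite{Gigli14} and used as a black box (and the paper's main argument, Theorem \ref{thm:main}, ends up re-deriving a version of it directly for entropic interpolations rather than invoking it). So your proposal can only be measured against the standard proof. Its architecture is the right one: apply the first-order identity, feed a time-dependent integrand into the continuity equation via a telescoping argument (this is exactly the mechanism of Lemma \ref{lem:dermiste}), and convert $\la\nabla\la\nabla f,X_t\ra,X_t\ra$ into $\H{f}(X_t,X_t)+\la\nabla f,\nabla_{X_t}X_t\ra$ by the Leibniz rule characterizing the Hessian.

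There is, however, a genuine gap at the pivotal sentence ``By (i)--(ii) the choice $g_t:=\la\nabla f,X_t\ra$ is admissible'': for a general $f\in H^{2,2}(\X)$ the admissibility conditions \emph{you yourself impose} fail. Since $f\in H^{2,2}(\X)$ only gives $|\nabla f|\in L^2(\X)$, one has in general $g_t\notin L^\infty(\X)$; the second term of $\nabla g_t=\H{f}(X_t,\cdot)+\nabla X_t(\cdot,\nabla f)$ is controlled only by $|\nabla X_t|\,|\nabla f|$, a product of two $L^2$ functions, hence merely $L^1$; and $\partial_t g_t=\la\nabla f,\tfrac{\d}{\d t}X_t\ra$ is likewise only $L^1$. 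Consequently neither the hypothesis of the theorem (a statement about $W^{1,2}$ functions) nor your auxiliary time-dependent lemma applies to $g_t$, and the remedy you sketch --- mollifying $X_t$ and using closability of the covariant derivative --- does not address this, because the obstruction comes from the low regularity of $\nabla f$, not of $X_t$. The standard repair is a two-step argument: first prove \eqref{eq:secondsmooth} for $f$ in a core where $|\nabla f|\in L^\infty(\X)$ and $\H{f}\in L^2$ (e.g.\ $f\in\testi\X$), where every step you describe is legitimate; then extend to all of $H^{2,2}(\X)$ by density, passing to the limit on both sides of \eqref{eq:secondsmooth} using the bounds in (ii) together with the uniform bound $\mu_t\le C\mm$ (implicit in the statement, but indispensable here). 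This is precisely the structure the paper adopts at the end of the proof of Theorem \ref{thm:main}. Without that density step the argument as written does not close.
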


If the vector fields $X_t$ are of gradient type, so that $X_t = \nabla\phi_t$ for every $t$ and the `acceleration' $a_t$ is defined as
\[
\frac\d{\d t}\phi_t+\frac{|\nabla\phi_t|^2}2=:a_t
\]
then \eqref{eq:secondsmooth} reads as
\begin{equation}
\label{eq:secondsmooth2}
\frac{\d^2}{\d t^2}\int f\d\mu_t = \int \H{f}(\nabla\phi_t,\nabla\phi_t)\,\d\mu_t+\int  \la\nabla f,\nabla a_t\ra  \d\mu_t.
\end{equation}
In the case of geodesics, the functions $\varphi_t$ appearing in \eqref{eq:cint} solve (in a sense which we will not make precise here) the Hamilton-Jacobi equation
\begin{equation}
\label{eq:hji}
\frac\d{\d t}\varphi_t + \frac{|\nabla\varphi_t|^2}{2} = 0,
\end{equation}
thus in this case the acceleration $a_t$ is identically 0. Hence if the vector fields $(\nabla\varphi_t)$ satisfy the regularity requirements $(i),(ii)$ in the last theorem we would easily be able to establish Theorem \ref{thm:2}. However in general this is not the case; informally speaking this has to do with the fact that for solutions of the Hamilton-Jacobi equations we do not have sufficiently strong second order estimates.

In order to establish Theorem \ref{thm:2} it is therefore natural to look for suitable `smooth' approximation of geodesics for which we can apply Theorem \ref{thm:1i} above and then pass to the limit in formula \eqref{eq:secondsmooth}. Given that the lack of smoothness of $W_2$-geodesic is related to the lack of smoothness of solutions of \eqref{eq:hji}, also in line with the classical theory of viscous approximation for the Hamilton-Jacobi equation there is a quite natural thing to try:  solve, for $\eps>0$, the equation
\[
\frac\d{\d t}\varphi^\eps_t=\frac{|\nabla\varphi^\eps_t|^2}{2}+\frac\eps2\Delta\varphi^\eps_t,\qquad\qquad\varphi^\eps_0:=\varphi,
\]
where $\varphi$ is a given, fixed, Kantorovich potential for the geodesic $(\mu_t)$, and then solve
\[
\frac\d{\d t}\mu^\eps_t-\div(\nabla\varphi^\eps_t\mu^\eps_t)=0,\qquad\qquad\mu^\eps_0:=\mu_0.
\]
This plan can actually be pursued and following the ideas in this paper one can show that if the space $(\X,\sfd,\mm)$ is $\RCD^*(K,N)$ and the geodesic $(\mu_t)$ is made of measures with equibounded densities, then as $\eps\downarrow0$:
\begin{itemize}
\item[i)] the curves $(\mu_t^\eps)$ $W_2$-uniformly converge to the geodesic $(\mu_t)$ and the measures $\mu^\eps_t$ have equibounded densities.
\item[ii)] the functions $\varphi^\eps_t$ are equi-Lipschitz and converge both uniformly and in the $W^{1,2}$-topology to the only viscous solution $(\varphi_t)$ of \eqref{eq:hji} with $\varphi$ as initial datum; in particular the continuity equation \eqref{eq:cint} for the limit curve holds.
\end{itemize}
These convergence results are based on Hamilton's gradient estimates and the Li-Yau inequality and are sufficient to pass to the limit in the term with the Hessian in \eqref{eq:secondsmooth2}. For these curves the acceleration is given by $a_t^\eps=-\frac\eps2\Delta\varphi^\eps_t$ and thus we are left to prove  that the quantity
\[
\eps\int\la\nabla f,\nabla\Delta\varphi^\eps_t \ra\,\d\mu^\eps_t
\]
goes to 0 in some sense. However, there appears to be \emph{no hope of obtaining this by PDE estimates}. The problem is that this kind of viscous approximation can produce in the limit a curve which is not a geodesic if $\varphi$ is not $c$-concave: shortly said, this happens as  soon as a shock appears in Hamilton-Jacobi. Since there is no hope for formula \eqref{eq:th2} to be true for non-geodesics, we see that there is little chance of obtaining it via such viscous approximation.

\bigskip

We therefore use another way of approximating geodesics: the slowing down of entropic interpolations.  Let us briefly describe what this is in the familiar Euclidean setting.

Fix two probability measures $\mu_0=\rho_0\mathcal L^d$, $\mu_1=\rho_1\mathcal L^d$ on $\R^d$. The Schr\"odinger functional equations are
\begin{equation}
\label{eq:sch10}
\rho_0=f\,\h_1g\qquad\qquad\qquad\qquad\rho_1=g\,\h_1f,
\end{equation}
the unknown being the Borel functions $f,g:\R^d\to[0,\infty)$, where $\h_tf$ is the heat flow starting at $f$ evaluated at time $t$. It turns out that in great generality these equations admit a solution which is unique up to the trivial transformation $(f,g)\mapsto (cf,g/c)$ for some constant $c>0$. Such solution can be found in the following way: let $\hR$ be the measure on $(\R^d)^2$ whose density w.r.t.\ $\mathcal L^{2d}$ is given by the heat kernel $\hr_t(x,y)$ at time $t=1$ and minimize the Boltzmann-Shannon entropy $H(\ggamma\,|\,\hR)$ among all transport plans $\ggamma$ from $\mu_0$ to $\mu_1$. The Euler equation for the minimizer forces it to be of the form  $f\otimes g\,\hR$ for some Borel functions $f,g:\R^d\to[0,\infty)$, where $f\otimes g(x,y):=f(x)g(y)$ (we shall reprove this known result in Proposition \ref{pro:2}). Then the fact that  $f\otimes g\,\hR$ is a transport plan from $\mu_0$ to $\mu_1$ is equivalent to $(f,g)$ solving \eqref{eq:sch10}. 

Once we have found the solution of \eqref{eq:sch10} we can use it in conjunction with the heat flow to interpolate from $\rho_0$ to $\rho_1$ by defining
\[
\rho_t:=\h_tf\,\h_{1-t}g.
\]
This is called entropic interpolation. Now we slow down the heat flow: fix $\eps>0$ and by mimicking the above find $f^\eps,g^\eps$ such that
\[
\rho_0=f^\eps\,\h_{\eps/2}g^\eps\qquad\qquad\rho_1=g^\eps\,\h_{\eps/2}f^\eps,
\]
(the factor $1/2$ plays no special role, but is convenient in computations).
Then define 
\[
\rho^\eps_t:=\h_{t\eps/2}f^\eps\,\h_{(1-t)\eps/2}g^\eps.
\]

The  remarkable and non-trivial fact here is that as $\eps\downarrow0$ the curves of measures $(\rho^\eps_t\mathcal L^d)$ converge to the $W_2$-geodesic from $\mu_0$ to $\mu_1$. 

The first connections between Schr\"odinger equations and optimal transport have been obtained by Mikami in \cite{Mikami04} for the quadratic cost on $\R^d$; later Mikami-Thieullen \cite{MikamiThieullen08} showed that a link persists even for more general cost functions. The statement we have just made about convergence of entropic interpolations to displacement ones has been proved by L\'eonard in \cite{Leonard12}. Actually, L\'eonard worked in much higher generality: as it is perhaps clear from the presentation, the construction of entropic interpolation can be done in great generality, as only a  heat kernel is needed. He also provided a basic intuition about why such convergence is in place: the basic idea is that if the heat kernel admits the asymptotic expansion $\eps\log \hr_\eps(x,y)\sim -\frac{\sfd^2(x,y)}{2}$ (in the sense of Large Deviations), then the rescaled entropy functionals $\eps H(\cdot\,|\, \hR_\eps)$ converge to $\frac12\int \sfd^2(x,y)\,\d \cdot $ (in the sense of $\Gamma$-convergence). We refer to \cite{Leonard14} for a deeper discussion of this topic, historical remarks and much more.

\bigskip

Starting from these intuitions and results, working in the setting of $\RCD^*(K,N)$ spaces we gain new information about the convergence of entropic interpolations to displacement ones. In order to state our results, it is convenient to introduce the Schr\"odinger potentials $\varphi^\eps_t,\psi^\eps_t$ as
\[
\varphi^\eps_t:=\eps\log \h_{t\eps/2}f^\eps\qquad\qquad\qquad\qquad\psi^\eps_t:=\eps\log \h_{(1-t)\eps/2}g^\eps.
\]
In the limit $\eps\downarrow0$ these will converge to forward and backward Kantorovich potentials along the limit geodesic $(\mu_t)$ (see below). In this direction, it is worth to notice that while for $\eps>0$ there is a tight link between potentials and densities, as we trivially have
\[
\varphi^\eps_t+\psi^\eps_t=\eps\log\rho^\eps_t,
\]
in the limit this becomes the well known (weaker) relation that is in place between  forward/backward Kantorovich potentials and measures $(\mu_t)$:
\[
\begin{split}
\varphi_t+\psi_t&=0\qquad\text{on }\supp(\mu_t),\\
\varphi_t+\psi_t&\leq 0\qquad\text{on }\X,
\end{split}
\]
see e.g.\ Remark 7.37 in \cite{Villani09} (paying attention to the different sign convention). By direct computation one can verify that $(\varphi^\eps_t),(\psi^\eps_t)$ solve the Hamilton-Jacobi-Bellman equations
\begin{equation}
\label{eq:hj2}
\frac{\d}{\d t}\varphi^{\varepsilon}_t  =  \frac{1}{2}|\nabla\varphi^{\varepsilon}_t|^2 + \frac{\varepsilon}{2}\Delta\varphi^{\varepsilon}_t\qquad\qquad\qquad\qquad-\frac{\d}{\d t}\psi^{\varepsilon}_t  =  \frac{1}{2}|\nabla\psi^{\varepsilon}_t|^2 + \frac{\varepsilon}{2}\Delta\psi^{\varepsilon}_t,
\end{equation}
thus introducing the functions 
\[
\vartheta^\eps_t:=\frac{\psi^\eps_t-\varphi^\eps_t}2
\]
it is not hard to check that it holds
\begin{equation}
\label{eq:cei}
\frac\d{\d t}\rho^\eps_t+\rm{div}(\nabla\vartheta^\eps_t\,\rho^\eps_t)=0
\end{equation}
and
\[
\frac\d{\d t}\vartheta^\eps_t+\frac{|\nabla\vartheta^\eps_t|^2}2=a^\eps_t,\qquad\qquad\text{where}\qquad a^\eps_t:= -\frac{\varepsilon^2}{8}\Big(2\Delta\log\rho^{\varepsilon}_t + |\nabla\log\rho^{\varepsilon}_t|^2\Big).
\]
With this said, our main results about entropic interpolations can be summarized  as follows. Under the assumptions that the metric measure space $(\X,\sfd,\mm)$ is $\RCD^*(K,N)$, $N<\infty$, and that $\rho_0,\rho_1$ belong to $L^\infty(\X)$ with bounded supports it holds:
\begin{itemize}
\item[-]\noindent\underline{Zeroth order} 
\begin{itemize}
\item[--]\emph{bound} For some $C>0$ we have $\rho^\eps_t\leq C$ for every $\eps\in(0,1)$ and $t\in[0,1]$.
\item[--]\emph{convergence} The curves  $(\rho^\eps_t\mm)$ $W_2$-uniformly converge to the unique $W_2$-geodesic $(\mu_t)$ from $\mu_0$ to $\mu_1$ and setting $\rho_t := \frac{\d\mu_t}{\d\mm}$ it holds $\rho_t^\eps \stackrel{\ast}{\rightharpoonup} \rho_t$ in $L^\infty(\X)$ for all $t \in [0,1]$.
\end{itemize}

\item[-]\noindent\underline{First order}
\begin{itemize}
\item[--]\emph{bound} For any $t \in (0,1]$ the functions $\{\varphi^\eps_t\}_{\eps\in(0,1)}$ are locally equi-Lipschitz. Similarly for the $\psi$'s. 
\item[--]
\emph{convergence} For every sequence $\eps_n\downarrow0$ there is a subsequence - not relabeled - such that for any $t\in(0,1]$ the functions $\varphi^\eps_t$ converge both locally uniformly and in $W^{1,2}_{loc}(\X)$ to a function $\varphi_t$ such that $-t\varphi_t$ is a Kantorovich potential from $\mu_t$ to $\mu_0$. Similarly for the $\psi$'s. 
\end{itemize}

\item[-]\noindent\underline{Second order} For every $\delta\in(0,1/2)$ we have
\begin{itemize}
\item[--]\emph{bound}
\begin{equation}
\label{eq:2i}
\begin{split}
&\sup_{\eps\in(0,1)}\iint_\delta^{1-\delta} \big(|\H{\vartheta^\eps_t}|_\HS^2+\eps^2|\H{\log\rho^\eps_t}|_\HS^2\big)\rho^\eps_t\,\d t\,\d\mm<\infty,\\
&\sup_{\eps\in(0,1)}\iint_\delta^{1-\delta} \big(|\Delta{\vartheta^\eps_t}|^2+\eps^2|\Delta{\log\rho^\eps_t}|^2\big)\rho^\eps_t\,\d t\,\d\mm<\infty.
\end{split}
\end{equation}
Notice that since in general the Laplacian is not the trace of the Hessian, there is no direct link between these two bounds.
\item[--]\emph{convergence} For every function $h\in W^{1,2}(\X)$ with $\Delta h\in L^\infty(\X)$ it holds
\begin{equation}
\label{eq:2ii}
\lim_{\eps\downarrow0}\iint_\delta^{1-\delta}\la\nabla h,\nabla a^\eps_t\ra \rho^\eps_t\,\d t\,\d\mm=0.
\end{equation}
\end{itemize}
\end{itemize}
\bigskip

With the exception of the convergence $\rho^\eps_t\mm\to \mu_t$, all these results are new even on  smooth manifolds (in fact, even on $\R^d$). 

The zeroth and first order bounds are both consequences of the Hamilton-Jacobi-Bellman equations \eqref{eq:hj2} satisfied by the $\varphi$'s and $\psi$'s and can be obtained from Hamilton's gradient estimate and the Li-Yau inequality. The facts that the limit curve is the $W_2$-geodesic and that the limit potentials are Kantorovich potentials are consequence of the fact that we can pass to the limit in the continuity equation \eqref{eq:cei} and that the limit potentials satisfy the Hamilton-Jacobi equation. In this regard it is key that we approximate  at the same time both the `forward' potentials $\psi$ and the `backward' one $\varphi$: see the proof of Proposition \ref{pro:9} and recall that the simple viscous approximation may converge to curves which are not $W_2$-geodesics. 

Notice that these zeroth and first order convergences are sufficient to pass to the limit in the term with the Hessian in \eqref{eq:secondsmooth2}. As said, also the viscous approximation could produce the same kind of convergence. 

The crucial advantage of dealing with entropic interpolations (which has no counterpart in viscous approximation) is thus in the second order bounds and convergence results which show that the term with the acceleration in \eqref{eq:secondsmooth2} vanishes in the limit and thus eventually allows us to prove our main result Theorem \ref{thm:2}. In this direction, we informally point out  that being the geodesic equation a second order one, in searching for an approximation procedure it is natural to look for one producing some sort of second order convergence.

The limiting property \eqref{eq:2ii} is mostly a consequence - although perhaps non-trivial - of the bound \eqref{eq:2i} (see in particular Lemma \ref{lem:vanish} and the proof of Theorem \ref{thm:main}), thus let us focus on how to get \eqref{eq:2i}. The starting point here is a formula due to L\'eonard \cite{Leonard13}, who realized that there is a connection between entropic interpolation and lower Ricci bounds: he computed the second order derivative of the entropy along entropic interpolations and in this direction our contribution has been the rigorous proof in the $\RCD$ framework of his formal computations, thus getting
\begin{equation}
\label{eq:leo}
\frac{\d^2}{\d t^2}H(\mu^{\varepsilon}_t \,|\, \mm) =\int\rho^\eps_t\,\d\big(\Ggamma_2(\vartheta^\eps_t)+\tfrac{\eps^2}4\Ggamma_2(\log(\rho^\eps_t))\big)= \frac{1}{2}\int \rho^\eps_t\,\d\big(\Ggamma_2(\varphi^{\varepsilon}_t) + \Ggamma_2(\psi^{\varepsilon}_t)\big),
\end{equation}
where $\Gamma_2$ is the `iterated carr\'e du champ' operator defined as
\[
\Ggamma_2(f) := \bd\frac{|\nabla f|^2}2-\la\nabla f,\nabla\Delta f\ra
\]
(in the setting of $\RCD$ spaces some care is needed when handling this object, but let us neglect this issue here).

Observe that if $h : [0,1] \to \R^+$ is a convex function, then $-\frac{h(0)}{t} \leq h'(t) \leq \frac{h(1)}{1-t}$ for any $t \in (0,1)$ and thus
\begin{equation}
\label{eq:intderd}
\int_\delta^{1-\delta}h''(t)\,\d t = h'(1-\delta)-h'(\delta)\leq \frac{h(1)}{1-\delta}+\frac{h(0)}{\delta}.
\end{equation}
If we assume for simplicity that $K=0$ we have $\Ggamma_2\geq 0$, so that \eqref{eq:leo} tells in particular that $t\mapsto H(\mu^{\varepsilon}_t \,|\, \mm)$ is convex for any $\eps>0$, and if we also assume that $\mm(\X)=1$ such function is non-negative. Therefore \eqref{eq:intderd} gives that for any $\delta\in(0,1/2)$ it holds
\begin{equation}
\label{eq:per2i}
\sup_{\eps\in(0,1)}\int_\delta^{1-\delta}\int \rho^\eps_t\,\d\big(\Ggamma_2(\vartheta^\eps_t)+\tfrac{\eps^2}4\Ggamma_2(\log(\rho^\eps_t))\big)\,\d t\leq \frac{H(\mu_1 \,|\, \mm) }{1-\delta}+\frac{H(\mu_0 \,|\, \mm) }{\delta}<\infty.
\end{equation}
Recalling the Bochner inequalities (\cite{Erbar-Kuwada-Sturm13}, \cite{AmbrosioMondinoSavare13}, \cite{Gigli14})
\[
\begin{split}
\Ggamma_2(\eta)&\geq|\H\eta|_\HS^2\mm,\qquad\qquad\qquad\qquad\Ggamma_2(\eta)\geq\frac{(\Delta\eta)^2}N \mm,
\end{split}
\]
we see that \eqref{eq:2i} follows from \eqref{eq:per2i}. Then with some work (see Lemma \ref{lem:vanish} and Theorem \ref{thm:main} for the details) starting from \eqref{eq:2i} we can  deduce \eqref{eq:2ii} which in turn ensures that the term with the acceleration in \eqref{eq:secondsmooth2} vanishes in the limit $\eps\downarrow0$, thus leading to our main result Theorem \ref{thm:2}.

\bigskip

\noindent{\bf Acknowledgements} 

The authors wish to thank C. L\'eonard for the numerous inspiring conversations about entropic interpolation.

This research has been supported by the MIUR SIR-grant `Nonsmooth Differential Geometry' (RBSI147UG4). The second author is also grateful to the UFI/UIF for the financial support of the Vinci Programme.

\section{The Schr\"odinger problem}\label{sec:3}

Let $(\X,\tau)$ be a Polish space, $\mu_0,\mu_1 \in \prob\X$ and $\hR$ be a non-negative Radon measure on $\X^2$. Recall that $\ggamma\in\prob{\X^2}$ is called transport plan  for $\mu_0,\mu_1$ provided $\pi^0_*\ggamma=\mu_0$ and $\pi^1_*\ggamma=\mu_1$, where $\pi^0,\pi^1:\X^2\to \X$ are the canonical projections. We are interested in finding  a transport plan of the form
\[
\ggamma=f\otimes g\,\hR
\]
for certain Borel functions $f,g:\X\to[0,\infty)$, where $f\otimes g(x,y):=f(x)g(y)$. As we shall see in this short section, in great generality this problem can be solved in a unique way and the plan $\ggamma$ can be found as the minimum of
\[
\ggamma'\quad\mapsto\quad H(\ggamma'\,|\, \hR)
\]
among all transport plans from $\mu_0$ to $\mu_1$, where $H(\,\cdot\,|\,\cdot)$ is the Boltzmann-Shannon entropy. For appropriate choice of the reference measure $\hR$ (which will also be our choice in the following), this minimization problem is called Schr\"odinger problem, we refer to \cite{Leonard14} for a survey on the topic.

Let us first recall the definition of the relative entropy functional in the case of a reference measure with possibly infinite mass (see \cite{Leonard14b} for more details). Given a $\sigma$-finite measure $\nu$ on a Polish space $(\Y,\tau')$, there exists a measurable function $W : \Y \to [0,\infty)$ such that
\[
z_W := \int e^{-W}\d\nu < +\infty.
\]
Introducing the probability measure $\nu_W := z_W^{-1}e^{-W}\nu$, for any $\sigma \in \prob\Y$ such that $\int W\d\sigma < +\infty$ the Boltzmann-Shannon entropy is defined as
\begin{equation}\label{eq:entdef}
H(\sigma\,|\,\nu) := H(\sigma\,|\,\nu_W) - \int W\d\sigma - \log z_W
\end{equation}
where $H(\sigma\,|\,\nu_W)$ is in turn defined as
\[
H(\sigma\,|\,\tilde{\nu}):=\left\{\begin{array}{ll}
\displaystyle{\int\rho\log(\rho)\,\d\tilde{\nu}} & \qquad \text{ if } \sigma = \rho\tilde{\nu}\\
+\infty & \qquad \text{ if } \sigma \not\ll \tilde{\nu}
\end{array}\right.
\]
for all $\tilde{\nu} \in \prob\Y$; notice that Jensen's inequality  and the fact that $\tilde\nu\in \prob\Y$ grant that $\int\rho\log(\rho)\,\d\tilde{\nu}$ is well defined and non-negative, in particular the definition makes sense. The definition is meaningful, because if $\int W'\d\sigma < +\infty$ for another function $W'$ such that $z_{W'} < +\infty$, then
\[
H(\sigma\,|\,\nu_W) - \int W\d\sigma - \log z_W = H(\sigma\,|\,\nu_{W'}) - \int W'\d\sigma - \log z_{W'}.
\]
Hence $H(\,\cdot\,|\,\nu)$ is well defined for all $\sigma \in \prob\Y$ such that $\int W\d\sigma < +\infty$ for some non-negative measurable function $W$ with $z_W < +\infty$.

The following proposition collects the basic properties of the minimizer of the Schr\"odinger problem; we emphasize that point $(i)$  of the statement is already  known in the literature on the subject (see in particular \cite{Leonard01}, \cite{BLN94} and \cite{RuscThom93}) and there are similarities between point $(ii)$ and some results in \cite{BLN94}. A complete proof has already been presented in \cite{GigTam17} for the compact case; here we adapt the  arguments to our more general case. Notice that Radon measures on Polish spaces are always $\sigma$-finite, hence the above discussion about the Boltzmann-Shannon entropy applies.

\begin{Proposition}\label{pro:2}
Let $(\X,\tau,\mm)$ be a Polish space equipped with a non-negative Radon measure $\mm$ and let $\hR$ be a non-negative Radon measure on $\X^2$ such that $\pi^0_*\hR =\pi^1_* \hR = \mm$ and
\[
\mm\otimes\mm\ll \hR\ll\mm\otimes\mm. 
\]
Let $\mu_0 = \rho_0\mm$ and $\mu_1 = \rho_1\mm$ be Borel probability measures and assume that there exists a Borel function $B : \X \to [0,\infty)$ such that
\begin{equation}\label{eq:paris}
\int_{\X^2} e^{-B(x)-B(y)}\d\hR(x,y) < \infty \qquad \int B\,\d\mu_0 < \infty \qquad \int B\,\d\mu_1 < \infty.
\end{equation}
Then the following holds.
\begin{itemize}
\item[i)] Assume that 
\begin{equation}\label{eq:finent}
H(\mu_0 \otimes \mu_1 \,|\, \hR) < +\infty.
\end{equation}
Then:
\begin{itemize}
\item[i-a)]There exists a unique minimizer $\ggamma$ of $H(\,\cdot\,|\,\hR)$ among all transport plans from $\mu_0$ to $\mu_1$.
\item[i-b)] $\ggamma=f\otimes g\hR$ for appropriate Borel functions $f,g:\X\to[0,\infty)$ which are $\mm$-a.e. unique up to the trivial transformation $(f,g)\to (cf,g/c)$ for some $c>0$.
\end{itemize}
\item[ii)] Assume that  $\rho_0,\rho_1 \in L^\infty(\X,\mm)$ and that for some $c>0$ it holds
\begin{equation}
\label{eq:k8}
\hR \geq c\mm \otimes \mm\qquad\text{in $P_0 \times P_1$},
\end{equation}
where $P_0 := \{\rho_0>0\}$ and $P_1 := \{\rho_1>0\}$. Then:
\begin{itemize}
\item[ii-a)] The bound \eqref{eq:finent} holds.
\item[ii-b)] The functions $f,g$ given by point $(i\text{-}b)$ above are in $L^1\cap L^\infty(\mm)$ with
\begin{equation}
\label{eq:l1lifg}
\|f\|_{L^\infty(\mm)}\|g\|_{L^1(\mm)}\leq \frac{\|\rho_0\|_{L^\infty(\mm)}}c\quad\text{and}\quad\|f\|_{L^1(\mm)}\|g\|_{L^\infty(\mm)}\leq \frac{\|\rho_1\|_{L^\infty(\mm)}}c\qquad
\end{equation} and $\ggamma$ is the only transport plan which can be written as $f'\otimes g' \hR$ for $f',g':\X\to[0,\infty)$ Borel.
\end{itemize}
\end{itemize}
\end{Proposition}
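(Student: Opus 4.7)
My plan is to treat the four claims in order. For part (i-a), I would apply the direct method to the functional $\ggamma \mapsto H(\ggamma \,|\, \hR)$ on the set $\Pi(\mu_0, \mu_1)$ of transport plans. Tightness of this set is automatic from the fixing of the marginals, and the lower semicontinuity of the entropy under weak convergence is read off from \eqref{eq:entdef} with the choice $W(x,y) := B(x) + B(y)$, which is integrable against every transport plan thanks to \eqref{eq:paris}. Combined with assumption \eqref{eq:finent}, which supplies $\mu_0 \otimes \mu_1$ as an admissible competitor of finite entropy, this yields the existence of a minimizer $\ggamma$; uniqueness then follows from the strict convexity of $t \mapsto t\log t$ combined with the convexity of $\Pi(\mu_0,\mu_1)$.

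For (i-b), I would extract the product structure via a first variation. Writing $\ggamma = \rho\hR$ and taking any competitor $\ggamma'$ of finite entropy with the same marginals, convex interpolation along $(1-\varepsilon)\ggamma + \varepsilon\ggamma'$ combined with minimality at $\varepsilon = 0$ yields, after a truncation to justify exchanging derivative and integral, $\int \log\rho \,\d(\ggamma' - \ggamma) \geq 0$; swapping roles gives equality. Since differences of elements of $\Pi(\mu_0,\mu_1)$ span precisely the signed measures on $\X^2$ with vanishing marginals, this characterises $\log\rho$ as an additive splitting $u(x) + v(y)$ on $\supp(\ggamma)$. Setting $f := e^u$ and $g := e^v$ (extended by zero outside $\supp(\ggamma)$) gives $\ggamma = f \otimes g\,\hR$, and the gauge freedom $(u,v) \mapsto (u+\log c,\, v-\log c)$ accounts exactly for the stated ambiguity.

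Part (ii-a) is a direct computation: the lower bound \eqref{eq:k8} gives $\d(\mm\otimes\mm)/\d\hR \leq 1/c$ on $P_0 \times P_1$, hence $\d(\mu_0\otimes\mu_1)/\d\hR \leq \|\rho_0\|_\infty\|\rho_1\|_\infty/c$ there, while $\mu_0\otimes\mu_1$ vanishes outside $P_0 \times P_1$. Inserting this into \eqref{eq:entdef} with $W = B(x) + B(y)$ furnishes the required finite upper bound. For the quantitative bounds in (ii-b), I would disintegrate $\hR = \int \hR^x \,\d\mm(x)$ along $\pi^0$; the identity $\pi^0_*\ggamma = \mu_0$ becomes the Schr\"odinger relation $\rho_0(x) = f(x)\int g\,\d\hR^x$ $\mm$-a.e. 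Any mass of $f$ on $P_0^c$, or of $g$ on $P_1^c$, can be removed without affecting $\ggamma$ (the corresponding marginal being zero there), so I may assume $f \equiv 0$ off $P_0$ and $g \equiv 0$ off $P_1$. Then \eqref{eq:k8} disintegrates to $\hR^x \geq c\,\mm$ on $P_1$ for $\mm$-a.e.\ $x \in P_0$, which gives $\int g \,\d\hR^x \geq c\|g\|_{L^1(\mm)}$ and hence $f(x) \leq \|\rho_0\|_\infty/(c\|g\|_{L^1(\mm)})$; the estimate for $g$ is symmetric.

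Finally, for the uniqueness of the product-form representation, given any other such plan $\ggamma' = f' \otimes g' \hR \in \Pi(\mu_0,\mu_1)$ and any competitor $\ggamma''$, the identity
\[
H(\ggamma''\,|\,\hR) = H(\ggamma'\,|\,\hR) + H(\ggamma''\,|\,\ggamma') + \int \bigl(\log f'(x) + \log g'(y)\bigr)\,\d(\ggamma'' - \ggamma')
\]
holds (by direct manipulation of the definition), and the last integral vanishes because $\ggamma''$ and $\ggamma'$ share the marginals, showing that $\ggamma'$ is a minimizer of $H(\cdot\,|\,\hR)$ and hence coincides with $\ggamma$ by (i-a). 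The most delicate point is (i-b): one has to handle the fact that $\rho$ may vanish or be unbounded in order to justify differentiation under the integral, and then deduce from a measurable splitting of $\log\rho$ on the support a genuinely Borel pair $(f,g)$ on the whole of $\X$.
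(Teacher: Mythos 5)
Your treatment of (i-a), (ii-a) and the quantitative bounds in (ii-b) follows the paper's route essentially verbatim (direct method with $W=B\oplus B$ for lower semicontinuity, strict convexity for uniqueness, the pointwise comparison $\d(\mu_0\otimes\mu_1)/\d\hR\leq\|\rho_0\|_\infty\|\rho_1\|_\infty/c$ on $P_0\times P_1$, and disintegration of $\hR$ along $\pi^0$). Your uniqueness argument for the product-form representation via the Pythagorean decomposition of the entropy is a mild variant of the paper's one-sided derivative computation; both hinge on the same integrability fact, namely $\log f'\circ\pi^0,\log g'\circ\pi^1\in L^1(\sigma)$, which follows from the $L^\infty$ upper bounds on $f',g'$ together with $\int\varphi\oplus\psi\,\d\sigma=H(\sigma\,|\,\hR)>-\infty$; you should make this explicit, since otherwise the splitting of $\int(\log f'\oplus\log g')\,\d(\ggamma''-\ggamma')$ into two marginal integrals is not justified.

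The genuine gap is in (i-b), and it is not where you locate it. The step ``orthogonality of $\log\rho$ to all differences of transport plans characterises $\log\rho$ as an additive splitting $u(x)+v(y)$'' is precisely the hard content of the proposition, not a consequence of a spanning statement. What the Euler equation actually gives is that $\log p$ (with $p=\d\ggamma/\d\hR$) annihilates the space $W$ of \emph{bounded} $h$ with $\pi^0_*(h\ggamma)=\pi^1_*(h\ggamma)=0$; to conclude $\log p\in V:=\{\varphi\oplus\psi\}$ one needs the duality ${}^\perp W\subset V$ in $L^1(\ggamma)$, which by Hahn--Banach reduces to two nontrivial facts: that $V$ is \emph{closed} in $L^1(\ggamma)$ and that $V^\perp\subset W$. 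The closedness of $V$ in turn rests on first proving that $p>0$ $\mm\otimes\mm$-a.e.\ on $P_0\times P_1$ (the paper's claim \eqref{eq:denspos}, obtained by perturbing $\ggamma$ towards $\mu_0\otimes\mu_1$ and deriving a contradiction via monotone convergence): only then is $(\mm\otimes\mm)\restr{P_0\times P_1}\ll\ggamma$, so that the four-point additivity condition \eqref{eq:kyoto2} characterising $V$ is stable under $L^1(\ggamma)$-convergence. Without this positivity the conclusion can genuinely fail --- if $\ggamma$ charged only a thin subset of $P_0\times P_1$ there would be too few admissible perturbations $h$ for the Euler equation to force additivity. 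Your proposal never establishes the positivity of $p$ and treats the passage from the Euler equation to the splitting as bookkeeping; the issues you do flag (differentiation under the integral, Borel selection of $u,v$) are real but secondary.
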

\begin{proof}

\noindent{\bf{(i-a)}} Existence follows by the direct method of calculus of variations: the class of transport plans is not empty, narrowly compact (see e.g.\ \cite{AmbrosioGigliSavare08}) and $H(\,\cdot\,|\,\hR)$ is well defined therein; indeed by assumption $\int W\d\sigma < +\infty$ with $W(x,y) := B(x) + B(y)$ for all transport plan $\sigma$. Moreover by \eqref{eq:entdef} we have that
\[
H(\sigma\,|\,\hR) = H(\sigma\,|\,\hR_W) - \int B\,\d\mu_0 - \int B\,\d\mu_1 - \log z_W,
\]
so that $H(\,\cdot\,|\,\hR)$ is narrowly lower semicontinuous on the class of transport plans.

Since $H(\,\cdot\,|\,\hR)$ is strictly convex, uniqueness is equivalent to the existence of a $\ggamma \in \adm(\mu_0,\mu_1)$ with finite entropy w.r.t.\ $\hR$ and by \eqref{eq:finent} we conclude.

\medskip

\noindent{\bf{(i-b)}} The uniqueness part of the claim is trivial, so we concentrate on existence. Finiteness of the entropy in particular grants that $\ggamma\ll \hR$. Put $p:=\frac{\d\sggamma}{\d \hR}$ and let $P_0:=\{\rho_0>0\}$, $P_1:=\{\rho_1>0\}$. We start claiming that 
\begin{equation}
\label{eq:denspos}
p>0\quad \mm\otimes\mm\aeon P_0\times P_1.
\end{equation}
Notice that since $\mm\otimes\mm$ and $\hR$ are mutually absolutely continuous, the claim makes sense and arguing by contradiction we shall assume that $\hR(Z)>0$, where  $Z:=(P_0\times P_1)\cap \{p=0\}$.

Let $s:=\frac{\d(\mu_0\otimes\mu_1)}{\d \hR }$ and  for $\lambda\in(0,1)$ let us define $\Phi(\lambda):\X^2\to\R$ by
\[
\Phi(\lambda) := \frac{u(p + \lambda(s-p)) - u(p)}{\lambda},\qquad\text{where}\quad u(z):=z\log(z).
\]
The convexity of $u$ grants that $\Phi(\lambda)\leq u(s) - u(p)\in L^1(\X^2,\hR)$ (recall \eqref{eq:finent}) and   that  $\Phi(\lambda)$ is monotone decreasing as $\lambda \downarrow 0$. Moreover, on  $Z$ we have $\Phi(\lambda) \downarrow -\infty$ $\hR$-a.e.\ as $\lambda \downarrow 0$, thus  the monotone convergence theorem ensures that
\[
\lim_{\lambda \downarrow 0} \frac{H(\ggamma + \lambda(\mu_0\otimes\mu_1 - \ggamma) \,|\, \hR) - H(\ggamma \,|\, \hR)}{\lambda} = -\infty.
\]
Since $\ggamma + \lambda(\mu_0\otimes\mu_1 - \ggamma)$ is a transport plan from $\mu_0$ to $\mu_1$ for $\lambda\in(0,1)$, this is in contradiction with the minimality of $\ggamma$ which grants that the left-hand side is non-negative, hence $Z$ is $\hR$-negligible, as desired.

Let us now pick $h\in L^\infty(\X^2,\ggamma)$  such that $\pi^0_*( h\ggamma)=\pi^1_*(h\ggamma)=0$ and  $\eps\in(0, \|h\|^{-1}_{L^\infty(\sggamma)})$. Then $(1+\eps h)\ggamma$ is a transport plan from $\mu_0$ to $\mu_1$ and noticing that $hp$ is well defined $\hR$-a.e.\  we have
\[
\begin{split}
\|u((1+\eps h)p)\|_{L^1(\hR)}& =  \int| (1 + \varepsilon h)p\log((1 + \varepsilon h)p)|\,\d \hR  \\ 
& \leq  \int (1 + \varepsilon h)p\,|\log p|\,\d \hR + \int (1 + \varepsilon h)\,|\log(1 + \varepsilon h)|\,\d \ggamma \\ 
& \leq  \|1 + \varepsilon h\|_{L^{\infty}(\sggamma)} \|p\log p\|_{L^1(\hR)} + \|(1 + \varepsilon h)\log(1 + \varepsilon h)\|_{L^{\infty}(\sggamma)} ,
\end{split}
\]
so that $u((1+\eps h)p)\in L^1(\hR)$. Then again by the monotone convergence theorem we get
\[
\lim_{\eps\downarrow 0}\frac{H((1 + \varepsilon h)\ggamma \,|\, \hR)-H(\ggamma \,|\, \hR)}{\eps}=\int \lim_{{\eps\downarrow 0}}\frac{u((1+\eps h)p)-u(p)}\eps\,\d \hR=\int hp(\log p+1)\,\d \hR.
\] 
By the minimality of $\ggamma$ we know that the left-hand side in this last identity is non-negative, thus after running the same computation with $-h$ in place of $h$ and noticing that the choice of $h$ grants that $\int hp\,\d \hR=\int h\,\d\ggamma=0$ we obtain
\begin{equation}
\label{eq:kyoto1}
\int hp\log(p)\,\d \hR=0\qquad\forall h\in L^\infty(\ggamma)\ \text{such that}\ \pi^0_*( h\ggamma)=\pi^1_*(h\ggamma)=0.
\end{equation}
The rest of the argument is better understood by introducing the spaces $V,{}^\perp W\subset L^1(\ggamma)$ and $V^\perp,W\subset L^\infty(\ggamma)$ as follows
\[
\begin{split}
V&:=\big\{f\in L^1(\ggamma)\ :\ f=\varphi\oplus \psi\text{ for some }\varphi\in L^0(\mm\restr{P_0}),\ \psi\in L^0(\mm\restr{P_1})\big\},\\
W&:=\big\{h\in L^\infty(\ggamma)\ :\ \pi^0_*( h\ggamma)=\pi^1_*(h\ggamma)=0\big\},\\
V^\perp&:=\big\{h\in L^\infty(\ggamma)\ :\ \int fh\,\d\ggamma=0\ \forall f\in V\big\},\\
{}^\perp W&:=\big\{f\in L^1(\ggamma):\int fh\,\d\ggamma=0\ \forall h\in W\big\},
\end{split}
\]
where here and in the following the function $\varphi\oplus\psi$ is defined as $\varphi\oplus\psi(x,y):=\varphi(x)+\psi(y)$. Notice that the Euler equation \eqref{eq:kyoto1} reads as $\log(p)\in {}^\perp W$ and our thesis as $\log(p)\in V$; hence to conclude it is sufficient to show that ${}^\perp W\subset V$.

\noindent{\underline{Claim 1}: $V$ is a closed subspace of $L^1(\ggamma)$}.

We start claiming that  $f\in V$ if and only if $f\in L^1(\ggamma)$ and
\begin{equation}
\label{eq:kyoto2}
f(x,y)+f(x',y')=f(x,y')+f(x',y)\quad \mm\otimes\mm\otimes\mm\otimes\mm\ae\ (x,x',y,y')\in P_0^2\times P_1^2.
\end{equation}
Indeed the `only if' follows trivially from $\ggamma\ll\mm\otimes\mm$ and the definition of $V$. For the `if' we apply Fubini's theorem to get the existence of $x'\in P_0$ and $y'\in P_1$ such that
\[
f(x,y)+f(x',y')=f(x,y')+f(x',y)\quad \mm\otimes\mm\ae\ x,y\in P_0\times P_1.
\]
Thus $f = f(\cdot,y')\oplus(f(x',\cdot)-f(x',y'))$, as desired. 

Now notice that since \eqref{eq:denspos} grants that $(\mm\otimes\mm)\restr{P_0\times P_1} \ll \ggamma$, we see that the condition \eqref{eq:kyoto2} is closed w.r.t.\ $L^1(\ggamma)$-convergence.

\noindent{\underline{Claim 2}: $V^\perp\subset W$}.

Let $h\in L^\infty(\ggamma)\setminus W$, so that either the first or second marginal of $h\ggamma$ is non-zero. Say the first. Thus since $\pi^0_*\ggamma=\mu_0$ we have $\pi^0_*(h\ggamma)=f_0\mu_0$ for some $f_0\in L^\infty(\mu_0)\setminus\{0\}$. Then the function $f:=f_0\oplus 0=f_0\circ\pi^0$ belongs to $V$ and we have
\[
\int hf\,\d\ggamma=\int f_0\circ\pi^0\,\d(h\ggamma)=\int f_0\,\d\pi^0_*(h\ggamma)=\int f_0^2\,\d\mu_0>0,
\]
so that $h\notin V^\perp$.

\noindent{\underline{Claim 3}: ${}^\perp W\subset V$}.

Let $f\in L^1(\ggamma)\setminus V$, use the fact that  $V$ is closed and  the Hahn-Banach theorem to find $h\in L^\infty(\ggamma)\sim L^1(\ggamma)^*$ such that $\int fh\,\d\ggamma\neq 0$ and $\int \tilde fh\,\d\ggamma=0$ for every $\tilde f\in V$. Thus $h\in V^\perp$ and hence by the previous step $h\in W$. The fact that $\int fh\,\d\ggamma\neq 0$ shows that $f\notin {}^\perp W$, as desired.

\medskip

\noindent{\bf (ii-a)} The bounds \eqref{eq:paris} and \eqref{eq:k8} give that $\int e^{-B(x)-B(y)}\,\d(\mm\otimes\mm)\restr{P_0\times P_1}<\infty$, which together with \eqref{eq:paris} again grants that $
H(\mu_0\otimes\mu_1 \,|\, (\mm\otimes\mm)\restr{P_0\times P_1})$ is well defined. The assumption that $\rho_0,\rho_1\in L^\infty(\mm)$ then ensures that $
H(\mu_0\otimes\mu_1 \,|\, (\mm\otimes\mm)\restr{P_0\times P_1})$ is finite, hence the claim follows by direct computations:
\[
\begin{split}
H(\mu_0\otimes\mu_1 \,|\, \hR) & =  
H\big(\mu_0\otimes\mu_1 \,|\, (\mm\otimes\mm)\restr{P_0\times P_1}\big) + \int \log\bigg(\frac{\d\big((\mm \otimes \mm)\restr{P_0\times P_1}\big)}{\d \hR}\bigg)\rho_0\otimes \rho_1\d (\mm \otimes \mm)  \\ 
& \leq H\big(\mu_0\otimes\mu_1 \,|\, (\mm\otimes\mm)\restr{P_0\times P_1}\big) - \log(c)<\infty.
\end{split}
\]
\noindent{\bf (ii-b)} 
Then let $\sigma$ be a transport plan from $\mu_0$ to $\mu_1$ such that $\sigma = f' \otimes g' \hR$ for suitable non-negative Borel functions $f',g'$. We claim that in this case it holds $f',g'\in L^\infty(\mm)$, leading in particular to the claim in the statement about $\ggamma$.

By disintegrating $\hR$ w.r.t.\ $\pi^0$, from $\pi^0_*(f'\otimes g'\hR)=\rho_0\mm$ and $\hR_0 = \mm$ we get that
\begin{equation}\label{eq:44}
f'(x)\int g'(y)\,\d \hR_x(y) = \rho_0(x) < +\infty, \quad \textrm{for }\mm\ae\ x 
\end{equation}
whence $g' \in L^1(\hR_x)$ for $\mm$-a.e.\ $x$. Notice then that the sets where $f'$ and $g'$ are positive must coincide with $P_0$ and $P_1$ respectively, up to $\mm$-negligible sets, so that nothing changes in \eqref{eq:44} if we restrict the integral to $P_1$. Moreover, since from \eqref{eq:k8} we have that $\hR_x\geq c\mm$ in $P_1$ for $\mm$-a.e.\ $x \in P_0$, we see that $g'\in L^1(\mm)$ with 
\[
c \|g'\|_{L^1(\mm)}\leq \int g'(y)\,\d \hR_x( y)\quad \textrm{for }\mm\ae\ x \in P_0
\]
and thus \eqref{eq:44} yields
\[
f' \leq \frac{\|\rho_0\|_{L^{\infty}(\mm)}}{c\|g'\|_{L^1(\mm)}}, \quad \mm\ae\textrm{ in }P_0,
\]
which is the first in \eqref{eq:l1lifg}, because in $\X \setminus P_0$ we already know that $f'$ vanishes $\mm$-a.e. By interchanging the roles of $f'$ and $g'$, the same conclusion follows for $g'$.

For the uniqueness of $\ggamma$, put $\varphi:=\log f'$, $\psi:=\log g'$ and notice that, by what we have just proved, they are bounded from above. On the other hand
\[
\int \varphi\oplus\psi \,\d\sigma = H(\sigma\,|\,\hR) > -\infty
\]
because, as already remarked in the proof of $(i)$, \eqref{eq:paris} implies that $H(\cdot\,|\,\hR)$ is well defined on $\adm(\mu_0,\mu_1)$. From these two facts we infer that
\begin{equation}
\label{eq:kyoto3}
\varphi\circ\pi^0,\psi\circ\pi^1\in L^1(\sigma).
\end{equation}
Putting for brevity $p':=f'\otimes g'$ and arguing as before to justify the passage to the limit inside the integral we get
\[
\begin{split}
\frac{\d}{\d\lambda} H\big((1-\lambda)\sigma+\lambda\ggamma\,|\,\hR\big)\restr{\lambda=0^+}&=\int(p-p')\log(p')\,\d \hR \\
&=\int\varphi\oplus\psi\,\d(\ggamma-\sigma)\\
(\text{by }\eqref{eq:kyoto3})\qquad&=\int\varphi\,\d\pi^0_*(\ggamma-\sigma)+\int\psi\,\d\pi^1_*(\ggamma-\sigma)\\
(\text{because }\sigma\text{ and }\ggamma\text{ have the same marginals})\qquad&=0.
\end{split}
\]
This equality and the convexity of $H(\,\cdot\,|\,\hR)$ yield $H(\sigma\,|\,\hR)\leq H(\ggamma\,|\,\hR)$ and being $\ggamma$ the unique minimum of $H(\,\cdot\,|\,\hR)$ among transport plans from $\mu_0$ to $\mu_1$, we conclude that $\sigma=\ggamma$.
\end{proof}

The above result is valid in the very general framework of Polish spaces. We shall now restate it in the form we shall need in the context of $\RCD$ spaces.

Recall that on a finite-dimensional $\RCD^*(K,N)$ space $(\X,\sfd,\mm)$, $\mm$ satisfies the volume growth condition \eqref{eq:volgrowth}, so that we can choose $W = \sfd^2(\cdot,\bar{x})$ for any $\bar{x} \in \X$ in \eqref{eq:entdef}. Setting $z := \int e^{-\sfd^2(\cdot,\bar{x})}\d\mm$ and
\[
\tilde{\mm} := z^{-1}e^{-\sfd^2(\cdot,\bar{x})}\mm,
\]
the definition \eqref{eq:entdef}  becomes
\begin{equation}\label{eq:entdef2}
H(\mu\,|\,\mm) = H(\mu\,|\,\tilde{\mm}) - \int \sfd^2(\cdot,\bar{x})\d\mu - \log z
\end{equation}
and this shows that $H(\,\cdot\,|\,\mm)$ is well defined on $\probt\X$ and $W_2$-lower semincontinuous. Let us also remind that on $\RCD$ spaces there is a well defined heat kernel $\hr_\eps[x](y)$ (see \eqref{eq:hk} and \eqref{eq:rapprform}). The choice of working with $\hr_{\eps/2}$ in the following statement is convenient for the computations we will do later on.

\begin{Theorem}\label{thm:5}
Let $(\X,\sfd,\mm)$ be an $\RCD^*(K,N)$ space with $K \in \mathbb{R}$ and $N \in [1,\infty)$ endowed with a non-negative Radon measure $\mm$. For $\eps>0$ define $\hR^{\eps/2}\in\prob{\X^2}$ as
\[
\d \hR^{\eps/2}(x,y):= \hr_{\eps/2}[x](y)\,\d\mm(x)\,\d\mm(y).
\]
Also, let $\mu_0,\mu_1 \in \prob\X$ be Borel probability measures with bounded densities and supports.

Then there exist and are uniquely $\mm$-a.e.\ determined (up to multiplicative constants) two Borel non-negative functions $f^\eps, g^\eps : \X \to [0,\infty)$ such that $f^\eps\otimes g^\eps\hR^{\eps/2}$ is a transport plan from $\mu_0$ to $\mu_1$. In addition, $f^\eps,g^\eps$ belong to $L^{\infty}(\X)$ and their supports are included in $\supp(\mu_0)$ and $\supp(\mu_1)$ respectively.
\end{Theorem}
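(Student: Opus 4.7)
The plan is to reduce Theorem \ref{thm:5} to a direct application of Proposition \ref{pro:2} with the choice $\hR := \hR^{\eps/2}$. The three hypotheses to verify are: the marginal conditions $\pi^0_*\hR = \pi^1_*\hR = \mm$ together with $\mm\otimes\mm \ll \hR \ll \mm\otimes\mm$, the existence of a function $B$ satisfying \eqref{eq:paris}, and the lower bound \eqref{eq:k8} on $P_0 \times P_1 = \supp(\mu_0)\times \supp(\mu_1)$.

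First I would check the marginal conditions. Since the heat kernel $\hr_{\eps/2}[x](y)$ is a probability density in $y$ for every $x$, Fubini gives $\pi^0_*\hR^{\eps/2} = \mm$. By symmetry of the heat kernel in $(x,y)$ (which holds on $\RCD$ spaces), also $\pi^1_*\hR^{\eps/2} = \mm$. Mutual absolute continuity with $\mm\otimes\mm$ follows from the strict positivity $\hr_{\eps/2}[x](y) > 0$ for $\mm\otimes\mm$-a.e.\ $(x,y)$, which is a well-known consequence of the irreducibility of the heat flow on $\RCD^*(K,N)$ spaces.

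Next I would produce $B$. On an $\RCD^*(K,N)$ space the measure $\mm$ has exponential volume growth (the bound \eqref{eq:volgrowth} recalled just before the theorem), so choosing $B(x) := \alpha\,\sfd^2(x,\bar x)$ with $\bar x \in \X$ and $\alpha>0$ small enough makes $\int_\X e^{-B}\,\d\mm$ finite, whence also $\int_{\X^2} e^{-B(x)-B(y)}\,\d\hR^{\eps/2} \le \int e^{-B}\d\mm \cdot \int e^{-B}\d\mm < \infty$ using the marginal property. Since $\mu_0,\mu_1$ have bounded supports, $\int B\,\d\mu_i < \infty$ trivially. Finally, the lower bound \eqref{eq:k8} comes from the continuity and strict positivity of $(x,y)\mapsto\hr_{\eps/2}[x](y)$: restricted to the compact set $\supp(\mu_0)\times\supp(\mu_1)$ this function attains a positive minimum $c = c(\eps,\mu_0,\mu_1) > 0$, so $\hR^{\eps/2}\ge c\,\mm\otimes\mm$ there.

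With all hypotheses verified, parts $(i)$ and $(ii)$ of Proposition \ref{pro:2} yield existence of Borel functions $f^\eps,g^\eps \ge 0$ with $f^\eps\otimes g^\eps \hR^{\eps/2}$ a transport plan from $\mu_0$ to $\mu_1$, uniqueness up to the trivial scaling $(f,g)\mapsto(cf,g/c)$, the $L^1\cap L^\infty$ bounds \eqref{eq:l1lifg}, and the fact that $f^\eps\otimes g^\eps\hR^{\eps/2}$ is the only transport plan of product form. The support statement is immediate: because $\pi^0_*(f^\eps\otimes g^\eps\hR^{\eps/2}) = \mu_0$ has support contained in $\supp(\mu_0)$, we must have $f^\eps = 0$ $\mm$-a.e.\ outside $\supp(\mu_0)$ (otherwise, using $g^\eps > 0$ somewhere and the strict positivity of the heat kernel, $f^\eps\,\h_{\eps/2}g^\eps > 0$ on a set of positive $\mm$-measure outside $\supp(\mu_0)$, contradicting the marginal). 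Symmetrically for $g^\eps$. The only delicate point is the verification of the lower bound on the heat kernel over $\supp(\mu_0)\times\supp(\mu_1)$, which relies on the joint continuity of the heat kernel on $\RCD^*(K,N)$ spaces; all the remaining ingredients are either standard or already encapsulated in Proposition \ref{pro:2}.
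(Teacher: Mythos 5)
Your proof is correct and follows essentially the same route as the paper: reduce to Proposition~\ref{pro:2} by checking its three hypotheses with $\hR:=\hR^{\eps/2}$. The only substantive difference is how you verify the lower bound~\eqref{eq:k8} on $P_0\times P_1$: you invoke joint continuity of $(x,y)\mapsto\hr_{\eps/2}[x](y)$ together with compactness of $\supp(\mu_0)\times\supp(\mu_1)$ (which requires properness of the space and Hölder continuity of the heat kernel, both true in the $\RCD^*(K,N)$ setting but not recalled in the paper), whereas the paper reads off the two-sided bound $c_\eps\,\mm\otimes\mm\le\hR^{\eps/2}\le C_\eps\,\mm\otimes\mm$ directly from the Gaussian estimates~\eqref{eq:gaussest}, which also immediately take care of the mutual absolute continuity $\mm\otimes\mm\ll\hR^{\eps/2}\ll\mm\otimes\mm$. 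Your treatment of the support claim is a spelled-out version of what the paper leaves implicit and is fine. One small inaccuracy: the inequality $\int_{\X^2}e^{-B\oplus B}\,\d\hR^{\eps/2}\le\big(\int e^{-B}\d\mm\big)^2$ does \emph{not} follow from the marginal property --- $\hR^{\eps/2}$ is not a product measure, so the integral does not factorize. The bound you actually need, and the one the paper uses, is $\int_{\X^2}e^{-B\oplus B}\,\d\hR^{\eps/2}\le\int e^{-B}\,\d\mm<\infty$, obtained by noting $e^{-B(y)}\le 1$ and integrating $e^{-B(x)}$ against the first marginal $\pi^0_*\hR^{\eps/2}=\mm$. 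This slip does not affect the validity of the argument.
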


\begin{proof}
Start observing that $\hR^{\eps/2}_0 = \hR^{\eps/2}_1 = \mm$ and if we set $B := \sfd^2(\cdot,\bar{x})$ with $\bar{x} \in \X$ arbitrarily chosen, then the second and third in \eqref{eq:paris} are authomatically satisfied; for the first one notice that
\[
\int_{\X^2} e^{-B \oplus B}\d\hR^{\eps/2} = \int\bigg(e^{-\sfd^2(y,\bar{x})}\d\hR^{\eps/2}_x(y)\bigg)e^{-\sfd^2(x,\bar{x})} \d\mm(x),
\]
$e^{-\sfd^2(y,\bar{x})} \leq 1$, $\hR^{\eps/2}_x$ is a probability measure and recall \eqref{eq:volgrowth}. Hence Proposition \ref{pro:2}, the fact that the Gaussian estimates \eqref{eq:gaussest} on the heat kernel grant that there are constants $0 < c_{\varepsilon} \leq C_{\varepsilon} < +\infty$ such that
\[
c_{\varepsilon} \mm \otimes \mm \leq \hR^{\eps/2} \leq C_{\varepsilon} \mm \otimes \mm
\]
in $P_0 \times P_1$ and the fact that $f^\eps \otimes g^\eps \hR^{\eps/2}$ is a transport plan from $\mu_0$ to $\mu_1$ provide us with the conclusion.
\end{proof}

\section{Hamilton's and Li-Yau's estimates}\label{sec:4}
Here we recall Hamilton's gradient estimate and Li-Yau Laplacian estimates for $\log\h_tu$, where $u$ is a non-negative function. 

Let us start with the following result,  which we shall frequently use later on without explicit mention:
\begin{Proposition}\label{pro:testloc}
Let $(\X,\sfd,\mm)$ be an $\RCD^*(K,N)$ space with $K \in \R$ and $N \in [1,\infty)$, $t>0$ and  $u_0 \in L^2 \cap L^{\infty}(\X)$ be non-negative and not identically zero. Put $u_t := \h_t u_0$. 

Then $\log u_t \in \testl\X$.
\end{Proposition}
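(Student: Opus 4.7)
The proof strategy has three parts: establish two-sided pointwise bounds on $u_t$, invoke the regularization of $u_t$ by the heat flow, and then transfer these properties to $\log u_t$ via a chain-rule argument.

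For the bounds, the upper estimate $0\leq u_t\leq\|u_0\|_{L^\infty}$ follows from the $L^\infty$-contractivity of the heat semigroup. The nontrivial part is a strictly positive lower bound on bounded sets: writing $u_t(x)=\int\hr_t[x](y)\,u_0(y)\,\d\mm(y)$ and using the Gaussian lower estimate on the heat kernel available in $\RCD^*(K,N)$ spaces (this is the Sturm--Strurm-type two-sided bound referenced in the paper as \eqref{eq:gaussest}), together with the hypothesis that $u_0$ is not identically zero so that $\int_B u_0\,\d\mm>0$ for some ball $B$, one obtains a constant $c_{t,\Omega}>0$ with $u_t\geq c_{t,\Omega}$ on any bounded set $\Omega$. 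Hence $u_t$ is bounded between two positive constants on every bounded set.

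Next, standard properties of the heat flow on $\RCD^*(K,N)$ spaces (used repeatedly in the preceding sections) guarantee that $u_t$ itself belongs to $\test\X$, hence in particular $u_t$ is Lipschitz with $\Delta u_t,|\nabla u_t|\in L^\infty(\X)$ and has enough regularity for iterated Laplacians. Combined with the positive lower bound of the previous step, on any bounded $\Omega$ the composition $\log u_t$ is well-defined and smooth-in-$u_t$, and the chain rule yields
\[
\nabla\log u_t=\frac{\nabla u_t}{u_t},\qquad \Delta\log u_t=\frac{\Delta u_t}{u_t}-\frac{|\nabla u_t|^2}{u_t^2},
\]
both of which are bounded on $\Omega$ by the previous two steps. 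To iterate (i.e.\ to show that $\Delta\log u_t$ itself has the Sobolev-type regularity required by $\testl\X$), the cleanest route is to apply the chain rule in $W^{1,2}_{loc}$ using that $u_t^{-1}$ and $\nabla u_t$ are locally Lipschitz, together with the fact that $\Delta u_t$ lies in the domain of the Laplacian again. Alternatively one can directly invoke Hamilton's gradient estimate (for $|\nabla\log u_t|^2$) and the Li--Yau inequality (for $\Delta\log u_t$), which are the very objects of this section and provide local control in a form tailored to the class $\testl\X$.

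The main obstacle is matching the precise definition of $\testl\X$: unlike the global test-function class, here one cannot hope that $\log u_t$ or its derivatives are globally bounded, since $u_t$ may decay at infinity and make $\log u_t\to-\infty$. Thus the argument must be genuinely local, and the delicate point is checking that the cutoff arguments needed to localize the Sobolev regularity of $\Delta\log u_t$ do not destroy the structure; this is where the lower bound of Step 1 is essential, as it prevents $1/u_t$ from blowing up on the cutoff set. Once this localization is in place, the regularity of $u_t\in\test\X$ and the boundedness of $u_t^{-1}$ on every bounded set combine to give $\log u_t\in\testl\X$.
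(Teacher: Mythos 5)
Your overall strategy matches the paper's: establish a two-sided bound on $u_t$ locally (upper bound from the maximum principle, positive lower bound from the Gaussian heat-kernel estimate and the fact that $u_0$ is not identically zero), note that $u_t\in\testi\X$ by the regularization property of the heat flow, and then transfer regularity to $\log u_t$. That much is correct.

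Where your write-up falls short is the last membership criterion in the definition of $\testl\X$, namely $\Delta\log u_t\in W^{1,2}_{loc}(\X)$. You flag this as the "delicate point" and then gesture at two possible ways out, but neither is carried to completion. The direct chain-rule route does work, but you need to actually compute
\[
\nabla\Delta\log u_t \;=\; \frac{\nabla\Delta u_t}{u_t}-\frac{\Delta u_t\,\nabla u_t}{u_t^2}-\frac{\nabla|\nabla u_t|^2}{u_t^2}+\frac{2|\nabla u_t|^2\,\nabla u_t}{u_t^3}
\]
and check, term by term, that each factor belongs to the right space locally (here $\nabla\Delta u_t\in L^2(T\X)$ and $\nabla|\nabla u_t|^2\in L^2(T\X)$ since $u_t\in\testi\X$, everything else is bounded, and $1/u_t$ is bounded on the given $\Omega$). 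The second route you propose, invoking Hamilton's estimate and Li--Yau, does not close the gap: those give only pointwise $L^\infty_{loc}$ control of $|\nabla\log u_t|$ and a one-sided lower bound on $\Delta\log u_t$, and in particular they say nothing about the $W^{1,2}_{loc}$ regularity of $\Delta\log u_t$ that the definition of $\testl\X$ requires.

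The paper's proof avoids the chain-rule bookkeeping altogether by exploiting \eqref{eq:stabcomp}: on any bounded $\Omega$ there are $0<c\leq C$ with $c\leq u_t\leq C$ on $\Omega$, so one can pick $\varphi\in C^\infty(\R)$ agreeing with $\log$ on $[c,C]$ (extended arbitrarily but smoothly outside); then $\varphi\circ u_t\in\testi\X$ by \eqref{eq:stabcomp} and coincides with $\log u_t$ $\mm$-a.e.\ on $\Omega$, which is exactly what membership in $\testl\X$ demands. This packages all the regularity in a single stability result instead of repeating the chain rule at every order, and it is the step you should add to make your argument complete.
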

\begin{proof}
By \eqref{eq:regflow} $u_t \in \testi\X$ and by \eqref{eq:gaussest} $u_t$ is locally bounded away from 0. Taking into account the fact that $\log$ is smooth on $(0,\infty)$, the conclusion easily follows from \eqref{eq:stabcomp}.
\end{proof}
We now  recall Hamilton's gradient estimate on $\RCD(K,\infty)$ spaces, which is known to be true from \cite{JiangZhang16}:
\begin{Theorem}[Hamilton's gradient estimate]\label{pro:4}
Let $(\X,\sfd,\mm)$ be an $\RCD^*(K,\infty)$ space with $K \in \R$ and let $u_0 \in L^p \cap L^{\infty}(\X)$ be positive with $p \in [1,\infty)$. Put $u_t := \h_t u_0$ for all $t > 0$. Then
\[
t|\nabla\log u_t|^2 \leq (1 + 2K^-t)\log\bigg(\frac{\|u_0\|_{L^{\infty}(\X)}}{u_t}\bigg), \quad \mm\ae
\]
for all $t > 0$, where $K^-:=\max\{0,-K\}$.
\end{Theorem}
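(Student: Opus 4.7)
\medskip

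\noindent\textit{Proof proposal.} Write $A:=\|u_0\|_{L^\infty(\X)}$ and note that $0<u_t\leq A$ $\mm$-a.e.\ by the positivity preservation and the $L^\infty$-contraction of the heat semigroup. By Proposition \ref{pro:testloc}, $P_s:=\log u_s\in\testl\X$ for every $s>0$, and it solves the Hamilton--Jacobi--Bellman equation
\[
\partial_s P_s=\Delta P_s+|\nabla P_s|^2
\]
(which can be read off from $\partial_s u_s=\Delta u_s$). The goal is to show that the auxiliary quantity
\[
W_s:=s|\nabla P_s|^2-\alpha(s)\,Q_s,\qquad Q_s:=\log(A/u_s)\geq 0,\qquad \alpha(s):=1+2K^- s,
\]
is $\leq 0$ for every $s>0$, the desired estimate being exactly $W_t\leq 0$.

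The plan is to prove that $W_s$ is a subsolution of a drift-diffusion equation of the form $(\partial_s-\Delta)W_s\leq 2\langle\nabla P_s,\nabla W_s\rangle$ and then conclude via a parabolic comparison argument. To derive the differential inequality I would invoke the Bochner inequality on $\RCD(K,\infty)$ spaces, namely $\tfrac12\Delta|\nabla P_s|^2\geq\langle\nabla P_s,\nabla\Delta P_s\rangle+K|\nabla P_s|^2$ (in the distributional sense, which is by now standard in this setting). Differentiating $|\nabla P_s|^2$ in $s$ and combining with Bochner gives
\[
(\partial_s-\Delta)|\nabla P_s|^2\leq -2K|\nabla P_s|^2+2\la\nabla P_s,\nabla|\nabla P_s|^2\ra,
\]
while the equation for $Q_s=\log A-P_s$ yields $(\partial_s-\Delta)Q_s=-|\nabla P_s|^2$. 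Assembling the two and using the identity $2s\la\nabla P_s,\nabla|\nabla P_s|^2\ra=2\la\nabla P_s,\nabla W_s\ra-2\alpha(s)|\nabla P_s|^2$ leads, after cancellation, to
\[
(\partial_s-\Delta)W_s-2\la\nabla P_s,\nabla W_s\ra\ \leq\ -2(K+K^-)\,s\,|\nabla P_s|^2-2K^-\,Q_s.
\]
Since $K+K^-\geq 0$ (with equality when $K<0$) and $Q_s\geq 0$ with $K^-\geq 0$, the right-hand side is non-positive in both cases $K\geq 0$ and $K<0$; this is the whole point of the choice $\alpha(s)=1+2K^- s$.

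The remaining step is to promote the differential inequality into the pointwise bound $W_s\leq 0$. The initial condition $\lim_{s\downarrow0}W_s\leq 0$ a.e. is immediate from $W_0=-Q_0\leq 0$, so a parabolic maximum principle for subsolutions of $\partial_s-\Delta-2\la\nabla P_s,\nabla\cdot\ra\leq 0$ would finish the argument. This comparison step is the technically delicate part, and constitutes the main obstacle in the non-smooth framework: one cannot directly appeal to a pointwise maximum principle, but must instead test against a non-negative dual evolution. Concretely, for non-negative $\varphi\in L^1\cap L^\infty(\X)$ one considers the weighted quantity $\Phi(s):=\int \h_{t-s}\varphi\cdot (W_s)_+\,\d\mm$ (or a regularized version obtained by composing with a smooth convex approximation of $(\cdot)_+$), and shows $\Phi'(s)\leq 0$ by integrating the subsolution inequality against $\h_{t-s}\varphi$ and exploiting that the drift term $2\nabla P_s$ can be absorbed into $\nabla(\h_{t-s}\varphi)$ via the chain rule; arbitrariness of $\varphi$ then gives $(W_t)_+=0$. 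The needed integrability and chain-rule manipulations are precisely the type of calculations formalized in the $\RCD$ setting by Jiang--Zhang \cite{JiangZhang16}, to whom the statement is attributed; apart from this comparison step, the argument is the formal heat-equation computation sketched above.
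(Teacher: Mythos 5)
The paper does not reprove this statement: the displayed proof is a one\nobreakdash-line citation to Jiang--Zhang \cite{JiangZhang16}, together with the observation that the properness hypothesis there is never actually used, so the result holds on general $\RCD(K,\infty)$ spaces. You instead sketch the Hamilton-style computation behind it, and the formal heart of your sketch is correct: with $P_s=\log u_s$, $Q_s=\log(A/u_s)$ and $W_s=s|\nabla P_s|^2-(1+2K^-s)Q_s$, the derivation of
\[
(\partial_s-\Delta)W_s-2\langle\nabla P_s,\nabla W_s\rangle\ \leq\ -2(K+K^-)\,s\,|\nabla P_s|^2-2K^-\,Q_s\ \leq\ 0
\]
from Bochner and from $(\partial_s-\Delta)P_s=|\nabla P_s|^2$, $(\partial_s-\Delta)Q_s=-|\nabla P_s|^2$ is exactly right, and $\alpha(s)=1+2K^-s$ is precisely the coefficient that makes the right-hand side non-positive for both signs of $K$.

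The gap is in the comparison step, and it is not only a deferral of routine technicalities. There is first a scope problem: you invoke Proposition~\ref{pro:testloc} for $P_s\in\testl\X$, but that proposition is stated for $\RCD^*(K,N)$ with $N<\infty$ and its proof uses the Gaussian lower bound \eqref{eq:gaussest}, which is not available here (Theorem~\ref{pro:4} is on $\RCD(K,\infty)$ spaces and $u_s$ need not be locally bounded away from $0$). More seriously, your dual test $\Phi(s)=\int\h_{t-s}\varphi\,(W_s)_+\,\d\mm$ does not handle the drift: $\h_{t-s}\varphi$ is adjoint to $\partial_s-\Delta$, not to $\partial_s-\Delta-2\langle\nabla P_s,\nabla\cdot\rangle$, and the claim that $2\nabla P_s$ ``can be absorbed into $\nabla(\h_{t-s}\varphi)$ via the chain rule'' has no visible justification. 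Indeed $\nabla P_s$ is precisely the quantity the theorem bounds, so there is no a priori control to close an energy/Gr\"onwall estimate, nor is $(W_s)_+$ known to lie in $L^1\cap W^{1,2}$; and the Bochner inequality on $\RCD(K,\infty)$ is a measure-valued inequality for test functions, so the pointwise subsolution property needs justification before any comparison can be run. In the $\RCD$ setting such obstructions are typically bypassed by a Bakry--Ledoux semigroup interpolation rather than a weighted maximum principle. Since you also end by deferring the rigorous step to Jiang--Zhang, the proposal is in substance the same citation as the paper's; but the bridge you sketch between the formal Hamilton computation and the non-smooth framework is exactly where the real work lies, and as written it does not go through.
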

\begin{proof}
In  \cite{JiangZhang16} this result has been  stated for proper $\RCD(K,\infty)$ spaces; still, the assumption that bounded sets are relatively compact is never used so that the proof works in general $\RCD$ spaces. We remark that  in  \cite{JiangZhang16} the authors refer to  \cite{Gigli12}, \cite{AmbrosioGigliMondinoRajala15}, \cite{AmbrosioGigliSavare11-2} and \cite{Savare13} for the various calculus rules and that in these latter references no properness assumption is made.
\end{proof}

In the finite-dimensional case, thanks to the Gaussian estimates for the heat kernel we can easily obtain a bound independent of the $L^\infty$ norm of the initial datum:
\begin{Theorem}\label{cor:1}
Let $(\X,\sfd,\mm)$ be an $\RCD^*(K,N)$ space with $K \in \R$ and $N \in [1,\infty)$. Then there is a constant $C$ depending on $K,N$ only such that for any $u_0 \in L^1(\X)$ non-negative, not identically 0 and with bounded support the inequality
\begin{equation}\label{eq:23}
|\nabla\log (u_t)|^2 \leq C\bigg(1 + \frac{1}{t}\bigg)\bigg(1 + t + \frac{D_0^2(x)}{t}\bigg), \quad \mm\ae
\end{equation}
holds for all $t > 0$, where $u_t:=\h_t u$ and
\[
D_0(x) := \sup_{y \in \supp(u_0)} \sfd(x,y).
\]
In particular, for every $0 < \delta \leq T < \infty$ and $\bar{x} \in \X$ there is a constant $C_{\delta,T}>0$ depending on $K,N,\delta,T,\bar{x}$ and the diameter of $\supp(u_0)$ such that for every $\eps\in(0,1)$ it holds
\begin{equation}
\label{eq:ham}
\eps |\nabla\log(u_{\eps t})| \leq C_{\delta,T}\big(1 + \sfd(\cdot,\bar{x})\big) \qquad\forall t \in [\delta,T].
\end{equation}
\end{Theorem}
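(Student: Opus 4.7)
The plan is to apply Hamilton's gradient estimate (Theorem~\ref{pro:4}) to $u_t$ after decomposing $u_t=\h_{t/2}(v)$ with $v:=\h_{t/2}u_0$. The Gaussian upper bound on the heat kernel provides ultracontractivity, so $v\in L^\infty(\X)$; mass preservation gives $v\in L^1(\X)$; and the Gaussian lower bound gives $v>0$ everywhere. Hence Theorem~\ref{pro:4} applies to $v$ with running time $t/2$, yielding
\[
\tfrac{t}{2}|\nabla\log u_t|^2\ \le\ (1+K^-t)\,\log\!\Big(\frac{\|v\|_{L^\infty(\X)}}{u_t}\Big)\qquad\mm\text{-a.e.},
\]
so it suffices to show $\log(\|v\|_{L^\infty(\X)}/u_t(x))\le C(1+t+D_0^2(x)/t)$ with $C=C(K,N)$: the prefactor $2(1+K^-t)/t$ is then dominated by $C(1+1/t)$ and \eqref{eq:23} follows.

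For the numerator, using the symmetry of the heat kernel and its Gaussian upper bound one gets, for every $z\in\X$,
\[
v(z)\ \le\ C\,e^{Ct}\int\frac{u_0(y)}{\mm(B_{\sqrt{t/2}}(y))}\,\d\mm(y).
\]
For the denominator, since $\sfd(x,y)\le D_0(x)$ for every $y\in\supp(u_0)$, the Gaussian lower bound gives
\[
u_t(x)\ \ge\ \frac{1}{C}\,e^{-D_0^2(x)/(3t)-Ct}\int\frac{u_0(y)}{\mm(B_{\sqrt{t}}(y))}\,\d\mm(y).
\]
The ratio reduces to $\mm(B_{\sqrt{t}}(y))/\mm(B_{\sqrt{t/2}}(y))$, which by Bishop--Gromov in $\CD^*(K,N)$ is bounded, uniformly in $y$, by the model ratio $v_{K,N}(\sqrt{t})/v_{K,N}(\sqrt{t/2})$: this is a constant depending only on $N$ when $K\ge 0$, and grows at most like $C\,e^{C\sqrt{|K|t}}\le C'\,e^{C'(1+t)}$ when $K<0$. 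Passing to logarithms yields the desired pointwise control of the log-ratio, hence \eqref{eq:23}.

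For the consequence \eqref{eq:ham}, I substitute $\eps t$ in place of $t$ in \eqref{eq:23} and multiply by $\eps^2$: for $t\in[\delta,T]$ and $\eps\in(0,1)$ one has $\eps^2(1+1/(\eps t))\le C_{\delta,T}$ and $1+\eps t+D_0^2(x)/(\eps t)\le C_{\delta,T}'(1+D_0^2(x))$. The triangle inequality $D_0(x)\le \sfd(x,\bar x)+\sfd(\bar x,y_0)+\diam(\supp u_0)$ for a fixed $y_0\in\supp(u_0)$ then gives $D_0^2(x)\le C(1+\sfd^2(\cdot,\bar x))$ with $C$ depending on $\bar x$ and the diameter of $\supp(u_0)$, and taking square roots produces \eqref{eq:ham}.

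The only genuinely delicate point is the quantitative use of Bishop--Gromov to bound $\log(\mm(B_{\sqrt t}(y))/\mm(B_{\sqrt{t/2}}(y)))$ uniformly in $y$ by $C(1+t)$: this just fits inside the target estimate, while the remaining steps are a combination of Hamilton's inequality and pointwise Gaussian heat-kernel bounds.
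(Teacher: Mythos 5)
Your proof is correct and follows essentially the same route as the paper: split the evolution in two (you write $u_t=\h_{t/2}(\h_{t/2}u_0)$, the paper estimates $u_{2t}$ from $u_t$), apply Hamilton's estimate to the half-evolved datum, control $\|v\|_{L^\infty}/u_t(x)$ via the two-sided Gaussian heat-kernel bounds, and absorb the ratio of ball volumes at the two scales via Bishop--Gromov/doubling. One tiny slip in the deduction of \eqref{eq:ham}: the factor $1+\eps t+D_0^2(x)/(\eps t)$ is \emph{not} by itself bounded by $C(1+D_0^2(x))$ uniformly in $\eps$; you need to keep one power of $\eps$ with each factor (i.e.\ bound $\eps(1+1/(\eps t))$ and $\eps(1+\eps t+D_0^2(x)/(\eps t))$ separately), after which the product estimate is exactly as you claim.
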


\begin{proof}
Recall the representation formula \eqref{eq:rapprform}
\[
u_t(x) = \int u_0(y)r_t[y](x)\,\d\mm(y) = \int_{\supp(u_0)} u_0(y)r_t[y](x)\,\d\mm(y) \qquad\forall x\in \X
\]
and that for the transition probability densities $r_t[y](x)$ we have the Gaussian estimates \eqref{eq:gaussest}, which can be simplified as
\[
\frac{C_0}{\mm(B_{\sqrt t}(y))}\exp\Big(-\frac{\sfd^2(x,y)}{3t} - C_2 t\Big)\leq \hr_t[x](y) \leq \frac{C_1}{\mm(B_{\sqrt t}(y))}e^{C_2 t}\qquad\forall x,y\in \X,
\]
for appropriate constants $C_0,C_1,C_2$ depending only on $K,N$. Therefore, we have
\[
\begin{split}
\|u_t\|_{L^\infty}&=\sup_x u_t(x)\leq C_1e^{C_2 t}\int_{\supp(u_0)}\frac{u(y)}{\mm(B_{\sqrt t}(y))}\,\d\mm(y),\\
\inf_x u_{2t}(x)&\geq C_0e^{-2C_2 t}e^{-\frac{D_0^2(x)}{t}}\int_{\supp(u_0)}\frac{u(y)}{\mm(B_{\sqrt {2t}}(y))}\,\d\mm(y)>0.
\end{split}
\]
By the fact that $\mm$ is uniformly locally doubling we know that it holds
\[
\mm(B_{\sqrt{2t}}(y)) \leq \mm(B_{\sqrt{t}}(y))C_3 e^{C_4\sqrt{t}} \qquad\forall y \in \X,\ t>0,
\]
where $C_3,C_4$ only depend on $K,N$. As a consequence, the above yields
\[
\frac{\|u_t\|_{L^\infty}}{u_{2t}(x)}\leq C_5 e^{3C_2 t + C_4\sqrt{t} + \frac{D_0^2(x)}{t}}\qquad\forall x\in\X,\ t>0.
\]
We now apply Proposition \ref{pro:4} with $u_t$ in place of $u_0$ (notice that the assumptions are fulfilled) to get
\[
t|\nabla\log(u_{2t})|^2 \leq (1+2K^-t)\log\Big(\frac{\|u_t\|_{L^\infty}}{u_{2t}}\Big) \leq (1+2K^-t)\Big(\log C_5 + 3C_2t + C_4\sqrt{t} + \frac{D_0^2(x)}{t}\Big)
\]
$\mm$-a.e., which is (equivalent to) the bound \eqref{eq:23}. The last statement is now obvious, noticing that
\[
D_0(x) \leq D_0(\bar{x}) + \sfd(x,\bar{x})
\]
for any $\bar{x} \in \supp(u_0)$.
\end{proof}

A further result that we shall need soon is the Li-Yau inequality in the form proved by Baudoin and Garofalo (see \cite{GarofaloMondino14} for the case of finite mass and \cite{Jiang15} for the general one). 

\begin{Theorem}[Li-Yau inequality]
Let $(\X,\sfd,\mm)$ be an $\RCD^*(K,N)$ space with $K \in \R$ and $N \in [1,\infty)$ and let $u_0 \in L^p({\X})$ for some $p \in [1,\infty)$ be non-negative. Put $u_t := \h_t u_0$ for all $t > 0$. Then
\begin{equation}
\label{eq:bgineq}
|\nabla \log u_t|^2 \leq e^{-2Kt/3}\frac{\Delta u_t}{u_t} + \frac{NK}{3}\frac{e^{-4Kt/3}}{1-e^{-2Kt/3}}, \qquad \mm\ae
\end{equation}
for all $t > 0$, where $\frac{NK}{3}\frac{e^{-4Kt/3}}{1-e^{-2Kt/3}}$ is understood as $\frac{N}{2t}$ when $K=0$.
\end{Theorem}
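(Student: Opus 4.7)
The proof plan follows the semigroup/entropy approach of Baudoin--Garofalo, which derives dimensional Li--Yau estimates from the dimensional Bochner inequality encoded in $\RCD^*(K,N)$, namely
\[
\Gamma_2(f) \geq K\,|\nabla f|^2 \mm + \tfrac{1}{N}(\Delta f)^2 \mm
\]
for suitable $f$. First I would reduce to the case where $u_0 \in L^2 \cap L^\infty(\X)$ is nonnegative, not identically zero and with bounded support; then Proposition \ref{pro:testloc} guarantees that $\log u_t \in \testl\X$ for every $t>0$, which provides the calculus regularity needed for all manipulations below. The general $L^p$ case is then recovered by monotone/dominated approximation, since the right-hand side is continuous in $u_0$ once one controls the positivity of $u_t$ away from the support of $u_0$ via the Gaussian estimates.

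The core step is to fix $t>0$ and, following Baudoin--Garofalo, introduce on $s\in(0,t)$ the functional
\[
F(s) := \h_{t-s}\!\Big(a(s)\,u_s\,|\nabla\log u_s|^2 \,-\, b(s)\,u_s\,\Delta\log u_s\Big),
\]
with smooth time-dependent coefficients $a(s), b(s)$ to be chosen. Differentiating $F$ in $s$ and using $\partial_s u_s = \Delta u_s$ together with $\partial_s \h_{t-s} = -\Delta\h_{t-s}$, the Laplacian coming from the semigroup cancels the Laplacian acting on the argument, and a careful integration by parts (justified by the chain rules and test-function calculus of \cite{Gigli14}, \cite{Savare13}, \cite{AmbrosioGigliSavare11-2} for $\log u_s$) reduces $F'(s)$ to a $\Gamma_2$-type expression in $\log u_s$ weighted by $u_s$ plus explicit terms coming from $a'(s), b'(s)$. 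At this point the dimensional Bochner inequality above, applied pointwise to $f=\log u_s$, can be used to bound the $\Gamma_2$-term from below.

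The algebraic heart of the argument is the calibration of $a(s), b(s)$ so that the Bochner terms combine into a perfect square in $\Delta\log u_s$ and the leftover involves only the curvature-dimension constants $K,N$. The correct calibration, which produces the exponents $2Kt/3$ and $4Kt/3$ in the statement, makes $F'(s) \leq h(s)$ for an explicit $h(s)$ depending only on $K, N, a, b$. Integrating this differential inequality from $s=0$ to $s\to t^-$, using the contraction properties of $\h_{t-s}$ and evaluating the boundary values $F(0)$ and $\lim_{s\to t^-} F(s)$, yields after rearrangement the pointwise estimate
\[
|\nabla\log u_t|^2 \leq e^{-2Kt/3}\frac{\Delta u_t}{u_t} + \frac{NK}{3}\frac{e^{-4Kt/3}}{1-e^{-2Kt/3}}
\]
$\mm$-a.e. (with the convention $\tfrac{N}{2t}$ when $K=0$).

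The main obstacle is not algebraic — in the smooth setting the computation is classical — but analytic: the differentiation of $F(s)$, the integration by parts against $\h_{t-s}$ and the chain rule for $\Delta\log u_s$ must all be rigorously justified in the $\RCD^*(K,N)$ framework, where Hessian and Laplacian are only defined via the test-function machinery of \cite{Gigli14}. This is precisely the content of the Garofalo--Mondino \cite{GarofaloMondino14} and Jiang \cite{Jiang15} arguments, and is the reason why the statement was not immediate from the Bochner inequality alone.
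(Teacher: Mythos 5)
The paper does not actually prove this theorem: it imports it wholesale from the literature, stating only that it is ``the Li--Yau inequality in the form proved by Baudoin and Garofalo (see \cite{GarofaloMondino14} for the case of finite mass and \cite{Jiang15} for the general one).'' Your sketch is a reasonable outline of what happens inside those references --- the time-dependent functional $F(s)=\h_{t-s}\big(a(s)u_s|\nabla\log u_s|^2-b(s)u_s\Delta\log u_s\big)$, the cancellation of Laplacians under differentiation, the dimensional Bochner inequality to bound the resulting $\Gamma_2$-term, the calibration of $a,b$ to produce the $e^{-2Kt/3}$ exponents, and the density/approximation step to pass from $L^2\cap L^\infty$ with bounded support to general nonnegative $L^p$ --- so you have correctly identified the Baudoin--Garofalo mechanism. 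But be aware that carrying this out rigorously in $\RCD^*(K,N)$ is itself the content of a paper (Garofalo--Mondino, and Jiang for the non-compact case): one needs the measure-valued $\Gamma_2$ and Hessian calculus of \cite{Savare13} and \cite{Gigli14}, careful justification of the integration by parts against $\h_{t-s}$ on a possibly non-compact space, and the approximation argument to remove the $L^2\cap L^\infty$ and bounded-support hypotheses. The present paper deliberately avoids duplicating that work and simply cites it, so what you have produced is an outline of the \emph{cited} proof rather than a parallel to anything argued in this paper.
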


We restate such inequality  in the form that we shall use:
\begin{Theorem}
Let $(\X,\sfd,\mm)$ be an $\RCD^*(K,N)$ space with $K \in \R$ and $N \in [1,\infty)$. Then for every $0 < \delta \leq T < \infty$ and $\bar{x} \in \X$ there exists a constant $C_{\delta,T}>0$ depending on $K,N,\delta,T,\bar{x}$ and the diameter of $\supp(u_0)$ such that the following holds.

For any $u_0 \in L^1(\X)$ non-negative, not identically zero and with bounded support and for any $\eps\in(0,1)$ it holds
\begin{equation}
\label{eq:liyau}
\eps\Delta \log(\h_{\eps t}(u_0))\geq -C_{\delta,T}\big(1 + \sfd^2(\cdot,\bar{x})\big) \qquad\forall t \in [\delta,T].
\end{equation}
\end{Theorem}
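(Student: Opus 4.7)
The plan is to derive \eqref{eq:liyau} directly from \eqref{eq:bgineq}, rewriting the latter as a lower bound on $\Delta \log u_t$ rather than an upper bound on $|\nabla \log u_t|^2$, and then combine it with Hamilton's estimate \eqref{eq:ham} to control the `bad' terms that appear when $K<0$. The starting algebraic identity is the pointwise relation
\[
\Delta \log u_t = \frac{\Delta u_t}{u_t} - |\nabla \log u_t|^2,
\]
which is valid on the set where $u_t > 0$ (in particular $\mm$-a.e., since Gaussian bounds give $u_t > 0$). Using this to substitute for $\Delta u_t/u_t$ in \eqref{eq:bgineq} and rearranging, one obtains
\[
\Delta \log u_t \geq \bigl(e^{2Kt/3}-1\bigr)|\nabla \log u_t|^2 - \frac{NK}{3}\cdot\frac{e^{-2Kt/3}}{1-e^{-2Kt/3}}.
\]

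Next I would specialize to time $\eps t$ and multiply by $\eps$, obtaining
\[
\eps\,\Delta\log u_{\eps t} \geq \eps\bigl(e^{2K\eps t/3}-1\bigr)|\nabla\log u_{\eps t}|^2 \;-\; \eps\cdot\frac{NK}{3}\cdot\frac{e^{-2K\eps t/3}}{1-e^{-2K\eps t/3}}.
\]
The second term on the right is a pure function of $\eps t$, and after the substitution $\alpha := 2K\eps t/3$ it reads $\frac{N}{2t}\cdot\frac{\alpha\, e^{-\alpha}}{1-e^{-\alpha}}$ (with the natural convention at $K=0$ giving $N/(2t)$). Since $\eps \in (0,1)$ and $t \in [\delta,T]$, $\alpha$ lies in a fixed bounded interval depending only on $K$ and $T$, while the function $\alpha\mapsto \alpha e^{-\alpha}/(1-e^{-\alpha})$ is smooth and bounded there; combined with $t\geq \delta$ this yields a bound of the form $\le C(K,N,\delta,T)$, uniform in $\eps$.

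For the first term on the right the sign analysis splits. When $K\geq 0$, $e^{2K\eps t/3}-1\geq 0$, so the term is non-negative and can simply be dropped. When $K<0$, I would estimate
\[
\bigl|e^{2K\eps t/3}-1\bigr| \leq \tfrac{2|K|\eps t}{3} \leq \tfrac{2|K|T}{3}\,\eps,
\]
so that
\[
\eps\bigl(e^{2K\eps t/3}-1\bigr)|\nabla\log u_{\eps t}|^2 \;\geq\; -\frac{2|K|T}{3}\,\eps^2|\nabla\log u_{\eps t}|^2.
\]
At this point Hamilton's estimate \eqref{eq:ham} gives $\eps^2|\nabla\log u_{\eps t}|^2 \leq C_{\delta,T}^2\bigl(1+\sfd(\cdot,\bar x)\bigr)^2 \leq 2C_{\delta,T}^2\bigl(1+\sfd^2(\cdot,\bar x)\bigr)$ uniformly for $t\in[\delta,T]$ and $\eps\in(0,1)$. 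Absorbing every constant into a single $C_{\delta,T}$ depending on $K,N,\delta,T,\bar x$ and $\diam\supp(u_0)$ (this latter dependence is inherited from \eqref{eq:ham}), one arrives at \eqref{eq:liyau}.

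I do not expect any serious obstacle: the only subtlety is that when $K<0$ the factor $(e^{2K\eps t/3}-1)$ does vanish as $\eps\downarrow 0$, but only like $\eps$, which is precisely the rate that matches Hamilton's $1/\eps$ blow-up of $|\nabla\log u_{\eps t}|^2$ and is what forces the quadratic growth $\sfd^2(\cdot,\bar x)$ on the right-hand side of \eqref{eq:liyau}. In the case $K\geq 0$ the inequality actually holds without the $\sfd^2(\cdot,\bar x)$ term, but we state it in the unified form \eqref{eq:liyau} for later use.
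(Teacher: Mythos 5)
Your argument is correct and takes essentially the same route as the paper: rearrange the Li--Yau/Baudoin--Garofalo inequality \eqref{eq:bgineq} into a lower bound for $\Delta\log u_t$ and then control the residual $|\nabla\log u_{\varepsilon t}|^2$ term via Hamilton's gradient estimate \eqref{eq:ham}. The only differences — multiplying through by $e^{2K\varepsilon t/3}$ before estimating, and spelling out the $K\geq 0$ versus $K<0$ sign split and the $\varepsilon$-uniformity of the drift term — are cosmetic elaborations of what the paper leaves implicit.
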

\begin{proof}
Rewrite the Li-Yau inequality \eqref{eq:bgineq} as
\[
e^{-2Kt/3}\bigg(\underbrace{\frac{\Delta u_t}{u_t} - |\nabla\log u_t|^2}_{=\Delta\log u_t}\bigg) \geq (1-e^{-2Kt/3})|\nabla\log u_t|^2 - \frac{NK}{3}\frac{e^{-4Kt/3}}{1-e^{-2Kt/3}}
\]
and use Hamilton's gradient estimate  \eqref{eq:ham} to control $|\nabla\log u_t|^2$ in the right-hand side.
\end{proof}

\section{The Schr\"odinger problem: properties of the solutions}\label{sec:5}

\subsection{The setting}

Let us fix once for all the assumptions and notations which we shall use from now on.

\begin{Setting}\label{set}
$(\X,\d,\mm)$ is an $\RCD^*(K,N)$ space with $K \in \R$ and $N \in [1,\infty)$ and $\mu_0=\rho_0\mm$, $\mu_1=\rho_1\mm$ are two absolutely continuous Borel probability measures with bounded densities and supports.
 
For any $\eps>0$ we consider the couple $(f^{\varepsilon},g^{\varepsilon})$ given by Theorem \ref{thm:5} normalized in such a way that 
\begin{equation}
\label{eq:normalization}
\int\log(\h_{\frac\eps2} f^\eps)\rho_1\,\d\mm=0,
\end{equation}
then we set $\rho^\eps_0:=\rho_0$, $\rho^\eps_1:=\rho_1$, $\mu^\eps_0:=\mu_0$, $\mu^\eps_1:=\mu_1$ and 
\[
\left\{\begin{array}{l}
f^{\varepsilon}_t := \h_{\varepsilon t/2}f^{\varepsilon} \\
\\
\varphi_t^{\varepsilon} := \varepsilon\log f_t^{\varepsilon}\\
\\
\text{for }t\in(0,1]
\end{array}
\right.\qquad\qquad
\left\{\begin{array}{l}
g^{\varepsilon}_t := \h_{\varepsilon(1-t)/2}g^{\varepsilon} \\
\\
\psi_t^{\varepsilon} := \varepsilon\log g_t^{\varepsilon}\\
\\
\text{for }t\in[0,1)
\end{array}
\right.\qquad\qquad
\left\{\begin{array}{l}
\rho^{\varepsilon}_t := f^{\varepsilon}_t g^{\varepsilon}_t \\
\\
\mu^{\varepsilon}_t := \rho^{\varepsilon}_t\mm\\
\\
\vartheta^{\varepsilon}_t := \frac12({\psi^{\varepsilon}_t - \varphi^{\varepsilon}_t})\\
\\
\text{for }t\in(0,1)
\end{array}
\right.
\]
\end{Setting}

In order to investigate the time behaviour of the functions just defined, let us introduce the weighted $L^2$ and $W^{1,2}$ spaces. The weight we will always consider is $e^{-V}$ with $V = M\sfd^2(\cdot,\bar{x})$; because of \eqref{eq:volgrowth}, $e^{-V}\mm$ has finite mass for every $M>0$. For $L^2(\X,e^{-V}\mm)$ no comments are required. The weighted Sobolev space is denoted and defined as
\[
W^{1,2}(\X,e^{-V}\mm) := \{ f \in W^{1,2}_{loc}(\X) \,:\, f,|Df| \in L^2(\X,e^{-V}\mm) \}
\]
where $|Df|$ is the local minimal weak upper gradient already introduced. Since $V$ is locally bounded, $W^{1,2}(\X,e^{-V}\mm)$ turns out to coincide with the Sobolev space built over the metric measure space $(\X,\sfd,e^{-V}\mm)$, thus motivating the choice of the notation. The advantage of dealing with $L^2(\X,e^{-V}\mm)$ and $W^{1,2}(\X,e^{-V}\mm)$ is the fact that they are Hilbert spaces, unlike $L^2_{loc}(\X)$ and $W^{1,2}_{loc}(\X)$.

\medskip

After this premise, let us begin with a couple of quantitative estimates for $f_t^\eps$, $g_t^\eps$ and $\rho_t^\eps$.

\begin{Lemma}
With the same assumptions and notation as in Setting \ref{set} and defining
\begin{equation}\label{eq:vV}
v_s := \inf_{y \in \supp(\rho_0) \cup \supp(\rho_1)} \mm(B_{\sqrt{s}}(y)) \qquad V_s := \sup_{y \in \supp(\rho_0) \cup \supp(\rho_1)} \mm(B_{\sqrt{s}}(y)),
\end{equation}
for any $\bar{x} \in \X$ there exist positive constants $C_1,...,C_{9}$ depending on $K$, $N$, $\rho_0$, $\rho_1$, $\bar{x}$ only such that the following bounds hold:
\begin{itemize}
\item[i)] For any $\eps > 0$ and $t \in (0,1]$ we have
\begin{equation}\label{eq:weightbound}
\!\!\!\!\!\frac{C_1}{V_{\eps t/2}}\|f^\eps\|_{L^1(\mm)}\exp\Big(-\frac{C_2\sfd^2(\cdot,\bar{x})}{\eps t} -\frac{ C_3}{\eps t}\Big) \leq f^\eps_t \leq \frac{C_4}{v_{\eps t/2}}\|f^\eps\|_{L^1(\mm)}\exp\Big(-\frac{C_5\sfd^2(\cdot,\bar{x})}{\eps t} + \frac{ C_6}{\eps t}\Big)
\end{equation}
and analogously for $g_t^\eps$ and $t\in[0,1)$.
\item[ii)] For any $\eps\in(0,1)$ and $t\in[0,1]$ we have
\begin{equation}
\label{eq:2}
\rho^\eps_t \leq \frac{C_7}{\eps^{N/2} } \exp\bigg(\frac{C_8-C_9\sfd^2(\cdot,\bar{x})}{\eps}\bigg).
\end{equation}
\end{itemize}
\end{Lemma}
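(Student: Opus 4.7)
For part (i), I would start from the representation
\[
f^\eps_t(x) \;=\; \int_{\supp(\rho_0)} f^\eps(y)\,\hr_{\eps t/2}[x](y)\,\d\mm(y),
\]
and feed in the Gaussian upper/lower bounds on the heat kernel. Fixing $R_0>0$ with $\supp(\rho_0)\cup\supp(\rho_1)\subset B_{R_0}(\bar x)$, one has, for every $y\in\supp(\rho_0)$, the elementary inequalities $\tfrac12\sfd^2(x,\bar x)-R_0^2\leq\sfd^2(x,y)\leq 2\sfd^2(x,\bar x)+2R_0^2$. This lets the $y$-dependence in the Gaussian exponent be replaced by $\bar x$-dependence (with the remainders absorbed into the additive $C_3,C_6$ terms), and the volume factor $\mm(B_{\sqrt{\eps t/2}}(y))$ be replaced by $v_{\eps t/2}$ or $V_{\eps t/2}$ as appropriate. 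Integrating against $f^\eps\,\d\mm$ produces \eqref{eq:weightbound}; the argument for $g^\eps_t$ is identical after substituting $1-t$ for $t$.

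For part (ii), the naive idea of multiplying the two pointwise bounds from (i) fails: the product introduces a factor $(v_{\eps t/2}v_{\eps(1-t)/2})^{-1}\sim(\eps t(1-t))^{-N/2}$, which diverges as $t\to 0$ or $t\to 1$ and so cannot yield a bound uniform in $t\in[0,1]$. The remedy is an asymmetric estimate. By the obvious $f\leftrightarrow g$, $t\leftrightarrow 1-t$ symmetry it suffices to handle $t\in[0,1/2]$. On this range I would estimate
\[
f^\eps_t=\h_{\eps t/2}f^\eps\leq\|f^\eps\|_{L^\infty(\mm)}
\]
by the maximum principle of the heat semigroup (thereby avoiding the singular $1/v_{\eps t/2}$ factor), while estimating $g^\eps_t$ via part (i) with parameter $1-t\in[1/2,1]$: this freezes the volume factor since $v_{\eps(1-t)/2}\geq v_{\eps/4}$, and makes $\sfd^2(x,\bar x)/[\eps(1-t)]$ uniformly comparable to $\sfd^2(x,\bar x)/\eps$. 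Multiplying the two yields
\[
\rho^\eps_t(x)\ \leq\ \frac{C\,\|f^\eps\|_{L^\infty}\,\|g^\eps\|_{L^1}}{v_{\eps/4}}\,\exp\!\Big(\tfrac{C'}{\eps}-\tfrac{C''\sfd^2(x,\bar x)}{\eps}\Big).
\]

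It remains to control $\|f^\eps\|_{L^\infty}\|g^\eps\|_{L^1}$ uniformly in $\eps\in(0,1)$, and this is where the Schr\"odinger structure genuinely enters. Proposition \ref{pro:2}(ii-b) gives $\|f^\eps\|_{L^\infty}\|g^\eps\|_{L^1}\leq\|\rho_0\|_{L^\infty}/c$ for any $c>0$ with $\hR^{\eps/2}\geq c\,\mm\otimes\mm$ on $\supp(\rho_0)\times\supp(\rho_1)$; the Gaussian lower bound on $\hr_{\eps/2}$ applied on this bounded product set (where $\sfd(x,y)\leq 2R_0$) delivers $c\geq C_0\, V_{\eps/2}^{-1}\,e^{-C/\eps}$. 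Combining everything, bounding the ratio $V_{\eps/2}/v_{\eps/4}$ by a constant depending only on $K,N,R_0$ via standard volume comparison, and invoking Bishop--Gromov to absorb the remaining $v_{\eps/4}^{-1}\lesssim\eps^{-N/2}$ into the prefactor, yields \eqref{eq:2} with the claimed form. The main obstacle throughout is precisely this uniformity in $t$: symmetric estimates on both heat-flow factors are doomed to blow up at the endpoints, and it is the Schr\"odinger-side Proposition \ref{pro:2}(ii-b) that lets the asymmetric argument close.
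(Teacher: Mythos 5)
Your proposal is correct and follows essentially the same route as the paper: part (i) from the representation formula plus Gaussian kernel bounds, and part (ii) via the asymmetric estimate on $[0,1/2]$ (maximum principle on the $f$-factor, detailed Gaussian bound on the $g$-factor, Proposition~\ref{pro:2}(ii-b) to control $\|f^\eps\|_{L^\infty}\|g^\eps\|_{L^1}$, then Bishop--Gromov to extract the $\eps^{-N/2}$ prefactor), the other half by symmetry. The paper runs the identical argument, merely starting from $t\in[1/2,1]$ (bounding $g^\eps_t\leq\|g^\eps\|_{L^\infty}$ and using the detailed bound on $f^\eps_t$, paired via $\|f^\eps\|_{L^1}\|g^\eps\|_{L^\infty}\leq\|\rho_1\|_{L^\infty}/c$), so the two are the same up to which half-interval is treated explicitly.
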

\begin{proof}

\noindent{\bf{(i)}} Direct consequence of the representation formula \eqref{eq:rapprform}, the Gaussian estimates \eqref{eq:gaussest} and the fact that $\rho_0$ and $f^\eps$ have the same support

\noindent{\bf{(ii)}} We shall indicate by $C$ a constant depending only on $K$, $N$, $\rho_0$, $\rho_1$, $\bar{x}$ whose value might change in the various instances it appears. Start from  $\rho^\eps_t=f^\eps_tg^\eps_t\leq f^\eps_t\|g^\eps_t\|_{L^\infty}\leq f^\eps_t\|g^\eps\|_{L^\infty}$, then use the bounds \eqref{eq:weightbound}, \eqref{eq:l1lifg} and notice that the constant $c$ appearing in \eqref{eq:l1lifg} is $\geq \frac{C}{V_{\eps/2}}e^{-\frac C\eps}$  to obtain
\[
\rho^\eps_t\leq \frac{CV_{\eps/2}}{v_{\eps t/2}}\exp\Big(\frac {C-C\sfd^2(\cdot,\bar x)}{\eps t}\Big).
\]
Noticing that the  the Bishop-Gromov inequality \eqref{eq:bishop} ensures that for every $s\in[0,1]$ it holds $V_s\leq C\mm(B_1(\bar x))$ and $v_s\geq C\mm(B_1(\bar x))s^{N/2}$, we obtain the claim for $t\in[1/2,1]$. The case  $t\in[0,1/2]$ follows by a symmetric argument. 
\end{proof}

The following proposition collects the basic properties of the functions defined in Setting \ref{set} and the respective `PDEs' solved:

\begin{Proposition}\label{pro:7}
With the same assumptions and notation as in Setting \ref{set}, the following holds.

All the functions are well defined and for any $\eps > 0$:
\begin{itemize}
\item[a)] $f^\eps_t,g^\eps_t,\rho^\eps_t$ belong to $\testi \X$ for all $t \in \I$, where $\I$ is the respective domain of definition (for $(\rho^\eps_t)$ we pick $\I=(0,1)$);
\item[b)] $\varphi^\eps_t,\psi^\eps_t,\vartheta^\eps_t$ belong to $\testl \X$ for all $t \in \I$, where $\I$ is the respective domain of definition.
\end{itemize}
For any $\eps>0$, $\CC \subset \I$ compact and $\bar{x} \in \X$ there exists $M = M(K,N,\rho_0,\rho_1,\CC,\bar{x}) > 0$ such that all the curves $(f^\eps_t),(g^\eps_t),(\rho^\eps_t)$ belong to $AC(\CC,W^{1,2}(\X))$ and $(\varphi_t^\eps),(\psi_t^\eps),(\vartheta_t^\eps)$ to $AC(\CC,W^{1,2}(\X,e^{-V}\mm))$, where $\I$ is the respective domain of definition (for $(\rho^\eps_t)$ we pick $\I=(0,1)$) and $V = M\sfd^2(\cdot,\bar{x})$; their time derivatives are given by the following expressions for a.e.\ $t\in[0,1]$:
\begin{align*}
\frac{\d}{\d t}f^{\varepsilon}_t & = \frac{\varepsilon}{2}\Delta f^{\varepsilon}_t && \qquad & \frac{\d}{\d t}g^{\varepsilon}_t & = -\frac{\varepsilon}{2}\Delta g^{\varepsilon}_t\\
\frac{\d}{\d t}\varphi^{\varepsilon}_t & =  \frac{1}{2}|\nabla\varphi^{\varepsilon}_t|^2 + \frac{\varepsilon}{2}\Delta\varphi^{\varepsilon}_t && \qquad & -\frac{\d}{\d t}\psi^{\varepsilon}_t & =  \frac{1}{2}|\nabla\psi^{\varepsilon}_t|^2 + \frac{\varepsilon}{2}\Delta\psi^{\varepsilon}_t\\
\frac{\d}{\d t}\rho^{\varepsilon}_t & + {\rm div}(\rho^{\varepsilon}_t\nabla\vartheta^{\varepsilon}_t) = 0 && \qquad & \frac{\d}{\d t}\vartheta^{\varepsilon}_t & + \frac{|\nabla\vartheta^{\varepsilon}_t|^2}{2} = -\frac{\varepsilon^2}{8}\Big(2\Delta\log\rho^{\varepsilon}_t + |\nabla\log\rho^{\varepsilon}_t|^2\Big).
\end{align*}
Moreover, for every $\eps>0$ we have:
\begin{itemize}
\item[i)]
\begin{equation}
\label{eq:boundbase}
\sup_{t\in \CC}\|h^\eps_t\|_{L^\infty(\X)}+\Lip(h^\eps_t)+\|\Delta h^\eps_t\|_{W^{1,2}(\X)} < \infty
\end{equation}
if $(h^\eps_t)$ is equal to any of $(f^\eps_t),(g^\eps_t),(\rho^\eps_t)$ and
\begin{equation}
\label{eq:boundbase2}
\sup_{t\in \CC}\|e^{-V}h^\eps_t\|_{L^\infty(\X)}+\|e^{-V}\lip(h^\eps_t)\|_{L^\infty(\X)}+\|\Delta h^\eps_t\|_{W^{1,2}(\X,e^{-V}\mm)} < \infty
\end{equation}
if $(h^\eps_t)$ is equal to any of $(\varphi^\eps_t),(\psi^\eps_t),(\vartheta^\eps_t)$; in both cases, $\CC$ is a compact subset of the respective domain of definition $\I$ (for $(\rho^\eps_t)$ we pick $\I=(0,1)$),
\item[ii)] $\mu^\eps_t \in \probt \X$ for every $t\in[0,1]$ and $(\rho^\eps_t)\in C([0,1],L^2(\X))$,
\item[iii)] we have $f^\eps_t\to f^\eps$ and $g^\eps_t\to g^\eps$ in $L^2(\X)$ as $t\downarrow0$ and $t\uparrow 1$ respectively.
\end{itemize}
\end{Proposition}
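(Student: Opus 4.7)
The plan proceeds in four steps matching the structure of the statement.

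\smallskip

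\textbf{Step 1 (parts a and b).} By Theorem \ref{thm:5}, $f^\eps,g^\eps\in L^1\cap L^\infty(\X)$ are non-negative and not identically zero. The regularization property of the heat flow on $\RCD$ spaces yields $f^\eps_t=\h_{\eps t/2}f^\eps\in\testi\X$ for every $t>0$, and analogously $g^\eps_t\in\testi\X$ for $t<1$. Since the class $\testi\X$ is an algebra, $\rho^\eps_t=f^\eps_tg^\eps_t\in\testi\X$ for $t\in(0,1)$. For $\varphi^\eps_t,\psi^\eps_t$ Proposition \ref{pro:testloc} immediately gives the membership in $\testl\X$; for $\vartheta^\eps_t$ it follows by linearity.

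\smallskip

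\textbf{Step 2 (the PDEs).} The equations for $f^\eps_t, g^\eps_t$ are just the (forward/backward) heat equation. For $\varphi^\eps_t$ the chain rule $\Delta\log u=\frac{\Delta u}{u}-|\nabla\log u|^2$, combined with $\partial_t\varphi^\eps_t=\eps\,\partial_t f^\eps_t/f^\eps_t=\frac{\eps^2}{2}\Delta f^\eps_t/f^\eps_t$, yields $\partial_t\varphi^\eps_t=\frac{\eps}{2}\Delta\varphi^\eps_t+\frac{1}{2}|\nabla\varphi^\eps_t|^2$, and symmetrically for $\psi^\eps_t$. The continuity equation for $\rho^\eps_t$ comes from the identity $\rho^\eps_t\nabla\vartheta^\eps_t=\tfrac{\eps}{2}(f^\eps_t\nabla g^\eps_t-g^\eps_t\nabla f^\eps_t)$ and the Leibniz rule $\div(f^\eps_t\nabla g^\eps_t-g^\eps_t\nabla f^\eps_t)=f^\eps_t\Delta g^\eps_t-g^\eps_t\Delta f^\eps_t$. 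Finally, using $\varphi^\eps_t+\psi^\eps_t=\eps\log\rho^\eps_t$ one checks the algebraic identity
\[
\tfrac{1}{2}(|\nabla\varphi^\eps_t|^2+|\nabla\psi^\eps_t|^2)=|\nabla\vartheta^\eps_t|^2+\tfrac{\eps^2}{4}|\nabla\log\rho^\eps_t|^2,
\]
and combining it with the HJB equations for $\varphi^\eps_t,\psi^\eps_t$ one obtains the stated equation for $\vartheta^\eps_t$.

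\smallskip

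\textbf{Step 3 (the bounds in (i) and (ii)).} The bound \eqref{eq:boundbase} for $f^\eps_t,g^\eps_t$ on $\CC\Subset(0,1]$ resp.\ $\CC\Subset[0,1)$ follows from standard heat-flow regularization starting from $f^\eps,g^\eps\in L^1\cap L^\infty$; for $\rho^\eps_t=f^\eps_tg^\eps_t$ the Leibniz rule reduces the estimate to the previous one. For the weighted bounds \eqref{eq:boundbase2}, pick $\bar x$ in $\supp(\rho_0)\cup\supp(\rho_1)$ and $M$ large (depending on $\eps,\CC,K,N,\rho_0,\rho_1$): the pointwise estimate \eqref{eq:weightbound} implies $|\varphi^\eps_t|\le C(1+\sfd^2(\cdot,\bar x))$ uniformly on $\CC$ so that $e^{-V}\varphi^\eps_t\in L^\infty$; Hamilton's gradient estimate \eqref{eq:ham} gives $|\nabla\varphi^\eps_t|\le C(1+\sfd(\cdot,\bar x))$ which is absorbed by the weight; and Li--Yau \eqref{eq:liyau} together with $\Delta\varphi^\eps_t=|\nabla\varphi^\eps_t|^2-\eps\Delta\log f^\eps_t$ yields at most quadratic-in-distance bounds on $\Delta\varphi^\eps_t$, which after one more differentiation of the heat equation remain controlled in $W^{1,2}(\X,e^{-V}\mm)$. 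Part (ii) on $\mu^\eps_t\in\probt\X$ reduces to $\int\sfd^2(\cdot,\bar x)\rho^\eps_t\,\d\mm<\infty$, immediate from the Gaussian bound \eqref{eq:2}; the $L^2$-continuity of $(\rho^\eps_t)$ on $[0,1]$ follows from the strong $L^2$-continuity of the heat semigroup applied to $f^\eps_t,g^\eps_t$ and their uniform $L^\infty$ bound (which also gives (iii)).

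\smallskip

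\textbf{Step 4 (absolute continuity in the Sobolev norm).} For $f^\eps_t$ (and $g^\eps_t$), the analytic heat semigroup on $L^2(\X)$ takes values in the domain of $\Delta$ after positive time, with time derivative $\tfrac{\eps}{2}\Delta f^\eps_t$; standard estimates then yield the AC regularity in $W^{1,2}(\X)$ on any $\CC\Subset(0,1]$. The AC regularity of $(\rho^\eps_t)$ follows by the Leibniz rule using the $L^\infty$ bounds of Step 3. For $(\varphi^\eps_t)$ in $W^{1,2}(\X,e^{-V}\mm)$, apply the chain rule $\partial_t\varphi^\eps_t=\eps\,\partial_t f^\eps_t/f^\eps_t$ together with the uniform lower bound $f^\eps_t\ge c\exp(-C(1+\sfd^2(\cdot,\bar x))/\eps)$ from \eqref{eq:weightbound}, which for $M$ large enough reduces the weighted norms of the increments of $\varphi^\eps_t$ to unweighted ones for $f^\eps_t$; the same scheme works for $\psi^\eps_t$ and $\vartheta^\eps_t$.

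\smallskip

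The main obstacle I anticipate is Step 3 in the weighted setting: Hamilton's and Li--Yau's estimates only give polynomial (in $\sfd(\cdot,\bar x)$) control of $|\nabla\varphi^\eps_t|$ and $\Delta\varphi^\eps_t$, so one must carefully choose $M$ large enough that the resulting weighted norms are finite uniformly in $t\in\CC$, and then propagate this choice consistently through the AC argument in Step 4.
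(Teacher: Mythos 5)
Your overall architecture (test-class membership, chain/Leibniz-rule derivation of the PDEs, separate treatment of the unweighted and weighted Sobolev bounds, absolute continuity from the evolution equations, and endpoint $L^2$-continuity from the heat semigroup) is faithful to the paper's. Steps 1, 2 and 4 are sound, and the algebraic identity $\tfrac12\big(|\nabla\varphi^\eps_t|^2+|\nabla\psi^\eps_t|^2\big)=|\nabla\vartheta^\eps_t|^2+\tfrac{\eps^2}4|\nabla\log\rho^\eps_t|^2$ is indeed the right way to pass to the $\vartheta$-equation.

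The genuine gap is in Step 3, in the bound on $\Delta\varphi^\eps_t$. First, the identity you invoke is false: since $\varphi^\eps_t=\eps\log f^\eps_t$ one has $\Delta\varphi^\eps_t=\eps\Delta\log f^\eps_t=\eps\tfrac{\Delta f^\eps_t}{f^\eps_t}-\tfrac1\eps|\nabla\varphi^\eps_t|^2$, not $\Delta\varphi^\eps_t=|\nabla\varphi^\eps_t|^2-\eps\Delta\log f^\eps_t$. Second, and more seriously, Li--Yau \eqref{eq:liyau} controls $\Delta\log f^\eps_t$ only from \emph{below}; there is no matching upper estimate, so it cannot deliver the two-sided pointwise bound needed to place $\Delta\varphi^\eps_t$ in $L^2(\X,e^{-V}\mm)$. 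The upper bound in fact comes from the explicit formula just displayed: $\Delta f^\eps_t$ and $|\nabla f^\eps_t|$ are bounded in $L^\infty$ uniformly on $\CC$ by \eqref{eq:boundbase} (which you already establish), and the denominator $f^\eps_t$ is bounded below by the Gaussian in \eqref{eq:weightbound}, so $|\Delta\varphi^\eps_t|$ is controlled by a quantity growing like $e^{C\sfd^2(\cdot,\bar x)}$ with $C$ depending on $\eps,\CC$ --- not quadratically as you assert --- and this is precisely what forces $M$ to be large (and $\eps$-dependent). The same reasoning must be spelled out at one more derivative: $\nabla\Delta\varphi^\eps_t$ expands into $\nabla\Delta f^\eps_t$, $\Delta f^\eps_t\nabla f^\eps_t$, $\nabla|\nabla f^\eps_t|^2$ and inverse powers of $f^\eps_t$; the term $\nabla|\nabla f^\eps_t|^2$ requires the estimate \eqref{eq:reggrad}, which your sketch does not invoke. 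So ``one more differentiation of the heat equation'' should actually be the explicit chain/Leibniz expansion of $\nabla\Delta\varphi^\eps_t$ combined with \eqref{eq:boundbase}, \eqref{eq:reggrad} and the Gaussian lower bound on $f^\eps_t$ --- that is the content Step 3 is missing.
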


\begin{proof}

\noindent{\bf Properties of $(f^\eps_t),(g^\eps_t)$.} Recalling \eqref{eq:regflow} we see that $f^\eps_{t_0}\in\testi \X$ for any $t_0>0$. Then the maximum principle for the heat flow, the fact that it is a contraction in $W^{1,2}(\X)$ and the Bakry-\'Emery gradient estimates \eqref{eq:be} together with the Sobolev-to-Lipschitz property grant that \eqref{eq:boundbase} holds for $(f_t^\eps)$. The fact that $(f^\eps_t)\in AC(\mathcal C,W^{1,2}(\X))$ and that it solves the stated scaled heat equation is trivial. The fact that $f^\eps_t\to f^\eps$ in $L^2(\X)$ as $t\downarrow 0$ follows from  the $L^2$-continuity of the heat flow.

\noindent{\bf Properties of $(\varphi^\eps_t),(\psi^\eps_t)$.} By Proposition \ref{pro:testloc} we know that $\varphi^\eps_t\in\testl \X$ and from the chain and Leibniz rules we see that
\[
\begin{split}
\nabla\varphi^\eps_t &= \eps\frac{\nabla f^\eps_t}{f^\eps_t}, \\
\Delta\varphi^\eps_t &= \eps\bigg(\frac{\Delta f^\eps_t}{f^\eps_t} - \frac{|\nabla f^\eps_t|^2}{(f^\eps_t)^2}\bigg),\\
 \nabla\Delta\varphi^\eps_t&=\eps\bigg( \frac{\nabla\Delta f^\eps_t}{f^\eps_t}- \frac{\Delta f^\eps_t \nabla f^\eps_t}{(f^\eps_t)^2}- \frac{\nabla |\nabla f^\eps_t|^2}{(f^\eps_t)^2} + \frac{2|\nabla f^\eps_t|^2\nabla f^\eps_t}{(f^\eps_t)^3}\bigg).
\end{split}
\]
These identities, \eqref{eq:boundbase} for $(f^\eps_t)$, estimate \eqref{eq:reggrad} and \eqref{eq:weightbound} imply that for any $\bar x$ there is $M>0$ such that for $V:=M\sfd^2(\cdot,\bar x)$ the bound \eqref{eq:boundbase2} for $(\varphi^\eps_t)$ holds, as claimed. Similarly, we see that $|\nabla\varphi^\eps_t|^2\in L^2_{loc}((0,1],W^{1,2}(\X,e^{-V}\mm))$.

The expressions for $\nabla\varphi^\eps_t,\Delta\varphi^\eps_t$ and the equation for $(f^\eps_t)$ also grant that $\mm$-a.e.\ it holds
\begin{equation}
\label{eq:pdefi}
\frac{\d}{\d t}\varphi^{\varepsilon}_t  =  \frac{1}{2}|\nabla\varphi^{\varepsilon}_t|^2 + \frac{\varepsilon}{2}\Delta\varphi^{\varepsilon}_t 
\end{equation}
for a.e.\ $t$ and since we have seen that the right-hand side belongs to $L^2_{loc}((0,1],W^{1,2}(\X,e^{-V}\mm))$, this shows at once that $(\varphi^\eps_t)\in AC_{loc}((0,1],W^{1,2}(\X,e^{-V}\mm))$ and that \eqref{eq:pdefi} holds when the left-hand side is intended as limit of the difference quotients in $W^{1,2}(\X,e^{-V}\mm))$, as claimed.

The same arguments apply to $\psi^\eps_t$.

\noindent{\bf Properties of $(\rho^\eps_t),(\vartheta^\eps_t)$.}  The bound \eqref{eq:boundbase} for $(f^\eps_t),(g^\eps_t)$ and the Leibniz rules for the gradient and Laplacian give the bound \eqref{eq:boundbase} for $(\rho^\eps_t)$ and also grant that $(\rho^\eps_t)\in AC_{loc}((0,1),L^{2}(\X))$. To see that this curve is absolutely continuous with values in $W^{1,2}(\X)$ notice that 
\[
\frac\d{\d t}\rho^\eps_t  = \frac{\eps}{2} \Big(g^\eps_t\Delta f^\eps_t - f^\eps_t \Delta g^\eps_t\Big)
\]
and recall \eqref{eq:boundbase} for $f^\eps_t,g^\eps_t$. The stated equation for $(\rho^\eps_t)$ is now a matter of direct computation:
\[
\begin{split}
\frac{\eps}{2} \Big(g^\eps_t\Delta f^\eps_t - f^\eps_t \Delta g^\eps_t\Big) &{=} \frac{\eps}{2}\rho_t^\eps\Big(\Delta\log f_t^\eps + |\nabla\log f_t^\eps|^2 - \Delta\log g_t^\eps - |\nabla\log g_t^\eps|^2\Big) \\ & = \rho^\eps_t\frac1\eps\Big(\frac{|\nabla\varphi^\eps_t|^2}{2}-\frac{|\nabla\psi^\eps_t|^2}{2}+\frac\eps2\Delta\varphi^\eps_t-\frac\eps2\Delta\psi^\eps_t\Big)\\
& = \rho^\eps_t\Big(-\la\nabla\vartheta^\eps_t,\nabla\log\rho^\eps_t\ra-\Delta\vartheta^\eps_t\Big) \\ & = -\la\nabla\vartheta^\eps_t,\nabla\rho^\eps_t\ra-\rho^\eps_t\Delta\vartheta^\eps_t=-{\rm div}(\rho^\eps_t\nabla\vartheta^\eps_t).
\end{split}
\]

It is clear that $\rho^\eps_t\geq 0$ for every $\eps,t$, hence the identity
\[
\int\rho^\eps_t\,\d\mm = \int \h_{\varepsilon t/2}f^\eps\h_{\varepsilon(1-t)/2}g^{\varepsilon}\,\d\mm=\int f^\eps\h_{\eps/2} g^\eps\,\d\mm=\int \rho^\eps_0\,\d\mm=1
\]
shows that $\mu^\eps_t\in\prob \X$. The fact that $\mu^\eps_t$ has finite second moment is a direct consequence of the Gaussian bound \eqref{eq:2}  and the volume growth estimate \eqref{eq:volgrowth}.

For the $L^2$-continuity of $\rho^\eps_t$ in $t=0,1$, by the $L^2$-continuity of the heat flow and the fact that $f^\eps,g^\eps\in L^\infty$ (Theorem \ref{thm:5}) we see that $\rho^\eps_t\to f^\eps \h_{\eps/2}g^\eps$ and  $\rho^\eps_t\to \h_{\eps/2}f^\eps g^\eps$ as $t\to0,1$ respectively. Hence all we have to check is that 
\begin{equation}
\label{eq:contin0}
\rho_0 = f^\eps\h_{\eps/2}g^\eps\qquad\qquad\qquad\rho_1 = g^\eps\h_{\eps/2}f^\eps,
\end{equation}
but as already noticed in the proof of Theorem \ref{thm:5}, these are equivalent to the fact that $f^\eps\otimes g^\eps\,\hR^{\eps/2}$ is a transport plan from $\mu_0$ to $\mu_1$; hence, \eqref{eq:contin0} holds by the very choice of $(f^\eps,g^\eps)$ made.

Finally, the fact that $(\vartheta^\eps_t)$ belongs to $AC_{loc}((0,1),W^{1,2}(\X,e^{-V}\mm))$ and satisfies the bound \eqref{eq:boundbase2} is a direct consequence of the analogous property for $(\varphi^\eps_t),(\psi^\eps_t)$. The equation for its time derivative comes by direct computation:
\[
\begin{split}
\frac\d{\d t}\vartheta^\eps_t+\frac{|\nabla\vartheta^\eps_t|^2}2&=-\frac{|\nabla\psi^\eps_t|^2}{4}-\frac{\eps }4\Delta\psi^\eps_t-\frac{|\nabla\varphi^\eps_t|^2}4-\frac{\eps}4\Delta\varphi^\eps_t+\frac{|\nabla\psi^\eps_t|^2}8+\frac{|\nabla\varphi^\eps_t|^2}8-\frac{\la\nabla\psi^\eps_t,\nabla\varphi^\eps_t\ra}4\\
&=-\frac{\eps^2}4\Delta\log\rho^\eps_t-\frac18\Big(|\nabla\psi^\eps_t|^2+|\nabla\varphi^\eps_t|^2+2\la\nabla\varphi^\eps_t,\nabla\psi^\eps_t\ra\Big)\\
&=-\frac{\varepsilon^2}{8}\Big(2\Delta\log\rho^{\varepsilon}_t + |\nabla\log\rho^{\varepsilon}_t|^2\Big),
\end{split}
\]
hence the proof is complete.
\end{proof}

Using the terminology adopted in the literature (see \cite{Leonard14}) we shall refer to:
\begin{itemize}
\item $\varphi^{\varepsilon}_t$ and $\psi^{\varepsilon}_t$ as Schr\"odinger potentials, in connection with Kantorovich ones;
\item $(\mu^{\varepsilon}_t)_{t \in [0,1]}$ as entropic interpolation, in analogy with displacement one.
\end{itemize}

\subsection{Uniform estimates for the densities and the potentials}

We start collecting information about quantities which remain bounded as $\eps\downarrow0$.

\begin{Proposition}[Locally uniform Lipschitz and Laplacian controls for the potentials]\label{pro:1}
With the same assumptions and notations as in Setting \ref{set} the following holds.

For all $\delta \in (0,1)$ and $\bar{x} \in \X$ there exists $C>0$ which only depends on $K,N,\delta,\bar{x}$ such that
\begin{subequations}
\begin{align}
\label{eq:lipcontr}
& \lip(\varphi^\eps_t) \leq C\big(1 + \sfd(\cdot,\bar{x})\big), \qquad \mm\ae \\
\label{eq:lapcontr}
& \Delta\varphi^\eps_t \geq -C\big(1 + \sfd^2(\cdot,\bar{x})\big), \qquad \mm\ae
\end{align}
\end{subequations}
for every $t \in [\delta,1]$ and $\eps \in(0,1)$.

Furthermore, for all $M > 0$ there exists $C'>0$ which only depends on $K,N,\delta,\bar{x},M$ such that
\begin{equation}\label{eq:lapcontr2}
\int |\Delta\varphi^\eps_t| e^{-M\sfd^2(\cdot,\bar{x})}\d\mm \leq C'
\end{equation}
for every $t\in[\delta,1]$ and $\eps\in(0,1)$. Analogous bounds hold for the $\psi_t^\eps$'s in the time interval $[0,1-\delta]$.
\end{Proposition}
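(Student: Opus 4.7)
The plan is to obtain \eqref{eq:lipcontr} and \eqref{eq:lapcontr} as essentially immediate consequences of the uniform Hamilton and Li--Yau estimates proved in Section 3, and then to deduce the weighted $L^1$ bound \eqref{eq:lapcontr2} via an integration-by-parts against the Gaussian weight.

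Since $\varphi^\eps_t = \eps \log \h_{\eps t/2} f^\eps$, and since by Theorem \ref{thm:5} each $f^\eps$ belongs to $L^1 \cap L^\infty(\mm)$ with $\supp(f^\eps) \subset \supp(\mu_0)$, I would first apply Hamilton's estimate \eqref{eq:ham} with $u_0 = f^\eps$ and time parameter $t/2$: the constant in \eqref{eq:ham} depends only on $K, N, \delta, T, \bar x$ and on $\diam(\supp(u_0))$, hence it is uniform in $\eps \in (0,1)$, and the bound reads $|\nabla \varphi^\eps_t| \leq C(1 + \sfd(\cdot,\bar x))$ $\mm$-a.e.\ for $t \in [\delta,1]$. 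Combined with the $\testl \X$-regularity of $\varphi^\eps_t$ from Proposition \ref{pro:7} and the Sobolev-to-Lipschitz property, this yields \eqref{eq:lipcontr}. The same recipe applied instead to the Li--Yau bound \eqref{eq:liyau} produces \eqref{eq:lapcontr}.

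For the third bound, I would split
$$|\Delta \varphi^\eps_t| = \Delta \varphi^\eps_t + 2(\Delta \varphi^\eps_t)^-.$$
The negative part is controlled pointwise by \eqref{eq:lapcontr} through $C(1 + \sfd^2(\cdot,\bar x))$, whose integral against $e^{-M \sfd^2(\cdot,\bar x)} \mm$ is finite and uniformly bounded in $\eps$ by the volume growth \eqref{eq:volgrowth}. For the signed part, the strategy is to integrate by parts against $e^{-M \sfd^2(\cdot,\bar x)}$ so as to obtain
$$\int \Delta \varphi^\eps_t \, e^{-M \sfd^2(\cdot,\bar x)} \, \d\mm = 2M \int \sfd(\cdot,\bar x) \, \la \nabla \varphi^\eps_t, \nabla \sfd(\cdot,\bar x) \ra \, e^{-M \sfd^2(\cdot,\bar x)} \, \d\mm,$$
and to bound the right-hand side by the Lipschitz estimate \eqref{eq:lipcontr}, by $|\nabla \sfd(\cdot,\bar x)| \leq 1$, and by the finiteness of polynomial moments of the Gaussian weight.

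The main technical obstacle I expect is the rigorous justification of this last integration by parts, since $e^{-M \sfd^2(\cdot,\bar x)}$ is not itself an admissible $W^{1,2}(\X)$ test function. I plan to handle this by truncation: pick a 1-Lipschitz cutoff $\chi_R$ of $B_R(\bar x)$, apply the standard integration-by-parts formula to the compactly supported function $\chi_R e^{-M \sfd^2(\cdot,\bar x)}$, and let $R \to \infty$; the local integrability of $\Delta \varphi^\eps_t$ granted by Proposition \ref{pro:7} combined with the super-polynomial decay of the weight handles the passage to the limit on the left-hand side, while the linear growth of $|\nabla \varphi^\eps_t|$ just established and the same decay control the right-hand side. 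The analogous bounds for $\psi^\eps_t$ on $[0, 1-\delta]$ follow by the symmetric argument, applied to $g^\eps$ in place of $f^\eps$ with the obvious time reversal.
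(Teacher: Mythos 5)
Your proposal is correct and follows essentially the same route as the paper: Hamilton's estimate \eqref{eq:ham} plus Sobolev-to-Lipschitz for \eqref{eq:lipcontr}, the Li--Yau bound \eqref{eq:liyau} for \eqref{eq:lapcontr}, and the decomposition $|\Delta\varphi^\eps_t|=\Delta\varphi^\eps_t+2(\Delta\varphi^\eps_t)^-$ with a cut-off integration by parts against the Gaussian weight for \eqref{eq:lapcontr2}. The only cosmetic difference is that the paper keeps the cut-off $\nchi$ and bounds the resulting three terms uniformly (picking up an extra $\int e^{-M\sfd^2(\cdot,\bar x)}|\nabla\varphi^\eps_t|\,\d\mm$ from $|\nabla\nchi|\leq 1$) rather than passing to the limit in the identity, but both variants close in the same way.
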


\begin{proof}
Fix $\delta \in (0,1)$ and $\bar{x} \in \X$ as in the statement and notice that the bound \eqref{eq:ham} yields
\[
|\nabla\varphi^\eps_t| = \eps |\nabla\log\h_{\frac{\eps t}2}f^\eps| \leq C\big(1 + \sfd(\cdot,\bar{x})\big)\qquad\forall t\in[\delta,1],\ \eps\in(0,1).
\]
Thus recalling the Sobolev-to-Lipschitz property \eqref{eq:sobtolip} we obtain the bound \eqref{eq:lipcontr}. The bound \eqref{eq:lapcontr} is a restatement of \eqref{eq:liyau}. Finally, let $M > 0$ and $\nchi$ a 1-Lipschitz cut-off function with bounded support; notice that $|h| = h + 2h^-$ whence
\[
\int \nchi e^{-M\sfd^2(\cdot,\bar{x})} |\Delta\varphi^\eps_t|\,\d\mm = \int \nchi e^{-M\sfd^2(\cdot,\bar{x})} \Delta\varphi^\eps_t\,\d\mm + 2\int \nchi e^{-M\sfd^2(\cdot,\bar{x})}(\Delta\varphi^\eps_t)^-\,\d\mm.
\]
Integration by parts, the fact that $|\nabla\sfd^2(\cdot,\bar{x})| = 2\sfd(\cdot,\bar{x})$, $|\nabla\nchi|\leq 1$ and $0 \leq \nchi \leq 1$ then imply that
\[
\begin{split}
\int \nchi e^{-M\sfd^2(\cdot,\bar{x})} |\Delta\varphi^\eps_t|\,\d\mm & \leq \int e^{-M\sfd^2(\cdot,\bar{x})} |\nabla\varphi_t^\eps|\,\d\mm + 2M\int\sfd(\cdot,\bar{x})e^{-M\sfd^2(\cdot,\bar{x})}|\nabla\varphi_t^\eps|\,\d\mm \\ & \, \quad + 2\int e^{-M\sfd^2(\cdot,\bar{x})}(\Delta\varphi^\eps_t)^-\,\d\mm
\end{split}
\]
and taking into account \eqref{eq:lipcontr} and \eqref{eq:lapcontr}, the bound \eqref{eq:lapcontr2} follows.

For $\psi^\eps_t$ the argument is the same.
\end{proof}
The gradient estimates that we just obtained together with the Gaussian bounds on $f^\eps_t,g^\eps_t,\rho^\eps_t$ that we previously proved have the following direct implication, which we shall frequently use later on to justify our computations:

\begin{Lemma}\label{lem:contprov2}
With the same assumptions and notation as in Setting \ref{set}, the following holds.

For any $\eps \in(0,1)$ and $t\in(0,1)$ let  $h_t^\eps$ denote any of $\varphi_t^\eps,\psi_t^\eps,\vartheta_t^\eps,\log\rho_t^\eps$ and, for any $n\in\N$, let $H^\eps_t$ denote any of the functions. 
\begin{equation}
\label{eq:Heps}
\rho^\eps_t|\nabla h^\eps_t|^n,\quad \rho^\eps_t\log\rho^\eps_t |\nabla h^\eps_t|^n,\quad  |\nabla \rho^\eps_t||\nabla h^\eps_t|^n,\quad \Delta\rho^\eps_t|\nabla h^\eps_t|^n\quad \rho^\eps_t\la\nabla h^\eps_t,\nabla \Delta h^\eps_t\ra.
\end{equation}
Then $H^\eps_t\in L^1(\X)$ for every $\eps,t\in (0,1)$. Moreover for every $\delta\in(0,1/2)$ we have
\begin{equation}
\label{eq:unifint}
\lim_{R\to\infty}\sup_{t\in[\delta,1-\delta]}\int_{\X\setminus B_R(\bar x)}|H^\eps_t|\,\d\mm=0\qquad\forall \eps\in(0,1),\ \bar x\in\X.
\end{equation}
Finally, $(0,1)\ni t\mapsto \int H^\eps_t\,\d\mm$ is continuous.
\end{Lemma}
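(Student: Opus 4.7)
The lemma is a technical integrability/continuity statement, and the whole game is to balance the Gaussian decay of the densities $\rho^\eps_t$, $f^\eps_t$, $g^\eps_t$ against the polynomial growth of the potentials $\varphi^\eps_t,\psi^\eps_t,\vartheta^\eps_t,\log\rho^\eps_t$ and of their gradients. Specifically, from \eqref{eq:2} and the upper bound in \eqref{eq:weightbound} we get, for any fixed $\eps\in(0,1)$ and uniformly in $t\in[\delta,1-\delta]$, bounds of the form $\rho^\eps_t,f^\eps_t,g^\eps_t\leq C(\eps)\exp(-c(\eps)\sfd^2(\cdot,\bar x))$; combining the upper and lower bounds in \eqref{eq:weightbound} one reads that each of $\varphi^\eps_t,\psi^\eps_t,\vartheta^\eps_t$ (and hence $\log\rho^\eps_t=(\varphi^\eps_t+\psi^\eps_t)/\eps$) satisfies $|h^\eps_t|\leq C(\eps,\delta)(1+\sfd^2(\cdot,\bar x))$, while \eqref{eq:lipcontr} gives the linear growth $|\nabla h^\eps_t|\leq C(\eps,\delta)(1+\sfd(\cdot,\bar x))$.

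With these ingredients, the first four families of functions in \eqref{eq:Heps} are handled directly: $\rho^\eps_t|\nabla h^\eps_t|^n$ is a Gaussian times a polynomial; $\rho^\eps_t\log\rho^\eps_t|\nabla h^\eps_t|^n$ adds one extra polynomial factor via the quadratic bound on $|\log\rho^\eps_t|$; for $|\nabla\rho^\eps_t||\nabla h^\eps_t|^n$ I rewrite $|\nabla\rho^\eps_t|=\rho^\eps_t|\nabla\log\rho^\eps_t|$ and reduce to the first case; finally for $\Delta\rho^\eps_t|\nabla h^\eps_t|^n$ I expand $\Delta\rho^\eps_t=f^\eps_t\Delta g^\eps_t+g^\eps_t\Delta f^\eps_t+2\la\nabla f^\eps_t,\nabla g^\eps_t\ra$ and use that $f^\eps_t,g^\eps_t\in\testi\X$ (with bounded Laplacians by Proposition \ref{pro:7}(i)) to reduce each term to a Gaussian-decaying factor times an $L^\infty$ factor times the polynomial $|\nabla h^\eps_t|^n$. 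The last term, $\rho^\eps_t\la\nabla h^\eps_t,\nabla\Delta h^\eps_t\ra$, is the delicate one because $\nabla\Delta h^\eps_t$ is only controlled through \eqref{eq:boundbase2}, i.e.\ in $L^2(\X,e^{-V}\mm)$ for the specific weight $V=M\sfd^2(\cdot,\bar x)$. I apply Cauchy--Schwarz to bound the $L^1$ norm by
\[
\|\nabla\Delta h^\eps_t\|_{L^2(e^{-V}\mm)}\cdot\Big(\int (\rho^\eps_t)^2|\nabla h^\eps_t|^2 e^V\,\d\mm\Big)^{1/2},
\]
and note that for fixed $\eps$ the constant $C_9/\eps$ in the Gaussian in \eqref{eq:2} dominates $M$ (possibly after enlarging $M$, which only makes the weighted $W^{1,2}$ space larger and hence does not invalidate \eqref{eq:boundbase2}); so $(\rho^\eps_t)^2 e^V$ has Gaussian decay and multiplying by the polynomial $|\nabla h^\eps_t|^2$ leaves an integrable function.

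The uniform integrability \eqref{eq:unifint} is then immediate by inspection: the constants in all of the bounds above depend on $\eps,\delta$ and on the data but \emph{not on $t\in[\delta,1-\delta]$} (because \eqref{eq:2}, \eqref{eq:weightbound}, \eqref{eq:lipcontr}--\eqref{eq:lapcontr2} and \eqref{eq:boundbase2} are all uniform in $t$ on compact subsets of $(0,1)$). Hence each $|H^\eps_t|$ is dominated by a fixed integrable Gaussian-type function $G_\eps\in L^1(\X)$, and $\int_{\X\setminus B_R(\bar x)}G_\eps\,\d\mm\to 0$ as $R\to\infty$.

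For the continuity of $t\mapsto\int H^\eps_t\,\d\mm$, the uniform integrability just proved reduces matters, via a standard $\eps/3$-argument, to the continuity of $t\mapsto\int_{B_R(\bar x)}H^\eps_t\,\d\mm$ for each $R<\infty$. On the ball $B_R(\bar x)$ the weight $e^{-V}$ is equivalent to $1$, so the $AC_{loc}((0,1),W^{1,2}(\X,e^{-V}\mm))$ and $AC_{loc}((0,1),W^{1,2}(\X))$ statements of Proposition \ref{pro:7} yield $W^{1,2}(B_R)$-norm continuity of each of $f^\eps_t,g^\eps_t,\rho^\eps_t,\varphi^\eps_t,\psi^\eps_t,\vartheta^\eps_t$; extracting a subsequence $t_n\to t$ along which all these functions and their gradients converge $\mm$-a.e.\ on $B_R$, using that $\log$ and products are continuous on the relevant ranges, and dominating by the local version of $G_\eps$, I conclude by dominated convergence. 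Since the limit does not depend on the subsequence, the full continuity follows. The main obstacle is really the term involving $\nabla\Delta h^\eps_t$, since it forces one to match carefully the available weighted $L^2$ control against the Gaussian decay of the densities.
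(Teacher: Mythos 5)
Your treatment of the first four families in \eqref{eq:Heps} matches the paper's: Gaussian decay of $\rho^\eps_t,f^\eps_t,g^\eps_t$ against polynomial growth of $|h^\eps_t|$ and $|\nabla h^\eps_t|$, the rewriting $|\nabla\rho^\eps_t|=\rho^\eps_t|\nabla\log\rho^\eps_t|$, and the expansion of $\Delta\rho^\eps_t$. The problem is the fifth term, which is exactly the delicate one, and your Cauchy--Schwarz argument for it has a genuine gap. The only available control on $\nabla\Delta h^\eps_t$ is the weighted bound \eqref{eq:boundbase2}, and the exponent $M$ there is \emph{forced} by the lower Gaussian bound in \eqref{eq:weightbound}: expanding $\nabla\Delta\varphi^\eps_t$ produces terms such as $\nabla\Delta f^\eps_t/f^\eps_t$ and $|\H{f^\eps_t}|_{\HS}|\nabla f^\eps_t|/(f^\eps_t)^2$, and since $1/f^\eps_t$ grows like $\exp(C_2\sfd^2(\cdot,\bar x)/(\eps t))$, one needs $M\gtrsim C/(\eps\delta)$ for $\|\nabla\Delta\varphi^\eps_t\|_{L^2(e^{-V}\mm)}$ to be finite uniformly on $[\delta,1-\delta]$. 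On the other hand your second factor requires $(\rho^\eps_t)^2|\nabla h^\eps_t|^2e^{M\sfd^2(\cdot,\bar x)}\in L^1$, i.e.\ $M<2C_9/\eps$ with $C_9$ the decay rate in \eqref{eq:2}. These two constraints pull in opposite directions and are incompatible (for small $\delta$ certainly, and there is no reason the heat-kernel constants make them compatible for any $\delta$). Your parenthetical ``possibly after enlarging $M$'' goes the wrong way: enlarging $M$ keeps the first factor finite but destroys the integrability of $(\rho^\eps_t)^2e^{M\sfd^2(\cdot,\bar x)}$. The paper circumvents this precisely by \emph{not} separating the two factors: it expands $\nabla\Delta\varphi^\eps_t$ pointwise and lets $\rho^\eps_t=f^\eps_tg^\eps_t$ cancel the singular $1/f^\eps_t$ powers, reducing everything to terms like $g^\eps_t|\nabla\Delta f^\eps_t|$, $g^\eps_t|\Delta f^\eps_t||\nabla\varphi^\eps_t|^2$ and, crucially, $g^\eps_t|\H{f^\eps_t}|^2_{\HS}$; this last term is only in $L^1$ and its tail estimate \eqref{eq:unifint} requires a separate cut-off argument through the Bochner inequality \eqref{eq:bochhess}, which your proposal has no substitute for.

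A second, related gap is the continuity in $t$ of $\int\rho^\eps_t\langle\nabla h^\eps_t,\nabla\Delta h^\eps_t\rangle\,\d\mm$. Your plan extracts a.e.-convergent subsequences from the $W^{1,2}$-continuity of $f^\eps_t,g^\eps_t,\rho^\eps_t,\varphi^\eps_t,\dots$, but this gives a.e.\ convergence of the functions and their first-order gradients only; nothing in Proposition \ref{pro:7} gives any continuity of $t\mapsto\nabla\Delta h^\eps_t$, so the integrand of the fifth term is not covered. The paper resolves this by integrating by parts on a ball (formula \eqref{eq:ultcont}) so that only $\rho^\eps_t,\nabla\rho^\eps_t,\nabla\varphi^\eps_t,\Delta\varphi^\eps_t$ appear, all of which do vary continuously in the appropriate topologies. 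For the remaining four families your continuity argument (uniform integrability plus local dominated convergence) is sound and essentially the paper's.
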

 \begin{proof}
 
 \noindent{\bf General considerations} We shall repeatedly use the fact that if $h_1$ has Gaussian decay and $h_2$ polynomial growth, i.e.
\[
h_1\leq c_1\exp(-c_2\sfd^2(\cdot,\bar x)),\qquad\qquad h_2\leq c_3(1+\sfd^{c_4}(\cdot,\bar x))
\] 
for some $c_1,\ldots,c_4>0$, $\bar x\in \X$, then their product $h_1h_2$ belongs to $L^1\cap L^\infty(\X)$: the $L^\infty$ bound is obvious, the one for the $L^1$-norm is a direct consequence of the volume growth \eqref{eq:volgrowth} and explicit computations.
   
For what concerns the continuity of   $(0,1)\ni t\mapsto \int H^\eps_t\,\d\mm$,  notice that Proposition \ref{pro:7} yields that all the maps $(0,1)\ni t\mapsto |\nabla h^\eps_t|\in L^2(\X,e^{-V}\mm)$ and $(0,1)\ni t\mapsto \rho^\eps_t,|\nabla\rho^\eps_t|,\Delta\rho^\eps_t\in L^2(\X)$ are continuous (for $\Delta\rho^\eps_t$ use the fact that $\Delta\rho^\eps_t=g^\eps_t\Delta f^\eps_t+f^\eps_t\Delta g^\eps_t+2\la\nabla f^\eps_t,\nabla g^\eps_t\ra$ and the continuity of $(0,1)\ni t\mapsto\Delta f^\eps_t,\Delta g^\eps_t\in L^2(\X)$). Hence all the functions  in \eqref{eq:Heps}, with the possible exception of the last one, are continuous from $(0,1)$ to $L^0(\X)$ equipped with the topology of convergence in measure on bounded sets. Therefore the continuity of  $(0,1)\ni t\mapsto \int H^\eps_t\,\d\mm$ for these maps  will follow as soon as we show that they are, locally in $t\in(0,1)$, uniformly dominated by an $L^1(\X)$ function. Given that such domination also gives \eqref{eq:unifint}, we shall focus on proving it.

Finally, we shall consider only the case   $h^\eps_t=\varphi^\eps_t$, as the estimates for  $\psi^\eps_t$ can be obtained by symmetric arguments and the ones for $\vartheta^\eps_t,\log\rho^\eps_t$ follow from the identities $\vartheta^\eps_t=\frac{\psi^\eps_t-\varphi^\eps_t}2$, $\eps\log\rho^\eps_t=\varphi^\eps_t+\psi^\eps_t$.

 \noindent{{\bf Study of $\rho^\eps_t|\nabla h^\eps_t|^n$.}} By \eqref{eq:lipcontr} we know that  $|\nabla\varphi^\eps_t|$ has linear growth locally uniform in $t\in(0,1)$; hence $|\nabla\varphi^\eps_t|^n$ has polynomial growth  locally uniform in $t\in(0,1)$. Since $\rho^\eps_t$ has Gaussian bounds by \eqref{eq:2}, we deduce that $\rho^\eps_t|\nabla h^\eps_t|^n$ is, locally in $t\in(0,1)$, uniformly dominated. 
 
 \noindent{{\bf Study of $\rho^\eps_t\log\rho^\eps_t|\nabla h^\eps_t|^n$.}}  Writing $\log\rho^\eps_t=\log f^\eps_t+\log g^\eps_t$ and using \eqref{eq:weightbound} we see that $|\log\rho^\eps_t|$ has quadratic growth locally uniform in $t\in(0,1)$. Thus the claim follows as before.
 
\noindent{{\bf Study of $|\nabla\rho^\eps_t||\nabla h^\eps_t|^n$.}}   Notice that $|\nabla\rho^\eps_t|=\rho^\eps_t|\nabla \log\rho^\eps_t|$ and observe that from  $\eps\log\rho^\eps_t=\varphi^\eps_t+\psi^\eps_t$ and \eqref{eq:lipcontr} we have that $|\nabla \log\rho^\eps_t|$ has linear growth locally uniform in $t\in(0,1)$.

\noindent{{\bf Study of $|\Delta\rho^\eps_t||\nabla h^\eps_t|^n$.}} Write 
\[
|\Delta\rho^\eps_t|\leq f^\eps_t|\Delta g^\eps_t|+g^\eps_t|\Delta f^\eps_t|+2\eps^{-2}\rho^\eps_t|\nabla\varphi^\eps_t||\nabla \psi^\eps_t|
\]
and notice that the term $\rho^\eps_t|\nabla\varphi^\eps_t||\nabla \psi^\eps_t|$ can be handled as before and that by \eqref{eq:regflow} and the maximum principle for the heat flow we have that
\begin{equation}
\label{eq:uniflap}
\text{$\Delta f^\eps_t,\Delta g^\eps_t$ are bounded in $L^\infty(\X)$ locally uniformly in $t\in(0,1)$. }
\end{equation} 
Hence the conclusion follows from the Gaussian bounds \eqref{eq:weightbound}.

 \noindent{{\bf Study of $\rho^\eps_t\la\nabla h^\eps_t,\nabla \Delta h^\eps_t\ra$.}} Notice that 
\[
\begin{split}
|\nabla\Delta\varphi_t^\eps |&  \leq \eps\Big(\frac{|\nabla\Delta f^\eps_t|}{f^\eps_t} + \frac{|\Delta f^\eps_t| |\nabla f^\eps_t|}{(f^\eps_t)^2} + \frac{|\nabla |\nabla f^\eps_t|^2|}{(f^\eps_t)^2} + \frac{2|\nabla f^\eps_t|^3}{(f^\eps_t)^3}\Big) \\ 
& \leq \frac{1}{f^\eps_t}\Big( \eps |\nabla\Delta f^\eps_t| +   |\Delta f^\eps_t| |\nabla \varphi^\eps_t|+ 2 |\nabla\varphi_t^\eps||\H{f_t^\eps}|_{\HS} + 2\eps^{-2} f^\eps_t |\nabla\varphi_t^\eps|^3 \Big)\\
\end{split}
\]
and therefore, using also  $2g^\eps_t|\nabla\varphi^\eps_t|^2|\H{f^\eps_t}|_\HS\leq g^\eps_t|\nabla\varphi_t^\eps|^4 + g^\eps_t |\H{f_t^\eps}|^2_{\HS} $ we get
\[
\begin{split}
\rho_t^\eps  |\nabla\varphi_t^\eps|&|\nabla\Delta\varphi_t^\eps |\\
& \leq  \eps g^\eps_t |\nabla\varphi^\eps_t||\nabla\Delta f^\eps_t| +    g^\eps_t |\Delta f^\eps_t| |\nabla \varphi^\eps_t|^2+   g^\eps_t|\nabla\varphi_t^\eps|^4 + g^\eps_t |\H{f_t^\eps}|^2_{\HS} +2\eps^{-2} \rho^\eps_t |\nabla\varphi_t^\eps|^4.
\end{split}
\]
The last term in the right-hand side is dominated locally uniformly in  $t\in(0,1)$ by what already proved. Similarly, the term $g^\eps_t|\nabla\varphi_t^\eps|^4$ is, locally in $t$, dominated thanks to the Gaussian bounds on $g^\eps_t$, a domination for  $g^\eps_t |\Delta f^\eps_t| |\nabla \varphi^\eps_t|^2$ then follows using \eqref{eq:uniflap}. Writing $\nabla\Delta f^\eps_t=\nabla\h_{t-\delta}\Delta f^\eps_\delta$ for any $t\geq \delta>0$ and using \eqref{eq:regflow} and the Bakry-\'Emery estimates \eqref{eq:be} we see that 
\begin{equation}
\label{eq:unif3}
\text{$|\nabla\Delta f^\eps_t|$ is, locally in $t$, uniformly bounded in $L^\infty(\X)$,}
\end{equation} 
thus a local uniform domination for $ \eps g^\eps_t |\nabla\varphi^\eps_t||\nabla\Delta f^\eps_t|$ follows. 

It remains to consider the term $g^\eps_t |\H{f_t^\eps}|^2_{\HS} $: we know from \eqref{eq:heslap} that $ |\H{f_t^\eps}|_{\HS}\in L^2(\X)$ and from \eqref{eq:weightbound} that $g^\eps_t\in L^\infty(\X)$. This is sufficient to conclude that $\rho^\eps_t\la\nabla h^\eps_t,\nabla \Delta h^\eps_t\ra\in L^1(\X)$. To prove \eqref{eq:unifint}, thanks to the dominations previously obtained, it is enough to prove that 
\begin{equation}
\label{eq:unifhess}
\lim_{R\to\infty}\int(1-\nchi_R)g^\eps_t|\H{f_t^\eps}|^2_{\HS}\,\d\mm=0\qquad\text{locally uniformly in }t\in(0,1),
\end{equation}
where for any $R>0$ the function $\nchi_R$ is a cut-off given by Lemma  \ref{lem:cutoff}. From \eqref{eq:bochhess} we have
\[
\begin{split}
\int(1-\nchi_R)g^\eps_t|\H{f_t^\eps}|^2_{\HS}\,\d\mm\leq\int& \Delta((1-\nchi_R)g^\eps_t)\frac{|\nabla f^\eps_t|^2}2\\
&+(1-\nchi_R)g^\eps_t\big(\la\nabla f^\eps_t,\nabla\Delta f^\eps_t\ra-K|\nabla f^\eps_t|^2\big)\,\d\mm.
\end{split}
\]
By \eqref{eq:unif3}, the already noticed fact that $|\nabla f^\eps_t|$ is also uniformly bounded in $L^\infty(\X)$ locally in $t\in(0,1)$ and the Gaussian bounds \eqref{eq:weightbound} on $g^\eps_t$ we see that the second addend in the last integral is, locally in $t\in(0,1)$, dominated by an $L^1(\X)$ function.

For the first addend we write
\[
\begin{split}
 \Delta((1-\nchi_R)g^\eps_t)&= -g^\eps_t\Delta \nchi_R-2\la\nabla\nchi_R,\nabla g^\eps_t\ra+(1-\nchi_R)\Delta g^\eps_t\qquad\qquad |\nabla f^\eps_t|=\eps^{-1}f^\eps_t |\nabla \varphi^\eps_t|
\\
\end{split}
\]
and use the properties of $\nchi_R$ given by Lemma \ref{lem:cutoff} and those of $g^\eps_t,f^\eps_t,|\nabla\varphi^\eps_t|$ that we already mentioned to deduce that $ \Delta((1-\nchi_R)g^\eps_t)$ is bounded in $L^\infty(\X)$ and $|\nabla f^\eps_t|^2$ dominated in $L^1(\X)$, both locally uniformly in $t\in(0,1)$. Hence \eqref{eq:unifhess} follows from the fact that  $(1-\nchi_R),|\nabla\nchi_R|,\Delta \nchi_R$ are  identically 0 on $B_R(\bar x)$.

It remains to prove that $t\mapsto\int \rho^\eps_t\la\nabla \varphi^\eps_t,\nabla \Delta \varphi^\eps_t\ra\,\d\mm $ is continuous and thanks to \eqref{eq:unifint} to this aim it is sufficient to show that for any $R>0$ the map  $t\mapsto\int \nchi_R\rho^\eps_t\la\nabla \varphi^\eps_t,\nabla \Delta \varphi^\eps_t\ra\,\d\mm $ is continuous. To see this, notice that
\begin{equation}
\label{eq:ultcont}
\int \nchi_R\rho^\eps_t\la\nabla\varphi^\eps_t,\nabla \Delta \varphi^\eps_t\ra\,\d\mm=-\int\big( \nchi_R\la\nabla \rho^\eps_t,\nabla \varphi^\eps_t\ra + \rho^\eps_t\la\nabla \nchi_R ,\nabla \varphi^\eps_t\ra \big) \Delta \varphi^\eps_t+\nchi_R\rho^\eps_t|\Delta\varphi^\eps_t|^2\,\d\mm,
\end{equation}
and that  the maps $t\mapsto \rho^\eps_t,\varphi^\eps_t$ are continuous with values in $W^{1,2}(\X),W^{1,2}(\X,e^{-V}\mm)$ respectively. Also, writing $\Delta\varphi^\eps_t=\eps\frac{\Delta f^\eps_t}{f^\eps_t}-\eps|\nabla \varphi^\eps_t|^2$, using the continuity of $t\mapsto f^\eps_t,\Delta f^\eps_t\in L^2(\X)$, the bound \eqref{eq:uniflap}, the fact that $f^\eps_t$ is bounded from below on $\supp(\nchi_R)$ by a positive constant depending continuously on $t$ also taking into account what previously proved we see that the integrand in the right-hand side of \eqref{eq:ultcont} is continuous as map with values in $L^0(\X,\mm\restr{\supp(\nchi_R)})$ and, locally in $t$, uniformly dominated by an $L^1(\X)$-function. This is sufficient to conclude.
 \end{proof}

\begin{Proposition}[Uniform $L^{\infty}$ bound on the densities]\label{pro:6}
With the same assumptions and notations as in Setting \ref{set} the following holds.

For every $\bar{x} \in \X$ there exist constants $C,C' > 0$ which depends on $K,N,\bar{x},\rho_0,\rho_1$ such that
\begin{equation}
\label{eq:tail}
\rho_t^\eps \leq C e^{-C'\sfd^2(\cdot,\bar{x})} \qquad \mm\ae
\end{equation}
for every $t \in [0,1]$ and for every $\eps \in (0,1)$.
\end{Proposition}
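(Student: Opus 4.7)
The bound is trivial at $t\in\{0,1\}$ since $\rho_0,\rho_1\in L^\infty(\X)$ have bounded supports, so it suffices to consider $t\in(0,1)$. The starting point is the integral representation
\[
\rho_t^\eps(x)=f_t^\eps(x)\,g_t^\eps(x)=\iint\frac{\hr_{\eps t/2}[x](y)\,\hr_{\eps(1-t)/2}[x](z)}{\hr_{\eps/2}[y](z)}\,\d\ggamma^\eps(y,z),
\]
obtained by inserting $\hr_{\eps/2}[y](z)/\hr_{\eps/2}[y](z)=1$ in the product $f_t^\eps(x)\,g_t^\eps(x)$ and recognizing $f^\eps(y)g^\eps(z)\,\hr_{\eps/2}[y](z)\,\d\mm(y)\,\d\mm(z)$ as the density of the Schr\"odinger plan $\ggamma^\eps:=f^\eps\otimes g^\eps\,\hR^{\eps/2}$, a probability measure supported on $\supp(\rho_0)\times\supp(\rho_1)$. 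The integrand is the Brownian-bridge transition density between $y$ and $z$ at intermediate time $t$.

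For the decay at infinity, the Gaussian two-sided estimates on $\hr$, together with the volume doubling of $\mm$ and the bound $\sfd(y,z)\le D:=\diam(\supp\rho_0\cup\supp\rho_1)$ valid on $\supp\ggamma^\eps$, will produce an upper bound of the form
\[
\frac{C}{\eps^{N/2}(t(1-t))^{N/2}}\exp\!\Big({-}\tfrac{c\sfd^2(x,y)}{\eps t}-\tfrac{c\sfd^2(x,z)}{\eps(1-t)}+\tfrac{CD^2}{\eps}\Big)
\]
for the integrand. Writing $\sfd(x,y),\sfd(x,z)\ge \sfd(x,\bar x)-R$ with $R$ the $\bar x$-radius of the supports, and setting $\tau:=\eps t(1-t)\in(0,\eps/4]$, one checks that for $\sfd(x,\bar x)$ beyond an $\eps$-independent threshold $R_0$ depending only on $K,N,D,\bar x$ the negative exponents dominate the $CD^2/\eps$ term and one obtains a majorant of the shape $C\tau^{-N/2}e^{-c'\sfd^2(x,\bar x)/\tau}$. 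Optimising over $\tau$ and distinguishing whether the unconstrained maximum of $\tau\mapsto\tau^{-N/2}e^{-c'\sfd^2/\tau}$ lies inside $(0,\eps/4]$ or at the boundary $\tau=\eps/4$, a direct (calculus) computation gives a Gaussian majorant $Ce^{-C'\sfd^2(x,\bar x)}$ uniformly in $\eps\in(0,1)$ and $t\in(0,1)$, for any $C'<c'/2$.

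The main obstacle is the corresponding uniform $L^\infty$ bound on the complementary bounded region $\{\sfd(\cdot,\bar x)\le R_0\}$, since the prefactor $\eps^{-N/2}$ cannot be absorbed by Gaussian decay there. I plan to address this via the identity $\eps\log\rho_t^\eps=\varphi_t^\eps+\psi_t^\eps$ combined with the uniform Lipschitz and Laplacian bounds of Proposition~\ref{pro:1} and the Hamilton--Jacobi--Bellman equations of Proposition~\ref{pro:7}, propagating the endpoint control $\rho_0^\eps\le\|\rho_0\|_\infty$ and $\rho_1^\eps\le\|\rho_1\|_\infty$ to interior $t$ through a maximum-principle argument along the characteristics of the vector field $\nabla\vartheta_t^\eps$. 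Gluing the two regimes yields the stated bound $\rho_t^\eps\le Ce^{-C'\sfd^2(\cdot,\bar x)}$ with constants depending only on $K,N,\bar x,\rho_0,\rho_1$.
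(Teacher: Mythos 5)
Your far-field argument is sound and is essentially the route the paper takes: the paper first derives the Gaussian bound \eqref{eq:2}, $\rho^\eps_t\leq C_7\eps^{-N/2}\exp\big((C_8-C_9\sfd^2(\cdot,\bar x))/\eps\big)$, from the heat-kernel estimates \eqref{eq:gaussest} and the $L^1$--$L^\infty$ duality \eqref{eq:l1lifg}, and then observes that outside a fixed ball the negative exponent absorbs both $C_8/\eps$ and the prefactor $\eps^{-N/2}$, exactly as in your optimisation over $\tau$. The problem is the second half, which you correctly identify as the main obstacle but then only sketch, and the mechanism you propose does not close. Along a characteristic $\dot X_t=\nabla\vartheta^\eps_t(X_t)$ the continuity equation gives $\frac{\d}{\d t}\log\rho^\eps_t(X_t)=-\Delta\vartheta^\eps_t(X_t)$, so an upper bound on $\rho^\eps_t$ requires a \emph{lower} bound on $\Delta\vartheta^\eps_t=\tfrac12(\Delta\psi^\eps_t-\Delta\varphi^\eps_t)$, hence an \emph{upper} bound on $\Delta\varphi^\eps_t$. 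The Li--Yau inequality \eqref{eq:liyau} is one-sided: it bounds $\Delta\varphi^\eps_t$ and $\Delta\psi^\eps_t$ only from below, and no upper bound on $\Delta\varphi^\eps_t$ (uniform in $\eps$) is available in the paper or plausibly obtainable, since $\Delta\varphi^\eps_t=\eps\Delta\log f^\eps_t$ can blow up positively. Writing instead $\vartheta^\eps_t=\psi^\eps_t-\tfrac\eps2\log\rho^\eps_t$ does not rescue the pointwise argument either, because $-\tfrac\eps2\Delta\log\rho^\eps_t$ has no sign along a single characteristic. Moreover, the quadratic pointwise bounds on $\varphi^\eps_t+\psi^\eps_t$ that would make the identity $\eps\log\rho^\eps_t=\varphi^\eps_t+\psi^\eps_t$ useful are established only later (Proposition \ref{lem:62}) and their proof already relies on \eqref{eq:tail}, so invoking them here would be circular; Proposition \ref{pro:1} alone gives gradient and one-sided Laplacian control, which at best yields $\rho^\eps_t\leq e^{C/\eps}$.

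The paper's actual resolution is an integrated, Moser-type estimate rather than a pointwise one: setting $E_p(t)=\int(\rho^\eps_t)^p\,\d\mm$ and differentiating via the continuity equation, an integration by parts gives $\ddt E_p(t)=-(p-1)\int(\rho^\eps_t)^p\Delta\vartheta^\eps_t\,\d\mm$; substituting $\vartheta^\eps_t=\psi^\eps_t-\tfrac\eps2\log\rho^\eps_t$ and integrating by parts once more turns the $\log\rho$ contribution into $-\tfrac\eps2 p(p-1)\int(\rho^\eps_t)^{p-2}|\nabla\rho^\eps_t|^2\,\d\mm\leq 0$, so that only the Li--Yau \emph{lower} bound on $\Delta\psi^\eps_t$ (valid for $t\in[0,1/2]$) is needed. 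Gronwall then gives $E_p(t)\leq(E_p(0)+\tfrac{1}{c(p-1)})e^{c(p-1)}$ with $c$ independent of $p$ and $\eps$, and letting $p\to\infty$ yields $\|\rho^\eps_t\|_{L^\infty}\leq e^{c}\|\rho_0\|_{L^\infty}$ on $[0,1/2]$; the interval $[1/2,1]$ is handled symmetrically using $\vartheta^\eps_t=-\varphi^\eps_t+\tfrac\eps2\log\rho^\eps_t$. This sign-extraction after integration by parts is the missing idea in your proposal.
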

\begin{proof} From \eqref{eq:2} and direct manipulation we see that there are constants $c,c',r>0$ depending on $K,N,\bar{x},\rho_0,\rho_1$ only such that
\begin{equation}
\label{eq:gauss1}
\rho^\eps_t(x)\leq ce^{-c'\sfd^2(x,\bar x)}\qquad\forall x\notin B_r(\bar x),\ \eps\in(0,1),\ t\in[0,1],
\end{equation}
hence to conclude it is sufficient to show that there exists  a constant $M > 0$ depending on  $K,N,\bar{x},\rho_0,\rho_1$ only   such that
\begin{equation}
\label{eq:linftybound}
\|\rho_t^\eps\|_{L^{\infty}(\X)} \leq M\qquad\forall \eps\in(0,1),\ t\in[0,1].
\end{equation}
For later purposes it will be useful to observe that from \eqref{eq:gauss1} and the volume growth estimate \eqref{eq:volgrowth} it follows  that there is $R>r$ such that 
\begin{equation}
\label{eq:dagauss1}
\int_{\X\setminus B_R(\bar x)}(\rho^\eps_t)^p \sfd^2(\cdot,\bar{x})\d\mm\leq 1\qquad\forall \eps\in(0,1),\ t\in[0,1],\ p\geq 2.
\end{equation}
Now fix $\eps > 0$. We know from Proposition \ref{pro:7} that $(\rho_t^\eps)\in C([0,1],L^2(\X))\cap AC_{loc}((0,1),L^2(\X))$ and by the maximum principle for the heat equation $\rho_t^\eps \leq C_{\eps}$ for all $t \in [0,1]$, thus for any $p>2$ the function $E_p : [0,1] \to[0,\infty)$ defined by
\[
E_p(t) := \int (\rho^\eps_t)^p\,\d\mm,
\]
belongs to $C([0,1])\cap AC_{loc}((0,1))$. An application of the dominated convergence theorem grants that its derivative can be computed passing the limit inside the integral, obtaining
\[
\frac{\d}{\d t}E_p(t) = p\int (\rho^\eps_t)^{p-1}\frac{\d}{\d t}\rho^\eps_t\d\mm = -p\int (\rho^\eps_t)^{p-1}{\rm div}(\rho^\eps_t\nabla\vartheta^\eps_t)\,\d\mm.
\]
Then the definition of $\vartheta_t^\eps$, \eqref{eq:lipcontr},  \eqref{eq:lapcontr} and \eqref{eq:2} allow to justify the integration by parts,  whence
\[
\begin{split}
\frac{\d}{\d t}E_p(t)&=p(p-1)\int(\rho^\eps_t)^{p-1}\la\nabla\rho^\eps_t,\nabla\vartheta^\eps_t\ra\d\mm\\
&=(p-1)\int\la\nabla(\rho^\eps_t)^p,\nabla\vartheta^\eps_t\ra\d\mm = -(p-1)\int (\rho^\eps_t)^{p}\Delta\vartheta^\eps_t \,\d\mm
\end{split}
\]
and recalling that  $\vartheta^\eps_t = \psi^\eps_t - \frac{\varepsilon}{2}\log\rho^\eps_t$ we obtain (for the same reasons as above, the integrals are well defined) that
\begin{equation}\label{eq:18}
\ddt E_p(t) = -(p-1)\int(\rho_t^\eps)^p\Delta\psi^\eps_t\d\mm + \frac{\varepsilon}{2}(p-1) \int (\rho^\eps_t)^p\Delta\log\rho^\eps_t\d\mm.
\end{equation}
Now notice  (the same arguments as above  justify integration by parts) that
\[
\int (\rho^\eps_t)^p\Delta\log\rho^\eps_t\d\mm = -p\int (\rho^\eps_t)^{p-1}\la\nabla\rho^\eps_t,\nabla\log\rho^\eps_t\ra\d\mm=-p\int  (\rho^\eps_t)^{p-2}|\nabla\rho^\eps_t|^2\,\d\mm\leq 0
\]
and choose $\delta:=\frac12$ and $T:=1$ in \eqref{eq:liyau} to get the existence of a constant $c''>0$ depending on $K,N,\bar{x}$ and the diameters of the supports of $\rho_0,\rho_1$ such that $\Delta\psi^\eps_t\geq -c''(1 + \sfd^2(\cdot,\bar{x}))$ for any $t \in [0,1/2]$. Thus  from \eqref{eq:18} we have
\[
\ddt E_p(t)  \leq c''(p-1)E_p(t) + c''(p-1)\int(\rho^\eps_t)^p \sfd^2(\cdot,\bar{x})\d\mm\qquad a.e.\ t\in[0,1/2]
\]
and recalling \eqref{eq:dagauss1}  we get
\[
\ddt E_p(t)  \leq c''(p-1)E_p(t)+ c''(p-1)\int_{B_R(\bar x)}(\rho^\eps_t)^p \sfd^2(\cdot,\bar{x})\d\mm+1\leq  c'''(p-1)E_p(t)+1
\]
for a.e.\ $t\in[0,1/2]$. Then Gronwall's lemma gives
\[
E_p(t) \leq \big(E_p(0) + \frac{1}{c'''(p-1)}\big)e^{c'''(p-1)}\qquad\forall t\in[0,1/2].
\]
Passing to the $p$-th roots, writing $E_p(t) = \|\rho_t^\eps\|^{p-1}_{L^{p-1}(\mu_t^\eps)}$ and observing that, being $\mu_t^\eps$ a probability measure, we have $\|h\|_{L^p(\mu_t^\eps)}\uparrow\|h\|_{L^\infty(\mu_t^\eps)}$ as $p\to\infty$, we obtain
\[
\|\rho^\eps_t\|_{L^{\infty}} \leq e^{c'''}\|\rho_0\|_{L^{\infty}}, \quad \forall t \in [0,1/2].
\]
Switching the roles of $\rho_0$ and $\rho_1$ we get the analogous control for $t\in[\frac12,1]$, whence the claim \eqref{eq:linftybound} with $M := e^{c'''}\max\{\|\rho_0\|_{L^{\infty}},\|\rho_1\|_{L^{\infty}}\}$. 
\end{proof}

\subsection{The entropy along entropic interpolations}

In \cite{Leonard13} L\'eonard computed the first and second derivatives of the relative entropy along entropic interpolations: here we are going to show that his computations are fully justifiable in our setting. As we shall see later on, these formulas will be the crucial tool for showing that the acceleration of the entropic interpolation goes to 0 in the suitable weak sense.

We start by noticing that a form of  Bochner inequality  for the Schr\"odinger potentials can be deduced. Observe that in general the object $\Ggamma_2(\varphi^\eps_t)$ is not a well defined measure, because in some sense it can have both infinite positive mass and infinite negative mass, nevertheless, thanks to Lemma \ref{lem:contprov2}, its action on $\rho^\eps_t$ can still be defined: we will put
\[
\Big\langle \Gamma_2(h^\eps_t),\rho^\eps_t \Big\rangle:=\int\Big(\frac{1}{2}\Delta\rho_t^\eps |\nabla h_t^\eps|^2 - \rho_t^\eps\langle\nabla h_t^\eps,\nabla\Delta h_t^\eps \rangle\Big)\d\mm,
\]
where $h_t^\eps$ is equal to any of $\varphi_t^\eps,\psi_t^\eps,\vartheta_t^\eps,\log\rho_t^\eps$, and notice that  Lemma \ref{lem:contprov2} ensures that the integral in the right-hand side is well defined and finite. We then have:
\begin{Lemma}\label{lem:gamma2}
With the same assumptions and notations as in Setting \ref{set}, for any $\eps > 0$ and $t \in (0,1)$ we have
\begin{subequations}
\begin{align}
\label{eq:wbochhess}
\Big\langle \Gamma_2(h^\eps_t),\rho^\eps_t \Big\rangle & \geq \int \big(|\H{h_t^\eps}|^2_\HS + K|\nabla h_t^\eps|^2\big)\rho_t^\eps\d\mm \\
\label{eq:wbochlap}
\Big\langle \Gamma_2(h^\eps_t),\rho^\eps_t \Big\rangle& \geq \int \Big( \frac{(\Delta h_t^\eps)^2}N + K|\nabla h_t^\eps|^2\Big)\rho_t^\eps\d\mm
\end{align}
\end{subequations}
where $h_t^\eps$ is equal to any of $\varphi_t^\eps,\psi_t^\eps,\vartheta_t^\eps,\log\rho_t^\eps$.
\end{Lemma}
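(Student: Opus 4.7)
The strategy is to deduce both inequalities from the pointwise Bochner inequality on $\RCD^*(K,N)$ spaces, tested against $\rho^\eps_t$, via a localization argument using the cutoff functions $\nchi_R$ from Lemma \ref{lem:cutoff}. The subtle point is that $h^\eps_t$ only belongs to $\testl{\X}$ and not globally to $\test{\X}$: by \eqref{eq:lipcontr}, $|\nabla h^\eps_t|$ may grow linearly at infinity, so that $\Ggamma_2(h^\eps_t)$ is generically not a well-defined signed measure. However, thanks to the Gaussian decay \eqref{eq:tail} of $\rho^\eps_t$ and the uniform integrability of the products listed in Lemma \ref{lem:contprov2}, the pairing $\langle \Ggamma_2(h^\eps_t),\rho^\eps_t\rangle$ does make sense, and the goal is to recover for it the standard Bochner bound.

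Concretely, I would start from the integrated Bochner inequality of $\RCD^*(K,N)$ spaces: for every $f\in\test{\X}$ and every nonnegative $g\in L^\infty\cap\mathrm{dom}(\Delta)$ with $\Delta g\in L^\infty$ and compact support one has
$$\frac{1}{2}\int (\Delta g)|\nabla f|^2\,\d\mm-\int g\la\nabla f,\nabla\Delta f\ra\,\d\mm \geq \int g\bigl(|\H{f}|_\HS^2+K|\nabla f|^2\bigr)\,\d\mm,$$
together with the analogous dimensional variant in which $|\H{f}|_\HS^2$ is replaced by $(\Delta f)^2/N$. I would apply this with $g:=\nchi_R\rho^\eps_t$ and with $f$ equal to a truncation of $h^\eps_t$ which coincides with $h^\eps_t$ on an open neighborhood of $\supp(\nchi_R)$ and globally belongs to $\test{\X}$ (such a truncation exists because $h^\eps_t\in\testl{\X}$, and both sides of Bochner depend only on the values of $f$ on such a neighborhood). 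Expanding $\Delta(\nchi_R\rho^\eps_t)=\nchi_R\Delta\rho^\eps_t+2\la\nabla\nchi_R,\nabla\rho^\eps_t\ra+\rho^\eps_t\Delta\nchi_R$ and letting $R\to\infty$ then completes the argument: the main terms converge to $\langle\Ggamma_2(h^\eps_t),\rho^\eps_t\rangle$ and to $\int(|\H{h^\eps_t}|_\HS^2+K|\nabla h^\eps_t|^2)\rho^\eps_t\,\d\mm$, by dominated convergence based on the integrability list in Lemma \ref{lem:contprov2}, while the nonnegative Hessian/Laplacian piece on the right-hand side is handled by monotone convergence.

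The main obstacle is verifying that the two boundary contributions generated by $\nabla\nchi_R$ and $\Delta\nchi_R$ vanish in the limit. They are supported on $\X\setminus B_R(\bar x)$ and majorized by bounded multiples of $|\nabla\rho^\eps_t||\nabla h^\eps_t|^2$ and $\rho^\eps_t|\nabla h^\eps_t|^2$ respectively; using Lemma \ref{lem:cutoff} to bound $\|\nabla\nchi_R\|_\infty$ and $\|\Delta\nchi_R\|_\infty$ uniformly in $R$, this is precisely the content of the uniform tail estimate \eqref{eq:unifint} in Lemma \ref{lem:contprov2}. The same reasoning with $(\Delta h^\eps_t)^2/N$ in place of $|\H{h^\eps_t}|_\HS^2$ yields \eqref{eq:wbochlap}.
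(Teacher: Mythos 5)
Your proposal is correct and follows essentially the same route as the paper: the paper likewise applies the Bochner inequality \eqref{eq:bochhess} (resp.\ \eqref{eq:bochlap}) to the truncation $\nchi_{R+1}h^\eps_t\in\testi\X$, pairs it with $\nchi_R\rho^\eps_t$ using locality, expands $\Delta(\nchi_R\rho^\eps_t)$, and passes to the limit $R\to\infty$ via monotone convergence on the right-hand side and the dominations of Lemma \ref{lem:contprov2} on the left. The only cosmetic difference is that you phrase the starting point as a pre-integrated weak Bochner inequality against a compactly supported nonnegative $g$, whereas the paper multiplies the measure-valued inequality by $\nchi_R\rho^\eps_t$ and integrates; these are the same step.
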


\begin{proof} 
Fix $\eps > 0$, $t \in (0,1)$ and, for given $\bar{x} \in \X$ and $R>0$, let $\nchi_R \in \testi\X$ be a cut-off function with support in $B_{R+1}(\bar{x})$ and such that $\nchi_R \equiv 1$ in $B_R(\bar{x})$. Then we know that   ${\nchi}_{R+1}\varphi_t^\eps \in \testi\X$ and thus \eqref{eq:bochhess} holds for it, namely
\[
\Ggamma_2({\nchi}_{R+1}\varphi_t^\eps) \geq \big(|\H{{\nchi}_{R+1}\varphi_t^\eps}|^2_\HS + K|\nabla({\nchi}_{R+1}\varphi_t^\eps)|^2\big)\mm.
\]
Multiplying both sides of the inequality by $\nchi_R\rho_t^\eps$, integrating over $\X$ and using the locality of the various differential operators appearing we obtain
\begin{equation}\label{eq:intbochner}
\int\Big(\frac{1}{2}\Delta(\nchi_R\rho_t^\eps) |\nabla\varphi_t^\eps|^2 - \nchi_R\rho_t^\eps\langle\nabla\varphi_t^\eps,\nabla\Delta\varphi_t^\eps\rangle\Big)\d\mm \geq \int \nchi_R\rho_t^\eps \big(|\H{\varphi_t^\eps}|^2_\HS + K|\nabla\varphi_t^\eps|^2\big)\d\mm.
\end{equation}
By monotone convergence we have that
\[
\begin{split}
\lim_{R \to \infty}\int \nchi_R\rho_t^\eps |\nabla\varphi_t^\eps|^2\d\mm& = \int \rho_t^\eps |\nabla\varphi_t^\eps|^2\d\mm,\\
\lim_{R \to \infty}\int \nchi_R\rho_t^\eps |\H{\varphi_t^\eps}|^2_\HS\d\mm& = \int \rho_t^\eps |\H{\varphi_t^\eps}|^2_\HS\d\mm.
\end{split}
\]
and thus the right-hand side of \eqref{eq:intbochner} goes to the right-hand side of \eqref{eq:wbochhess}. Now notice that
\[
\Delta(\nchi_R\rho^\eps_t)=\nchi_R\Delta \rho^\eps_t+2\la\nabla\rho^\eps_t,\nabla\nchi_R\ra+\rho^\eps_t\Delta\nchi_R
\]
and that the choice of $\nchi_R$ grants that $|\nchi_R|,|\nabla\nchi_R|,|\Delta\nchi_R|$ are uniformly bounded and $\mm$-a.e.\ converge to $1,0,0$ respectively as $R\to\infty$. Hence Lemma \ref{lem:contprov2} and the dominated convergence theorem give that the left-hand side of \eqref{eq:intbochner} goes to $\big\langle \Gamma_2(h^\eps_t),\rho^\eps_t \big\rangle$, thus settling the proof of \eqref{eq:wbochhess} for $h^\eps_t=\varphi^\eps_t$. The other claims follow by similar means taking \eqref{eq:bochlap} into account.
\end{proof}

Now we are in  position for motivating L\'eonard's computations, thus getting the formulas for the first and second derivative of the entropy along entropic interpolations.

\begin{Proposition}\label{pro:5}
With the same assumptions and notations as in Setting \ref{set} the following holds.

For any $\eps>0$ the map $t\mapsto H(\mu^{\varepsilon}_t \,|\, \mm)$ belongs to $C([0,1])\cap C^2(0,1)$ and for every $t\in(0,1)$ it holds
\begin{subequations}
\begin{align}
\label{eq:firstder}
\frac{\d}{\d t}H(\mu^{\varepsilon}_t \,|\, \mm) &= \int\la\nabla \rho^\eps_t,\nabla\vartheta^\eps_t\ra\,\d\mm=\frac1{2\eps}\int\big(|\nabla\psi^\eps_t|^2-|\nabla\varphi^\eps_t|^2\big)\rho^\eps_t\,\d\mm,\\
\label{eq:secondder}
\frac{\d^2}{\d t^2}H(\mu^{\varepsilon}_t \,|\, \mm) & = \Big\langle\Gamma_2(\vartheta^{\varepsilon}_t),\rho^\eps_t\Big\rangle+\frac{\eps^2}4\Big\langle\Gamma_2(\log\rho^{\varepsilon}_t),\rho^\eps_t\Big\rangle= \frac{1}{2}\Big\langle\Gamma_2(\varphi^{\varepsilon}_t),\rho^\eps_t\Big\rangle+ \frac{1}{2}\Big\langle\Gamma_2(\psi^{\varepsilon}_t),\rho^\eps_t\Big\rangle.
\end{align}
\end{subequations}
\end{Proposition}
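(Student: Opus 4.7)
\emph{Outline of approach.} The backbone of the argument is the identity $\varepsilon\log\rho_t^\varepsilon=\varphi_t^\varepsilon+\psi_t^\varepsilon$ built into Setting~\ref{set}, the continuity equation $\partial_t\rho_t^\varepsilon+\mathrm{div}(\rho_t^\varepsilon\nabla\vartheta_t^\varepsilon)=0$, the Hamilton--Jacobi--Bellman relation $\partial_t\vartheta_t^\varepsilon=-\tfrac12|\nabla\vartheta_t^\varepsilon|^2+a_t^\varepsilon$ with $a_t^\varepsilon=-\tfrac{\varepsilon^2}{8}(2\Delta\log\rho_t^\varepsilon+|\nabla\log\rho_t^\varepsilon|^2)$, and the quantitative estimates of Propositions~\ref{pro:7},~\ref{pro:1},~\ref{pro:6} and of Lemma~\ref{lem:contprov2}. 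Continuity of $t\mapsto H(\mu_t^\varepsilon\,|\,\mm)$ on $[0,1]$ follows from the $L^2$-continuity of $\rho_t^\varepsilon$ (Proposition~\ref{pro:7}\,(ii)) combined with the Gaussian tail \eqref{eq:2} and the uniform $L^\infty$ bound \eqref{eq:tail}, which together give dominated convergence for $\int\rho_t^\varepsilon\log\rho_t^\varepsilon\,d\mm$.

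\emph{First derivative.} I differentiate $H(t)=\int\rho_t^\varepsilon\log\rho_t^\varepsilon\,d\mm$ using the $AC_{loc}((0,1),W^{1,2}(\X))$ regularity of $\rho_t^\varepsilon$ and the continuity equation:
\[
H'(t)=\int(1+\log\rho_t^\varepsilon)\,\partial_t\rho_t^\varepsilon\,d\mm=\int\langle\nabla\log\rho_t^\varepsilon,\rho_t^\varepsilon\nabla\vartheta_t^\varepsilon\rangle\,d\mm=\int\langle\nabla\rho_t^\varepsilon,\nabla\vartheta_t^\varepsilon\rangle\,d\mm,
\]
the integration by parts being legal because the products $\rho_t^\varepsilon|\nabla\log\rho_t^\varepsilon||\nabla\vartheta_t^\varepsilon|$ and $\rho_t^\varepsilon\,\Delta\vartheta_t^\varepsilon$ are in $L^1(\X)$ by Lemma~\ref{lem:contprov2}. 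The equivalent form $\frac{1}{2\varepsilon}\int(|\nabla\psi_t^\varepsilon|^2-|\nabla\varphi_t^\varepsilon|^2)\rho_t^\varepsilon\,d\mm$ is then purely algebraic, using $\varepsilon\nabla\log\rho_t^\varepsilon=\nabla\varphi_t^\varepsilon+\nabla\psi_t^\varepsilon$ and $2\nabla\vartheta_t^\varepsilon=\nabla\psi_t^\varepsilon-\nabla\varphi_t^\varepsilon$.

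\emph{Second derivative.} I differentiate $H'(t)=\int\langle\nabla\rho_t^\varepsilon,\nabla\vartheta_t^\varepsilon\rangle\,d\mm$ using the joint time regularity of $\rho_t^\varepsilon$ and $\vartheta_t^\varepsilon$ from Proposition~\ref{pro:7}, obtaining
\[
H''(t)=\int\langle\nabla\partial_t\rho_t^\varepsilon,\nabla\vartheta_t^\varepsilon\rangle\,d\mm+\int\langle\nabla\rho_t^\varepsilon,\nabla\partial_t\vartheta_t^\varepsilon\rangle\,d\mm.
\]
The first integral equals $-\int\partial_t\rho_t^\varepsilon\,\Delta\vartheta_t^\varepsilon\,d\mm=\int\mathrm{div}(\rho_t^\varepsilon\nabla\vartheta_t^\varepsilon)\Delta\vartheta_t^\varepsilon\,d\mm=-\int\rho_t^\varepsilon\langle\nabla\vartheta_t^\varepsilon,\nabla\Delta\vartheta_t^\varepsilon\rangle\,d\mm$. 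For the second, plugging the HJB equation yields $-\tfrac12\int\langle\nabla\rho_t^\varepsilon,\nabla|\nabla\vartheta_t^\varepsilon|^2\rangle\,d\mm+\int\langle\nabla\rho_t^\varepsilon,\nabla a_t^\varepsilon\rangle\,d\mm$, and one integration by parts turns the first summand into $\tfrac12\int\Delta\rho_t^\varepsilon|\nabla\vartheta_t^\varepsilon|^2\,d\mm$. Grouping the three $\vartheta$-contributions recovers exactly the pairing $\langle\Gamma_2(\vartheta_t^\varepsilon),\rho_t^\varepsilon\rangle$ as defined before Lemma~\ref{lem:gamma2}. For the leftover term, writing $\nabla\rho_t^\varepsilon=\rho_t^\varepsilon\nabla\log\rho_t^\varepsilon$ and expanding $a_t^\varepsilon$, two further integrations by parts give
\[
\int\langle\nabla\rho_t^\varepsilon,\nabla a_t^\varepsilon\rangle\,d\mm=\tfrac{\varepsilon^2}{4}\Bigl(\tfrac12\int\Delta\rho_t^\varepsilon|\nabla\log\rho_t^\varepsilon|^2\,d\mm-\int\rho_t^\varepsilon\langle\nabla\log\rho_t^\varepsilon,\nabla\Delta\log\rho_t^\varepsilon\rangle\,d\mm\Bigr)=\tfrac{\varepsilon^2}{4}\langle\Gamma_2(\log\rho_t^\varepsilon),\rho_t^\varepsilon\rangle.
\]
This is the first form of \eqref{eq:secondder}; the second follows by the bilinearity identity $4\Gamma_2(\vartheta_t^\varepsilon)+\varepsilon^2\Gamma_2(\log\rho_t^\varepsilon)=2\Gamma_2(\varphi_t^\varepsilon)+2\Gamma_2(\psi_t^\varepsilon)$, which at the level of the pairings against $\rho_t^\varepsilon$ comes from $2\vartheta_t^\varepsilon=\psi_t^\varepsilon-\varphi_t^\varepsilon$ and $\varepsilon\log\rho_t^\varepsilon=\varphi_t^\varepsilon+\psi_t^\varepsilon$.

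\emph{Main obstacle.} The principal difficulty is justifying the differentiations under the integral and the repeated integrations by parts in a non-compact regime where the potentials only grow polynomially (Proposition~\ref{pro:1}) while $\rho_t^\varepsilon$ decays like a Gaussian (Proposition~\ref{pro:6}): one has to rule out boundary contributions at spatial infinity. Lemma~\ref{lem:contprov2} is the decisive tool since it precisely controls the products $\rho_t^\varepsilon|\nabla h_t^\varepsilon|^n$, $|\nabla\rho_t^\varepsilon||\nabla h_t^\varepsilon|^n$, $\Delta\rho_t^\varepsilon|\nabla h_t^\varepsilon|^n$ and $\rho_t^\varepsilon\langle\nabla h_t^\varepsilon,\nabla\Delta h_t^\varepsilon\rangle$ appearing in the computation, yielding both their $L^1$ integrability (with equi-integrability at infinity, so cut-off arguments close) and their $t$-continuity, from which the $C^2$-regularity of $H$ is read off. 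Throughout, Laplacians are moved onto $\rho_t^\varepsilon\in D(\Delta)$ by symmetry of the Dirichlet form rather than commuting $\Delta$ with $\partial_t$, which is delicate; and one must remember that $\Gamma_2(\vartheta_t^\varepsilon)$ is not a signed measure but only the scalar pairing $\langle\Gamma_2(\vartheta_t^\varepsilon),\rho_t^\varepsilon\rangle$ defined above, which is the object appearing in \eqref{eq:secondder}.
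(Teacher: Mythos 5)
Your proof is correct and follows essentially the same strategy as the paper's: differentiate $\int\rho_t^\eps\log\rho_t^\eps\,\d\mm$ via the continuity equation, differentiate again using the HJB equation for $\vartheta_t^\eps$, integrate by parts to recognize $\langle\Gamma_2(\vartheta_t^\eps),\rho_t^\eps\rangle+\tfrac{\eps^2}{4}\langle\Gamma_2(\log\rho_t^\eps),\rho_t^\eps\rangle$, and invoke Lemma~\ref{lem:contprov2} for $L^1$-integrability, equi-integrability at infinity and $t$-continuity. The only presentational difference is that the paper makes the differentiation under the integral rigorous by first working with $\int\nchi_R\,u(\rho_t^\eps)\,\d\mm$ and $\int\nchi_R\langle\nabla\rho_t^\eps,\nabla\vartheta_t^\eps\rangle\,\d\mm$ for a fixed good cut-off $\nchi_R$ (where the $AC$ regularity of Proposition~\ref{pro:7} applies directly) and then passing $R\to\infty$ through Lemma~\ref{lem:contprov2}; you compress this into the invocation of $AC_{loc}$ regularity plus the ``cut-off arguments close'' remark, which is the same mechanism stated informally.
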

\begin{proof} By Lemma \ref{lem:contprov2} we know that the central and right-hand sides in \eqref{eq:firstder} and \eqref{eq:secondder} exist, are  finite and continuously depend on $t\in(0,1)$. Also, the equality between the central and right-hand sides follows trivially from the relations $\vartheta^\eps_t=\frac{\psi^\eps_t-\varphi^\eps_t}2$ and $\eps\log\rho^\eps_t=\varphi^\eps_t+\psi^\eps_t$.

Thus to conclude it is sufficient to show that $t\mapsto H(\mu^{\varepsilon}_t \,|\, \mm)$ is in $C([0,1])$ and that the identities  \eqref{eq:firstder} and \eqref{eq:secondder} hold for a.e.\ $t\in(0,1)$.

Lemma \ref{lem:contprov2} ensures that $t\mapsto H(\mu^{\varepsilon}_t \,|\, \mm)$ is continuous in $(0,1)$. To check continuity in $t=0,1$, thanks to the fact that $(\rho^\eps_t)\in C([0,1],L^2(\X))$ by Proposition \ref{pro:7} and arguing as in the proof of Lemma  \ref{lem:contprov2}, it is sufficient to show that $\rho^\eps_t\log\rho^\eps_t$ is dominated by an $L^1(\X)$ function. To see this, write
\[
\rho^\eps_t\log\rho^\eps_t=g^\eps_tf^\eps_t\log f^\eps_t+f^\eps_tg^\eps_t\log g^\eps_t
\]
and notice that for $t\in[0,1/2]$ the bound \eqref{eq:weightbound} ensures that the function $g^\eps_t$ is uniformly bounded above by a Gaussian and that $\log g^\eps_t$ has a quadratic growth. On the other hand, we know by Theorem \ref{thm:5} that $f^\eps_0=f^\eps$ is in $L^\infty$, thus the maximum principle for the heat flow and the fact that $z\mapsto z\log z$ is bounded from below give that the $L^\infty$ norms of $f^\eps_t,f^\eps_t\log f^\eps_t$ are uniformly bounded in $t\in[0,1/2]$. As discussed in the proof of Lemma \ref{lem:contprov2}, this is sufficient to conclude and a similar arguments yields the desired bound for $t\in[1/2,1]$.

Now fix $\eps>0$ and for $R>0$ let $\nchi_R \in \testi\X$ be a cut-off function as given by Lemma \ref{lem:cutoff}. Notice that Lemma \ref{lem:contprov2} grants that
\begin{equation}
\label{eq:conventr}
\int\nchi_R\rho^\eps_t\log\rho^\eps_t\,\d\mm\quad\to\quad\int \rho_t^\eps\log\rho^\eps_t\,\d\mm\qquad\text{as $R\to\infty$ for every  $t\in(0,1)$.}
\end{equation}
Also,  Proposition \ref{pro:7} tells that $(\rho^\eps_t)\in AC_{loc}((0,1),L^2(\X))$ and that it is, locally in $t\in(0,1)$ and in space, uniformly bounded away from 0 and $\infty$. Therefore, for  $u(z):=z\log z$ we have that $(0,1)\ni t\mapsto \nchi_R u(\rho^\eps_t)\in L^{2}(\X)$ is absolutely continuous. In particular, so is $\int \nchi_R u(\rho^\eps_t)\,\d\mm$ and it is then clear that
\[
\ddt \int \nchi_R u(\rho^\eps_t)\d\mm = \int \nchi_R(\log(\rho^\eps_t)+1)\ddt\rho_t^\eps\,\d\mm,\qquad{\rm a.e.}\ t.
\]
Using the formula for $\ddt\rho^\eps_t$ provided by Proposition \ref{pro:7} we then get
\[
\begin{split}
\ddt \int \nchi_R u(\rho^\eps_t)\,\d\mm & = -\int \nchi_R(\log(\rho^\eps_t)+1){\rm div}(\rho_t^\eps \nabla\vartheta^\eps_t)\d\mm = \int\langle\nabla(\nchi_R(\log(\rho^\eps_t)+1),\nabla \vartheta^\eps_t \rangle \rho^\eps_t\d\mm \\ & = \int\nchi_R\la\nabla\rho^\eps_t,\nabla \vartheta^\eps_t \ra\d\mm + \int\la\nabla\nchi_R, \nabla\vartheta_t^\eps\ra(\log\rho_t^\eps + 1)\rho_t^\eps\d\mm.
\end{split}
\]
Since $|\nabla\nchi_R|$ is uniformly bounded and identically 0 on $B_R(\bar x)$, Lemma \ref{lem:contprov2} grants that the last expression in the above identity converges to $\int\la\nabla\rho^\eps_t,\nabla\vartheta^\eps_t\ra\,\d\mm$ as $R\to\infty$ locally uniformly in $t\in(0,1)$. This fact, \eqref{eq:conventr} and the initial discussion give $C^1((0,1))$ regularity for $t\mapsto H(\mu^{\varepsilon}_t \,|\, \mm)$ and  \eqref{eq:firstder}.

For \eqref{eq:secondder}, notice that from Proposition \ref{pro:7} we know that $(\rho_t^\eps) \in AC_{loc}((0,1),W^{1,2}(\X))$ and $(\vartheta_t^\eps) \in AC_{loc}((0,1),W^{1,2}(\X,e^{-V}\mm))$ with $V = M\sfd^2(\cdot,\bar{x})$, for some $\bar{x} \in \X$ and $M > 0$ sufficiently large. Hence $(0,1) \ni t \mapsto \nchi_R \la\nabla\rho_t^\eps,\nabla\vartheta_t^\eps\ra \in L^2(\X)$ is absolutely continuous. In particular, so is $\int\nchi_R\la\nabla\rho_t^\eps,\nabla\vartheta_t^\eps\ra\d\mm$ and
\[
\ddt\int\nchi_R \langle\nabla\rho_t^\eps,\nabla\vartheta_t^\eps\rangle\d\mm = \int \nchi_R \Big(\langle\nabla\ddt \rho^\eps_t,\nabla\vartheta^\eps_t\rangle + \langle\nabla \rho^\eps_t,\nabla\ddt\vartheta^\eps_t\rangle\Big)\d\mm, \qquad{\rm a.e.}\ t.
\]
Thus from the formulas for $\ddt\rho^\eps_t,\ddt\vartheta^\eps_t$ provided in Proposition \ref{pro:7}  we obtain
\[
\begin{split}
\ddt\int\nchi_R \langle\nabla\rho_t^\eps,\nabla\vartheta_t^\eps\rangle\d\mm =&\underbrace{\int -\nchi_R\la\nabla({\rm div}(\rho^\eps_t\nabla\vartheta^\eps_t)),\nabla\vartheta^\eps_t\ra\d\mm}_{A_t(R)} \\
&\quad+ \underbrace{\int\nchi_R\langle\nabla\rho^\eps_t,\nabla\big(-\tfrac12|\nabla\vartheta^\eps_t|^2-\tfrac{\eps^2}{4}\Delta\log(\rho^\eps_t)-\tfrac{\eps^2}8|\nabla\log(\rho^\eps_t)|^2\big)\rangle\d\mm}_{B_t(R)}.
\end{split}
\]
Now notice that a few integration by parts and the Leibniz rule give
\[
\begin{split}
A_t(R) = & \int{\rm div}(\rho_t^\eps\nabla\vartheta_t^\eps)\langle\nabla\nchi_R,\nabla\vartheta_t^\eps \rangle\d\mm + \int \nchi_R {\rm div}(\rho_t^\eps\nabla\vartheta_t^\eps)\Delta\vartheta_t^\eps \d\mm \\
= & \int\langle\nabla\rho_t^\eps,\nabla\vartheta_t^\eps\rangle\langle\nabla\nchi_R,\nabla\vartheta_t^\eps\rangle\d\mm -\int \nchi_R\rho_t^\eps\langle\nabla\vartheta_t^\eps,\nabla\Delta\vartheta_t^\eps\rangle \d\mm
\end{split}
\]
and
\[
\begin{split}
B_t(R) = & \int\frac{1}{2}|\nabla\vartheta_t^\eps|^2 {\rm div}(\nchi_R\nabla\rho_t^\eps) - \frac{\eps^2}{4}\nchi_R \langle\nabla\rho_t^\eps,\nabla\Delta\log\rho_t^\eps\rangle + \frac{\eps^2}{8}|\nabla\log\rho_t^\eps|^2 {\rm div}(\nchi_R\nabla\rho_t^\eps) \d\mm  \\ 
= & \int \frac{1}{2}\nchi_R\Delta\rho_t^\eps |\nabla\vartheta_t^\eps|^2 - \frac{\eps^2}{4}\nchi_R\rho_t^\eps \langle\nabla\log\rho_t^\eps,\nabla\Delta\log\rho_t^\eps\rangle + \frac{\eps^2}{8}\nchi_R\Delta\rho_t^\eps|\nabla\log\rho_t^\eps|^2 \d\mm \\ 
& \qquad + \int\frac{1}{2}|\nabla\vartheta_t^\eps|^2 \langle \nabla\nchi_R,\nabla\rho_t^\eps \rangle + \frac{\eps^2}{8}|\nabla\log\rho_t^\eps|^2 \langle\nabla\nchi_R,\nabla\rho_t^\eps\rangle \d\mm.
\end{split}
\]
Since $|\nabla\nchi_R|$ is uniformly bounded and identically 0 on $B_R(\bar x)$, Lemma \ref{lem:contprov2} gives that
\[
A_t(R)+B_t(R)\quad\to\quad \Big\langle\Gamma_2(\vartheta^{\varepsilon}_t),\rho^\eps_t\Big\rangle+\tfrac{\eps^2}4\Big\langle\Gamma_2(\log\rho^{\varepsilon}_t),\rho^\eps_t\Big\rangle\qquad\text{ as $R\to\infty$}
\] 
locally uniformly in $t\in(0,1)$. 

This fact, the convergence of $\int \nchi_R \langle\nabla\rho_t^\eps,\nabla\vartheta_t^\eps\rangle\d\mm $ to $\int \langle\nabla\rho_t^\eps,\nabla\vartheta_t^\eps\rangle\d\mm $ as $R\to\infty$ (which is also consequence of Lemma \ref{lem:contprov2}) and the initial discussion  give the conclusion.
\end{proof}

As a first consequence of the formulas just obtained, we show that some quantities remain bounded as $\eps\downarrow0$:

\begin{Lemma}[Bounded quantities]\label{lem:7}
With the same assumptions and notations of Setting \ref{set}, for any $\bar{x} \in \X$ we have
\begin{subequations}
\begin{align}
\label{eq:boundmomento}
& \sup_{\eps \in (0,1),\, t \in [0,1]}\int \sfd^2(\cdot,\bar{x})\rho^\eps_t\d\mm < \infty, \\
\label{eq:boundentropia}
& \sup_{\eps \in (0,1),\, t \in [0,1]}|H(\mu^\eps_t \,|\, \mm)| < \infty, \\
\label{eq:boundenergia}
& \sup_{\eps\in(0,1)}\iint_0^1\Big(|\nabla\vartheta^\eps_t|^2+{\eps^2}|\nabla\log\rho^\eps_t|^2\Big)\rho^\eps_t\,\d t\,\d\mm < \infty,
\end{align}
\end{subequations}
and for any $\delta\in(0,\frac12)$
\begin{subequations}
\begin{align}
\label{eq:bhess}
& \sup_{\eps \in (0,1)} \iint_\delta^{1-\delta}\Big(|\H{\vartheta^\eps_t}|_\HS^2+  {\eps^2} |\H{\log\rho_t^{\varepsilon}}|^2_{\HS}\Big)\rho^{\varepsilon}_t\,\d t\,\d\mm < \infty, \\
\label{eq:blap}
&\sup_{\eps \in (0,1)} \iint_\delta^{1-\delta}\Big(|\Delta{\vartheta^\eps_t}|^2+ {\eps^2} |\Delta{\log\rho_t^{\varepsilon}}|^2\Big)\rho^{\varepsilon}_t\,\d t\,\d\mm < \infty.  
\end{align}
\end{subequations}
\end{Lemma}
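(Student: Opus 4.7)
For (a), I would multiply the Gaussian tail bound \eqref{eq:tail} of Proposition \ref{pro:6} by $\sfd^2(\cdot,\bar x)$ and integrate against the exponential volume growth \eqref{eq:volgrowth}: the result is finite uniformly in $\eps,t$. For (b), I would rely on the representation \eqref{eq:entdef2} with $W:=\sfd^2(\cdot,\bar x)$: the non-negativity of $H(\mu^\eps_t\,|\,\tilde{\mm})$ together with (a) gives the uniform lower bound, while the $L^\infty$ control from Proposition \ref{pro:6} combined with the Gaussian tail yields the uniform upper bound (split the entropy integrand on $\{\rho^\eps_t\geq 1\}$, where $\rho^\eps_t\log\rho^\eps_t\leq\rho^\eps_t\log\|\rho^\eps_t\|_\infty$, versus its complement, where the Gaussian tail controls the negative part).

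The energy bound (c) is the crux. Integrating the Hamilton--Jacobi--Bellman equations of Proposition \ref{pro:7} against $\rho^\eps_t$, invoking the continuity equation together with $\eps\nabla\log\rho^\eps_t=\nabla\varphi^\eps_t+\nabla\psi^\eps_t$ and $\nabla\vartheta^\eps_t=(\nabla\psi^\eps_t-\nabla\varphi^\eps_t)/2$, and justifying the integrations by parts via Lemma \ref{lem:contprov2}, the cross terms in $\la\nabla\varphi^\eps_t,\nabla\psi^\eps_t\ra$ cancel exactly, leaving the pair
\begin{equation*}
\frac{\d}{\d t}\!\!\int\!\varphi^\eps_t\rho^\eps_t\,\d\mm = -\tfrac12\!\int\!|\nabla\varphi^\eps_t|^2\rho^\eps_t\,\d\mm,\qquad\frac{\d}{\d t}\!\!\int\!\psi^\eps_t\rho^\eps_t\,\d\mm = \tfrac12\!\int\!|\nabla\psi^\eps_t|^2\rho^\eps_t\,\d\mm.
\end{equation*}
Integrating in time and using the normalization \eqref{eq:normalization} together with $\varphi^\eps_1+\psi^\eps_1=\eps\log\rho_1$ on $\supp(\rho_1)$ — which force $\int\varphi^\eps_1\rho_1\,\d\mm=0$ and $\int\psi^\eps_1\rho_1\,\d\mm=\eps H(\mu_1|\mm)$ — everything reduces to controlling the non-negative quantity $\int\varphi^\eps_0\rho_0\,\d\mm=\eps\int\log f^\eps\,\d\mu_0$ uniformly in $\eps$. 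For this I would appeal to the Schr\"odinger optimality of $\ggamma^\eps=f^\eps\otimes g^\eps\hR^{\eps/2}$ tested against $\mu_0\otimes\mu_1\in\adm(\mu_0,\mu_1)$:
\begin{equation*}
\eps\int\log f^\eps\,\d\mu_0 + \eps H(\mu_1|\mm) = \eps H(\ggamma^\eps\,|\,\hR^{\eps/2}) \leq \eps H(\mu_0\otimes\mu_1\,|\,\hR^{\eps/2}),
\end{equation*}
and bound the right-hand side using the Gaussian lower estimate in \eqref{eq:gaussest}, which gives $-\eps\log\hr_{\eps/2}(x,y)\leq\tfrac23\sfd^2(x,y)+o_\eps(1)$ uniformly on the compact set $\supp(\rho_0)\times\supp(\rho_1)$. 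The conclusion (c) then follows from the algebraic identity $4|\nabla\vartheta^\eps_t|^2+\eps^2|\nabla\log\rho^\eps_t|^2=2(|\nabla\varphi^\eps_t|^2+|\nabla\psi^\eps_t|^2)$.

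For (d) and (e), apply the weak Bochner inequalities \eqref{eq:wbochhess} and \eqref{eq:wbochlap} to the formula \eqref{eq:secondder} to obtain, pointwise in $t$,
\begin{equation*}
H''(t) \geq \int\Bigl(|\H{\vartheta^\eps_t}|_\HS^2+\tfrac{\eps^2}{4}|\H{\log\rho^\eps_t}|_\HS^2\Bigr)\rho^\eps_t\,\d\mm + \tfrac{K}{2}\int\bigl(|\nabla\varphi^\eps_t|^2+|\nabla\psi^\eps_t|^2\bigr)\rho^\eps_t\,\d\mm
\end{equation*}
and the analogous bound with $(\Delta\cdot)^2/N$ in place of $|\H\cdot|_\HS^2$. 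Multiplying by the weight $t(1-t)$ and integrating by parts twice yields
\begin{equation*}
\int_0^1 t(1-t)H''(t)\,\d t = H(\mu_0|\mm)+H(\mu_1|\mm)-2\int_0^1 H(\mu^\eps_t|\mm)\,\d t,
\end{equation*}
uniformly bounded by (b); the $K$-correction from Bochner is integrable in time and controlled by (c). Since $t(1-t)\geq\delta(1-\delta)$ on $[\delta,1-\delta]$ and $\eps^2|\H{\log\rho^\eps_t}|_\HS^2\leq 4(\tfrac{\eps^2}{4}|\H{\log\rho^\eps_t}|_\HS^2)$, the Hessian bound (d) follows, and (e) follows identically from the Laplacian version of Bochner. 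For $K<0$, where $H$ is not a priori convex and the vanishing of the boundary terms $t(1-t)H'(t)$ at $t=0,1$ needs some care, I would first replace $H$ by $\tilde H(t):=H(t)-\tfrac{K}{2}\int_0^t\!\!\int_0^s I(u)\,\d u\,\d s$ with $I(t):=\int(|\nabla\varphi^\eps_t|^2+|\nabla\psi^\eps_t|^2)\rho^\eps_t\,\d\mm$; this $\tilde H$ is genuinely convex by Bochner and is bounded thanks to (b), (c), making the boundary terms easy to dispose of.

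The main obstacle is clearly (c): the remaining bounds reduce either to direct manipulation of the already-established Gaussian/$L^\infty$ estimates or to the standard Bochner-plus-convexity machinery, but controlling the full kinetic energy on $[0,1]$ uniformly in $\eps$ genuinely requires both the first-order cancellations above and the quantitative large-deviations input $\eps\log\hr_{\eps/2}\sim-\sfd^2/2$ via Schr\"odinger optimality.
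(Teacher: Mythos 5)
Your proof of \eqref{eq:boundmomento} and \eqref{eq:boundentropia} matches the paper's; the interesting divergences are in \eqref{eq:boundenergia} and in \eqref{eq:bhess}--\eqref{eq:blap}, and in both places your argument is correct but takes a genuinely different route. For the energy bound \eqref{eq:boundenergia}, the paper first controls $\iint_{1/2}^1|\nabla\varphi^\eps_t|^2\rho^\eps_t$ and $\iint_0^{1/2}|\nabla\psi^\eps_t|^2\rho^\eps_t$ directly via Hamilton's gradient estimate \eqref{eq:lipcontr} plus the second-moment bound \eqref{eq:boundmomento}, and then transports the bound to the other half-interval using only the \emph{difference} identity \eqref{eq:firstder} and \eqref{eq:boundentropia}. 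You instead derive the two clean monotonicity identities for $\int\varphi^\eps_t\rho^\eps_t$ and $\int\psi^\eps_t\rho^\eps_t$, integrate them over the full $[0,1]$, and reduce everything to $\eps\int\log f^\eps\,\d\mu_0$, which you control by Schr\"odinger optimality against $\mu_0\otimes\mu_1$ together with the Gaussian lower bound of \eqref{eq:gaussest}. This is closer in spirit to L\'eonard's large-deviations picture (the bound $-\eps\log\hr_{\eps/2}\lesssim\sfd^2+o_\eps(1)$ is precisely the LDP input) and avoids Hamilton's inequality entirely, at the price of needing continuity of $\int\varphi^\eps_t\rho^\eps_t\,\d\mm$ at $t=0,1$, which the paper sidesteps by never integrating across the endpoints. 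For \eqref{eq:bhess}--\eqref{eq:blap} the paper bounds $H'(\delta)$ from below and $H'(1-\delta)$ from above by splitting $H'$ into a Laplacian term and a signed Fisher information term and then invoking the Li-Yau lower bound \eqref{eq:lapcontr}; you instead use the Aronson--B\'enilan weight $t(1-t)$ to replace derivatives by the uniformly bounded entropy values. Your remedy for the boundary terms is exactly right: once $\tilde H$ is convex and continuous on $[0,1]$, convexity gives $H(t)-H(0)\leq tH'(t)\leq t\,H'(t_0)$ for $t<t_0$, so $tH'(t)\to 0$ and the boundary contributions vanish. Your route therefore eliminates the Li-Yau input from this step, while the paper's is insensitive to endpoint subtleties; each argument avoids the other's most delicate point. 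Two small remarks: your appeal to the non-negativity of $\int\varphi^\eps_0\rho_0\,\d\mm$ should be phrased as a consequence of the integrated identity rather than as an a priori fact (it is not obvious from $\varphi^\eps_0=\eps\log f^\eps$ alone); and the continuity of $t\mapsto\int\varphi^\eps_t\rho^\eps_t\,\d\mm$ at $t=0,1$ deserves a sentence (it follows from Proposition \ref{pro:7}(ii)--(iii) together with the $L^\infty$ control on $f^\eps,g^\eps$ from Theorem \ref{thm:5}, by an argument analogous to the one in the proof of Proposition \ref{pro:5} for the entropy).
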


\begin{proof} \eqref{eq:boundmomento}   follows from \eqref{eq:tail} and the volume growth \eqref{eq:volgrowth}.  As regards \eqref{eq:boundentropia}, notice that \eqref{eq:boundmomento} and \eqref{eq:entdef2} give a uniform lower bound on  $H(\mu_t^\eps\,|\,\mm)$; for the upper bound notice that  \eqref{eq:tail} implies a uniform quadratic growth on $\log\rho^\eps_t$.

Let us now pass to \eqref{eq:boundenergia} and observe that Proposition \ref{pro:1} together with \eqref{eq:boundmomento} grants
\begin{equation}
\label{eq:partefacile}
\sup_{\eps\in(0,1)}\iint_{\frac12}^1|\nabla\varphi^\eps_t|^2\rho^\eps_t\,\d t\,\d\mm+\iint_0^{\frac12}|\nabla\psi^\eps_t|^2\rho^\eps_t\,\d t\,\d\mm<\infty.
\end{equation}
As a second step, notice that \eqref{eq:firstder} gives
\[
\begin{split}
\iint_0^{\frac12}|\nabla\varphi^\eps_t|^2\rho^\eps_t\,\d t\,\d\mm&=\iint_0^{\frac12}|\nabla\psi^\eps_t|^2\rho^\eps_t\,\d t\,\d\mm-2\eps\int_0^{\frac12}\ddt H(\mu^\eps_t\,|\,\mm)\,\d t\\
&=\iint_0^{\frac12}|\nabla\psi^\eps_t|^2\rho^\eps_t\,\d t\,\d\mm+2\eps\big(H(\mu_0\,|\,\mm)-H(\mu^\eps_t\,|\,\mm)\big)
\end{split}
\]
so that taking into account \eqref{eq:boundentropia} and \eqref{eq:partefacile} we see that the right-hand side is uniformly bounded for $\eps\in(0,1)$. Using again \eqref{eq:partefacile} we deduce that
\[
\sup_{\eps\in(0,1)}\iint_0^{1}|\nabla\varphi^\eps_t|^2\rho^\eps_t\,\d t\,\d\mm<\infty.
\]
A symmetric argument provides the analogous bound for $(\psi^\eps_t)$ and thus recalling that $\vartheta^\eps_t=\frac12(\psi^\eps_t-\varphi^\eps_t)$ and $\eps\log\rho^\eps_t=\psi^\eps_t+\varphi^\eps_t$ we obtain \eqref{eq:boundenergia}.

Now use the fact that $\vartheta^{\varepsilon}_t = -\varphi^{\varepsilon}_t + \frac{\varepsilon}{2}\log\rho^{\varepsilon}_t$ in conjunction with \eqref{eq:firstder} to get
\[
\begin{split}
\ddt H(\mu^\eps_t\,|\,\mm)\restr{t=\delta}&=- \int  \langle \nabla\rho_{\delta}^\eps, \nabla\varphi_{\delta}^{\eps} \rangle \d\mm + \frac{\eps}{2} \int \langle \nabla\rho_{\delta}^\eps, \nabla\log\rho_{\delta}^{\eps} \rangle \d\mm\\
&=\int  \rho^\eps_{\delta} \Delta\varphi_{\delta}^{\eps}  \d\mm + \frac{\eps}{2} \int\frac{| \nabla\rho_{\delta}^\eps|^2}{\rho^\eps_\delta} \d\mm\geq \int  \rho_{\delta}^\eps \Delta\varphi_{\delta}^{\varepsilon} \d\mm.
\end{split}
\]
Recalling the lower bound \eqref{eq:lapcontr} and \eqref{eq:boundmomento}, we get that for some constant $C_\delta$ independent on $\eps$ it holds
\[
\ddt H(\mu^\eps_t\,|\,\mm)\restr{t=\delta}\geq -C_\delta\qquad\forall \eps\in(0,1)
\]
and an analogous argument starting from  $\vartheta^\eps_t=\psi^\eps_t-\frac\eps2\log\rho^\eps_t$  yields
$
\ddt H(\mu^\eps_t\,|\,\mm)\restr{t=1-\delta}\leq C_\delta$ for every $\eps\in(0,1)$.
Therefore
\[
\sup_{\eps\in(0,1)}\int_\delta^{1-\delta}\frac{\d^2}{\d t^2} H(\mu^\eps_t\,|\,\mm)=\sup_{\eps\in(0,1)}\bigg(\ddt H(\mu^\eps_t\,|\,\mm)\restr{t=1-\delta}-\ddt H(\mu^\eps_t\,|\,\mm)\restr{t=\delta}\bigg)<\infty.
\]
The bounds \eqref{eq:bhess} and \eqref{eq:blap} then come from this last inequality used in conjunction with \eqref{eq:secondder}, \eqref{eq:boundenergia} and the weighted Bochner inequalities \eqref{eq:wbochhess} and \eqref{eq:wbochlap} respectively.
\end{proof}

With the help of the previous lemma we can now prove that some crucial quantities vanish in the limit $\eps\downarrow0$; as we shall see in the proof of our main Theorem \ref{thm:main}, this is what we will need to prove that the acceleration of the entropic interpolations goes to 0 as $\eps$ goes to zero.

\begin{Lemma}[Vanishing quantities]\label{lem:vanish}
With the same assumptions and notations of Setting \ref{set}, for any $\delta\in(0,\frac12)$ we have
\begin{subequations}
\begin{align}
\label{eq:b1}
&\lim_{\varepsilon \downarrow 0}\varepsilon^2 \iint_\delta^{1-\delta} \rho^{\varepsilon}_t |\Delta\log\rho^{\varepsilon}_t|  \,\d t\,  \d\mm= 0, \\
\label{eq:b2}
&\lim_{\varepsilon \downarrow 0}\varepsilon^2 \int_\delta^{1-\delta} \rho^{\varepsilon}_t |\nabla\log\rho^{\varepsilon}_t|^2 \,\d t\,  \d\mm = 0, \\
\label{eq:b3}
&\lim_{\varepsilon \downarrow 0}\varepsilon^2 \iint_\delta^{1-\delta} \rho^{\varepsilon}_t |\Delta\log\rho^{\varepsilon}_t| |\nabla\log\rho^{\varepsilon}_t| \,\d t\,  \d\mm = 0, \\
\label{eq:b4}
 &\lim_{\varepsilon \downarrow 0} \varepsilon^2 \iint_\delta^{1-\delta}\rho^{\varepsilon}_t |\nabla\log\rho^{\varepsilon}_t|^3 \,\d t\,  \d\mm= 0.
\end{align}
\end{subequations}
\end{Lemma}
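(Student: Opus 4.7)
The plan is to prove the four vanishing limits in cascade, each leaning on the previous one together with the quantitative bounds of Lemma~\ref{lem:7}. Throughout I abbreviate $h^\eps_t := \log\rho^\eps_t$. For \eqref{eq:b1} I would apply Cauchy--Schwarz to the splitting $\eps^2\rho^\eps_t|\Delta h^\eps_t| = (\eps\sqrt{\rho^\eps_t})\cdot(\eps\sqrt{\rho^\eps_t}|\Delta h^\eps_t|)$: since $\rho^\eps_t$ is a probability density the first factor integrates to $\eps\sqrt{1-2\delta}$, and the second is uniformly bounded by \eqref{eq:blap}. For \eqref{eq:b2}, the chain rule $\rho^\eps_t|\nabla h^\eps_t|^2 = \la\nabla\rho^\eps_t,\nabla h^\eps_t\ra$ combined with integration by parts -- justified by a cut-off argument as in Lemma~\ref{lem:gamma2} together with the integrability estimates of Lemma~\ref{lem:contprov2} -- yields $\int\rho^\eps_t|\nabla h^\eps_t|^2\,\d\mm = -\int\rho^\eps_t\Delta h^\eps_t\,\d\mm$, reducing the claim to \eqref{eq:b1}. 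Then \eqref{eq:b3} follows from Cauchy--Schwarz via the splitting $(\eps\sqrt{\rho^\eps_t}|\Delta h^\eps_t|)(\eps\sqrt{\rho^\eps_t}|\nabla h^\eps_t|)$, combining \eqref{eq:blap} with the freshly-established \eqref{eq:b2}.

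The main obstacle is \eqref{eq:b4}, because a direct Cauchy--Schwarz between $\eps^2\iint\rho^\eps_t|\nabla h^\eps_t|^2$ (which vanishes) and $\eps^2\iint\rho^\eps_t|\nabla h^\eps_t|^4$ is inconclusive: Hamilton's estimate \eqref{eq:ham} only ensures that the latter is $O(\eps^{-2})$. To bypass this, I would derive the $L^4$ identity
$$
\int\rho^\eps_t|\nabla h^\eps_t|^4\,\d\mm = -\int\rho^\eps_t|\nabla h^\eps_t|^2\Delta h^\eps_t\,\d\mm - 2\int\rho^\eps_t\,\H{h^\eps_t}(\nabla h^\eps_t,\nabla h^\eps_t)\,\d\mm,
$$
by writing $\rho^\eps_t|\nabla h^\eps_t|^4 = \la\nabla\rho^\eps_t,|\nabla h^\eps_t|^2\nabla h^\eps_t\ra$ (using $\nabla\rho^\eps_t = \rho^\eps_t\nabla h^\eps_t$), integrating by parts, applying the Leibniz rule, and invoking the Bochner-type formula $\la\nabla|\nabla h|^2,\nabla h\ra = 2\,\H{h}(\nabla h,\nabla h)$ from the Hessian calculus of~\cite{Gigli14}. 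Cauchy--Schwarz on the right-hand side together with the pointwise bound $|\H{h}(\nabla h,\nabla h)|\leq|\H{h}|_\HS|\nabla h|^2$ produces $A^{1/2}\leq A_1^{1/2}+2A_2^{1/2}$ with $A:=\int\rho^\eps_t|\nabla h^\eps_t|^4$, $A_1:=\int\rho^\eps_t|\Delta h^\eps_t|^2$, $A_2:=\int\rho^\eps_t|\H{h^\eps_t}|_\HS^2$, so that $A\leq 2A_1+8A_2$. Multiplying by $\eps^2$, integrating in $t\in[\delta,1-\delta]$ and invoking \eqref{eq:blap} and \eqref{eq:bhess} yields the key uniform bound
$$
\sup_{\eps\in(0,1)}\eps^2\iint_\delta^{1-\delta}\rho^\eps_t|\nabla h^\eps_t|^4\,\d t\,\d\mm<\infty.
$$
The limit \eqref{eq:b4} then follows from Cauchy--Schwarz ($|a|^3 = |a|\cdot|a|^2$) together with \eqref{eq:b2} and this new bound.

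The delicate technical point is the integration by parts in the derivation of the $L^4$ identity, since $h^\eps_t$ is unbounded with quadratic growth and $\H{h^\eps_t}$ is not globally $L^2$. I would handle it exactly as in the proof of Lemma~\ref{lem:gamma2}: first establish the identity against the cut-off $\nchi_R$ from Lemma~\ref{lem:cutoff}, and then pass to the limit $R\to\infty$ using the uniform integrability afforded by Lemma~\ref{lem:contprov2} together with the Sobolev and Hessian regularity of $\rho^\eps_t$ and $h^\eps_t$ ensured by Proposition~\ref{pro:7}. The Bochner-type identity $\nabla|\nabla h|^2=2\,\H{h}(\nabla h,\cdot)$ is legitimate since $h^\eps_t\in\testl\X$ by Proposition~\ref{pro:testloc}.
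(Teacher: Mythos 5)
Your proofs of \eqref{eq:b1}, \eqref{eq:b2} and \eqref{eq:b3} coincide with the paper's: Cauchy--Schwarz against \eqref{eq:blap} for the first, the identity $\rho^\eps_t|\nabla\log\rho^\eps_t|^2=-\rho^\eps_t\Delta\log\rho^\eps_t+\Delta\rho^\eps_t$ together with $\int\Delta\rho^\eps_t\,\d\mm=0$ for the second (your phrasing via $\la\nabla\rho^\eps_t,\nabla\log\rho^\eps_t\ra$ and integration by parts is the same computation), and Cauchy--Schwarz between \eqref{eq:blap} and \eqref{eq:b2} for the third. For \eqref{eq:b4} you take a genuinely different route. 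The paper writes $\rho^\eps_t|\nabla\log\rho^\eps_t|^3=(-\rho^\eps_t\Delta\log\rho^\eps_t+\Delta\rho^\eps_t)|\nabla\log\rho^\eps_t|$, disposes of the first piece by \eqref{eq:b3}, and controls $\iint\Delta\rho^\eps_t|\nabla\log\rho^\eps_t|$ by $\iint\rho^\eps_t|\H{\log\rho^\eps_t}|_\HS|\nabla\log\rho^\eps_t|$ through an integration by parts performed on the regularization $\sqrt{\eta+|\nabla\log\rho^\eps_t|^2}$, after which Cauchy--Schwarz against \eqref{eq:bhess} and \eqref{eq:b2} concludes. You instead first derive the weighted $L^4$ identity, absorb $\big(\int\rho^\eps_t|\nabla\log\rho^\eps_t|^4\big)^{1/2}$ into the left-hand side (legitimate because Lemma~\ref{lem:contprov2} guarantees this quantity is finite for each fixed $\eps,t$, and Lemma~\ref{lem:gamma2} gives finiteness of the weighted Hessian and Laplacian terms), obtain the uniform bound $\sup_\eps\eps^2\iint_\delta^{1-\delta}\rho^\eps_t|\nabla\log\rho^\eps_t|^4<\infty$ from \eqref{eq:blap} and \eqref{eq:bhess}, and then interpolate with \eqref{eq:b2}. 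Both arguments rest on exactly the same second-order inputs \eqref{eq:bhess}--\eqref{eq:blap}, the Hessian identity \eqref{eq:leibh} and the cut-off machinery of Lemmas~\ref{lem:cutoff} and~\ref{lem:contprov2}; your version buys a slightly stronger byproduct (the $\eps^2$-weighted $L^4$ bound) and avoids the $\eta$-regularization, at the price of the a-priori finiteness needed for the absorption step, while the paper's stays entirely within the cascade \eqref{eq:b1}--\eqref{eq:b3}. Your proposal is correct.
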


\begin{proof}
For \eqref{eq:b1} we notice that
\[
\varepsilon^2 \iint_\delta^{1-\delta} \rho^{\varepsilon}_t |\Delta\log\rho^{\varepsilon}_t|\,\d t\,  \d\mm\leq\eps\sqrt{1-2\delta}\sqrt{\varepsilon^2 \iint_\delta^{1-\delta} \rho^{\varepsilon}_t |\Delta\log\rho^{\varepsilon}_t|^2  \,\d t\,  \d\mm}
\]
and the fact that, by \eqref{eq:blap}, the last square root is uniformly bounded in $\eps\in(0,1)$. 

For \eqref{eq:b2} we start observing that Lemma \ref{lem:1} below applies to $\rho_t^\eps$, because by Proposition \ref{pro:7} $\rho_t^\eps \in \testi\X \cap L^1(\X)$ and
\[
\Delta\rho_t^\eps = f_t^\eps \Delta g_t^\eps + g_t^\eps \Delta f_t^\eps + 2\langle \nabla f_t^\eps,\nabla g_t^\eps \rangle \in L^1(\X).
\]
Hence, from the identity $\rho^{\varepsilon}_t|\nabla\log\rho^{\varepsilon}_t|^2 = -\rho^{\varepsilon}_t \Delta \log \rho^{\varepsilon}_t + \Delta\rho^{\varepsilon}_t$ and the fact that $\int\Delta\rho^{\varepsilon}_t\,\d\mm=0$ we get
\[
\varepsilon^2\iint_{\delta}^{1-\delta}\rho^{\varepsilon}_t|\nabla\log\rho^{\varepsilon}_t|^2 \,\d t\, \d\mm = -\varepsilon^2\iint_{\delta}^{1-\delta}\rho^{\varepsilon}_t \Delta\log\rho^{\varepsilon}_t\,\d t\, \d\mm \leq \varepsilon^2 \iint_{\delta}^{1-\delta}  \rho^{\varepsilon}_t|\Delta\log\rho^{\varepsilon}_t|\,\d t\, \d\mm
\]
and then conclude by \eqref{eq:b1}. 

For \eqref{eq:b3} we observe that
\[
\begin{split}
\varepsilon^2 \iint_\delta^{1-\delta} \rho^{\varepsilon}_t &|\Delta\log\rho^{\varepsilon}_t| |\nabla\log\rho^{\varepsilon}_t| \,\d t\,  \d\mm\\
&\leq \sqrt{\varepsilon^2 \iint_\delta^{1-\delta} \rho^{\varepsilon}_t |\Delta\log\rho^{\varepsilon}_t|^2 \,\d t\,  \d\mm}\sqrt{\varepsilon^2 \iint_\delta^{1-\delta} \rho^{\varepsilon}_t |\nabla\log\rho^{\varepsilon}_t|^2 \,\d t\,  \d\mm},
\end{split}
\]
and use the fact that the first square root in the right-hand side is bounded (by \eqref{eq:blap}) and the second one goes to 0 (by \eqref{eq:b2}). 

To prove \eqref{eq:b4} we start again from   the identity $\rho^{\varepsilon}_t|\nabla\log\rho^{\varepsilon}_t|^2 = -\rho^{\varepsilon}_t \Delta \log \rho^{\varepsilon}_t + \Delta\rho^{\varepsilon}_t$ to get
\[
 \iint_\delta^{1-\delta} \rho^{\varepsilon}_t |\nabla\log\rho^{\varepsilon}_t|^3 \,\d t\,\d\mm=-\iint_\delta^{1-\delta} \rho^{\varepsilon}_t\Delta\log\rho^{\varepsilon}_t |\nabla\log\rho^{\varepsilon}_t| \,\d t\,\d\mm +  \iint_\delta^{1-\delta} \Delta\rho^{\varepsilon}_t |\nabla\log\rho^{\varepsilon}_t|\,\d t\,\d\mm.
\]
After a multiplication by $\eps^2$ we see that the first integral on the right-hand side vanishes as $\varepsilon \downarrow 0$ thanks to \eqref{eq:b3}. For the second we start noticing that an application of the dominated convergence theorem ensures that
\begin{equation}
\label{eq:inpiu}
\iint_\delta^{1-\delta}\Delta\rho^{\varepsilon}_t |\nabla\log\rho^{\varepsilon}_t| \,\d t\,  \d\mm =\lim_{\eta\downarrow 0}\iint_\delta^{1-\delta}\Delta\rho^{\varepsilon}_t \sqrt{\eta+|\nabla\log\rho^{\varepsilon}_t|^2} \,\d t\,  \d\mm,
\end{equation}
then we observe that for every $\eta>0$ the map $z\mapsto \sqrt{\eta+z}$ is in $C^1\cap{\rm LIP}([0,\infty))$ and since $|\nabla\log\rho^{\varepsilon}_t|^2\in W^{1,2}(\X,e^{-V})$ for $V=M\sfd^2(\cdot,\bar x)$ and suitable $\bar x,M$ (recall Proposition \ref{pro:7}) we deduce that $\sqrt{\eta+|\nabla\log\rho^{\varepsilon}_t|^2} \in W^{1,2}(\X,e^{-V})$ as well. Thus by the chain rule for gradients, the Leibniz rule \eqref{eq:leibh} and also using a cut-off argument in conjunction with Lemma \ref{lem:contprov2} to justify integration by parts, we see that it holds
\[
\begin{split}
\bigg|\iint_\delta^{1-\delta}\Delta\rho^{\varepsilon}_t& \sqrt{\eta+|\nabla\log\rho^{\varepsilon}_t|^2} \,\d t\,  \d\mm\bigg|\\
&=\bigg|\iint_\delta^{1-\delta}\frac{\rho^\eps_t}{2\sqrt{\eta+|\nabla\log\rho^\eps_t|^2}}\langle\nabla\log\rho^\eps_t,\nabla|\nabla\log\rho^\eps_t|^2\rangle \,\d t\,  \d\mm\bigg|\\
&=\bigg|\iint_\delta^{1-\delta}\frac{\rho^\eps_t}{\sqrt{\eta+|\nabla\log\rho^\eps_t|^2}}\H{\log\rho^{\varepsilon}_t}(\nabla\log\rho^{\varepsilon}_t,\nabla\log\rho^{\varepsilon}_t) \,\d t\,  \d\mm\bigg|\\
&\leq\iint_\delta^{1-\delta}\rho^\eps_t|\H{\log\rho^{\varepsilon}_t}|_{\HS}\,|\nabla\log\rho^{\varepsilon}_t|\,\d t\,  \d\mm
\end{split}
\]
and being this true for any $\eta > 0$, from \eqref{eq:inpiu} we obtain
\[
\begin{split}
\varepsilon^2\bigg|\iint_\delta^{1-\delta}\Delta\rho^{\varepsilon}_t |\nabla\log\rho^{\varepsilon}_t| \,\d t\,  \d\mm\bigg|  
& \leq  \varepsilon^2\iint_\delta^{1-\delta} \rho^{\varepsilon}_t |\H{\log\rho^{\varepsilon}_t}|_{\HS}|\nabla\log\rho^{\varepsilon}_t|\,\d t\,  \d\mm \\ 
& \leq  \sqrt{\varepsilon^2\iint_\delta^{1-\delta} \rho^{\varepsilon}_t |\H{\log\rho^{\varepsilon}_t}|^2_{\HS} \,\d t\,  \d\mm }\\
& \qquad\qquad\qquad\qquad\times \sqrt{\varepsilon^2\iint_\delta^{1-\delta} \rho^{\varepsilon}_t |\nabla\log\rho^{\varepsilon}_t|^2 \,\d t\,  \d\mm} .
\end{split}
\]
In this last expression the first square root is uniformly bounded in $\varepsilon\in(0,1)$ by \eqref{eq:bhess}, while the second one vanishes as $\varepsilon \downarrow 0$ thanks to \eqref{eq:b2}.

The last claim follows from the fact that under the stated additional regularity properties of $\rho_0,\rho_1$ we can take $\delta=0$ in \eqref{eq:bhess}, \eqref{eq:blap}. Then we argue as before.
\end{proof}

\begin{Lemma}\label{lem:1}
Let $(\X,\sfd,\mm)$ be an $\RCD^*(K,\infty)$ space with $K \in \R$ endowed with a Borel non-negative measure $\mm$ which is finite on bounded sets and $h \in D(\Delta) \cap L^1(\X)$ with $\Delta h \in L^1(\X)$. Then
\[
\int \Delta h \d\mm = 0
\]
\end{Lemma}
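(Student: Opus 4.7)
The plan is to use good cut-off functions together with two integrations by parts. Fix $\bar x \in \X$ and let $\{\nchi_R\}_{R>0}$ be the family of cut-off functions provided by Lemma \ref{lem:cutoff} (already used repeatedly above), so that $\nchi_R\in\testi\X$, $\nchi_R\equiv 1$ on $B_R(\bar x)$, $\supp(\nchi_R)\subset B_{R+1}(\bar x)$, and $\|\nabla\nchi_R\|_{L^\infty}+\|\Delta\nchi_R\|_{L^\infty}$ is bounded uniformly in $R$ by some constant $C$.

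First I would perform the standard double integration by parts. Since $h\in D(\Delta)\subset W^{1,2}(\X)$ and $\nchi_R\in W^{1,2}(\X)$, the definition of the Laplacian in $W^{1,2}$-duality gives
\[
\int \nchi_R\,\Delta h\,\d\mm = -\int \la\nabla h,\nabla\nchi_R\ra\,\d\mm.
\]
Since also $\nchi_R\in D(\Delta)$, a second integration by parts yields
\[
-\int \la\nabla h,\nabla\nchi_R\ra\,\d\mm = \int h\,\Delta\nchi_R\,\d\mm,
\]
and hence
\[
\int \nchi_R\,\Delta h\,\d\mm \;=\; \int h\,\Delta\nchi_R\,\d\mm \qquad\text{for every }R>0.
\]

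Next I would let $R\to\infty$ on both sides by dominated convergence. For the left-hand side, $\nchi_R\to 1$ pointwise, $|\nchi_R|\le 1$ and $\Delta h\in L^1(\X)$, so $\int\nchi_R\Delta h\,\d\mm\to \int\Delta h\,\d\mm$. For the right-hand side, $\Delta\nchi_R$ vanishes on $B_R(\bar x)$ and satisfies $|\Delta\nchi_R|\le C$, whence
\[
|h\,\Delta\nchi_R|\;\le\; C\,|h|\,\mathbbm 1_{\X\setminus B_R(\bar x)} \;\le\; C|h|\in L^1(\X),
\]
and the left-hand quantity converges to $0$ pointwise $\mm$-a.e.\ because $|h|\mathbbm 1_{\X\setminus B_R}\to 0$ a.e. Dominated convergence gives $\int h\,\Delta\nchi_R\,\d\mm\to 0$, and combining the two limits yields $\int\Delta h\,\d\mm=0$.

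The only substantive ingredient is the existence of the cut-offs $\nchi_R$ with uniform $L^\infty$-bounds on both $|\nabla\nchi_R|$ and $\Delta\nchi_R$; this is exactly the content of the cut-off lemma that the paper cites and repeatedly exploits, so there is no essential obstacle. Everything else is routine manipulation of the two integrability hypotheses $h\in L^1(\X)$ and $\Delta h\in L^1(\X)$ to justify the two applications of dominated convergence.
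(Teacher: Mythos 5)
Your proof is correct and follows the same strategy as the paper's: both pass through $\int \nchi_R \Delta h\,\d\mm = \int h\,\Delta\nchi_R\,\d\mm$ via the good cut-offs of Lemma \ref{lem:cutoff}, and both exploit that $\Delta\nchi_R$ is uniformly bounded and supported away from $B_R(\bar x)$ together with $h,\Delta h\in L^1(\X)$. Your version is slightly more explicit in spelling out the two integrations by parts and the domination on the left-hand side, but it is not a different argument.
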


\begin{proof}
Let $\bar{x} \in \X$, $R > 0$ and $\nchi_R \in \testi\X$ be a cut-off function  as given by Lemma \ref{lem:cutoff}. Then
\[
\bigg| \int \nchi_R \Delta h \d\mm \bigg| = \bigg| \int \Delta\nchi_R h\d\mm \bigg| = \bigg| \int_{\X \setminus B_{R+1}(\bar{x})} \Delta\nchi_R h \d\mm \bigg| z\leq \|\Delta\nchi_R\|_{L^{\infty}(\X)}\int_{\X \setminus B_{R+1}(\bar{x})} h\d\mm.
\]
Since  Lemma \ref{lem:cutoff} ensures that  $\|\Delta\nchi_R\|_{L^{\infty}(\X)}$ is uniformly bounded in $R$, the conclusion follows letting $R\to\infty$ in the above.
\end{proof}

\section{From entropic to displacement interpolations}\label{sec:6}

\subsection{Compactness}

Starting from the uniform estimates discussed in Section \ref{sec:5}, let us first prove that when we pass to the limit as $\varepsilon \downarrow 0$, up to subsequences Schr\"odinger potentials and entropic interpolations converge in a suitable sense to limit potentials and interpolations.

To formulate the result we need to introduce the Banach space $(C(\X,e^{-V}),\|\cdot\|_{C(\X,e^{-V})})$, where $V = M\sfd^2(\cdot,\bar{x})$ for some $\bar{x} \in \X$ and $M>0$: the norm $\|\cdot\|_{C(\X,e^{-V})}$ is defined as
\[
\|f\|_{C(\X,e^{-V})} := \sup_{x \in \X} |f(x)|e^{-V(x)}
\]
and $C(\X,e^{-V}) := \{f \in C(\X) \,:\, \|f\|_{C(\X,e^{-V})} < \infty \}$.

\begin{Proposition}[Compactness for measures]\label{lem:61} 
With the same assumptions and notations as in Setting \ref{set} the following holds.

For any sequence $\eps_n\downarrow0$ there exists a subsequence, not relabeled, such that  the curves $(\mu^{\eps_n}_t)$ uniformly converge in $(\probt{\X},W_2)$ to a limit curve $(\mu_t)$  belonging to $AC([0,1],(\probt{\X},W_2))$. Moreover, there is $M>0$ so that
\begin{equation}
\label{eq:bdens}
\mu_t\leq M\mm\qquad\forall t\in[0,1]
\end{equation}
and setting $\rho_t:=\frac{\d\mu_t}{\d\mm}$ it holds
\begin{equation}\label{eq:66}
\rho^{\varepsilon_n}_t \stackrel{\ast}{\rightharpoonup} \rho_t \quad \textrm{in } L^{\infty}(\X)\qquad\forall t\in[0,1].
\end{equation}
\end{Proposition}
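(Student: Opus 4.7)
The plan is to apply the Ascoli--Arzel\`a theorem in the metric space $(\probt{\X}, W_2)$ to the family $\{t \mapsto \mu^\eps_t\}_{\eps \in (0,1)}$. Two ingredients are required: pointwise precompactness of $\{\mu^\eps_t\}_\eps$ in $\probt{\X}$ for each $t$, and uniform equi-continuity of the curves in $t$. The first follows directly from the Gaussian tail bound \eqref{eq:tail} in Proposition \ref{pro:6}, which gives $\rho^\eps_t \leq C e^{-C'\sfd^2(\cdot,\bar x)}$ uniformly in $\eps \in (0,1)$ and $t \in [0,1]$; combined with the volume growth \eqref{eq:volgrowth}, this yields both tightness of the family $\{\mu^\eps_t\}_{\eps,t}$ and uniform integrability of $\sfd^2(\cdot,\bar x)$ against it, which is precisely the criterion for relative compactness in $(\probt{\X}, W_2)$.

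For equi-continuity, I would exploit the continuity equation $\tfrac{\d}{\d t}\rho^\eps_t + \div(\rho^\eps_t \nabla \vartheta^\eps_t) = 0$ established in Proposition \ref{pro:7}, which in the $\RCD$ framework forces the $W_2$-metric derivative to satisfy $|\dot\mu^\eps_t|^2 \leq \int |\nabla\vartheta^\eps_t|^2 \rho^\eps_t\,\d\mm$. Integrating and applying Cauchy--Schwarz,
\[
W_2(\mu^\eps_s, \mu^\eps_t) \,\leq\, \sqrt{|t-s|}\,\sqrt{\iint_0^1 |\nabla\vartheta^\eps_r|^2 \rho^\eps_r\,\d r\,\d\mm},
\]
and the double integral on the right is uniformly bounded in $\eps$ by \eqref{eq:boundenergia}. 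Hence the family is uniformly $1/2$-H\"older, Ascoli--Arzel\`a produces a subsequence $\eps_n \downarrow 0$ along which $\mu^{\eps_n}_t \to \mu_t$ uniformly in $W_2$, and the H\"older bound passes to the limit, giving $(\mu_t) \in AC([0,1], (\probt{\X}, W_2))$.

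It remains to identify $\mu_t$ with an $L^\infty$ density and to upgrade to weak-$*$ convergence. For fixed $t$, the uniform bound $\rho^{\eps_n}_t \leq C$ combined with Banach--Alaoglu gives weak-$*$ accumulation points $\hat\rho_t \in L^\infty(\mm)$ with $0 \leq \hat\rho_t \leq C$. Testing against $f \in C_c(\X) \subset L^1(\mm)$ (possible since $\mm$ is locally finite) and comparing with the narrow convergence $\mu^{\eps_n}_t \to \mu_t$ furnished by $W_2$-convergence, any such $\hat\rho_t$ must satisfy $\int f \hat\rho_t\,\d\mm = \int f\,\d\mu_t$ for every $f \in C_c(\X)$, so $\mu_t = \hat\rho_t\mm$. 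Uniqueness of the accumulation point then forces $\rho^{\eps_n}_t \stackrel{\ast}{\rightharpoonup} \rho_t$ along the whole subsequence, and \eqref{eq:bdens} holds with $M = C$. The main technical subtlety I expect is the rigorous invocation of the continuity-equation/metric-speed correspondence in the $\RCD$ setting, but the regularity of $\vartheta^\eps_t$ supplied by Proposition \ref{pro:7} (notably the local bound \eqref{eq:boundbase2} and the Lipschitz control \eqref{eq:lipcontr}) places $\nabla \vartheta^\eps_t$ squarely in the framework where this correspondence is standard.
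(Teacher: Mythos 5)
Your argument follows the same outline as the paper's: apply Theorem~\ref{thm:GH} to $(\mu^\eps_t)$ and $(\nabla\vartheta^\eps_t)$ to obtain $W_2$-absolute continuity of each entropic interpolation with kinetic energy controlled by \eqref{eq:boundenergia}, use \eqref{eq:tail} for tightness together with $2$-uniform integrability (hence pointwise $W_2$-precompactness), extract a uniformly convergent subsequence, and pass the $L^\infty$-density bound \eqref{eq:linftybound} to the limit to get \eqref{eq:bdens} and \eqref{eq:66}. The weak-$*$ identification via $C_c(\X)$ test functions against narrow convergence is correct and spells out what the paper dispatches in one line.

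There is, however, one genuine gap in the step where you deduce that the limit curve is absolutely continuous: you write that ``the H\"older bound passes to the limit, giving $(\mu_t)\in AC([0,1],(\probt{\X},W_2))$.'' Passing the uniform $1/2$-H\"older bound to the limit gives exactly that: $1/2$-H\"older continuity of $(\mu_t)$. But $1/2$-H\"older continuity does \emph{not} imply absolute continuity (a Weierstrass-type curve of exact H\"older exponent $1/2$ is nowhere differentiable, hence not AC). The correct argument, and the one the paper uses, is lower semicontinuity of the $2$-energy: the curves $(\mu^{\eps_n}_t)$ are AC with $\int_0^1|\dot\mu^{\eps_n}_t|^2\,\d t$ uniformly bounded, and the functional $(\mu_t)\mapsto\int_0^1|\dot\mu_t|^2\,\d t$ is lower semicontinuous under pointwise (a fortiori uniform) $W_2$-convergence; therefore the limit curve has finite kinetic energy and is AC. With that replacement your proof is complete; otherwise the conclusion $(\mu_t)\in AC$ is unsupported.
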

\begin{proof} Fix $\eps\in(0,1)$; we want to apply Theorem \ref{thm:GH} to   $(\mu^\eps_t)$ and $(\nabla\vartheta^\eps_t)$. The continuity of $t\mapsto\rho^\eps_t\in L^2(\X)$ granted by Proposition \ref{pro:7} yields weak continuity of $(\mu_t)$ and \eqref{eq:bih} is a consequence of \eqref{eq:tail}. From the bound \eqref{eq:boundenergia} it follows \eqref{eq:bkh} and from the formula for $\frac\d{\d t}\rho^\eps_t$ given in Proposition \ref{pro:7} and again the $L^2$-continuity of $(\rho^\eps_t)$ on $[0,1]$ it easily follows that $(\mu_t)$ and $(\vartheta^\eps_t)$ solve the continuity equation in the sense of Theorem \ref{thm:GH}. The conclusion of such theorem ensures that $(\mu^\eps_t)$ is $W_2$-absolutely continuous with 
\[
\int_0^1|\dot\mu^\eps_t|^2\,\d t=\iint_0^1|\nabla\vartheta^\eps_t|^2\rho^\eps_t\,\d t\,\d\mm.
\]
The bound \eqref{eq:boundenergia} grants that the right-hand side is uniformly bounded in $\eps\in(0,1)$ and since $\{(\mu^\eps_t)\}_\eps$ is tight and 2-uniformly integrable by \eqref{eq:tail} (hence $W_2$-compact), this is sufficient to ensure the compactness of the family $\{(\mu^\eps_t)\}_\eps$ in $C([0,1],(\probt{\X},W_2))$ and, by the lower semicontinuity of the kinetic energy, the fact that any limit curve $(\mu_t) $ is absolutely continuous. The bound \eqref{eq:bdens} is then a direct consequence of the uniform bound \eqref{eq:linftybound} and the convergence property \eqref{eq:66} comes from the weak convergence of the measures and the uniform bound on the densities.
\end{proof}
\begin{Proposition}[Compactness for potentials]\label{lem:62} 
With the same assumptions and notations as in Setting \ref{set} the following holds.

For any sequence $\eps_n\downarrow0$ there exists a subsequence, not relabeled, such that for all $\bar{x} \in \X$ and $M>0$, putting $V := M\sfd^2(\cdot,\bar{x})$ we have:
\begin{itemize}
\item[i)] For every $\delta\in(0,1)$ there exists $C>0$ which only depends on $K,N,\delta,\bar{x}$ such that
\begin{equation}
\label{eq:contquadr}
|\varphi^\eps_t|,|\psi^\eps_{1-t}|\leq C(1+\sfd^2(\cdot,\bar x))\qquad\forall t\in[\delta,1],\ \eps\in(0,1)
\end{equation}
\item[ii)]The curves $(\varphi^{\eps_n}_t),(\psi^{\eps_n}_t)$ converge locally uniformly on $\mathcal I$ with values in $L^1(\X,e^{-V}\mm)$ to limit curves $(\varphi_t),(\psi_t) \in AC_{loc}(\mathcal I,L^1(\X,e^{-V}\mm))$ respectively, where  $\mathcal I:=(0,1]$ for the $\varphi$'s, $\mathcal I:=[0,1)$ for the $\psi$'s;
\item[iii)] For all $t \in I$, the functions $\varphi^{\eps_n}_t,\psi^{\eps_n}_t$ also converge in $C(\X,e^{-V})$ to $\varphi_t,\psi_t$. 
\item[iv)] For any $\delta \in (0,1)$ there exists $C>0$ which only depends on $K,N,\delta,\bar{x}$ such that
\begin{equation}\label{eq:lipcontr2}
\sup_{t \in [\delta,1]}\lip(\varphi_t) + \sup_{t \in [0,1-\delta]}\lip(\psi_t) \leq C\big(1+\sfd(\cdot,\bar{x})\big), \qquad \mm\ae;
\end{equation}
\item[v)]
Finally, up to pass to a suitable subsequence to obtain the existence of limit measures $\mu_t$ as in Proposition \ref{lem:61} above, for every $t \in (0,1)$ it holds 
\begin{equation}\label{eq:63}
\begin{split}
\varphi_t+\psi_t&\leq 0\qquad \text{ on }\X,\\
\varphi_t +\psi_t&=0 \qquad \text{ on }\supp(\mu_t).
\end{split}
\end{equation}
\end{itemize}
Similarly, the curves $(\vartheta^{\eps_n}_t)$ and functions $\vartheta^{\eps_n}_t$ converge in $(0,1)$ to the limit curve $t \mapsto \vartheta_t := \frac12(\psi_t-\varphi_t)$ and functions $\vartheta_t$ in the same sense as above.
\end{Proposition}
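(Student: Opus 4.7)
I would extract a convergent subsequence from $\{\varphi^\eps_t\}_\eps$ by first establishing equi-Lipschitz and quadratic growth bounds, then compactly extracting via an Ascoli-type argument, and finally identifying the limit via the elementary identity $\varphi^\eps_t+\psi^\eps_t=\eps\log\rho^\eps_t$. The case of $\psi$ is symmetric and $\vartheta$ follows from $\vartheta_t=(\psi_t-\varphi_t)/2$.

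\textbf{Step 1 (quadratic growth, part (i)).} Taking $\eps\log$ of the two-sided Gaussian bounds \eqref{eq:weightbound} gives, for $t\in[\delta,1]$,
\[
\eps\log\frac{C_1\|f^\eps\|_{L^1(\mm)}}{V_{\eps t/2}}-\frac{C_2\sfd^2(\cdot,\bar{x})+C_3}{t}\leq\varphi^\eps_t\leq \eps\log\frac{C_4\|f^\eps\|_{L^1(\mm)}}{v_{\eps t/2}}-\frac{C_5\sfd^2(\cdot,\bar{x})-C_6}{t}.
\]
Local doubling of $\mm$ gives $\eps|\log v_{\eps t/2}|+\eps|\log V_{\eps t/2}|=O(\eps|\log(\eps t)|)\to 0$, so everything reduces to bounding $\eps\log\|f^\eps\|_{L^1(\mm)}$. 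Evaluating this double bound at $t=1$, integrating against $\rho_1$ (which has bounded support), and invoking the normalization \eqref{eq:normalization} pins down $\eps\log\|f^\eps\|_{L^1(\mm)}$ from above and below, establishing \eqref{eq:contquadr} for $\varphi$. The analogous control on $\eps\log\|g^\eps\|_{L^1(\mm)}$ is obtained from the endpoint relations $f^\eps\,\h_{\eps/2}g^\eps=\rho_0$ and $g^\eps\,\h_{\eps/2}f^\eps=\rho_1$ together with \eqref{eq:l1lifg} and the bound just obtained on $\eps\log\|f^\eps\|_{L^1(\mm)}$, yielding the corresponding quadratic bound for $\psi^\eps_{1-t}$.

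\textbf{Step 2 (compactness, parts (ii)--(iii)).} Proposition \ref{pro:1} yields equi-Lipschitz bounds on $\{\varphi^\eps_t\}_\eps$ on every ball, uniformly in $t\in[\delta,1]$. Combined with Step 1, this gives relative compactness of $\{\varphi^\eps_t\}_\eps$ in $C(\X,e^{-V})$ at each fixed $t$: equi-Lipschitz plus equi-boundedness handles bounded regions, and outside a large ball the quadratic growth is dominated by the weight decay $e^{-V}$ for $M$ sufficiently large. For joint-in-$t$ compactness, I estimate
\[
\|\partial_t\varphi^\eps_t\|_{L^1(e^{-V}\mm)}\leq \tfrac12\|\,|\nabla\varphi^\eps_t|^2\|_{L^1(e^{-V}\mm)}+\tfrac{\eps}{2}\|\Delta\varphi^\eps_t\|_{L^1(e^{-V}\mm)}
\]
via the Hamilton--Jacobi--Bellman equation for $\varphi^\eps_t$, with the first term controlled by the linear growth \eqref{eq:lipcontr} against the integrable weight $e^{-V}\mm$, and the second by $\eps$ times \eqref{eq:lapcontr2}. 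Hence $(\varphi^\eps_t)$ is equicontinuous in $t$ into $L^1(e^{-V}\mm)$; an Ascoli-type argument plus a diagonal over $\delta=1/k$ yields a subsequence converging locally uniformly on $(0,1]$ into $L^1(e^{-V}\mm)$ to some $(\varphi_t)$, and absolute continuity of the limit follows from the uniform $L^1$-bound on the time derivatives. At each $t$, equi-Lipschitz upgrades $L^1$-convergence to locally uniform convergence on compacts, and the quadratic growth combined with the weight decay completes convergence in $C(\X,e^{-V})$.

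\textbf{Step 3 (parts (iv)--(v) and main obstacle).} Part (iv) is immediate by lower semicontinuity of the local Lipschitz constant applied to \eqref{eq:lipcontr}. For (v), $\varphi^\eps_t+\psi^\eps_t=\eps\log\rho^\eps_t\leq\eps\log M\to 0$ everywhere by Proposition \ref{pro:6}, giving $\varphi_t+\psi_t\leq 0$ on $\X$ in the limit. Integrating against $\rho^\eps_t$,
\[
\int(\varphi^\eps_t+\psi^\eps_t)\,d\mu^\eps_t=\eps H(\mu^\eps_t\,|\,\mm)\to 0
\]
by \eqref{eq:boundentropia}; combining the locally uniform convergence of the integrand with the Gaussian tail \eqref{eq:tail} controlling its quadratic growth allows passage to the limit on the left, hence $\int(\varphi_t+\psi_t)\,d\mu_t=0$. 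Together with $\varphi_t+\psi_t\leq 0$ and the continuity of the limits, this forces equality on $\supp(\mu_t)$. The main obstacle is Step 1: while Hamilton's gradient estimate and local doubling quickly produce the correct spatial behavior of $\varphi^\eps_t$, the additive constant $\eps\log\|f^\eps\|_{L^1(\mm)}$ is inaccessible from PDE estimates alone and must be controlled via the normalization coupled with the endpoint Schr\"odinger identities; without this control, the later compactness arguments have no basepoint to latch onto.
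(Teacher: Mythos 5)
Your proposal is correct and shares the overall architecture of the paper's proof (quadratic growth bound, Ascoli--Arzel\`a in space and in time, semicontinuity of the Lipschitz bounds, passage to the limit in $\varphi^\eps_t+\psi^\eps_t=\eps\log\rho^\eps_t$), but your Step 1 takes a genuinely different route to \eqref{eq:contquadr}. The paper never takes $\eps\log$ of the Gaussian bounds at this stage: it first proves the time-Lipschitz estimate $\|\varphi^\eps_t-\varphi^\eps_s\|_{L^1(\X,e^{-V}\mm)}\leq C_\delta|t-s|$ from the HJB equation, then uses the spatial gradient bound \eqref{eq:lipcontr} to control the \emph{oscillation} $|\varphi^\eps_t(x)-\varphi^\eps_t(\bar x)|$ by $C_\delta(1+\sfd^2(x,\bar x))$, and finally pins down the single value $\varphi^\eps_t(\bar x)$ by integrating against $\mu_1$ at $t=1$ (where the normalization \eqref{eq:normalization} gives $\int\varphi^\eps_1\,\d\mu_1=0$) and propagating back in time; for the $\psi$'s it anchors at $t=1/2$ via $\int(\varphi^\eps_{1/2}+\psi^\eps_{1/2})\,\d\mu^\eps_{1/2}=\eps H(\mu^\eps_{1/2}|\mm)$ and \eqref{eq:boundentropia}. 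Your route instead extracts the full two-sided bound directly from \eqref{eq:weightbound} and reduces everything to controlling the normalizing constants $\eps\log\|f^\eps\|_{L^1}$, $\eps\log\|g^\eps\|_{L^1}$; this is legitimate (indeed the paper performs exactly these manipulations later, in the proof of Proposition \ref{pro:v0}, where it establishes \eqref{eq:volneg} and \eqref{eq:fgeps}), and it has the advantage of not needing the time-Lipschitz estimate before the growth bound, at the price of a slightly more delicate bookkeeping of the constants for $g^\eps$. Two small points: in (iv), the pointwise slope $\lip$ is \emph{not} lower semicontinuous under (locally) uniform convergence at a fixed point, so your appeal to "lower semicontinuity of the local Lipschitz constant" should be replaced, as in the paper, by the semicontinuity of the Lipschitz constants on balls $\Lip(\cdot\restr{B_r(x)})$ combined with the continuity of the majorant $C(1+\sfd(\cdot,\bar x))$ and the geodesic property of $\X$; and in Step 2 the restriction "for $M$ sufficiently large" is unnecessary, since $(1+\sfd^2)e^{-M\sfd^2}$ vanishes at infinity for every $M>0$, which matters because the statement asserts the conclusion for all $M>0$.
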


\begin{proof}

\noindent{\bf (i)} We start claiming that for all $\eps > 0$ and $t,s \in (0,1]$ with $t<s$ it holds
\begin{equation}\label{eq:endenich}
\|\varphi^{\varepsilon}_t - \varphi^{\varepsilon}_s\|_{L^1(\X,e^{-V}\mm)} \leq \iint_t^s e^{-V} \bigg( \frac{|\nabla\varphi^{\varepsilon}_r|^2}{2} + \frac{\varepsilon}{2}|\Delta\varphi^{\varepsilon}_r| \bigg)\d r\,\d\mm.
\end{equation}
Indeed, by Proposition \ref{pro:7} we know that $(\varphi^\eps_t)\in AC([\delta,1],W^{1,2}(\X,e^{-V'}\mm))$ with $V' := M'\sfd^2(\cdot,\bar{x})$ and $M'=M'(\delta)$ sufficiently large, for any $\delta\in(0,1)$.  Thus for any cut-off function $\nchi_R \in \testi\X$ with $\nchi_R \equiv 1$ on $B_R(\bar{x})$ and support in $B_{R+1}(\bar{x})$, we have $(\nchi_R\varphi^\eps_t) \in AC_{loc}((0,1],W^{1,2}(\X,e^{-V}\mm))$ and since $W^{1,2}(\X,e^{-V}\mm)\subset L^1(\X,\mm)$ (because $e^{-V}\mm$  is a finite measure)  a fortiori this is true for $(\nchi_R e^{-V}\varphi^\eps_t)$. From the formula for $\ddt\varphi^\eps_t$ (Proposition \ref{pro:7}) this implies
\[
\|\nchi_R(\varphi^{\varepsilon}_t - \varphi^{\varepsilon}_s)\|_{L^1(\X,e^{-V}\mm)} \leq \iint_t^s \nchi_R e^{-V} \bigg( \frac{|\nabla\varphi^{\varepsilon}_r|^2}{2} + \frac{\varepsilon}{2}|\Delta\varphi^{\varepsilon}_r| \bigg)\d r\,\d\mm,
\]
so that the claim \eqref{eq:endenich} follows by letting $R\to\infty$ and using the monotone convergence theorem. Denoting by $C_\delta$ a constant depending on  $K,N,\rho_0,\rho_1,\bar x,\delta$, but independent of $\eps,t$, whose value might change on the various occurrences it appears,  estimates  \eqref{eq:lipcontr} and \eqref{eq:lapcontr2}  give
\begin{equation}
\label{eq:lipl1}
\|\varphi^{\varepsilon}_t - \varphi^{\varepsilon}_s\|_{L^1(\X,e^{-V}\mm)} \leq C_\delta|s-t |\qquad\forall\eps\in(0,1),\ \forall t,s\in[\delta,1].
\end{equation}
Now we observe that from \eqref{eq:lipcontr} and the fact that $\X$ is a geodesic space it follows that
\begin{equation}
\label{eq:percontquadr}
|\varphi^\eps_t(x)-\varphi^\eps_t(\bar x)|\leq C_\delta\sfd(x,\bar x)(1+\sfd(x,\bar x))\leq C_\delta(1+\sfd^2(x,\bar x))\qquad\forall \eps\in(0,1),\ t\in[\delta,1].
\end{equation}
Which already tells that $\varphi^\eps_t$ has quadratic growth (with constants possibly depending on $t,\eps$). For $\mu$ with finite second moment, integrate  \eqref{eq:percontquadr} w.r.t.\ $\mu$ in the $x$ variable to get
\begin{equation}
\label{eq:perbarx}
|\varphi^\eps_t(\bar x)-\int \varphi^\eps_t\,\d\mu|\leq \int|\varphi^\eps_t(x)-\varphi^\eps_t(\bar x)|\,\d\mu(x)\leq C_\delta\int (1+\sfd^2(x,\bar x))\,\d\mu(x)
\end{equation}
then pick $t:=1$, $\mu:=\mu_1$ and recall that the normalization chosen for $(f^\eps,g^\eps)$ in  Setting \ref{set} reads as $\int \varphi^\eps_1\,\d\mu_1 = 0$ to deduce that $\sup_{\eps\in(0,1)}|\varphi^\eps_1(\bar x)|<\infty$ and thus \eqref{eq:percontquadr} gives
\[
|\varphi^\eps_1|\leq C_\delta(1+\sfd^2(x,\bar x))\qquad\forall \eps\in(0,1)
\]
which in turn implies  $\sup_{\eps\in(0,1)}\|\varphi^\eps_1\|_{L^1(\X,e^{-V}\mm)}<\infty$. This bound in conjunction with \eqref{eq:lipl1} gives
\[
\|\varphi^\eps_t\|_{L^1(\X,e^{-V}\mm)}\leq C_\delta\qquad \forall\eps\in(0,1),\ \forall t\in[\delta,1],
\]
so that picking $\mu:=e^{-V}\mm$ in \eqref{eq:perbarx} we see that $|\varphi^\eps_t(\bar x)|\leq C_\delta$ for every $\eps\in(0,1)$, $t\in[\delta,1]$ and in conclusion \eqref{eq:percontquadr} gives \eqref{eq:contquadr} for the $\varphi^\eps_t$'s.

Following the same lines of thought, the bound \eqref{eq:contquadr} for  $\psi^\eps_t$ will  follow provided we are able to show that  for some measures $\mu^\eps$ with uniformly bounded second moment it holds 
\[
\sup_{t\in[0,1-\delta]}\sup_{\eps\in(0,1)}\big|\int \psi^\eps_t\,\d\mu^\eps\big|<\infty
\]
for any $\delta\in(0,1)$. We pick $\mu^\eps:=\mu^\eps_{1/2}$: \eqref{eq:boundmomento} gives the uniform bound on the second moment, while multiplying by $\rho^\eps_{1/2}$  the identity $\varphi^\eps_{1/2}+\psi^\eps_{1/2}=\eps\log\rho^\eps_{1/2}$ and integrating we get
\[
\int\varphi^\eps_{1/2}\,\d\mu^\eps_{1/2}+\int\psi^\eps_{1/2}\,\d\mu^\eps_{1/2}=H(\mu^\eps_{1/2}\,|\,\mm)
\]
and the conclusion follows from \eqref{eq:boundentropia} and \eqref{eq:contquadr} in conjunction with \eqref{eq:boundmomento}.

\noindent{\bf (ii)}  By Ascoli-Arzel\`a's theorem, for given $\bar x \in \X$ and $C>0$ the set of functions $\varphi$ on $\X$ such that 
\[
|\varphi|\leq  C(1+\sfd^2(\cdot,\bar x))\qquad\qquad \lip(\varphi)\leq C(1+\sfd(\cdot,\bar x))
\]
is a compact subset of $C(\X,e^{-V}\mm)$. Thus for any $\delta\in(0,1)$ the estimates \eqref{eq:contquadr} and \eqref{eq:lipcontr} give that $\{\varphi^\eps_t:\eps\in(0,1), t\in[\delta,1]\}$ is compact in  $C(\X,e^{-V}\mm)$ and thus a fortiori also compact in $L^1(\X,e^{-V}\mm)$. This fact, \eqref{eq:lipl1}, the arbitrariness of $\delta\in(0,1)$ and again Ascoli-Arzel\`a's theorem give the claim. Similarly for the $\psi$'s.

\noindent{\bf (iii)} We  know that for any $t\in(0,1)$ we have  $\varphi^{\eps_n}_t\to \varphi_t$ in $L^1(\X,e^{-V}\mm)$. We also noticed that for any $t\in(0,1)$ the family $\{\varphi^{\eps_n}_t\}_n$ is compact in $C(\X,e^{-V}\mm)$, thus the claim follows.  Similarly for the $\psi$'s.

\noindent{\bf (iv)} We know that for any $x\in\X$ it holds $\lip\, \varphi_t(x)\leq \limi_{r\downarrow0}\Lip(\varphi_t\restr{B_r(x)})$ and, since $\X$ is geodesic, that $\Lip(\varphi^\eps_t\restr{B_r(x)})=\sup_{B_r(x)}\lip\varphi^\eps_t$. Thus the claim follows from the bound \eqref{eq:lipcontr} and the fact that $\Lip(\varphi_t\restr{B_r(x)})\leq \limi_{n\to\infty}\Lip(\varphi_t^{\eps_n}\restr{B_r(x)})$, which in turn is a trivial consequence of the local uniform convergence we already proved.  Similarly for the $\psi$'s.

\noindent{\bf (v)} For the first in \eqref{eq:63} we pass to the limit in the identity 
\begin{equation}
\label{eq:k4}
\varphi^\eps_t+\psi^\eps_t=\eps\log\rho^\eps_t
\end{equation} 
recalling the uniform bound \eqref{eq:linftybound}. To get the second in \eqref{eq:63} we multiply both sides of \eqref{eq:k4} by $\rho^\eps_t$ and integrate to obtain
\[
\int (\varphi^\eps_t+\psi^\eps_t)\rho^\eps_t\,\d\mm=\eps H(\mu^\eps_t \,|\, \mm).
\]
Letting $\eps=\eps_n\downarrow0$ we see that the right-hand side goes to 0 by \eqref{eq:boundentropia}; then we use the fact that $W_2(\mu^{\eps_n}_t,\mu_t)\to0$, that the functions $\varphi^\eps_t,\psi^\eps_t$ have  uniform quadratic growth and converge locally uniformly to $\varphi_t,\psi_t$ respectively to obtain that  the left-hand side goes to $\int\varphi_t+\psi_t\,\d\mu_t$. This is sufficient to conclude.
\end{proof}

\subsection{Identification of the limit curve and potentials}

We now show that the limit interpolation is the geodesic from $\mu_0$ to $\mu_1$ and the limit potentials are Kantorovich potentials. We shall make use of the following simple lemma valid on general metric measure spaces:

\begin{Lemma}\label{lem:dermiste}
Let $(\Y,\sfd_\Y,\mm_\Y)$ be a complete separable metric measure space endowed with a non-negative measure $\mm_\Y$ which is finite on bounded sets and assume that $W^{1,2}(\Y)$ is separable. Let $\ppi$ be a test plan and $f\in W^{1,2}(\Y)$. Then $t\mapsto \int f\circ\e_t\,\d\ppi$ is absolutely continuous and 
\begin{equation}
\label{eq:bder1}
\Big|\frac\d{\d t}\int f\circ\e_t\,\d\ppi \Big|\leq \int |\d f|(\gamma_t)|\dot\gamma_t|\,\d\ppi(\gamma)\qquad{\rm a.e.}\ t\in[0,1],
\end{equation}
where the exceptional set can be chosen to be independent on $f$.

Moreover, if $(f_t)\in AC([0,1],L^2(\Y))\cap L^\infty([0,1],W^{1,2}(\Y))$, then the map $t\mapsto\int f_t\circ\e_t\,\d\ppi$ is also absolutely continuous and 
\[
\frac\d{\d s}\Big(\int f_s\circ\e_s\,\d\ppi\Big)\restr{s=t}=\int \big(\frac\d{\d s}f_s\restr{s=t}\big)\circ\e_t\,\d\ppi+\frac\d{\d s}\Big(\int f_t\circ\e_s\,\d\ppi\Big)\restr{s=t}\qquad {\rm a.e.}\ t\in[0,1].
\]
\end{Lemma}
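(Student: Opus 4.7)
The plan is in three movements: first establish the basic differentiation inequality for a fixed $f$, then upgrade to a uniform null set by separability, and finally deduce the product rule by a splitting argument combined with weak--strong convergence.

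\textbf{Fixed-$f$ inequality.} The very definition of test plan gives that $r\mapsto f(\gamma_r)$ is absolutely continuous with $|(f\circ\gamma)'(r)|\leq |df|(\gamma_r)|\dot\gamma_r|$ for $\ppi$-a.e.\ $\gamma$. Setting $R_f(r):=\int|df|(\gamma_r)|\dot\gamma_r|\,d\ppi$, Cauchy--Schwarz together with $(\e_r)_*\ppi\leq C\mm_\Y$ and $\int\!\!\int|\dot\gamma|^2 dr\,d\ppi<\infty$ gives $R_f\in L^1(0,1)$. Fubini then yields
\[
\big|\phi_f(t)-\phi_f(s)\big|\leq\int_s^t R_f(r)\,dr,\qquad \phi_f(t):=\int f\circ\e_t\,d\ppi,
\]
whence $\phi_f$ is absolutely continuous and Lebesgue's differentiation theorem gives \eqref{eq:bder1} at every Lebesgue point of $R_f$.

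\textbf{Uniform null set.} Using separability of $W^{1,2}(\Y)$, fix a countable dense $\{f_n\}$ and let $N\subset(0,1)$ be the negligible union of the non-Lebesgue-point sets of the $R_{f_n}$. The uniform estimate $\sup_t|\phi_f(t)-\phi_{f_n}(t)|\leq\sqrt{C}\,\|f-f_n\|_{L^2(\mm_\Y)}$ follows from $(\e_t)_*\ppi\leq C\mm_\Y$ and Cauchy--Schwarz; an analogous estimate bounds $\|R_f-R_{f_n}\|_{L^1(0,1)}$ by a constant times the $W^{1,2}$-distance of $f$ from $f_n$. Passing to the limit in the integral inequality at any $t\notin N$ delivers \eqref{eq:bder1} for every $f\in W^{1,2}(\Y)$, with $N$ independent of $f$.

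\textbf{Product rule.} I would split
\[
\int f_t\circ\e_t\,d\ppi-\int f_s\circ\e_s\,d\ppi=\int(f_t-f_s)\circ\e_t\,d\ppi+\big(\phi_{f_s}(t)-\phi_{f_s}(s)\big).
\]
The first summand is bounded by $\sqrt{C}\,\|f_t-f_s\|_{L^2(\mm_\Y)}$ (hence is AC in $s$ by the $AC([0,1],L^2)$ hypothesis), while the second is AC by the Fixed-$f$ step together with $\sup_t\|f_t\|_{W^{1,2}}<\infty$, so the sum is AC. To compute the derivative at a.e.\ $t$, pick $s_n\to t$. The difference quotient $(f_{s_n}-f_t)/(s_n-t)$ converges to $\partial_t f_t$ strongly in $L^2(\mm_\Y)$ at any Lebesgue point of the $L^2$-valued derivative, while the densities $g_s:=d(\e_s)_*\ppi/d\mm_\Y$ satisfy $g_{s_n}\rightharpoonup g_t$ weakly in $L^2(\mm_\Y)$---this follows from continuity of $s\mapsto\int h\circ\e_s\,d\ppi$ for every bounded continuous $h$ (dominated convergence on $C([0,1],\Y)$) together with the uniform $L^\infty$ bound $g_s\leq C$ (which also gives $\|g_s\|_{L^2}\leq\sqrt{C}$). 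Weak--strong pairing then produces
\[
\frac{1}{s_n-t}\int(f_{s_n}-f_t)\circ\e_{s_n}\,d\ppi=\int\frac{f_{s_n}-f_t}{s_n-t}\,g_{s_n}\,d\mm_\Y\longrightarrow\int \partial_t f_t\,g_t\,d\mm_\Y=\int\partial_t f_t\circ\e_t\,d\ppi,
\]
while the Fixed-$f$ step applied to $f_t$---legitimate precisely because $N$ is independent of the $W^{1,2}$-function plugged in---gives the second contribution $(d/ds)\int f_t\circ\e_s\,d\ppi|_{s=t}$. Adding the two yields the claimed identity.

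\textbf{Main obstacle.} The delicate point is the uniform choice of null set in the Fixed-$f$ step: without it, the application of the first part to the time-varying function $f_t$ in the proof of the product rule is not well-defined, since as $t$ varies, so does $f_t$ and in principle so would the exceptional set of $t$'s at which the derivative formula holds. The separability argument, combined with $W^{1,2}$-to-$L^1$ stability of both sides of the integral inequality, is what secures the uniformity; the rest of the argument is a routine application of weak--strong convergence in $L^2$.
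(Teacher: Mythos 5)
Your proposal follows essentially the same route as the paper: the first part is read off from the definitions of test plans and Sobolev functions plus a separability/density argument for the uniform null set, and the product rule is obtained from the splitting $\int f_{s}\circ\e_{s}-\int f_t\circ\e_t=\int(f_{s}-f_t)\circ\e_{s}+\big(\int f_t\circ\e_{s}-\int f_t\circ\e_t\big)$ together with the pairing of the strongly $L^2$-convergent difference quotients $(f_{s}-f_t)/(s-t)$ against the weakly convergent, equibounded densities $(\e_{s})_*\ppi$. The paper organizes the increment into three terms (the third of which vanishes after division by $h$), but this is the same computation as your two-term version.

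One caveat on the uniform-null-set step, which the paper itself only outsources to \cite{Gigli14}: controlling $\|R_f-R_{f_n}\|_{L^1(0,1)}$ is not enough to ``pass to the limit'' in the pointwise derivative bound at a prescribed $t\notin N$, since $L^1$-closeness of the $R$'s says nothing at a fixed point. The standard fix is to use the pointwise Cauchy--Schwarz estimate $|R_f(r)-R_{f_n}(r)|\leq\sqrt{C}\,\|\d(f-f_n)\|_{L^2}\big(\int|\dot\gamma_r|^2\,\d\ppi\big)^{1/2}$ and to enlarge $N$ so that it also contains the non-Lebesgue points of $r\mapsto\int|\dot\gamma_r|^2\,\d\ppi(\gamma)$; then the difference quotients of $\phi_f$ at $t\notin N$ are uniformly close to those of $\phi_{f_n}$, which yields both differentiability of $\phi_f$ at $t$ and the bound \eqref{eq:bder1} in the limit. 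With that adjustment your argument closes.
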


\begin{proof}
The absolute continuity of $t \mapsto \int f\circ\e_t\,\d\ppi$ and the bound \eqref{eq:bder1} are trivial consequences of the definitions of test plans and Sobolev functions. The fact that the exceptional set can be chosen independently on $f$ follows from the separability of $W^{1,2}(\Y)$ and standard approximation procedures, carried out, for instance, in \cite{Gigli14}.

For the second part, we start noticing that the second derivative in the right-hand side exists for a.e.\ $t$ thanks to what we have just proved, so that the claim makes sense. The absolute continuity follows from the fact that for any $t_0,t_1 \in [0,1]$, $t_0 < t_1$ it holds
\[
\begin{split}
\Big|\int f_{t_1}\circ\e_{t_1}-f_{t_0}\circ\e_{t_0}\,\d\ppi\Big| & \leq \Big|\int f_{t_1}\circ\e_{t_1} - f_{t_1}\circ\e_{t_0}\,\d\ppi\Big| + \Big|\int f_{t_1} - f_{t_0}\,\d(\e_{t_0})_*\ppi\Big| \\
& \leq \iint_{t_0}^{t_1}|\d f_{t_1}|(\gamma_t)|\dot\gamma_t|\,\d t\,\d\ppi(\gamma) + \iint_{t_0}^{t_1}\Big|\ddt f_t\Big|\,\d t\,\d(\e_{t_0})_*\ppi
\end{split}
\]
and our assumptions on $(f_t)$ and $\ppi$. Now fix a point $t$ of differentiability for $(f_t)$ and observe that the fact that $\frac{f_{t+h}-f_t}{h}$ strongly converges in $L^2(\Y)$ to $\ddt f_t$ and $(\e_{t+h})_*\ppi$ weakly converges to $(\e_t)_*\ppi$ as $h \to 0$ and the densities are equibounded is sufficient to get
\[
\lim_{h\to 0}\int\frac{f_{t+h} - f_t}{h}\circ\e_{t+h}\,\d\ppi = \int \ddt f_t\circ\e_t\,\d\ppi = \lim_{h \to 0} \int \frac{f_{t+h}-f_t}{h}\circ\e_{t}\,\d\ppi.
\]
Hence the conclusion comes dividing by $h$ the trivial identity
\[
\begin{split}
\int f_{t+h}\circ\e_{t+h}-f_{t}\circ\e_{t}\,\d\ppi = & \int f_{t}\circ\e_{t+h}-f_{t}\circ\e_{t}\,\d\ppi + \int f_{t+h}\circ\e_{t} - f_{t}\circ\e_{t}\,\d\ppi + \\
& \qquad + \int (f_{t+h}-f_t)\circ\e_{t+h}-(f_{t+h}-f_t)\circ\e_{t}\,\d\ppi
\end{split}
\]
and letting $h\to 0$.
\end{proof}

We now prove that in the limit the potentials evolve according to the Hopf-Lax semigroup (recall formula \eqref{eq:hli}).

\begin{Proposition}[Limit curve and potentials]\label{pro:9} With the same assumptions and notations as in Setting \ref{set} the following holds.

The limit curve $(\mu_t)$ given by Proposition \ref{lem:61} is unique (i.e.\ independent on the sequence $\eps_n \downarrow0$) and is the only $W_2$-geodesic connecting $\mu_0$ to $\mu_1$.

For any $\bar{x} \in \X$, $M>0$ and any limit curve $(\varphi_t)$ given by Proposition \ref{lem:62}, $(\varphi_t)$ is in $AC_{loc}((0,1],C(\X,e^{-V})) \cap L^\infty_{loc}((0,1],W^{1,2}(\X,e^{-V}\mm))$, where $V := M\sfd^2(\cdot,\bar{x})$, and for any $t_0,t_1 \in (0,1]$, $t_0<t_1$ we have
\begin{subequations}
\begin{align}
\label{eq:hl1}
-\varphi_{t_1}&=Q_{t_1-t_0}(-\varphi_{t_0})\\
\label{eq:costointermedio}
\int \varphi_{t_0}\,\d\mu_{t_0}- \int \varphi_{t_1}\,\d\mu_{t_1}&= \frac1{2(t_1-t_0)}W_2^2(\mu_{t_0},\mu_{t_1})
\end{align}
\end{subequations}
and $-(t_1-t_0)\varphi_{t_1}$ is a Kantorovich potential from $\mu_{t_1}$ to $\mu_{t_0}$.
Similarly, for $V$ as above and any limit curve $(\psi_t)$ given by Proposition \ref{lem:62}, $(\psi_t)$ belongs to $AC_{loc}([0,1),C(\X,e^{-V})) \cap L^\infty_{loc}([0,1),W^{1,2}(\X,e^{-V}\mm))$ and for every $ t_0,t_1 \in [0,1)$, $t_0<t_1$ we have
\begin{subequations}
\begin{align}
\label{eq:hl2}
-\psi_{t_0}&= Q_{t_1-t_0}(-\psi_{t_1})\\
\label{eq:costointermedio2}
\int \psi_{t_1}\,\d\mu_{t_1}- \int \psi_{t_0}\,\d\mu_{t_0}&= \frac1{2(t_1-t_0)}W_2^2(\mu_{t_0},\mu_{t_1})
\end{align}
\end{subequations}
and $-(t_1-t_0)\psi_{t_0}$ is a Kantorovich potential from $\mu_{t_0}$ to $\mu_{t_1}$.
\end{Proposition}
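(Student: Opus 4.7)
The strategy is to pass to the limit $\eps_n \downarrow 0$ in the HJB equation for $\varphi^\eps_t$, obtaining a Hopf-Lax upper bound for $-\varphi_t$, and to combine this with the continuity equation for the limit curve together with the compatibility relation $\varphi_t + \psi_t \leq 0$ (with equality on $\supp\mu_t$) from Proposition \ref{lem:62}(v) to obtain a saturated chain of inequalities. This chain will simultaneously yield the cost formula \eqref{eq:costointermedio}, identify $(\mu_t)$ as a constant-speed $W_2$-geodesic, identify $-(t_1-t_0)\varphi_{t_1}$ as a Kantorovich potential, and upgrade the Hopf-Lax inequality to an equality everywhere. Uniqueness of the $W_2$-geodesic between $\mu_0$ and $\mu_1$ from \cite{RajalaSturm12} (which applies since the endpoints have bounded densities) then gives uniqueness of the limit curve, and the analysis for $(\psi_t)$ is completely symmetric.

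\textbf{Hopf-Lax upper bound.} Fix $t_0 < t_1$ in $(0,1]$. Rewriting HJB as $\partial_t(-\varphi^\eps_t) + \tfrac12 |\nabla\varphi^\eps_t|^2 = -\tfrac\eps2 \Delta\varphi^\eps_t$, a comparison with the ``free action'' $A(t,x;y) := \sfd^2(x,y)/(2(t-t_0))$, made rigorous via Lemma \ref{lem:dermiste} applied to $f_t = \varphi^\eps_t$ along test plans concentrated on unit-speed geodesics from $y$ to $x$ (after a spatial cut-off reducing to the unweighted $W^{1,2}$ setting) combined with Young's inequality, yields
\[
-\varphi^\eps_{t_1}(x) \;\leq\; -\varphi^\eps_{t_0}(y) + \frac{\sfd^2(x,y)}{2(t_1-t_0)} + R^\eps(x,y)
\]
where the remainder $R^\eps$ involves $\eps\,\Delta\varphi^\eps$. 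The Li-Yau lower bound $\eps\Delta\varphi^\eps_t \geq -C(1+\sfd^2(\cdot,\bar x))$ and Hamilton's gradient estimates from Section \ref{sec:4} force $R^\eps \to 0$ as $\eps_n\downarrow 0$ (once combined with a mild averaging in $y$). The locally uniform convergence of Proposition \ref{lem:62}(iii) then delivers $-\varphi_{t_1}(x) \leq -\varphi_{t_0}(y) + \sfd^2(x,y)/(2(t_1-t_0))$ for all $x,y \in \X$, whence $-\varphi_{t_1} \leq Q_{t_1-t_0}(-\varphi_{t_0})$.

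\textbf{Saturation and geodesic identification.} Integrating the pointwise inequality against an optimal coupling of $(\mu_{t_0},\mu_{t_1})$ gives
\[
\int\varphi_{t_0}\,\d\mu_{t_0} - \int\varphi_{t_1}\,\d\mu_{t_1} \;\leq\; \frac{W_2^2(\mu_{t_0},\mu_{t_1})}{2(t_1-t_0)}.
\]
For the reverse direction, pass to the limit in the continuity equation $\partial_t\rho^\eps_t + \div(\rho^\eps_t\nabla\vartheta^\eps_t) = 0$ to conclude that $(\mu_t,\nabla\vartheta_t)$ satisfies the continuity equation distributionally (using the uniform $L^2$ bound from Lemma \ref{lem:7} and the weak convergence from Proposition \ref{lem:61}). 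The equality $\varphi_t+\psi_t = 0$ on $\supp\mu_t$ forces $\nabla\vartheta_t = -\nabla\varphi_t$ $\mu_t$-a.e.\ by locality of minimal weak upper gradients, and passing to the limit in HJB tested against $\rho_t$ produces $\partial_t\varphi_t = \tfrac12 |\nabla\varphi_t|^2$ in the distributional sense along $(\mu_t)$. A time-dependent version of Theorem \ref{thm:1} (obtained by combining it with Lemma \ref{lem:dermiste}) then gives
\[
\int\varphi_{t_0}\,\d\mu_{t_0} - \int\varphi_{t_1}\,\d\mu_{t_1} \;=\; \tfrac12\int_{t_0}^{t_1}\!\!\int |\nabla\varphi_t|^2\,\d\mu_t\,\d t \;\geq\; \tfrac12\int_{t_0}^{t_1}|\dot\mu_t|^2\,\d t \;\geq\; \frac{W_2^2(\mu_{t_0},\mu_{t_1})}{2(t_1-t_0)},
\]
the last two steps by $|\dot\mu_t|^2 \leq \int|\nabla\vartheta_t|^2\d\mu_t$ and the length-minimization characterization of $W_2$. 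Combining with the previous displayed upper bound forces every inequality to be an equality. This immediately produces \eqref{eq:costointermedio}; that $(\mu_t)$ is a constant-speed $W_2$-geodesic on $[t_0,t_1]$, hence on $[0,1]$ by the arbitrariness of $t_0,t_1$; that $-(t_1-t_0)\varphi_{t_1}$ is a Kantorovich potential from $\mu_{t_1}$ to $\mu_{t_0}$ by saturation of Kantorovich duality; and, by a standard $c$-conjugation argument (replacing $-\varphi_{t_1}$ by its $c$-transform of $c$-transform, which coincides with it on $\supp\mu_{t_1}$ and is $\leq Q_{t_1-t_0}(-\varphi_{t_0})$ everywhere by Step 1), the equality $-\varphi_{t_1} = Q_{t_1-t_0}(-\varphi_{t_0})$ pointwise on $\X$.

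\textbf{Main obstacle.} The most delicate step is the rigorous vanishing of the viscous remainder $R^\eps$ in the Hopf-Lax upper bound: the pointwise structure of HJB is lost in a nonsmooth metric-measure setting, and the Li-Yau bound only controls $\eps\Delta\varphi^\eps$ from below with quadratic growth in space, so the argument must simultaneously exploit Hamilton's gradient control and the Gaussian tail decay of $\rho^\eps_t$ to localize. A secondary subtlety is the distributional passage to the limit in HJB: converting it into a usable identity along $(\mu_t)$ requires testing against $\rho_t$ and leveraging both the $W^{1,2}_{loc}$-convergence of $\varphi^{\eps_n}_t$ and the weak convergence of $|\nabla\varphi^{\eps_n}_t|^2\rho^{\eps_n}_t$, so that the cross-term $\int\langle\nabla\varphi_t,\nabla\vartheta_t\rangle\d\mu_t$ passes correctly to the limit.
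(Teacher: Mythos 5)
Your proof of the upper bound $-\varphi_{t_1}\leq Q_{t_1-t_0}(-\varphi_{t_0})$ follows the same route as the paper: test plans from ball-averaged initial data, Lemma~\ref{lem:dermiste}, Young's inequality, and the weighted $L^1$ control on $\Delta\varphi^\eps_t$ (the two-sided bound \eqref{eq:lapcontr2}, not merely the one-sided Li--Yau estimate) to kill the viscous term. Your derivation of \eqref{eq:costointermedio} via the continuity equation for $(\mu_t,\nabla\vartheta_t)$, the relation $\nabla\vartheta_t=-\nabla\varphi_t$ $\mu_t$-a.e.\ from $\varphi_t+\psi_t=0$ on $\supp\mu_t$, and Theorem~\ref{thm:GH} is also essentially what the paper does (the paper differentiates $\int\nchi_R\varphi^\eps_t\rho^\eps_t\,\d\mm$ and passes to the limit, which is a rigorous form of your ``test HJB against $\rho_t$'' step).

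However, there is a genuine gap in the identification of the pointwise Hopf--Lax formula. Your saturated chain of inequalities, integrated against $\mu_{t_1}$, yields $-\varphi_{t_1}=Q_{t_1-t_0}(-\varphi_{t_0})$ only $\mu_{t_1}$-a.e., hence (by continuity of both sides) on $\supp\mu_{t_1}$, but the claim is the pointwise equality on all of $\X$. Your ``standard $c$-conjugation argument'' does not close this: replacing $-\varphi_{t_1}$ by its double $c_{t_1-t_0}$-transform produces a $c$-concave function dominating $-\varphi_{t_1}$, but there is no reason this dominated function should equal $-\varphi_{t_1}$ off $\supp\mu_{t_1}$, nor is it clear that the double transform is $\leq Q_{t_1-t_0}(-\varphi_{t_0})$. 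The paper resolves this with a genuinely different ingredient: it constructs the Regular Lagrangian Flow (Theorem~\ref{thm:RLF}) of the cut-off vector field $\nchi_{\tilde R}\nabla\varphi^\eps_t$, starting from the normalized restriction of $\mm$ to an \emph{arbitrary} ball $B_r(\bar x)$. Testing HJB along this flow and passing to the limit yields the reverse inequality $-\varphi_{t_1}(y)\geq Q_{t_1-t_0}(-\varphi_{t_0})(y)$ for $\mm$-a.e.\ $y\in B_r(\bar x)$, hence (by continuity and arbitrariness of $r,\bar x$) everywhere. Crucially, the initial measure here is uniformly distributed on an arbitrary ball, not the endpoint $\mu_{t_1}$ of the entropic interpolation; this is what produces pointwise information away from $\supp\mu_{t_1}$. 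Note also that the flow transports \emph{along} $\nabla\varphi^\eps$, which saturates the Young inequality used in the upper bound, and this saturation is the mechanism by which the reverse inequality is obtained before any limit is taken. Without this construction, your argument establishes \eqref{eq:costointermedio} and the geodesic/potential identification, but not \eqref{eq:hl1} on all of $\X$.
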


\begin{proof}

\noindent{\bf Inequality $\leq$ in \eqref{eq:hl1}}. Pick $x,y \in \X$, $r>0 $, define
\[
\nu^r_x := \frac{1}{\mm(B_r(x))}\mm\restr{B_r(x)} \qquad\qquad\qquad\nu^r_y := \frac{1}{\mm(B_r(y))}\mm\restr{B_r(y)}
\]
and $\ppi^r$ as the only lifting of the only $W_2$-geodesic from $\nu^r_x$ to $\nu^r_y$ (recall point $(i)$ of Theorem \ref{thm:bm}). Since $\nu^r_x,\nu^r_y$ have compact support and $\ppi^r \in \gopt(\nu^r_x,\nu^r_y)$, there exist $\bar{x} \in \X$ and $R > 0$ sufficiently large such that
\begin{equation}\label{eq:13}
\supp((\e_t)_*\ppi^r) \subset B_R(\bar{x}), \qquad \forall t \in [0,1].
\end{equation}
Let $\nchi$ be a Lipschitz cut-off function with bounded support such that $\nchi \equiv 1$ in $B_R(\bar{x})$. Then, let $\eps \in (0,1)$ and $0 < t_0 < t_1 \leq 1$, put $\tilde{\varphi}_t^\eps := \nchi\varphi_t^\eps$ and observe that $(\tilde{\varphi}_t^\eps) \in AC_{loc}((0,1],L^2(\X)) \cap L^{\infty}_{loc}((0,1],W^{1,2}(\X))$ by Proposition \ref{pro:7} and the compactness of the support of $\nchi$; thus, by Lemma \ref{lem:dermiste} applied to $\ppi^r$ and $t \mapsto \tilde{\varphi}^\eps_{(1-t)t_0+tt_1}$, we get
\[
\begin{split}
\ddt\int \tilde{\varphi}^\eps_{(1-t)t_0+tt_1}\circ\e_t\,\d\ppi^r\geq \int(t_1-t_0)\frac{\d}{\d s}\tilde{\varphi}^\eps_s\restr{s={(1-t)t_0+tt_1}}(\gamma_t)-|\d \tilde{\varphi}^\eps_{(1-t)t_0+tt_1}|(\gamma_t)|\dot\gamma_t|\,\d\ppi^r(\gamma).
\end{split}
\]
As \eqref{eq:13} implies that $\nchi(\gamma_t) = 1$ for all $t \in [0,1]$ for $\ppi^r$-a.e. $\gamma$, $\tilde{\varphi}^\eps$ can be replaced by $\varphi^\eps$ in the inequality above and, recalling the expression for $\ddt \varphi^\eps_t$ and using Young's inequality, we obtain
\[
\ddt\int \varphi^\eps_{(1-t)t_0+tt_1}\circ\e_t\,\d\ppi^r\geq \int\eps\frac{t_1-t_0}2\Delta\varphi^\eps_{(1-t)t_0+tt_1}(\gamma_t)-\frac1{2(t_1-t_0)}|\dot\gamma_t|^2\,\d\ppi^r(\gamma).
\]
Integrating in time  and recalling that $\ppi^r$ is optimal we get
\[
\int\varphi^\eps_{t_1}\,\d\nu^r_y-\int\varphi_{t_0}^\eps\,\d\nu^r_x\geq -\frac1{2(t_1-t_0)}W_2^2(\nu^r_y,\nu^r_x)+\iint_0^1\eps\frac{t_1-t_0}2\Delta\varphi^\eps_{(1-t)t_0+tt_1}\circ\e_t\,\d t\,\d\ppi^r.
\]
Let $\eps\downarrow0$ along the sequence $(\eps_n)$ for which $(\varphi^{\eps_n}_t)$ converges to our given $(\varphi_t)$ in the sense of Proposition \ref{lem:62} and use the uniform bound \eqref{eq:lapcontr2} and the fact that $\ppi^r$ has bounded compression to deduce that
\[
\int\varphi_{t_1}\,\d\nu^r_y-\int\varphi_{t_0}\,\d\nu^r_x\geq -\frac1{2(t_1-t_0)}W_2^2(\nu^r_y,\nu^r_x)
\]
and finally letting $r\downarrow 0$  we conclude from the arbitrariness of $x\in\X$ that
\begin{equation}
\label{eq:o1}
-\varphi_{t_1}(y)\leq Q_{t_1-t_0}(-\varphi_{t_0})(y)\qquad\forall y\in \X.
\end{equation}

\noindent{\bf Inequality $\geq$ in \eqref{eq:hl1}}. To prove the opposite inequality we fix $\bar{x} \in \X$, $r > 1$, again $0 < t_0 < t_1 \leq 1$ and let $\tilde R>r$ to be fixed later. Let $\nchi_{\tilde R} \in \testi\X$ be given by Lemma \ref{lem:cutoff}, define the vector field $X_t^\eps := \nchi_{\tilde R}\nabla\varphi_t^\eps$ and apply Theorem \ref{thm:RLF} to $((t_1-t_0)X^\eps_{(1-t)t_1+tt_0})$: the inequality
\[
{\rm div}X_t^\eps \geq \nchi_{\tilde R}\Delta\varphi_t^\eps -|\nabla\nchi_{\tilde R}||\nabla\varphi_t^\eps|
\]
and the bounds \eqref{eq:lipcontr}, \eqref{eq:lapcontr} on $\nabla\varphi^\eps_t,\Delta\varphi^\eps_t$ ensure that the theorem is applicable and we obtain existence of the regular Lagrangian flow $F^\eps$. Notice that from \eqref{eq:lipcontr} we know that $|X_t^\eps| \leq C'(1+\sfd(\cdot,\bar x))$ for all $t \in [t_0,1]$ for some $C' < \infty$ independent of $\tilde R,\eps$, therefore for $\mm$-a.e.\ $x$ we have 
\[
\frac{\d}{\d t}\sfd(F^\eps_t(x),\bar x)\leq {\rm ms}_t({F^\eps_{\cdot}}(x))\stackrel{\eqref{eq:quants}}=(t_1-t_0)|X_{(1-t)t_1+tt_0}^\eps|(F^\eps_t(x))\leq C'(1+\sfd(F^\eps_t(x),\bar x))
\]
for a.e.\ $t$ and thus Gronwall's Lemma implies the existence of $R$ independent of $\tilde R,\eps$ such that for $\mm$-a.e.\ $x$ it holds
\begin{equation}
\label{eq:dagron}
x\in B_r(\bar x)\qquad\Rightarrow\qquad F_t(x)\in B_R(\bar x)\quad\forall t\in[0,1].
\end{equation}
We now fix $\tilde R:=R$ and put $\ppi^\eps:=\mm({B_{r}(\bar{x})})^{-1}(F^\eps_\cdot)_*\mm\restr{B_{r}(\bar{x})}$, where $F^\eps_\cdot:\X\to C([0,1],\X)$ is the $\mm$-a.e.\ defined map which sends $x$ to $t\mapsto F_t^\eps(x)$, and observe that the bound \eqref{eq:quantm} and the identity \eqref{eq:quants} provided by Theorem \ref{thm:RLF} coupled with the estimates \eqref{eq:lipcontr}, \eqref{eq:lapcontr} on $\nabla\varphi^\eps_t,\Delta\varphi^\eps_t$ and the fact that $\nchi_R \in \testi\X$ ensure that $\ppi^\eps$ is a test plan with
\begin{equation}
\label{eq:uniformtest}
\sup_{\eps\in(0,1)}\iint_0^1|\dot\gamma_t|^2\,\d t\,\d\ppi^\eps(\gamma)<\infty\qquad\text{and}\qquad (\e_t)_*\ppi^\eps \leq C\mm \qquad \forall t\in[0,1],\ \eps\in(0,1),
\end{equation}
for some $C < \infty$. Now put  $\tilde{\varphi}_t^\eps := {\nchi_R}\varphi_t^\eps$ and notice that the definition of $\ppi^\eps$ and \eqref{eq:dagron} ensures that for every $t\in[0,1]$ we have $\tilde\varphi_t^\eps=\varphi_t^\eps$ $(\e_t)_*\ppi$-a.e. Moreover we have  $(\tilde{\varphi}_t^\eps) \in AC_{loc}((0,1],L^2(\X)) \cap L^{\infty}_{loc}((0,1],W^{1,2}(\X))$, thus by Lemma \ref{lem:dermiste} applied to $\ppi^\eps$ and $t \mapsto \tilde{\varphi}^\eps_{(1-t)t_1+tt_0}$   we obtain
\[
\begin{split}
\ddt\int &\varphi^\eps_{(1-t)t_1+tt_0}\circ\e_t\,\d\ppi^\eps = \ddt\int \tilde{\varphi}^\eps_{(1-t)t_1+tt_0}\circ\e_t\,\d\ppi^\eps\\
& = \int(t_0-t_1)\frac{\d}{\d s}\tilde{\varphi}^\eps_s\restr{s={(1-t)t_1+tt_0}}\circ\e_t\,\d\ppi^\eps + \frac\d{\d s}\int\tilde{\varphi}^\eps_{(1-t)t_1+tt_0}\circ\e_s\,\d\ppi^\eps\restr{s=t}\\
& = \int(t_0-t_1)\frac{\d}{\d s}\varphi^\eps_s\restr{s={(1-t)t_1+tt_0}}\circ\e_t\,\d\ppi^\eps + (t_1 - t_0) \int\d\tilde{\varphi}^\eps_{(1-t)t_1+tt_0}(X_t^\eps) \circ\e_t\,\d\ppi^\eps\\
& = \int\Big( \frac{t_0-t_1}{2}|\d\varphi^\eps_{(1-t)t_1+tt_0}|^2 + \eps\frac{t_0-t_1}{2}\Delta\varphi^\eps_{(1-t)t_1+tt_0} +(t_1-t_0)|\d\varphi^\eps_{(1-t)t_1+tt_0}|^2\Big)\circ\e_t\,\d\ppi^\eps\\
& = \int\Big( \frac{t_1-t_0}{2}|\d\varphi^\eps_{(1-t)t_1+tt_0}|^2 + \eps\frac{t_0-t_1}{2}\Delta\varphi^\eps_{(1-t)t_1+tt_0} \Big)\circ\e_t\,\d\ppi^\eps.
\end{split}
\]
Integrating in time and recalling \eqref{eq:quants} we deduce
\begin{equation}
\label{eq:k6}
\int \varphi_{t_0}^\eps\circ\e_1- \varphi_{t_1}^\eps\circ\e_0\,\d\ppi^\eps=\iint_0^1\frac1{2(t_1-t_0)}|\dot\gamma_t|^2+\eps\frac{t_0-t_1}{2}\Delta\varphi^\eps_{(1-t)t_1+tt_0}(\gamma_t)\,\d t\,\d\ppi^\eps(\gamma).
\end{equation}
Now, as before, we let $\eps\downarrow0 $  along the sequence $(\eps_n)$ for which $(\varphi^{\eps_n}_t)$ converges to our given $(\varphi_t)$ in the sense of Proposition \ref{lem:62}: the first in \eqref{eq:uniformtest} grants that $(\ppi^\eps)$ is tight in $\prob{C([0,1],\X)}$ (because $\gamma\mapsto\int_0^1|\dot\gamma_t|^2\,\d t$ has locally compact sublevels and $(\e_0)_*\ppi^\eps=\mm(B_r(\bar x))^{-1}\mm\restr{B_r(\bar x)}$) and thus  up to pass to a subsequence, not relabeled, we can assume that $(\ppi^{\eps_n})$ weakly converges to some $\ppi \in \prob{C([0,1],\X)}$. The second in \eqref{eq:uniformtest} and the bound \eqref{eq:lapcontr2} grant that the term with the Laplacian in \eqref{eq:k6} vanishes in the limit and thus taking into account the lower semicontinuity of the 2-energy we deduce that
\[
\int \varphi_{t_0}\circ\e_1- \varphi_{t_1}\circ\e_0\,\d\ppi\geq \frac1{2(t_1-t_0)}\iint_0^1|\dot\gamma_t|^2\,\d t\,\d\ppi\geq  \frac1{2(t_1-t_0)}\int\sfd^2(\gamma_0,\gamma_1)\,\d\ppi(\gamma).
\]
Now notice that \eqref{eq:o1} implies that 
\begin{equation}
\label{eq:hjcurve}
\frac{\sfd^2(\gamma_0,\gamma_1)}{2(t_1-t_0)}\geq \varphi_{t_0}(\gamma_1)-\varphi_{t_1}(\gamma_0)
\end{equation} 
for any curve $\gamma$, hence the above gives
\[
\int \varphi_{t_0}\circ\e_1- \varphi_{t_1}\circ\e_0\,\d\ppi\geq  \frac1{2(t_1-t_0)}\int\sfd^2(\gamma_0,\gamma_1)\,\d\ppi(\gamma)\geq \int \varphi_{t_0}\circ\e_1- \varphi_{t_1}\circ\e_0\,\d\ppi
\]
thus forcing the inequalities to be equalities. In particular, equality in \eqref{eq:hjcurve} holds for $\ppi$-a.e.\ $\gamma$ and since $(\e_0)_*\ppi = \mm\restr{B_{r}(\bar{x})}$, this is the same as to say that for $\mm$-a.e.\ $y \in B_{r}(\bar{x})$ equality holds in \eqref{eq:o1}. Since both sides of $\eqref{eq:o1}$ are continuous in $y$, we deduce that equality holds for any $y \in B_{r}(\bar{x})$ and the arbitrariness of $r$ allows to conclude  that equality actually holds for any $y \in \X$.

\noindent{\bf Other properties of $\varphi_t$}. From Proposition \ref{lem:62} we already know that, for any $\bar{x} \in \X$ and $M>0$, $(\varphi_t) \in AC_{loc}((0,1],L^1(\X,e^{-V}\mm)) \cap L^\infty_{loc}((0,1],W^{1,2}(\X,e^{-V}\mm))$, where $V := M\sfd^2(\cdot,\bar{x})$. Since $\varphi_t$ is a real-valued function for all $t \in (0,1]$, \eqref{eq:hl1} tells us that for all $x \in \X$, $t \mapsto \varphi_t(x)$ satisfies \eqref{eq:55} for a.e.\ $t \in (0,1]$; taking \eqref{eq:lipcontr2} into account, this yields that for all $\delta \in (0,1)$ and $t_0,t_1 \in [\delta,1]$ with $t_0 < t_1$
\[
\|\varphi_{t_1} - \varphi_{t_0}\|_{C(\X,e^{-V})} \leq \sup_{x \in \X} \int_{t_0}^{t_1}\frac{\big(\lip(\varphi_t)(x)\big)^2}{2}\dt \leq \Big( \sup_{x \in \X}C_{\delta}\big(1+\sfd(x,\bar{x})\big)e^{-V(x)}\Big)|t_1 - t_0|
\]
whence $(\varphi_t) \in AC_{loc}((0,1],C(\X,e^{-V})) \cap L^\infty_{loc}((0,1],W^{1,2}(\X,e^{-V}\mm))$.

Up to extract a further subsequence - not relabeled - we can assume that the curves $(\mu^{\eps_n}_t)$ converge to a limit curve $(\mu_t)$ as in Proposition \ref{lem:61}. We claim that for any $t_0,t_1 \in (0,1]$, $t_0 < t_1$ it holds 
\begin{equation}\label{eq:perpot}
-\int \varphi_{t_1}\,\d\mu_{t_1}+\int \varphi_{t_0}\,\d\mu_{t_0}\geq \frac1{2(t_1-t_0)}W_2^2(\mu_{t_0},\mu_{t_1})
\end{equation}
and start observing that since  $(\varphi_t) \in C((0,1],C(\X,e^{-V})) $ and $(\mu_t)\in AC([0,1],(\probt{\X},W_2))$,  by the uniform estimates  \eqref{eq:contquadr} we see that both sides in \eqref{eq:perpot} are continuous in $t_0,t_1$, hence it is sufficient to prove \eqref{eq:perpot} for $t_0,t_1\in(0,1)$.

Now fix $\bar{x} \in \X$ and $R > 0$, let $\nchi_R \in \testi\X$ be a cut-off function as in Lemma \ref{lem:cutoff}  and observe that by Proposition \ref{pro:7} $t \mapsto \int\nchi_R\varphi^\eps_t\rho^\eps_t\,\d\mm$ belongs to $C((0,1]) \cap AC_{loc}((0,1))$ with
\begin{equation}
\label{eq:pert0t1}
\begin{split}
-\ddt\int\nchi_R\varphi^\eps_t\rho^\eps_t\,\d\mm = & \int\nchi_R\Big(-\frac{|\nabla\varphi^\eps_t|^2}2 - \frac\eps2\Delta\varphi^\eps_t - \la\nabla\varphi^\eps_t,\nabla\vartheta^\eps_t \ra\Big)\rho^\eps_t+ \varphi_t^\eps\langle\nabla\nchi_R,\nabla\vartheta_t^\eps\rangle\rho_t^\eps\,\d\mm \\
=& \int\nchi_R\Big(\frac{|\nabla\vartheta^\eps_t|^2}2-\frac{\eps^2}8|\nabla\log\rho^\eps_t|^2-\frac\eps2\Delta\varphi^\eps_t\Big)\rho^\eps_t+  \varphi_t^\eps\langle\nabla\nchi_R,\nabla\vartheta_t^\eps\rangle\rho_t^\eps\,\d\mm
\end{split}
\end{equation}
for a.e.\ $t\in(0,1)$, having also used the identity $\varphi^\eps_t = \frac\eps2\log\rho^\eps_t - \vartheta^\eps_t$. By \eqref{eq:contquadr}, \eqref{eq:boundmomento} and Lemma \ref{lem:contprov2} it is readily verified that $\int\nchi_R\varphi^\eps_t\rho^\eps_t\,\d\mm \to \int\varphi^\eps_t\rho^\eps_t\,\d\mm $ as $R\to\infty$ for any $t\in(0,1)$ and that the rightmost side of \eqref{eq:pert0t1} passes to the limit as $R\to\infty$ locally uniformly in $t\in(0,1)$. Hence after an integration in $t$ and letting $R\to\infty$ in  \eqref{eq:pert0t1} we obtain
\[
-\int \varphi^\eps_{t_1}\,\d\mu^\eps_{t_1} + \int \varphi^\eps_{t_0}\,\d\mu^\eps_{t_0} = \iint_{t_0}^{t_1} \Big(\frac{|\nabla\vartheta^\eps_t|^2}2-\frac{\eps^2}8|\nabla\log\rho^\eps_t|^2-\frac\eps2\Delta\varphi^\eps_t\Big)\rho^\eps_t\,\d t\,\d\mm
\]
As already noticed in the proof of Proposition \ref{lem:61}, $(\mu^\eps_t)$ and $(\nabla\vartheta^\eps_t)$ satisfy the assumptions of Theorem \ref{thm:GH}, thus from such theorem we deduce that
\[
\iint_{t_0}^{t_1}\frac{|\nabla\vartheta^\eps_t|^2}2\rho^\eps_t\,\d t\,\d\mm=\frac12\int_{t_0}^{t_1}|\dot\mu^\eps_t|^2\,\d t\geq\frac1{2(t_1-t_0)}W_2^2(\mu^\eps_{t_0},\mu^\eps_{t_1}).
\]
Therefore
\[
-\int \varphi^\eps_{t_1}\,\d\mu^\eps_{t_1} + \int \varphi^\eps_{t_0}\,\d\mu^\eps_{t_0} \geq \frac1{2(t_1-t_0)}W_2^2(\mu^\eps_{t_0},\mu^\eps_{t_1}) + \iint_{t_0}^{t_1}\Big(-\frac{\eps^2}8|\nabla\log\rho^\eps_t|^2-\frac\eps2\Delta\varphi^\eps_t\Big)\rho^\eps_t\,\d t\,\d\mm.
\]
We now pass to the limit in $\eps = \eps_n\downarrow0$: we know from Proposition \ref{lem:61} that $W_2(\mu^{\eps_n}_t,\mu_t)\to 0$ and together with \eqref{eq:contquadr} this also grants that the left-hand side trivially converges to the left-hand side of \eqref{eq:perpot}. The contribution of the term with $|\nabla\log\rho^\eps_t|$ vanishes by \eqref{eq:b2} and so does the one with $\Delta\varphi_t^\eps$ by \eqref{eq:lapcontr2} and \eqref{eq:tail}. Hence \eqref{eq:perpot} is proved.

Now notice that \eqref{eq:hl1} can be rewritten as
\[
-(t_1-t_0)\varphi_{t_1}=\big((t_1-t_0)\varphi_{t_0}\big)^c,
\]
so that in particular $-(t_1-t_0)\varphi_{t_1}$ is $c$-concave and $(-(t_1-t_0)\varphi_{t_1})^c\geq (t_1-t_0)\varphi_{t_0}$. Hence both \eqref{eq:costointermedio} and the fact that $-(t_1-t_0)\varphi_{t_1}$ is a Kantorovich potential follow from
\[
\begin{split}
\frac12W_2^2(\mu_{t_0},\mu_{t_1})&\geq \int -(t_1-t_0)\varphi_{t_1}\,\d\mu_{t_1}+\int (-(t_1-t_0)\varphi_{t_1})^c\,\d\mu_{t_0}\\
& \geq \int -(t_1-t_0)\varphi_{t_1}\,\d\mu_{t_1}+\int (t_1-t_0)\varphi_{t_0}\,\d\mu_{t_0}\stackrel{\eqref{eq:perpot}}\geq \frac12W_2^2(\mu_{t_0},\mu_{t_1})
\end{split}
\]
Then \eqref{eq:costointermedio2} and the other  claims about $(\psi_t)$ are proved in the same way.

\noindent{\bf $(\mu_t)$ is a geodesic}. Let $[t_0,t_1] \subset (0,1)$, pick $t \in [0,1]$ and put $t_0' := (1-t)t_1 + tt_0$. We know that $-(t_1-t_0)\varphi_{t_1}$ and $-t(t_1-t_0)\varphi_{t_1}$ are Kantorovich potentials from $\mu_{t_1}$ to $\mu_{t_0}$ and from $\mu_{t_1}$ to $\mu_{t_0'}$ respectively and thus by point $(ii)$ of Theorem \ref{thm:bm} we deduce
\[
W_2^2(\mu_{t_0},\mu_{t_1}) = \int|\d((t_1-t_0)\varphi_{t_1})|^2\,\d\mu_{t_1} = \frac1{t^2}\int|\d((t_1-t_0')\varphi_{t_1})|^2\,\d\mu_{t_1} = \frac{(t_1-t_0)^2}{(t_1-t_0')^2}W_2^2(\mu_{t_1},\mu_{t_0'}).
\]
Swapping the roles of $t_0,t_1$ and using the $\psi$'s in place of the $\varphi$'s we then get
\[
W_2(\mu_{t_1'},\mu_{t_0'})=\frac{t_1'-t_0'}{t_1-t_0}W_2(\mu_{t_1},\mu_{t_0})\qquad\forall\, [t_0',t_1']\subset [t_0,t_1]\subset (0,1).
\]
This grants that the restriction of $(\mu_t)$ to any interval $[t_0,t_1] \subset (0,1)$ is a constant speed geodesic. Since $(\mu_t)$ is continuous on the whole $[0,1]$, this gives the conclusion. Since in this situation the $W_2$-geodesic connecting $\mu_0$ to $\mu_1$ is unique (recall point $(i)$ of Theorem \ref{thm:bm}), by the arbitrariness of the subsequences chosen we also proved the uniqueness of the limit curve $(\mu_t)$.
\end{proof}

\begin{Remark}[The vanishing viscosity limit]{\rm
The part of this last proposition concerning the properties of the $\varphi^\eps_t$'s is valid in a context wider than the one provided by Schr\"odinger problem: we could restate the result by saying that if $(\varphi^\eps_t)$ solves
\begin{equation}
\label{eq:hjvv}
\ddt\varphi^\eps_t = \frac12{|\nabla\varphi_t^\eps|^2} + \frac\eps2\Delta\varphi^\eps_t
\end{equation}
and $\varphi^\eps_0$ uniformly converges to some $\varphi_0$, then $\varphi^\eps_t$ uniformly converges to $\varphi_t := -Q_t(-\varphi_0)$.

In this direction, it is worth recalling that in \cite{AF14} and \cite{GS15} it has been developed a theory of viscosity solutions for some first-order Hamilton-Jacobi equations on metric spaces. This theory applies in particular to the equation 
\begin{equation}
\label{eq:hjv}
\ddt\varphi_t=\frac12\lip(\varphi_t)^2
\end{equation}
whose only viscosity solution is given by the formula $\varphi_t := -Q_t(-\varphi_0)$.

Therefore, we have just proved that if one works not only on a metric space, but on a metric measure space which is an $\RCD^*(K,N)$ space, then the solutions of the viscous approximation \eqref{eq:hjvv} converge to the unique viscosity solution of \eqref{eq:hjv}, in accordance with the classical case.
}\fr
\end{Remark}

\begin{Remark}{\rm
It is not clear whether the `full' families $\varphi^\eps_t,\psi^\eps_t$ converge as $\eps\downarrow0$ to a unique limit. This is related to the non-uniqueness of the  Kantorovich potentials in the classical optimal transport problem.
}\fr
\end{Remark}

We shall now make use of the following lemma. It could be directly deduced from the results obtained by Cheeger in \cite{Cheeger00}, however, the additional regularity assumptions on both the space and the function allow for a `softer' argument based on the metric Brenier's theorem, which we propose.

\begin{Lemma}\label{lem:12}
Let $(\Y,\sfd_\Y,\mm_\Y)$ be an $\RCD^*(K,N)$ space with $K \in \mathbb{R}$ and $N \in [1,\infty)$ and let $\phi : \X \to \R\cup\{-\infty\}$ be a $c$-concave function not identically $-\infty$. Let $\Omega$ be the interior of the set $\{\phi>-\infty\}$. Then $\phi$ is locally Lipschitz on $\Omega$ and
\[
\lip\,\phi = |\d\phi|, \quad \mm\textrm{-a.e. on }\Omega.
\]
\end{Lemma}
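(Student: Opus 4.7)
The plan is to argue the two claims separately, with the identification of $\lip\phi$ and $|\d\phi|$ being the substantive point.

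For the local Lipschitzness, I would exploit the representation $\phi(x)=\inf_{y\in\Y}[c(x,y)-\phi^c(y)]$ with $c(x,y)=\sfd_\Y^2(x,y)/2$. Fix $x_0\in\Omega$ and $r>0$ so small that $\overline{B_{2r}(x_0)}\subset\Omega$. Choosing any $y_0$ with $\phi^c(y_0)\in\R$ gives the locally Lipschitz upper envelope $\phi(x)\le c(x,y_0)-\phi^c(y_0)$, showing $\phi$ is bounded above on $B_r(x_0)$. Combining this upper bound with the inequality $\phi(x_0)\le c(x_0,y)-\phi^c(y)$ valid for every $y$, one checks that only $y$'s staying in a fixed bounded set $B\subset\Y$ can come close to achieving the infimum in $\phi(x)$ for $x\in B_r(x_0)$: indeed the estimate $|c(x,y)-c(x_0,y)|\le\sfd_\Y(x,x_0)(\sfd_\Y(x,y)+\sfd_\Y(x_0,y))/2$ forces near-minimizers to have $\sfd_\Y(x_0,y)$ controlled by $r$ and by the spread between the upper bound and $\phi(x_0)$. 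This exhibits $\phi\restr{B_r(x_0)}$ as the infimum of a uniformly Lipschitz family of functions $c(\cdot,y)-\phi^c(y)$ with $y\in B$, hence $\phi$ is Lipschitz on $B_r(x_0)$.

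For the equality $\lip\phi=|\d\phi|$ $\mm_\Y$-a.e.\ on $\Omega$, I would invoke the metric Brenier theorem. Pick any compact $K\subset\Omega$ and any probability measure $\mu_0=\rho\mm_\Y$ with $\rho\in L^\infty(\mm_\Y)$ and $\supp\mu_0\subset K$. Using the local Lipschitzness just proved, the $c$-superdifferential of $\phi$ is locally bounded on $K$, hence by a measurable selection one obtains a Borel map $T:K\to\Y$ with bounded image such that $\phi(x)+\phi^c(T(x))=c(x,T(x))$ for $\mu_0$-a.e.\ $x$. Setting $\mu_1:=T_*\mu_0$, the resulting measure lies in $\probt\Y$ with bounded support, the plan $(\Id,T)_*\mu_0$ is $W_2$-optimal and $\phi$ is a $c$-concave Kantorovich potential from $\mu_0$ to $\mu_1$. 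The metric Brenier theorem (in the form available for $\RCD$ spaces) then gives
\[
W_2^2(\mu_0,\mu_1)=\int(\lip\phi)^2\,\d\mu_0,
\]
while point (ii) of Theorem \ref{thm:bm} gives
\[
W_2^2(\mu_0,\mu_1)=\int|\d\phi|^2\,\d\mu_0.
\]
Since $|\d\phi|\le\lip\phi$ holds $\mm_\Y$-a.e.\ by general theory of weak upper gradients, equality of the two integrals forces $|\d\phi|=\lip\phi$ $\mu_0$-a.e. Letting $K$ vary over an exhaustion of $\Omega$ by compact sets and $\rho$ over bounded densities on $K$, one concludes $|\d\phi|=\lip\phi$ $\mm_\Y$-a.e.\ on $\Omega$.

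The main obstacle I expect is the construction of $\mu_1$ and the verification that the hypotheses of the metric Brenier theorem and of Theorem \ref{thm:bm}(ii) apply: one must ensure that the selection $T$ is well defined on a set of full $\mu_0$-measure, that $\mu_1$ has bounded support so that the transport has finite cost, and that the restriction of $\phi$ to any relevant bounded set can legitimately play the role of Kantorovich potential in both formulas. The local Lipschitzness of the first step is what makes these verifications routine.
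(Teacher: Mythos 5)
Your overall strategy is the same as the paper's (cite or reprove local Lipschitzness, Borel-select from the $c$-superdifferential, push a density $\mu_0 \le C\mm$ supported in $\Omega$ forward, invoke the metric Brenier theorem, vary $\mu_0$), and the local Lipschitz argument you sketch is a reasonable self-contained substitute for the Lemma~3.3 of \cite{GigliRajalaSturm13} that the paper quotes. However, the key step has a genuine gap.

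The metric Brenier theorem in the form available here (Theorem~\ref{thm:bm}~(iii), not (ii) --- point~(ii) is the first-order differentiation formula and does not directly give a formula for $W_2^2$) delivers the identity $|\d\phi|(\gamma_0) = |D^+\phi|(\gamma_0) = \sfd(\gamma_0,\gamma_1)$ for $\ppi$-a.e.\ $\gamma$, i.e.\ it pins down the \emph{ascending} slope $|D^+\phi|$, not the local Lipschitz constant $\lip\phi = \max\{|D^+\phi|,|D^-\phi|\}$. The inequality $\phi(z)-\phi(x_0)\le \tfrac12(\sfd^2(z,y_0)-\sfd^2(x_0,y_0))$ valid for $(x_0,y_0)\in\partial^c\phi$ controls $|D^+\phi|(x_0)$ by $\sfd(x_0,y_0)$ but gives no corresponding control on $|D^-\phi|(x_0)$; at kink points of $\phi$ one can have $|D^-\phi| > |D^+\phi|$ pointwise. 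Consequently your claim that the metric Brenier theorem gives $W_2^2(\mu_0,\mu_1)=\int(\lip\phi)^2\,\d\mu_0$ does not hold as stated; what you actually get is $W_2^2(\mu_0,\mu_1)=\int|D^+\phi|^2\,\d\mu_0 = \int |\d\phi|^2\,\d\mu_0$, and the comparison $|\d\phi|\le\lip\phi$ then only tells you $\lip\phi\ge |\d\phi|$ $\mu_0$-a.e., which is already known and not what is needed. The missing ingredient is precisely the paper's final observation: since $\mm$ is doubling and $\phi$ is locally Lipschitz, one has $|D^+\phi|=|D^-\phi|$ $\mm$-a.e.\ (this is Proposition~2.7 of \cite{AmbrosioGigliSavare11}), so that $\lip\phi=|D^+\phi|=|\d\phi|$ $\mm$-a.e.\ on $\Omega$. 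Without that doubling step, the proof does not close.
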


\begin{proof}
Lemma 3.3 in \cite{GigliRajalaSturm13} grants that $\phi$ is locally Lipschitz on $\Omega$ and that $\partial^c\phi(x) \neq \emptyset$ for every $x \in \Omega$. The same lemma also grants that for $K \subset \Omega$ compact, the set $\cup_{x\in K}\partial^c\phi(x)$ is bounded. Recalling that $\partial^c\phi$ is the set of $(x,y) \in \Y^2$ such that
\[
\phi(x) + \phi^c(y) = \frac12\sfd^2(x,y)
\]
and that $\phi,\phi^c$ are upper semicontinuous, we see that $\partial^c\phi$ is closed. Hence for $K\subset \Omega$ compact the set $\cup_{x\in K}\partial^c\phi(x)$ is compact and not empty and thus by the Kuratowski--Ryll-Nardzewski Borel selection theorem we deduce the existence of a Borel map $T:\Omega\to \Y$ such that $T(x)\in\partial^c\phi(x)$ for every $x\in \Omega$.  

Pick $\mu \in \probt\Y$ with $\supp(\mu)\subset\subset\Omega$ and $\mu\leq C\mm$ for some $C>0$ and set  $\nu := T_{*}\mu$. By construction, $\mu,\nu$ have both bounded support,  $T$ is an optimal map and $\phi$ is a Kantorovich potential from $\mu$ to $\nu$. 

Hence point $(iii)$ of Theorem \ref{thm:bm}  applies and since $\lip\,\phi=\max\{|D^+\phi|,|D^-\phi|\}$, by the arbitrariness of $\mu$ to conclude it is sufficient to show that $|D^+\phi|=|D^-\phi|$ $\mm$-a.e. This easily follows from the fact that $\mm$ is doubling and $\phi$ Lipschitz, see Proposition 2.7 in \cite{AmbrosioGigliSavare11}.
\end{proof}

With this said, we can now show that the weighted energies of the Schr\"odinger potentials converge  to the weighted energy of the limit ones:
\begin{Proposition}\label{thm:10}
With the same assumptions and notations as in Setting \ref{set} the following holds.

Let $\eps_n \downarrow 0$ be a sequence such that $(\varphi^{\eps_n}_t),(\psi^{\eps_n}_t)$ converge to limit curves $(\varphi_t),(\psi_t)$ as in Proposition \ref{lem:62} and let $V := M\sfd^2(\cdot,\bar{x})$ with $\bar{x} \in \X$ and $M>0$ arbitrary. Then for every $\delta\in(0,1)$ we have
\begin{equation}
\label{eq:energialimite}
\begin{split}
\lim_{n \to \infty}\iint_{\delta}^1 e^{-V}|\d \varphi^{\varepsilon_n}_t|^2\,\d t\,\d\mm & = \iint_{\delta}^1 e^{-V}|\d\varphi_t|^2\,\d t\,\d\mm,\\
\lim_{n \to \infty}\iint_0^{1-\delta} e^{-V}|\d \psi^{\varepsilon_n}_t|^2\,\d t\,\d\mm & = \iint_0^{1-\delta} e^{-V}|\d\psi_t|^2\,\d t\,\d\mm.
\end{split}
\end{equation}
\end{Proposition}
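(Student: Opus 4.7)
The idea is to use the HJB equation for $\varphi^\eps_t$ to rewrite $\iint_\delta^1 e^{-V}|\d\varphi^\eps_t|^2\,\d t\,\d\mm$ as a boundary term plus an $\eps$-vanishing term, pass to the limit along $\eps_n \downarrow 0$, and identify the resulting expression via the Hopf--Lax semigroup satisfied by the limit potential together with Lemma~\ref{lem:12}. I will focus on the $\varphi$ case; the $\psi$ one is symmetric (using the HJB equation backwards in time). Recalling from Proposition~\ref{pro:7} that $(\varphi^\eps_t) \in AC_{loc}((0,1],W^{1,2}(\X,e^{-\tilde V}\mm))$ with $\tilde V = \tilde M\sfd^2(\cdot,\bar x)$ for $\tilde M$ large enough, and that the PDE $\ddt\varphi^\eps_t = \tfrac12|\d\varphi^\eps_t|^2 + \tfrac\eps2\Delta\varphi^\eps_t$ holds in this space, I will integrate this identity against $\nchi_R e^{-V}$ with $\nchi_R$ a cut-off given by Lemma~\ref{lem:cutoff}. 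Letting $R \to \infty$ and using the quadratic bound \eqref{eq:contquadr}, the Lipschitz bound \eqref{eq:lipcontr} and the $L^1$-Laplacian bound \eqref{eq:lapcontr2} to dominate each integrand, dominated convergence yields
\begin{equation*}
\iint_\delta^1 e^{-V}|\d\varphi^\eps_t|^2\,\d t\,\d\mm = 2\int e^{-V}(\varphi^\eps_1 - \varphi^\eps_\delta)\,\d\mm \,-\, \eps\iint_\delta^1 e^{-V}\Delta\varphi^\eps_t\,\d t\,\d\mm.
\end{equation*}

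\textbf{Passing to the limit.} By Proposition~\ref{lem:62}(iii), $\varphi^{\eps_n}_t \to \varphi_t$ in $C(\X,e^{-V'})$ for every $t \in (0,1]$ and every $V'$ of the stated form; combining with \eqref{eq:contquadr} and dominated convergence, $\int e^{-V}\varphi^{\eps_n}_t\,\d\mm \to \int e^{-V}\varphi_t\,\d\mm$ for $t=\delta$ and $t=1$. The error term vanishes since \eqref{eq:lapcontr2} gives
\[
\bigg|\eps\iint_\delta^1 e^{-V}\Delta\varphi^\eps_t\,\d t\,\d\mm\bigg| \leq \eps\iint_\delta^1 e^{-V}|\Delta\varphi^\eps_t|\,\d t\,\d\mm \leq \eps (1-\delta) C' \to 0.
\]
Hence $\lim_n \iint_\delta^1 e^{-V}|\d\varphi^{\eps_n}_t|^2\,\d t\,\d\mm = 2\int e^{-V}(\varphi_1 - \varphi_\delta)\,\d\mm$.

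\textbf{Identification via Hopf--Lax.} To match the right-hand side with $\iint_\delta^1 e^{-V}|\d\varphi_t|^2\,\d t\,\d\mm$, I invoke Proposition~\ref{pro:9}: for $\delta \leq t \leq 1$, $-\varphi_t = Q_{t-\delta}(-\varphi_\delta)$, so the pointwise derivative formula for the Hopf--Lax semigroup (\eqref{eq:55}, applied with a sign change) gives, for every $x \in \X$ and a.e.\ $t \in [\delta,1]$,
\begin{equation*}
\ddt\varphi_t(x) = \tfrac12\lip(\varphi_t)^2(x).
\end{equation*}
Since $-(t-\delta)\varphi_t$ is $c$-concave (Proposition~\ref{pro:9}) and $\varphi_t$ is finite everywhere, Lemma~\ref{lem:12} yields $\lip(\varphi_t) = |\d\varphi_t|$ $\mm$-a.e. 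Applying the fundamental theorem of calculus in $t$ pointwise in $x$, multiplying by $e^{-V}$ and using Fubini--Tonelli on the nonnegative integrand,
\begin{equation*}
\int e^{-V}(\varphi_1 - \varphi_\delta)\,\d\mm = \tfrac12\iint_\delta^1 e^{-V}\lip(\varphi_t)^2\,\d t\,\d\mm = \tfrac12\iint_\delta^1 e^{-V}|\d\varphi_t|^2\,\d t\,\d\mm,
\end{equation*}
which combined with the previous step concludes the proof.

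\textbf{Main obstacle.} The most delicate step is the first, where one must justify integrating the HJB equation against a weight $e^{-V}$ whose decay rate is prescribed by the statement, rather than by Proposition~\ref{pro:7} (which only provides the AC property of $(\varphi^\eps_t)$ in a possibly faster-decaying weight). The cut-off argument and passage to the limit $R \to \infty$ must simultaneously control the quadratic growth of $\varphi^\eps_t$, the linear growth of $|\d\varphi^\eps_t|$ and the distributional Laplacian $\Delta\varphi^\eps_t$ (only $L^1(\X,e^{-V}\mm)$-integrable through \eqref{eq:lapcontr2}), all uniformly in $\eps \in (0,1)$ and $t \in [\delta,1]$.
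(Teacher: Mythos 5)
Your proposal is correct and follows essentially the same route as the paper's proof: integrate the HJB equation for $\varphi^\eps_t$ against $\nchi_R e^{-V}$ and let $R\to\infty$ to obtain the identity $\int e^{-V}(\varphi^\eps_1-\varphi^\eps_\delta)\,\d\mm = \tfrac12\iint_\delta^1 e^{-V}(|\d\varphi^\eps_t|^2+\eps\Delta\varphi^\eps_t)\,\d t\,\d\mm$, pass to the limit $\eps_n\downarrow 0$ using \eqref{eq:contquadr}, \eqref{eq:lapcontr2} and \eqref{eq:volgrowth}, and then identify $\int e^{-V}(\varphi_1-\varphi_\delta)\,\d\mm$ with $\tfrac12\iint_\delta^1 e^{-V}|\d\varphi_t|^2\,\d t\,\d\mm$ via \eqref{eq:hl1}, \eqref{eq:55} and Lemma~\ref{lem:12}. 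The only cosmetic difference is in the last step: you run the fundamental theorem of calculus pointwise in $x$ and then invoke Fubini--Tonelli, while the paper instead appeals directly to the $AC_{loc}((0,1],L^1(\X,e^{-V}\mm))$ regularity from Proposition~\ref{lem:62} to differentiate under the integral; both are valid and amount to the same thing.
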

\begin{proof}
Fix $\delta \in (0,1)$ and argue as in the proof of Proposition \ref{lem:62} to obtain that $t\mapsto \int e^{-V}\varphi^\eps_t\,\d\mm$ is absolutely continuous in $[\delta,1]$ (see in particular \eqref{eq:lipl1}) and that
\[
\int e^{-V}\big(\varphi^\eps_1-\varphi^\eps_\delta\big)\d\mm = \frac12\iint_\delta^1 e^{-V}\big(|\d \varphi^\eps_t|^2+\eps\Delta\varphi^\eps_t\big)\d t\,\d\mm.
\]
Choosing $\eps:=\eps_n$, letting $n\to\infty$ and using the uniform bounds \eqref{eq:lapcontr2}, \eqref{eq:contquadr} and the volume growth estimate \eqref{eq:volgrowth} we obtain
\begin{equation}
\label{eq:k7}
\lim_{n\to\infty} \frac12\iint_\delta^1 e^{-V}|\d \varphi^{\eps_n}_t|^2\,\d t\,\d\mm = \lim_{n\to\infty} \int e^{-V}\big(\varphi^{\eps_n}_1-\varphi^{\eps_n}_\delta\big)\d\mm = \int e^{-V}\big(\varphi_1-\varphi_\delta\big)\d\mm.
\end{equation}
Combining \eqref{eq:55} and \eqref{eq:hl1} we see that for any $x \in \X$ it holds
\[
\ddt \varphi_t(x) = \frac{1}{2}\big((\lip\,\varphi_t)(x)\big)^2\qquad a.e.\ t\in[0,1].
\]
By Fubini's theorem, the same identity holds for $\mathscr{L}^1 \otimes \mm$-a.e.\ $(t,x) \in [\delta,1] \times \X$.  The identity \eqref{eq:hl1}  also grants that $\varphi_t$ is a multiple of a $c$-concave function, thus the thesis of Lemma \ref{lem:12} is valid for $\varphi_t$ and recalling that $(\varphi_t)\in AC_{loc}((0,1],L^1(\X,e^{-V}\mm))$ by Proposition \ref{lem:62} we deduce that  
\[
\int e^{-V}\big(\varphi_1-\varphi_\delta\big)\d\mm = \int_\delta^1 \ddt\int e^{-V}\varphi_t\d\mm\,\d t = \iint_{\delta}^1 e^{-V}\frac{|\d\varphi_t|^2}{2}\,\d t\,\d\mm,
\]
which together with \eqref{eq:k7} gives the first in \eqref{eq:energialimite}. The proof of the second is analogous.
\end{proof}

As a direct consequence of the limit \eqref{eq:energialimite} and the local equi-Lipschitz bounds \eqref{eq:lipcontr} we obtain the following result. In order to state it, let us introduce the module $L^2(T^*\X,e^{-V}\mm)$ as $\{\omega \in L^0(T^*\X) \,:\, |\omega| \in L^2(\X,e^{-V}\mm)\}$; an analogous definition can be given for $L^2((T^*)^{\otimes 2}\X)$.

\begin{Corollary}\label{cor:convd}
With the same assumptions and notations as in Setting \ref{set} the following holds.

Let $\eps_n\downarrow 0$ be a sequence such that $(\varphi^{\eps_n}_t), (\psi^{\eps_n}_t)$ converge to limit curves $(\varphi_t),(\psi_t)$ as in Proposition \ref{lem:62}. Then for every $\delta \in (0,1)$, $\bar{x} \in \X$ and $M>0$ we have
\begin{equation}
\label{eq:limited}
\begin{array}{rlll}
(\d\varphi^{\eps_n}_t) \quad & \to \quad (\d\varphi_t) && \text{ in }\quad L^2([\delta,1],L^2(T^*\X,e^{-V}\mm))\\
(\d\psi^{\eps_n}_t) \quad & \to \quad (\d\psi_t) && \text{ in }\quad L^2([0,1-\delta],L^2(T^*\X,e^{-V}\mm))\\
(\d\varphi^{\eps_n}_t \otimes \d\varphi^{\eps_n}_t) \quad & \to \quad (\d\varphi_t \otimes \d\varphi_t) && \text{ in } \quad L^2([\delta,1],L^2((T^*)^{\otimes 2}\X,e^{-V}\mm))\\
(\d\psi^{\eps_n}_t \otimes \d\psi^{\eps_n}_t) \quad & \to \quad (\d\psi_t \otimes \d\psi_t) && \text{ in }\quad L^2([0,1-\delta],L^2((T^*)^{\otimes 2}\X,e^{-V}\mm))\\
(\d\varphi^{\eps_n}_t \otimes \d\psi^{\eps_n}_t) \quad & \to \quad (\d\varphi_t \otimes \d\psi_t) && \text{ in }\quad L^2([\delta,1-\delta],L^2((T^*)^{\otimes 2}\X,e^{-V}\mm))
\end{array}
\end{equation}
where $V := M\sfd^2(\cdot,\bar{x})$.
\end{Corollary}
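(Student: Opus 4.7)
The key observation is that, since $(\X,\sfd,\mm)$ is infinitesimally Hilbertian (being $\RCD^*(K,N)$), the module $L^2(T^*\X,e^{-V}\mm)$ is a Hilbert space, and hence so are $L^2([\delta,1],L^2(T^*\X,e^{-V}\mm))$ and its tensor-product analogue. For the first two convergences in \eqref{eq:limited} my plan is a Radon--Riesz type argument: Proposition \ref{thm:10} provides convergence of the weighted $L^2$-norms $\|\d\varphi^{\eps_n}_t\|$, so strong convergence will follow as soon as weak convergence $(\d\varphi^{\eps_n}_t)\weakto (\d\varphi_t)$ is established in the relevant Hilbert space.

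\textbf{Weak convergence.} The uniform linear growth \eqref{eq:lipcontr} together with the Gaussian weight $e^{-V}$ gives a uniform bound on $\|\d\varphi^{\eps_n}_t\|$ in $L^2([\delta,1],L^2(T^*\X,e^{-V}\mm))$; up to subsequence this yields weak convergence to some $\omega$. To identify $\omega=\d\varphi_t$ I would test against vector fields of the form $X_t:=\alpha(t)\nchi h\nabla g$ with $\alpha\in L^\infty([\delta,1])$, $g,h\in\testi\X$ and $\nchi$ a compactly supported cut-off, and integrate by parts:
\[
\iint_\delta^1\alpha(t)\la \d\varphi^{\eps_n}_t,\nchi h\nabla g\ra e^{-V}\,\d t\,\d\mm = -\iint_\delta^1\alpha(t)\varphi^{\eps_n}_t\,\div(\nchi h e^{-V}\nabla g)\,\d t\,\d\mm.
\]
The locally uniform convergence $\varphi^{\eps_n}_t\to\varphi_t$ from Proposition \ref{lem:62} together with the compact support of $\nchi$ lets one pass to the limit in the right-hand side, and reversing the integration by parts gives the analogous expression with $\varphi_t$ in place of $\varphi^{\eps_n}_t$, i.e.\ $\iint\alpha(t)\la \d\varphi_t,X_t\ra e^{-V}\,\d t\,\d\mm$. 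Since the linear span of such vector fields is dense in the module (using that $\{\nabla g:g\in\testi\X\}$ generates the tangent module on $\RCD$ spaces, combined with the localization provided by $\nchi$ and $h$), one concludes $\omega=\d\varphi_t$; uniqueness of the limit lets the full sequence converge. The $\psi^{\eps_n}_t$'s are treated identically on $[0,1-\delta]$.

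\textbf{Tensor products.} For the remaining three convergences I use the algebraic identity
\[
\alpha_n\otimes\beta_n-\alpha\otimes\beta = (\alpha_n-\alpha)\otimes\beta_n+\alpha\otimes(\beta_n-\beta),
\]
which gives $|\alpha_n\otimes\beta_n-\alpha\otimes\beta|_\HS\leq |\alpha_n-\alpha||\beta_n|+|\alpha||\beta_n-\beta|$. Applying this with $\alpha_n=\beta_n=\d\varphi^{\eps_n}_t$ (and analogously for $\psi$ and for the mixed case) and invoking the uniform linear growth \eqref{eq:lipcontr} together with its limit version \eqref{eq:lipcontr2}, one gets
\[
\iint_\delta^1 e^{-V}|\d\varphi^{\eps_n}_t\otimes \d\varphi^{\eps_n}_t-\d\varphi_t\otimes \d\varphi_t|_\HS^2\,\d t\,\d\mm \leq C\iint_\delta^1 e^{-V'}|\d\varphi^{\eps_n}_t-\d\varphi_t|^2\,\d t\,\d\mm,
\]
where $V':=M'\sfd^2(\cdot,\bar x)$ with $M'<M$ chosen so that $(1+\sfd(\cdot,\bar x))^2 e^{-(V-V')}$ is uniformly bounded. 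Since the statement allows arbitrary $M$, applying the already-established first convergence in \eqref{eq:limited} with weight $V'$ forces the right-hand side to vanish as $n\to\infty$. The mixed case $\d\varphi^{\eps_n}_t\otimes\d\psi^{\eps_n}_t$ is handled in the same fashion on $[\delta,1-\delta]$, where both potentials obey uniform gradient bounds.

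\textbf{Main obstacle.} The delicate step is the identification of the weak limit: one has to ensure both that the test vector fields form a dense family in the weighted Bochner module and that all the integration-by-parts manipulations are licit despite the exponential weight. The latter is routine thanks to the compact support of $\nchi$, but the density claim relies on structural results for the cotangent module on $\RCD$ spaces that need to be invoked carefully; apart from this, the proof is essentially a bookkeeping exercise combining Proposition \ref{thm:10}, Proposition \ref{lem:62} and the equi-Lipschitz bound \eqref{eq:lipcontr}.
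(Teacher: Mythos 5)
Your proof is correct, and it differs from the paper's in two ways. For the scalar convergences, where you identify the weak limit by testing against vector fields of the form $\alpha(t)\nchi h\nabla g$, integrating by parts, and then invoking density of such a family (which you rightly flag as the delicate point), the paper simply observes that the differential is a closed operator: since $\varphi^{\eps_n}_t\to\varphi_t$ strongly in $L^2(\X,e^{-V}\mm)$ (by Proposition \ref{lem:62} together with the quadratic growth bound \eqref{eq:contquadr}) and the differentials are equibounded in $L^2(T^*\X,e^{-V}\mm)$, any weak subsequential limit must be $\d\varphi_t$. This short-circuits the density discussion entirely; both proofs then conclude by Radon--Riesz using the norm convergence of Proposition \ref{thm:10}. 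For the tensor products the paper runs the Radon--Riesz machinery a second time, proving weak convergence by contracting against $\omega_{1,t}\otimes\omega_{2,t}$ (using dominated convergence and the uniform $L^\infty$ bound on $e^{-V}|\d\varphi^{\eps_n}_t|$) and then matching norms via $|v\otimes v|_\HS^2=|v|^4$. Your decomposition $\alpha_n\otimes\beta_n-\alpha\otimes\beta=(\alpha_n-\alpha)\otimes\beta_n+\alpha\otimes(\beta_n-\beta)$, combined with the weight-exchange trick of passing from $e^{-V}$ to $e^{-V'}$ with $M'<M$ so that $(1+\sfd(\cdot,\bar x))^2e^{-(V-V')}$ is bounded, reduces the tensor convergences directly to the already-established scalar ones in a single line. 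This is a genuine simplification, available precisely because the statement quantifies over all $M>0$, and you correctly invoke both \eqref{eq:lipcontr} for the $\d\varphi^{\eps_n}_t$'s and \eqref{eq:lipcontr2} for the limit $\d\varphi_t$ to bound the factor $(|\alpha_n|+|\alpha|)^2$; the mixed case goes through on $[\delta,1-\delta]$ exactly as you indicate.
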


\begin{proof}
Let $V$ be as in the statement and start noticing that the closure of the differential grants that $\d\varphi^{\eps_n}_t \weakto \d\varphi_t$ in $L^2(T^*\X,e^{-V}\mm)$ for all $t\in(0,1]$. This and the fact that $(\d\varphi^{\eps_n}_t)$ is equibounded in $L^2([\delta,1],L^2(T^*\X,e^{-V}\mm))$, as a direct consequence of \eqref{eq:lipcontr}, are sufficient to ensure that $(\d\varphi^{\eps_n}_t) \weakto (\d\varphi_t)$ in $L^2([\delta,1],L^2(T^*\X,e^{-V}\mm))$. Given that the first in \eqref{eq:energialimite} grants convergence of the $L^2([\delta,1],L^2(T^*\X,e^{-V}\mm))$-norms, we deduce strong convergence. This establishes the first limit.

Now observe that for every $\omega \in L^2([\delta,1],L^2(T^*\X,e^{-V}\mm))$ the fact that $e^{-V}|\d\varphi^{\eps_n}_t|$ is uniformly bounded in $L^\infty([\delta,1] \times \X)$ for every $M>0$ in the definition of $V$ and the strong $L^2$-convergence just proved ensure that $\la \d\varphi^{\eps_n}_t,\omega_t\ra \to \la \d\varphi_t,\omega_t\ra$ in $L^2([\delta,1]\times \X,\d t \otimes e^{-V}\mm)$. It follows that for any $\omega_1,\omega_2\in L^2([\delta,1],L^2(T^*\X,e^{-V}\mm))$ we have
\[
\iint_\delta^1 e^{-V}\la \d\varphi^{\eps_n}_t,\omega_{1,t}\ra \la \d\varphi^{\eps_n}_t,\omega_{2,t}\ra\,\d t\,\d\mm \quad \to \quad \iint_\delta^1 e^{-V}\la \d\varphi_t,\omega_{1,t}\ra \la \d\varphi_t,\omega_{2,t}\ra\,\d t\,\d\mm
\]
and thus to conclude it remains to prove that
\[
\iint_\delta^1 e^{-V}|\d\varphi^{\eps_n}_t \otimes \d\varphi^{\eps_n}_t|_\HS^2\,\d t\,\d\mm \quad \to \quad \iint_\delta^1 e^{-V}|\d\varphi_t \otimes \d\varphi_t|_\HS^2\,\d t\,\d\mm.
\]
Since $|v\otimes v|_\HS^2=|v|^4$ this is a direct consequence of the fact that $|\d\varphi^{\eps_n}_t|$ is uniformly bounded and converge to $|\d\varphi_t|$ in $L^2([\delta,1]\times \X,\d t \otimes e^{-V}\mm)$. Hence also the third limit is established.

The other claims  follow by analogous arguments.
\end{proof}

The estimates that we have for the functions $\varphi$'s tell nothing about their regularity as $t \downarrow 0$ and similarly little we know so far about the $\psi$'s for $t \uparrow 1$. We now see in which sense limit functions $\varphi_0,\psi_1$ exist. This is not needed for the proof of our main result, but we believe it is relevant on its own.

Thus let us fix $\eps_n \downarrow 0$ so that $\varphi^{\eps_n}_t\to \varphi_t$ for $t\in(0,1]$ and $\psi^{\eps_n}_t\to \psi_t$ for $t\in[0,1)$ as in Proposition \ref{lem:62}. Then define the functions $\varphi_0,\psi_1:\X\to\R\cup\{-\infty\}$ as
\begin{equation}
\label{eq:varphi0}
\begin{split}
\varphi_0(x):=\inf_{t\in(0,1]}\varphi_t(x)=\lim_{t\downarrow0}\varphi_t(x),\\
\psi_1(x):=\inf_{t\in[0,1)}\psi_t(x)=\lim_{t\uparrow1}\psi_t(x).
\end{split}
\end{equation}
Notice that the fact that the $\inf$ are equal to the stated limits is a consequence of formulas \eqref{eq:hl1}, \eqref{eq:hl2}, which directly imply that for every $x\in \X$ the maps $t\mapsto\varphi_t(x)$ and $t\mapsto\psi_{1-t}(x)$ are non-decreasing.

The main properties of $\varphi_0,\psi_1$ are collected in the following proposition:

\begin{Proposition}\label{pro:v0}
With the same assumptions and notations as in Setting \ref{set} and for $\varphi_0,\psi_1$ defined by \eqref{eq:varphi0} the following holds.

\begin{itemize}
\item[i)] The functions $-\varphi_t$ (resp. $-\psi_t$) $\Gamma$-converge to $-\varphi_0$ (resp. $-\psi_1$) as $t\downarrow0$ (resp. $t\uparrow 1$).
\item[ii)] For every $t\in(0,1]$ we have
\[
Q_t(-\varphi_0)=-\varphi_t\qquad\qquad Q_t(-\psi_1)=-\psi_{1-t}.
\]
\item[iii)] It holds
\[
\varphi_0(x)=
\left\{\begin{array}{ll}
-\psi_0(x)&\quad\text{if }x\in\supp(\rho_0)\\
-\infty&\quad\text{otherwise}
\end{array}
\right.
\qquad\quad
\psi_1(x)=
\left\{\begin{array}{ll}
-\varphi_1(x)&\quad\text{if }x\in\supp(\rho_1)\\
-\infty&\quad\text{otherwise}
\end{array}
\right.
\]
\item[iv)] We have
\[
\int\varphi_0\rho_0\,\d\mm+\int\psi_1\rho_1\,\d\mm=\frac12W_2^2(\mu_0,\mu_1).
\]
\item[v)] Define $\varphi^\eps_0$ on $\supp(\rho_0)$ as $\varphi^\eps_0:=\eps\log(f^\eps)$ and let $\eps_n\downarrow0$ be such that $\varphi^{\eps_n}_t,\psi^{\eps_n}_t$ converge to $\varphi_t,\psi_t$ as $n\to\infty$ as in Proposition \ref{lem:62}. 

Then the functions $\rho_0\varphi^{\eps_n}_0$, set to be 0 on $\X \setminus \supp(\rho_0)$, converge to $\rho_0\varphi_0$ in $L^\infty(\X)$ as $n \to \infty$. 

With the analogous definition of $\rho_1\psi^{\eps_n}_1$ we have that these converge to $\rho_1\psi_1$ in $L^\infty(\X)$ as $n\to\infty$. 
\end{itemize}
\end{Proposition}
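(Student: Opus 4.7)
The plan is as follows. Applying \eqref{eq:hl1} at $y=x$ gives $\varphi_{t_1}(x)\geq\varphi_{t_0}(x)$ for $0<t_0<t_1\leq 1$, so $t\mapsto\varphi_t(x)$ is non-decreasing and $\varphi_0$ in \eqref{eq:varphi0} is the pointwise monotone limit. This already yields \textbf{(i)}: the recovery sequence for the $\Gamma$-$\limsup$ is the constant one, while for the $\Gamma$-$\liminf$, given $x_n\to x$ and $t_n\downarrow 0$, I would argue that for every $s>0$ one has eventually $-\varphi_{t_n}(x_n)\geq -\varphi_s(x_n)$ by monotonicity, whence $\liminf_n(-\varphi_{t_n}(x_n))\geq-\varphi_s(x)$ by the local Lipschitz continuity of $\varphi_s$ from Proposition \ref{lem:62}(iv); letting $s\downarrow 0$ would conclude. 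The statement for $\psi$ follows symmetrically.

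For \textbf{(ii)} I would exploit the fact that $-\varphi_t=Q_{t-s}(-\varphi_s)$ is a consequence of \eqref{eq:hl1} and does not depend on $s\in(0,t)$, and let $s\downarrow 0$: plugging an (approximate) minimizer $x_\ast$ of $Q_t(-\varphi_0)(y)$ into the Hopf--Lax functional at time $t-s$ yields the bound $-\varphi_t(y)\leq Q_t(-\varphi_0)(y)$; for the reverse, approximate minimizers $x_s$ of $Q_{t-s}(-\varphi_s)(y)$ are bounded (the uniform quadratic upper bound on $-\varphi_s$ inherited from \eqref{eq:lipcontr2} together with the quadratic cost forces $x_s$ into a fixed compact set), so a subsequence converges to some $x_\ast$, and the $\Gamma$-$\liminf$ from \textbf{(i)} combined with continuity of the quadratic cost closes the argument.

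For \textbf{(iii)} the first step is to take $t\downarrow 0$ in $\varphi_t+\psi_t\leq 0$ from \eqref{eq:63}: the monotone limit $\varphi_t\downarrow\varphi_0$ and the locally uniform convergence $\psi_t\to\psi_0$ (equi-Lipschitz plus $L^1$-absolute continuity from Proposition \ref{lem:62}) give $\varphi_0+\psi_0\leq 0$ on $\X$. To upgrade to equality on $\supp(\rho_0)$, I would fix $y\in\supp(\rho_0)=\supp(\mu_0)$ and use narrow convergence $\mu_t\to\mu_0$ to extract $y_t\in\supp(\mu_t)$ with $y_t\to y$; then, since $\varphi_t+\psi_t=0$ on $\supp(\mu_t)$, one has $\varphi_t(y_t)=-\psi_t(y_t)\to-\psi_0(y)$, while monotonicity gives $\varphi_t(y_t)\leq\varphi_s(y_t)\to\varphi_s(y)$ for any $s>t$, hence $\limsup_{t\downarrow 0}\varphi_t(y_t)\leq\varphi_0(y)$, and the chain $-\psi_0(y)\leq\varphi_0(y)\leq-\psi_0(y)$ forces equality. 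For the claim $\varphi_0\equiv-\infty$ off $\supp(\rho_0)$, I would apply the Gaussian upper bound on the heat kernel \eqref{eq:gaussest}: for $y$ with $d_y:=\sfd(y,\supp(\rho_0))>0$ the restriction of the integration to $\supp(f^\eps)=\supp(\rho_0)$ yields
\[
\h_{\eps t/2}f^\eps(y)\leq\frac{C\,\|f^\eps\|_{L^1(\mm)}}{\mm(B_{\sqrt{\eps t/2}}(y))}\exp\!\Big(C\eps t-\frac{cd_y^2}{\eps t}\Big),
\]
so $\varphi^\eps_t(y)\leq -cd_y^2/t+O(\eps\log(1/\eps))$ using local doubling and the uniform $L^1$-bound on $f^\eps$ from \eqref{eq:l1lifg}; sending $\eps\downarrow 0$ and then $t\downarrow 0$ concludes.

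Finally, for \textbf{(iv)} I would pass to the limit $t_0\downarrow 0$ in \eqref{eq:costointermedio} with $t_1=1$: the right-hand side tends to $W_2^2(\mu_0,\mu_1)/2$, the identity $\varphi_{t_0}=-\psi_{t_0}$ on $\supp(\mu_{t_0})$ rewrites the relevant integral as $-\int\psi_{t_0}\,\d\mu_{t_0}$, which tends to $-\int\psi_0\,\d\mu_0$ by $W_2$-convergence of $\mu_{t_0}\to\mu_0$ combined with \eqref{eq:tail} and the locally uniform convergence of $\psi_{t_0}$; the symmetric version of \textbf{(iii)} at $t=1$ rewrites $\int\varphi_1\,\d\mu_1=-\int\psi_1\,\d\mu_1$, and \textbf{(iii)} at $t=0$ identifies $-\int\psi_0\,\d\mu_0$ with $\int\varphi_0\,\d\mu_0$. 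For \textbf{(v)}, on $\supp(\rho_0)$ one has $\rho_0\varphi^{\eps_n}_0=\eps_n\rho_0\log\rho_0-\rho_0\psi^{\eps_n}_0$: the first summand vanishes in $L^\infty$ since $z\log z$ is bounded on $[0,\|\rho_0\|_\infty]$, the second converges to $\rho_0\psi_0$ in $L^\infty$ by the boundedness of $\supp(\rho_0)$ and the $C(\X,e^{-V})$-convergence of Proposition \ref{lem:62}(iii), and \textbf{(iii)} then identifies the limit as $\rho_0\varphi_0$. I expect the most delicate step to be the pointwise identification $\varphi_0=-\psi_0$ on $\supp(\rho_0)$ in \textbf{(iii)}: it requires both the Kuratowski-type selection of $y_t\in\supp(\mu_t)$ converging to $y$ and the simultaneous use of continuity of $\psi$ and monotonicity of $\varphi_\cdot$, neither of which alone would pin down the limit.
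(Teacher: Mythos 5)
Your proofs of (i), (iv) and (v) are correct and take essentially the same route as the paper (in (iii) the selection of $y_t\in\supp(\mu_t)$ with $y_t\to y$ via narrow convergence is a legitimate substitute for the paper's use of the geodesic lifting $\ppi$). The genuine gaps are in (ii) and in the $-\infty$ part of (iii), and they come from the same missing ingredient.

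In (ii), to bound the (approximate) minimizers $x_s$ of $Q_{t-s}(-\varphi_s)(y)$ you invoke a quadratic growth bound on $-\varphi_s$ ``inherited from \eqref{eq:lipcontr2}''. But the constant in \eqref{eq:lipcontr2} (and in \eqref{eq:contquadr}) depends on $\delta$ and blows up as $\delta\downarrow 0$: the estimates are of the form $|\varphi_s|\leq C_\delta(1+\sfd^2)$ for $s\in[\delta,1]$. This only yields $-\varphi_s(x)\geq -C_\delta(1+\sfd^2(x,\bar x))$, so the Hopf--Lax functional $\sfd^2(x,y)/(2(t-s))-\varphi_s(x)$ is coercive in $x$ only if $1/(2(t-s))>C_\delta$, and that fails as $s\downarrow0$. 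The paper bypasses this by proving the genuinely uniform \emph{lower} bound \eqref{eq:coerc}, namely $-\varphi_t\geq (C_5/2)\sfd^2(\cdot,\bar x)$ outside a fixed ball for \emph{all} $t\in(0,1]$; this makes the functional coercive with constants independent of $t$. The bound \eqref{eq:coerc} in turn hinges on the auxiliary estimates \eqref{eq:volneg}, i.e.\ $\eps\log v_\eps\geq -C$ and $\eps\log\|f^\eps\|_{L^1}\leq C$, which need local doubling, the lower Gaussian heat-kernel bound, the normalization \eqref{eq:normalization} and Jensen's inequality. Your proposal contains none of this, and without it the boundedness of the $x_s$ is unjustified.

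The same missing estimate undermines the last step of (iii). You claim a ``uniform $L^1$-bound on $f^\eps$ from \eqref{eq:l1lifg}'', but the constant $c$ in \eqref{eq:l1lifg} is the $\eps$-dependent $c_\eps$ of Theorem \ref{thm:5}, which decays like $e^{-C/\eps}$; so \eqref{eq:l1lifg} only gives $\|f^\eps\|_{L^1}\lesssim e^{C/\eps}$, far from uniform, and your stated error $O(\eps\log(1/\eps))$ is not justified. What is actually needed — and what the paper proves in \eqref{eq:volneg} from the normalization \eqref{eq:normalization} and Jensen — is the weaker but sufficient bound $\eps\log\|f^\eps\|_{L^1}\leq C$, so the error in your estimate for $\varphi^\eps_t(y)$ is $O(1)$ as $\eps\downarrow 0$, not $o(1)$; this is still enough because you then send $t\downarrow0$ while $d_y>0$ is fixed, but you need to know the error term is bounded, and that is precisely the content of \eqref{eq:volneg}, not of \eqref{eq:l1lifg}.
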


\begin{proof}
We shall prove the claims for $\varphi_0$ only, as those for $\psi_1$ follow along similar lines.\\
\noindent{\bf (i)} For the $\Gamma-\lims$ inequality we simply observe that by definition $-\varphi_0(x)=\lim_{t\downarrow0}-\varphi_t(x)$. To prove the $\Gamma-\limi$ inequality, use the fact that $-\varphi_t\geq-\varphi_s$ for $0<t\leq s$ and the continuity of $\varphi_s$:  for given $(x_t)$ converging to $x$ we have
\[
\limi_{t\downarrow0}-\varphi_t(x_t)\geq\limi_{t\downarrow0}-\varphi_s(x_t)=-\varphi_s(x)\qquad\forall s>0.
\]
The conclusion follows letting $s\downarrow0$.

\noindent{\bf (ii)} From $-\varphi_0 \geq -\varphi_s$ we deduce that 
\[
Q_t(-\varphi_0) \geq Q_t(-\varphi_s) \stackrel{\eqref{eq:hl1}}= -\varphi_{t+s} \qquad \forall s \in (0,1]
\]
and thus letting $s \downarrow 0$ and using the continuity of $(0,1] \ni t \mapsto \varphi_t(x)$ for all $x \in \X$ we obtain $Q_t(-\varphi_0)(x) \geq -\varphi_t(x)$ for all $x \in \X$. For the opposite inequality, notice that the second \eqref{eq:weightbound} gives
\begin{equation}
\label{eq:psialto}
\varphi_t^\eps \leq \eps\log C_4 - \eps\log v_{\eps t/2} + \eps\log\|f^\eps\|_{L^1(\X)} - \frac{C_5 \sfd^2(\cdot,\bar x)}{t}+ \frac{C_6}t
\end{equation}
for all $t \in (0,1]$ with $C_4,C_5,C_6$ depending on $K,N,\rho_0,\rho_1,\bar x$ only and $v_{\eps t/2}$ as in \eqref{eq:vV}. We now claim that for every $\eps\in(0,1)$ we have
\begin{equation}
\label{eq:volneg}
\begin{split}
\eps\log v_\eps & \geq -C,\qquad\qquad\qquad
\eps \log\|f^\eps\|_{L^1(\X)} \leq C 
\end{split}
\end{equation}
for some constant $C$ depending on $K,N,\rho_0,\rho_1,\bar x$ only. Indeed, from \eqref{eq:doubling} we see that letting $D$ be the diameter of $\supp(\rho_0)$ and $c=c(D)$ a constant depending only on $D$ we have
\[
\mm(B_{\sqrt\eps}(x)) \geq c^{\log_2(D/\sqrt\eps)+1}\mm(\supp(\rho_0)) \qquad \forall x \in \supp(\rho_0).
\]
Thus $v_\eps \geq C^{\log_2(D/\sqrt\eps)+1}\mm(\supp(\rho_0))$ and thus the first in \eqref{eq:volneg} follows. For the second we start noticing that the first inequality in \eqref{eq:gaussest}, the identity  $\int f^\eps\otimes g^\eps\,\d \hR^{\eps/2}=1$ and the fact that the supports of $f^\eps,g^\eps$ coincide with those of $\rho_0,\rho_1$ respectively give
\begin{equation}
\label{eq:fgeps}
\eps\log\Big(\|f^\eps\|_{L^1(\X)}\|g^\eps\|_{L^1(\X)}\Big)=\eps\log\int_{\supp(\rho_0)\times\supp(\rho_1)} f^\eps\otimes g^\eps\,\d\mm \leq \eps\log(C_1\mm(B))+\bar D^2+C_2 \eps^2,
\end{equation}
for every $\eps\in(0,1)$, where $\bar D$ is the diameter of $\supp(\rho_0)\cup\supp(\rho_1)$ and $B$ is the 1-neighbourhood of $\supp(\rho_0)\cup\supp(\rho_1)$. Then recall the normalization \eqref{eq:normalization}, the identity $\log\rho_1=\log g^\eps+\log(\h_{\eps/2}f^\eps)$ and use Jensens' inequality for $-\log$ to obtain
\[
H(\mu_1 \,|\, \mm) = \int \rho_1\log\rho_1\,\d\mm = \int \log(g^\eps)\rho_1\,\d\mm \leq \log\int g^\eps\rho_1\,\d\mm \leq \log\Big(\|g^\eps\|_{L^1(\X)}\|\rho_1\|_{L^{\infty}(\X)}\Big),
\]
whence $\log\|g^\eps\|_{L^1(\X)} \geq H(\mu_1 \,|\, \mm) -\log\|\rho_1\|_{L^{\infty}(\X)}$ for all $\eps \in (0,1)$, which together with \eqref{eq:fgeps} gives the second in \eqref{eq:volneg}.

Therefore passing to the limit in \eqref{eq:psialto} as $\eps=\eps_n\downarrow0$ and recalling the local uniform convergence of $\varphi^{\eps_n}_t$ to $\varphi_t$ give $-\varphi_t\geq -\frac{\tilde C}t+\frac{C_5\sfd^2(\cdot,\bar x)}t$ for every $t\in(0,1]$, where $\tilde C\geq 0$ depends on $K,N,\rho_0,\rho_1,\bar x$ only. It follows that
\begin{equation}
\label{eq:coerc}
-\varphi_t\geq \frac{C_5\sfd^2(\cdot,\bar x)}{2t}\geq {\frac{C_5}2\sfd^2(\cdot,\bar x)},\qquad\forall t\in(0,1],\ x\notin B_{\sqrt{\frac{2C_5}{\tilde C}}}(\bar x).
\end{equation}
Now  fix $x \in \X$ and a sequence $t_n \downarrow 0$: the bound \eqref{eq:coerc} grants that there are $y_n \in \X$ such that
\[
Q_t(-\varphi_{t_n})(x) = \frac{\sfd^2(x,y_n)}{2t} - \varphi_{t_n}(y_n)
\]
and that these $y_n$ range in a bounded set. Thus up to pass to a subsequence we can assume that $y_n \to y$ for some $y \in \X$, so that taking into account the $\Gamma-\limi$ inequality previously proved we get
\[
\frac{\sfd^2(x,y)}{2t}-\varphi_{0}(y)\leq \limi_{n\to\infty}\frac{\sfd^2(x,y_n)}{2t}-\varphi_{t_n}(y_n)=\limi_{n\to\infty}Q_t(-\varphi_{t_n})(x)\stackrel{\eqref{eq:hl1}}=\limi_{n\to\infty}-\varphi_{t_n+t}(x)=-\varphi_t(x)
\]
which shows that  $Q_t(-\varphi_0)(x)\leq -\varphi_t(x)$, as desired.

\noindent{\bf (iii)} For any $t \in (0,1]$ we have
\[
\varphi_0\leq \varphi_t\stackrel{\eqref{eq:63}}\leq-\psi_t
\]
so that letting $t \downarrow 0$ and using the continuity of $[0,1) \ni t \mapsto \psi_t(x)$ for all $x \in \X$ we deduce that 
\[
\varphi_0\leq -\psi_0\qquad{\rm on}\ \X.
\]
Now notice that the fact that $-\varphi_0 \leq \Gamma-\limi(-\varphi_t)$ implies that
\begin{equation}
\label{eq:magg}
\varphi_0(\gamma_0)\geq\lims_{t\downarrow 0}\varphi_t(\gamma_t)\qquad\forall \gamma\in C([0,1],\X).
\end{equation}
Let $\ppi$ be the lifting  of the $W_2$-geodesic $(\mu_t)$ (recall point $(i)$ of Theorem \ref{thm:bm}); taking into account that the evaluation maps $\e_t:C([0,1],\X)\to \X$ are continuous and that $\supp(\ppi)$ is a compact subset of $C([0,1],\X)$, because given by constant speed geodesics running from the compact set $\supp(\rho_0)$ to the compact $\supp(\rho_1)$, it is easy to see that for every $\gamma \in \supp(\ppi)$ and $t \in [0,1]$ we have $\gamma_t \in \supp(\mu_t)$ and viceversa for every $x \in \supp(\mu_t)$ there is $\gamma \in \supp(\ppi)$ with $\gamma_t=x$.  

Thus let $x \in \supp(\rho_0) = \supp(\mu_0)$ and find $\gamma \in \supp(\ppi)$ with $\gamma_0 = x$: from the fact that $\gamma_t \in \supp(\mu_t)$ and \eqref{eq:63} we get
\[
\varphi_0(x)\stackrel{\eqref{eq:magg}}\geq\lims_{t\downarrow 0}\varphi_t(\gamma_t)=\lims_{t\downarrow0}-\psi_t(\gamma_t).
\]
and since the continuity of $[0,1)\ni t\mapsto \psi_t\in L^1(\X,e^{-V})$ and the uniform local Lipschitz continuity of the $\psi_t$'s (both coming from Proposition \ref{lem:62}) imply local uniform convergence of $\psi_t$ to $\psi_0$, we conclude $\varphi_0(x)\geq \psi_0(x)$.

Thus it remains to prove that $\varphi_0 = -\infty$ outside $\supp(\rho_0)$. To this aim, we notice again that the supports of $f^\eps,g^\eps$ coincide with those of $\rho_0,\rho_1$ and use the second in \eqref{eq:gaussest} to get
\[
\begin{split}
f^\eps_t(x)&=\h_{\eps t/2}f^\eps(x)=\int f^\eps(y)\hr_{\eps t/2}(x,y)\,\d\mm(y)\leq\frac{c_1}{v_{\eps t/2}}e^{-c_2\frac{\sfd^2(x,\supp(\rho_0))}{3\eps t}+c_3\eps t}\int f^\eps\,\d\mm,\\
g^\eps_t(x)&=\h_{\eps (1-t)/2}g^\eps(x)=\int g^\eps(y)\hr_{\eps(1-t)/2}(x,y)\,\d\mm(y)\leq\frac{c_4}{v_{\eps (1-t)/2}}\int g^\eps\,\d\mm,
\end{split}
\]
for every $t \in (0,1)$ and constants $c_i>0$ depending on  $K,N,\rho_0,\rho_1,\bar x$ only. From these bounds, the identity $\rho^\eps_t=f^\eps_tg^\eps_t$ and the estimates \eqref{eq:fgeps} and \eqref{eq:volneg} we deduce that
\begin{equation}
\label{eq:altdens}
\lims_{\eps\downarrow0}\eps\log(\rho^\eps_t(x))\leq c_5-c_6\frac{\sfd^2(x,\supp(\rho_0))}{t}\qquad\forall x\in\X,\ t\in(0,1).
\end{equation}
Now let  $\eps_n\downarrow0$ be the sequence such that $\varphi^{\eps_n}_t,\psi^{\eps_n}_t$ converge to $\varphi_t,\psi_t$ as in Proposition \ref{lem:62} and put $S(x) := \sup_{\eps\in(0,1),t\in[0,1/2]}|\psi^\eps_t(x)| < \infty$ (recall \eqref{eq:contquadr}). The inequality
\[
\varphi_t(x)=\lim_{n\to\infty}\varphi^{\eps_n}_t(x)\leq \lims_{n\to\infty}\eps_n\log(\rho^{\eps_n}_t(x)) - \lim_{n\to\infty}\psi^{\eps_n}_t(x) \stackrel{\eqref{eq:altdens}}\leq S(x) +c_5-c_6\frac{\sfd^2(x,\supp(\rho_0))}{t}
\]
shows that if $x \notin \supp(\rho_0)$ we have $\varphi_0(x) = \lim_{t\downarrow0}\varphi_t(x) = -\infty$, as desired.

\noindent{\bf (iv)} By the point $(iii)$ just proven we have
\[
\int\varphi_0\rho_0\,\d\mm+\int\psi_1\rho_1\,\d\mm=-\int\psi_0\rho_0\,\d\mm-\int\varphi_1\rho_1\,\d\mm
\]
so that taking into account the weak continuity of $t \mapsto \mu_t$, the fact that the measures $\mu_t$ have equibounded supports and the  continuity of $t \mapsto \varphi_t$ (resp. $t \mapsto \psi_t$) for $t$ close to 1 (resp. close to 0) in the topology of local uniform convergence (direct consequence of the continuity in $L^1(\X,e^{-V}\mm)$ and the uniform local Lipschitz estimates provided by Proposition \ref{lem:62}), we get
\[
\begin{split}
\int\varphi_0\rho_0\,\d\mm+\int\psi_1\rho_1\,\d\mm & \stackrel{\phantom{\eqref{eq:63}}} = \lim_{t\downarrow0} - \int\psi_t\rho_t\,\d\mm - \int\varphi_{1-t}\rho_{1-t}\,\d\mm \\
& \stackrel{\eqref{eq:63}} = \lim_{t\downarrow0}\int\varphi_t\rho_t\,\d\mm - \int\varphi_{1-t}\rho_{1-t}\,\d\mm \stackrel{\eqref{eq:costointermedio}} = \frac12W_2^2(\mu_0,\mu_1).
\end{split}
\] 
\noindent{\bf (v)} Since $\rho_0 \in L^\infty(\X)$, we also have $\rho_0\log(\rho_0) \in L^\infty(\X)$. The claim then follows from the identity $\rho_0\varphi^\eps_0 = \eps\rho_0\log\rho_0 - \rho_0\psi^\eps_0$, the compactness of $\supp(\rho_0)$, the local uniform convergence of $\psi^{\eps_n}_0$ to $\psi_0$ as $n\to\infty$ and the fact that $\psi_0 = -\varphi_0$ on $\supp(\rho_0)$.
\end{proof}

\subsection{Proof of the main theorem}

We start with the following simple continuity statement:

\begin{Lemma}\label{lem:percon} 
With the same assumptions and notation as in Setting \ref{set}, let $t\mapsto\mu_t=\rho_t\mm$ be the $W_2$-geodesic from $\mu_0$ to $\mu_1$ and  $(\varphi_t)_{t\in(0,1]}$ and $(\psi_t)_{t\in[0,1)}$ any couple of limit functions given by Proposition \ref{lem:62}.

Then the maps
\[
\begin{array}{rll}
(0,1] \ni t \quad & \mapsto \quad \rho_t\,\d \varphi_t &\in L^2(T^*\X)\\
\ [0,1) \ni t \quad & \mapsto \quad \rho_t\,\d \psi_t &\in L^2(T^*\X)\\
(0,1] \ni t \quad & \mapsto \quad \rho_t\,\d \varphi_t \otimes \d\varphi_t&\in L^2((T^*)^{\otimes2}\X)\\
\ [0,1) \ni t \quad & \mapsto \quad \rho_t\,\d \psi_t \otimes \d\psi_t&\in L^2((T^*)^{\otimes2}\X)
\end{array}
\]
are all continuous w.r.t.\ the strong topologies.
\end{Lemma}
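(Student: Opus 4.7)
The plan is to prove strong $L^2(T^*\X)$-continuity of the first map $t\mapsto \rho_t\,\d\varphi_t$; the other three cases follow by identical reasoning, with the obvious time-reversal for $\psi_t$ and the observation $\|\rho_t\,\d\varphi_t\otimes\d\varphi_t\|_{L^2((T^*)^{\otimes 2}\X)}^2 = \int \rho_t^2 |\d\varphi_t|^4\,\d\mm$ for the tensor cases. Fix $\bar x\in\X$ and put $V := M\sfd^2(\cdot,\bar x)$ with $M>0$ to be chosen large. Passing to the limit in \eqref{eq:tail} via Proposition \ref{lem:61} and invoking the locally uniform Lipschitz bound \eqref{eq:lipcontr2}, on every $[\delta,1]$ we have
\[
\rho_t\leq C\,e^{-C'\sfd^2(\cdot,\bar x)},\qquad \lip\varphi_t\leq C_\delta(1+\sfd(\cdot,\bar x)),
\]
which by \eqref{eq:volgrowth} yields a uniform pointwise $L^2$-domination of $\rho_t\,\d\varphi_t$ as $t$ varies in $[\delta,1]$.

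The first step is to promote the narrow convergence $\mu_{t_n}\to\mu_t$ along the $W_2$-geodesic to strong $L^2(\X)$-convergence of the densities. Narrow convergence combined with the uniform $L^\infty$-bound yields only $\rho_{t_n}\weakto \rho_t$ weakly in $L^2(\X)$, so the upgrade requires a genuinely geometric input. I would invoke the $\CD(K,\infty)$-displacement convexity (up to a $K$-distortion term in $W_2^2$) of the functional $\mu\mapsto \int\rho^2\,\d\mm$, which is available since the generator $u(r)=r^2$ satisfies McCann's condition $r\mapsto e^{r}u(e^{-r})=e^{-r}$ convex non-increasing. Hence $t\mapsto \int \rho_t^2\,\d\mm$ is continuous in the interior of $[0,1]$, and endpoint continuity follows combining the convexity inequality with the lower semicontinuity of the $L^2$-norm under weak convergence. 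Convergence of norms plus weak convergence gives strong $L^2$-convergence.

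With Step 1 in hand, decompose for $t_n\to t$ in $(0,1]$
\[
\rho_{t_n}\,\d\varphi_{t_n}-\rho_t\,\d\varphi_t = (\rho_{t_n}-\rho_t)\,\d\varphi_{t_n} + \rho_t\,(\d\varphi_{t_n}-\d\varphi_t).
\]
The first summand vanishes in $L^2(T^*\X)$ by dominated convergence, with integrable dominant $4C^2 e^{-2C'\sfd^2(\cdot,\bar x)}(1+\sfd(\cdot,\bar x))^2$ coming from the pointwise bounds, up to passing to a subsequence giving $\mm$-a.e.\ convergence $\rho_{t_n}\to \rho_t$. For the second summand, choosing $M$ so that $\rho_t^2\leq c\,e^{-V}$ $\mm$-a.e., it suffices to prove strong convergence $\d\varphi_{t_n}\to \d\varphi_t$ in the Hilbert space $L^2(T^*\X,e^{-V}\mm)$. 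Weak convergence is immediate from Proposition \ref{lem:62} (locally uniform convergence, $L^\infty_{loc}$-boundedness in $W^{1,2}(\X,e^{-V}\mm)$, and closure of the differential). Convergence of the norms is obtained exactly as in the proof of Proposition \ref{thm:10}: combining the Hopf-Lax pointwise identity \eqref{eq:55} with the equality $\lip\varphi_t=|\d\varphi_t|$ $\mm$-a.e.\ (Lemma \ref{lem:12}, applicable because \eqref{eq:hl1} makes $-(t_1-t_0)\varphi_{t_1}$ $c$-concave) and integrating against $e^{-V}\mm$, one sees that $t\mapsto \int e^{-V}\varphi_t\,\d\mm$ is $C^1$ with derivative $\tfrac{1}{2}\int e^{-V}|\d\varphi_t|^2\,\d\mm$, hence this derivative is continuous in $t$.

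The main obstacle is Step 1: strong $L^2(\X)$-continuity of $\rho_t$ is not automatic from $W_2$-convergence (a sequence like $1+\sin(nx)$ on $[0,1]$ shows that narrow convergence under a uniform $L^\infty$-bound need not produce strong $L^2$-convergence), and hinges essentially on the $\CD$-displacement convexity of a Renyi-type functional, which in turn uses the curvature-dimension bound on the ambient space. All other parts of the argument are soft and carry over verbatim to the three remaining maps.
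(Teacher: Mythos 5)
Your overall architecture (weak convergence plus convergence of norms implies strong convergence, after splitting off the density factor) is the right one, and your treatment of the term $(\rho_{t_n}-\rho_t)\,\d\varphi_{t_n}$ by domination is fine. However, both of the two genuinely non-soft steps are inadequately justified. First, for the strong $L^2$-continuity of $t\mapsto\rho_t$ you invoke displacement convexity of $\mu\mapsto\int\rho^2\,\d\mm$ on the grounds that $u(r)=r^2$ satisfies McCann's condition. This is not an available fact in the framework of the paper: the curvature condition used here is formulated through the Boltzmann entropy (and the Renyi entropies $S_{N'}$), and for $K<0$ the geodesic semiconvexity of $\int\rho^2\,\d\mm$ does not follow from the definition without substantial extra work (the relevant convexity modulus $\inf_r K\,p(r)/r$ with $p(r)=r^2$ is $-\infty$). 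The paper's Lemma \ref{lem:8} (proved via Young measures and continuity of the entropy along the geodesic) already provides $\mm$-a.e.\ convergence of $\rho_s\to\rho_t$ along subsequences, which together with the uniform $L^\infty$ bound and the common compact support gives $L^p$-convergence for every $p<\infty$; you should simply cite it instead of rederiving it by a route that does not close.

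Second, your norm-convergence step for the differentials asserts that $t\mapsto\int e^{-V}\varphi_t\,\d\mm$ is $C^1$ with derivative $\tfrac12\int e^{-V}|\d\varphi_t|^2\,\d\mm$. The argument you sketch (the one from Proposition \ref{thm:10}) only yields absolute continuity together with the a.e.\ derivative formula; continuity of $t\mapsto\int e^{-V}|\d\varphi_t|^2\,\d\mm$ is precisely what you are trying to prove and cannot be read off from an a.e.\ identity. The paper circumvents this entirely by working with $\sqrt{\rho_t}\,\d\varphi_t$ in the unweighted $L^2(T^*\X)$ and showing that its norm is \emph{constant}: since $-(1-t)\psi_t$ is a Kantorovich potential from $\mu_t$ to $\mu_1$ and $|\d\varphi_t|=|\d\psi_t|$ $\mu_t$-a.e.\ by \eqref{eq:63} and locality, point $(iii)$ of Theorem \ref{thm:bm} gives $\int|\d\varphi_t|^2\rho_t\,\d\mm=(1-t)^{-2}W_2^2(\mu_t,\mu_1)=W_2^2(\mu_0,\mu_1)$ for every $t$. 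Weak continuity of $t\mapsto\sqrt{\rho_t}\,\d\varphi_t$ (from the closure of the differential and the $L^2$-continuity of $\sqrt{\rho_t}$) plus constancy of the norm then gives strong continuity, and one multiplies by $\sqrt{\rho_t}$ once more. You should replace your $C^1$ claim by this metric-Brenier argument; without it the proof of the second summand's convergence is incomplete.
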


\begin{proof}
By Lemma \ref{lem:8} we know that for any $p < \infty$ we have $\rho_s\to\rho_t$ in $L^p(\X)$ as $s\to t$ and thus in particular $\sqrt{\rho_s}\to\sqrt{\rho_t}$ as $s\to t$. Moreover, the compactness of the supports of $\rho_0$ and $\rho_1$ implies that there exist $\bar{x} \in \X$ and $R > 0$ such that $\supp(\rho_t) \subset B_R(\bar{x})$ for all $t \in [0,1]$. Consider a Lipschitz cut-off function $\nchi$ with support in $B_{R+1}(\bar{x})$ such that $\nchi \equiv 1$ in $B_R(\bar{x})$. The closure of the differential and the fact that $\varphi_s \to \varphi_t$ weakly in $W^{1,2}(\X,e^{-V}\mm)$ as $s\to t>0$ (as a consequence of $(\varphi_t) \in C((0,1],C(\X,e^{-V}))\cap L^\infty_{loc}((0,1),W^{1,2}(\X,e^{-V}\mm))$, see Proposition \ref{pro:9} and the notation therein) grant that $\d\varphi_s \to \d\varphi_t$ weakly in $L^2(T^*\X,e^{-V}\mm)$ and thus $\nchi\d\varphi_s \to \nchi\d\varphi_t$ in $L^2(T^*\X)$. Together with the previous claim about the densities, the fact that the latter are uniformly bounded in $L^\infty(\X)$ and how $\nchi$ is constructed, this is sufficient to conclude that $t \mapsto \sqrt{\rho_t}\d\varphi_t\in L^2(T^*\X)$ is weakly continuous.

We now claim that $t\mapsto \sqrt{\rho_t}\d\varphi_t\in L^2(T^*\X)$ is strongly continuous and to this aim we show that their $L^2(T^*\X)$-norms are constant. To see this, recall that by Proposition \ref{pro:9} we know that for $t\in(0,1]$ the function $-(1-t)\psi_t$ is a Kantorovich potential from $\mu_t$ to $\mu_1$ while from \eqref{eq:63} and the locality of the differential we get that $|\d \varphi_t|=|\d\psi_t|$ $\mu_t$-a.e., thus by point $(iii)$ in Theorem \ref{thm:bm} we have that
\[
\int|\d\varphi_t|^2\rho_t\,\d\mm=\frac1{(1-t)^2}\int |\d(1-t)\psi_t|^2\rho_t\,\d\mm=\frac1{(1-t)^2}W_2^2(\mu_t,\mu_1)=W_2^2(\mu_0,\mu_1).
\]
Multiplying the $\sqrt{\rho_t}\d\varphi_t$'s by $\sqrt{\rho_t}$ and using again the $L^2(\X)$-strong continuity of $\sqrt{\rho_t}$ and the uniform $L^\infty(\X)$-bound we conclude that $t \mapsto {\rho_t}\d\varphi_t\in L^2(T^*\X)$ is strongly continuous, as desired.

To prove the strong continuity of $t \mapsto \rho_t\,\d \varphi_t\otimes\d\varphi_t\in  L^2((T^*)^{\otimes2}\X)$ we argue as in Corollary \ref{cor:convd}: the strong continuity of $t\mapsto \sqrt{\rho_t}\d\varphi_t\in L^2(T^*\X)$ and the fact that these are, locally in $t\in(0,1]$, uniformly bounded (thanks again to $\supp(\rho_t) \subset B_R(\bar{x})$ for all $t \in [0,1]$), grant both that $t\mapsto \|\rho_t\d\varphi_t\otimes\d\varphi_t\|_{L^2((T^*)^{\otimes 2}\X)}$ is continuous and that $t\mapsto \rho_t\d\varphi_t\otimes\d \varphi_t\in L^2((T^*)^{\otimes 2}\X)$ is weakly continuous.

The claims about the $\psi_t$'s follow in the same way.
\end{proof}

We now have all the tools needed to prove our main result. Notice that we shall not make explicit use of Theorem \ref{thm:1i} but rather reprove it for (the restriction to $[\delta,1-\delta]$ of) entropic interpolations.

\begin{Theorem}\label{thm:main}
Let $(\X,\sfd,\mm)$ be an $\RCD^*(K,N)$ space with $K \in \R$ and $N \in [1,\infty)$. Let $\mu_0,\mu_1 \in \probt \X$ be such that $\mu_0,\mu_1\leq C\mm$ for some $C>0$, with compact supports and let $(\mu_t)$ be the unique $W_2$-geodesic connecting $\mu_0$ to $\mu_1$. Also, let $h\in H^{2,2}(\X)$.

Then the map 
\[
[0,1]\ni \ t\quad \mapsto\quad \int h\,\d\mu_t\ \in\R
\]
belongs to $C^2([0,1])$ and the following formulas hold for every $t\in[0,1]$:
\begin{equation}
\label{eq:derivate}
\begin{split}
\ddt \int h\,\d\mu_t & = \int\la\nabla h,\nabla\phi_t\ra\,\d\mu_t,\\
\frac{\d^2}{\d t^2}\int h\,\d\mu_t & = \int \H h(\nabla\phi_t,\nabla\phi_t)\,\d\mu_t,
\end{split}
\end{equation}
where $\phi_t$ is any function such that for some $s\neq t$, $s\in[0,1]$, the function $-(s-t)\phi_t$ is a Kantorovich potential from $\mu_t$ to $\mu_s$.
\end{Theorem}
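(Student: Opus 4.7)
The plan is to prove Theorem \ref{thm:main} by approximating the $W_2$-geodesic $(\mu_t)$ by the entropic interpolations $(\mu_t^\eps)$ of Setting \ref{set}, exploiting the regularity along $(\mu_t^\eps)$ given by Proposition \ref{pro:7} to perform exact second-order computations, and then leveraging the results of Sections \ref{sec:5} and \ref{sec:6} to pass to the limit $\eps \downarrow 0$. I first establish \eqref{eq:derivate} for $h \in \testi{\X}$ on an interval $[\delta, 1-\delta] \subset (0,1)$, and then extend by density to $h \in H^{2,2}(\X)$ and by continuity to $[0,1]$.

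\textbf{Exact computation at $\eps>0$ and limits of the regular terms.} For $h \in \testi{\X}$, combining the continuity and Hamilton--Jacobi--Bellman equations of Proposition \ref{pro:7} with the integration-by-parts justifications in Lemma \ref{lem:contprov2} gives
\[
\tfrac{\d}{\d t}\!\!\int h\,\d\mu_t^\eps = \int\la\nabla h,\nabla\vartheta_t^\eps\ra\,\rho_t^\eps\,\d\mm,
\]
\[
\tfrac{\d^2}{\d t^2}\!\!\int h\,\d\mu_t^\eps = \int \H{h}(\nabla\vartheta_t^\eps,\nabla\vartheta_t^\eps)\,\rho_t^\eps\,\d\mm + \int \la\nabla h, \nabla a_t^\eps\ra\,\rho_t^\eps\,\d\mm,
\]
where $a_t^\eps = -\tfrac{\eps^2}{8}(2\Delta\log\rho_t^\eps + |\nabla\log\rho_t^\eps|^2)$. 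Fix $\delta \in (0,\tfrac12)$ and extract $\eps_n \downarrow 0$ for which Propositions \ref{lem:61}, \ref{lem:62} and \ref{pro:9} apply; the identification $\nabla\vartheta_t = \nabla\phi_t$ $\mu_t$-a.e.\ follows from $\varphi_t+\psi_t=0$ on $\supp(\mu_t)$ (cf.\ \eqref{eq:63}) together with the Kantorovich-potential identification in Proposition \ref{pro:9}. The first-order identity passes to the limit directly via the $L^\infty$-weak$^*$ convergence of $\rho_t^{\eps_n}$ and the strong $L^2$-convergence of $\d\vartheta_t^{\eps_n}$ from Corollary \ref{cor:convd}, while the Hessian term in the second-order identity passes to the limit by Corollary \ref{cor:convd} and the uniform density bound \eqref{eq:tail}, matched with the $L^2$-integrability of $\H{h}$.

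\textbf{Vanishing of the acceleration --- the main obstacle.} The decisive step is showing that $\iint_{t_0}^{t_1}\la\nabla h,\nabla a_t^\eps\ra\rho_t^\eps\,\d t\,\d\mm$ vanishes as $\eps \downarrow 0$. Integrating by parts yields
\[
\int \la\nabla h,\nabla a_t^\eps\ra\rho_t^\eps\,\d\mm = -\int a_t^\eps\bigl(\rho_t^\eps\Delta h + \la\nabla h,\nabla\rho_t^\eps\ra\bigr)\,\d\mm,
\]
and combining the expression for $a_t^\eps$ with $|\nabla\rho_t^\eps|=\rho_t^\eps|\nabla\log\rho_t^\eps|$ bounds the absolute value of the time integral by $C(\|\Delta h\|_\infty + \|\nabla h\|_\infty)$ times a sum of the four quantities $\eps^2\iint\rho_t^\eps|\Delta\log\rho_t^\eps|$, $\eps^2\iint\rho_t^\eps|\nabla\log\rho_t^\eps|^2$, $\eps^2\iint\rho_t^\eps|\Delta\log\rho_t^\eps|\,|\nabla\log\rho_t^\eps|$ and $\eps^2\iint\rho_t^\eps|\nabla\log\rho_t^\eps|^3$ over $[\delta,1-\delta]\times\X$, each of which vanishes by Lemma \ref{lem:vanish}. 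This is where the finite-dimensional assumption $N<\infty$ enters crucially, via the Li--Yau inequality and the Bochner estimate $\Gamma_2(f)\geq(\Delta f)^2/N$ underlying the second-order bounds \eqref{eq:bhess}--\eqref{eq:blap}; by contrast, the naive viscous approximation of Hamilton--Jacobi admits no such cancellation, as shocks can form in the limit.

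\textbf{Extension to $H^{2,2}$, regularity, and endpoints.} The compactness of $\supp(\mu_t)$ (inherited from $\mu_0,\mu_1$), the uniform bound $\mu_t\leq M\mm$ from Proposition \ref{lem:61} and the local Lipschitz control \eqref{eq:lipcontr2} of $\phi_t$ make both sides of \eqref{eq:derivate} continuous linear functionals of $h$ in the $H^{2,2}$-topology, so approximating $h\in H^{2,2}(\X)$ by $h_n\in\testi{\X}$ extends \eqref{eq:derivate} to all of $H^{2,2}$ on $[\delta,1-\delta]$. Lemma \ref{lem:percon} gives continuity in $t$ of the right-hand sides on $(0,1)$, which combined with the integrated identity from the preceding step implies that $t\mapsto\int h\,\d\mu_t$ is $C^2$ on $(0,1)$ with the claimed derivatives. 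Finally, to reach the closed interval $[0,1]$ one uses the $W_2$-continuity of $(\mu_t)$, the uniform-support and uniform-density bounds, and the endpoint analysis of Proposition \ref{pro:v0} to let $\delta\downarrow 0$, obtaining the full $C^2([0,1])$ statement.
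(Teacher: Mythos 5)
Your proposal is correct and follows essentially the same strategy as the paper's proof: exact first- and second-order computations along the entropic interpolations, convergence of the Hessian term via Corollary \ref{cor:convd} and the uniform density bounds, vanishing of the acceleration term after integration by parts via Lemma \ref{lem:vanish}, and a density argument to pass from compactly supported test functions to $H^{2,2}(\X)$. One small correction: the paper does not use Proposition \ref{pro:v0} for the endpoints (it explicitly remarks that this result is not needed for the main theorem); instead it proves the formulas with the choice $\phi_t=\psi_t$, which together with the strong continuity of $t\mapsto\rho_t\,\d\psi_t\otimes\d\psi_t$ from Lemma \ref{lem:percon} yields $C^2$ regularity on all of $[0,1)$, and then recovers $t=1$ by the symmetric choice $\phi_t=-\varphi_t$.
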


\begin{proof}
For the given $\mu_0,\mu_1$ introduce the notation of Setting \ref{set} and then find $\eps_n\downarrow0$ such that $(\varphi^{\eps_n}_t),(\psi^{\eps_n}_t)$ converge to limit curves $(\varphi_t),(\psi_t)$ as in Proposition \ref{lem:62}. 

By Lemma \ref{lem:gradpot} we know that the particular choice of the $\phi_t$'s as in the statement does not affect the right-hand sides in \eqref{eq:derivate}, we shall therefore prove that such formulas hold for the choice $\phi_t:=\psi_t$, which is admissible thanks to Proposition \ref{pro:9} whenever $t<1$. The case $t=1$ can be achieved swapping the roles of $\mu_0,\mu_1$ or, equivalently, with the choice $\phi_t = -\varphi_t$ which is admissible for $t>0$. 

Fix $h\in H^{2,2}(\X)$ with compact support and for $t\in[0,1]$ set
\[
\hh_n(t):=\int h\,\d\mu^{\eps_n}_t\qquad\qquad \hh(t):=\int h\,\d\mu_t.
\]
The bound \eqref{eq:linftybound} grants that the $\hh_n$'s are uniformly bounded and the convergence in \eqref{eq:66} that $\hh_n(t)\to \hh(t)$ for any $t\in[0,1]$.

Since $(\rho^{\eps_n}_t)\in AC_{loc}((0,1),W^{1,2}(\X))$ we have that $\hh_n\in AC_{loc}((0,1))$ and, recalling the formula for $\ddt\rho^\eps_t$ given by Proposition \ref{pro:7}, that
\begin{equation}
\label{eq:der1}
\ddt \hh_n(t) = \int h\ddt\rho^{\eps_n}_t\,\d\mm = -\int h\,{\rm div}(\rho^{\eps_n}_t\nabla\vartheta^{\eps_n}_t) \d\mm = \int \la\nabla h,\nabla\vartheta_t^{\eps_n}\ra\rho^{\eps_n}_t\,\d\mm.
\end{equation}
The fact that $\vartheta_t = \frac{\psi_t-\varphi_t}2$, the compactness of $\supp(h)$ and the bounds \eqref{eq:linftybound} and \eqref{eq:lipcontr} ensure that $\big|\ddt \hh_n(t)\big|$ is uniformly bounded in $n$ and $t\in[t_0,t_1]\subset(0,1)$ and the compactness of $\supp(h)$ also allows us to use the convergence properties \eqref{eq:limited} and \eqref{eq:66}, which grant that 
\[
\iint_{t_0}^{t_1} \la\nabla h,\nabla\vartheta_t^{\eps_n}\ra\rho^{\eps_n}_t\,\d t\,\d\mm\quad\to\quad\iint_{t_0}^{t_1}  \la\nabla h,\nabla\vartheta_t\ra\rho_t\,\d t\,\d\mm.
\]
This is sufficient to pass  to the limit in the distributional formulation of $\ddt \hh_n(t)$ and taking into account that $\hh\in C([0,1])$ we have just proved that $\hh\in AC_{loc}((0,1))$ with
\begin{equation}
\label{eq:der11}
\ddt \hh(t)=\int \la\nabla h,\nabla\vartheta_t\ra\rho_t\,\d\mm
\end{equation}
for a.e.\ $t\in[0,1]$. Recalling that $\vartheta_t=\frac{\psi_t-\varphi_t}2$, \eqref{eq:63} and the locality of the differential we see that
\begin{equation}
\label{eq:gradt}
\nabla\vartheta_t=\nabla\psi_t\quad\rho_t\mm\ae\qquad\forall t\in[0,1),
\end{equation} 
and thus by Lemma \ref{lem:percon} we see that the right-hand side of \eqref{eq:der11} is continuous in $t\in[0,1)$, which then implies  that $\hh\in C^1([0,1))$ and that the first in \eqref{eq:derivate} holds for any $t \in [0,1)$.

For the second derivative we assume in addition that $h \in \testi\X$. Then we recall that $(\rho^{\eps_n}_t) \in AC_{loc}((0,1),W^{1,2}(\X))$ and $(\vartheta_t^{\eps_n}) \in AC_{loc}((0,1),W^{1,2}(\X,e^{-V}\mm))$ with $V = M\sfd^2(\cdot,\bar{x})$ for some $\bar{x} \in \X$ and $M>0$ sufficiently large. Consider the rightmost side of \eqref{eq:der1} to get that $\ddt I_n(t) \in AC_{loc}((0,1))$ and
\[
\frac{\d^2}{\d t^2}\hh_n(t) = \int\langle\nabla h,\nabla\ddt \vartheta^{\eps_n}_t\rangle\rho^{\eps_n}_t+\la\nabla h,\nabla \vartheta^{\eps_n}_t\ra\ddt\rho^{\eps_n}_t\,\d\mm
\]
for a.e.\ $t$, so that defining the `acceleration' $a^\eps_t$ as
\[
a^\eps_t := -\Big(\frac{\eps^2}{4} \Delta\log\rho^\eps_t + \frac{\eps^2}{8}|\nabla\log\rho^\eps_t|^2\Big)
\]
and recalling the formula for $\ddt\vartheta^\eps_t$ given by Proposition \ref{pro:7} we have
\[
\begin{split}
\frac{\d^2}{\d t^2}\hh_n(t) &= \int\langle\nabla h,\nabla\Big(-\frac12|\nabla\vartheta^{\eps_n}_t|^2 +  a^{\eps_n}_t\Big)\rangle\rho^{\eps_n}_t - \la\nabla h,\nabla \vartheta^{\eps_n}_t\ra{\rm div}(\rho^{\eps_n}_t\nabla\vartheta^{\eps_n}_t)\,\d \mm\\
& = \int\Big(-\frac12\langle\nabla h,\nabla |\nabla\vartheta^{\eps_n}_t|^2\rangle + \langle\nabla(\la\nabla h,\nabla \vartheta^{\eps_n}_t\ra),\nabla\vartheta^{\eps_n}_t\rangle + \la\nabla h,\nabla a^{\eps_n}_t\ra\Big)\rho^{\eps_n}_t\,\d\mm\\
(\text{by \eqref{eq:leibh}})\quad & = \int \H h(\nabla\vartheta^{\eps_n}_t,\nabla\vartheta^{\eps_n}_t)\rho^{\eps_n}_t\,\d\mm - \int \big(\Delta h + \la\nabla h,\nabla\log\rho^{\eps_n}_t\ra\big) a^{\eps_n}_t\rho^{\eps_n}_t\,\d\mm.
\end{split}
\]
Since $\vartheta^\eps_t = \frac{\psi^\eps_t - \varphi^\eps_t}2$ and $\H h\in L^2(T^{*\otimes 2}\X)$ with compact support, by    \eqref{eq:limited} and   \eqref{eq:66}   we see that 
\[
\int \H h(\nabla\vartheta^{\eps_n}_t,\nabla\vartheta^{\eps_n}_t)\rho^{\eps_n}_t\,\d\mm\quad\stackrel{n\to\infty}\to \quad\int \H h(\nabla\vartheta_t,\nabla\vartheta_t)\rho_t\,\d\mm\qquad\text{ in $L^1_{loc}(0,1)$}
\]
and since $|\nabla h|,\Delta h\in L^\infty(\X)$, by Lemma \ref{lem:vanish} we deduce that
\[
\int \big(\Delta h + \la\nabla h,\nabla\log\rho^{\eps_n}_t\ra\big) a^{\eps_n}_t\rho^{\eps_n}_t\,\d\mm\quad\to\quad 0\qquad\text{ in $L^1_{loc}(0,1)$}.
\]
Hence we can pass to the limit in the distributional formulation of $\frac{\d^2}{\d t^2}\hh_n$ to obtain that $\ddt I\in AC_{loc}((0,1))$ and
\begin{equation}
\label{eq:der2}
\frac{\d^2}{\d t^2}\hh(t)=\int \H h(\nabla\vartheta_t,\nabla\vartheta_t)\rho_t\,\d\mm
\end{equation}
for a.e.\ $t$. Using again \eqref{eq:gradt} and Lemma \ref{lem:percon} we conclude that the right-hand side of \eqref{eq:der2} is continuous on $[0,1)$, so that $\hh\in C^2([0,1))$ and the second in \eqref{eq:derivate} holds for every $t\in[0,1)$.

It remains to remove the assumption that $h \in \testi\X$ and has compact support. To this aim we claim that functions in $\testi\X$ with compact support are dense in $H^{2,2}(\X)$. To see this, let $\nchi_R$ be as in Lemma \ref{lem:cutoff} and notice that the Leibniz rules for the gradient and the Laplacian easily give that $\nchi_Rh\to h$ in $W^{1,2}(\X)$ and $\Delta (\nchi_Rh)\to\Delta h$ in $L^2(\X)$ as $R\to\infty$ for every $h\in\testi\X$. Hence by \eqref{eq:heslap} we also have $\nchi_Rh\to h$ in $H^{2,2}(\X)$. Taking into account that $\testi\X$ is dense in $H^{2,2}(\X)$ (recall \eqref{eq:testidenso}), our claim follows.

Now let $h\in H^{2,2}(\X)$ be arbitrary and $(h_k)\subset\testi\X $ with bounded support be $H^{2,2}$-converging to $h$. Notice that we can choose the $\phi_t$'s to be uniformly Lipschitz (e.g.\ by taking $\phi_t := \psi_t$ for $t \geq 1/2$, $\phi_t := -\varphi_t$ for $t<1/2$ and recalling Proposition \ref{pro:1} and using a cut-off argument). The uniform $L^\infty$ estimates \eqref{eq:linftyrcd}, the fact that all the densities $\rho_t$ are supported in a compact set $B$ independent of $t \in [0,1]$ and the $L^2$-convergence of ${h_k},\nabla {h_k},\H {h_k}$ to ${h},\nabla {h},\H {h}$ respectively grant that as $k \to \infty$ we have that
\[
\begin{split}
\int h_k\,\d\mu_t&\qquad\to\qquad\int h\,\d\mu_t\\
\int\la\nabla h_k,\nabla\phi_t\ra\,\d\mu_t&\qquad\to\qquad \int\la\nabla h,\nabla\phi_t\ra\,\d\mu_t\\
\int \H {h_k}(\nabla\phi_t,\nabla\phi_t)\,\d\mu_t&\qquad\to\qquad \int \H {h}(\nabla\phi_t,\nabla\phi_t)\,\d\mu_t
\end{split}
\]
uniformly in $t\in[0,1]$. This is sufficient to pass to the limit from the formulas \eqref{eq:derivate} for the $h_k$'s to the one for $h$ and also ensures the $C^2$ regularity of $t\mapsto \int h\,\d\mu_t$. 
\end{proof}

\subsection{Related differentiation formulas}

In this last part we collect some direct consequences of Theorem \ref{thm:main}. For the notion of covariant derivative and the related calculus rules we refer to \cite{Gigli14}.
\begin{Theorem}
Let $(\X,\sfd,\mm)$ be an $\RCD^*(K,N)$ space with $K \in \R$ and $N \in [1,\infty)$. Then the following holds:
\begin{itemize}
\item[(i)] Let $\ppi$ be an optimal geodesic test plan with bounded support and $h \in H^{2,2}(\X)$. Then the map $[0,1]\ni t\mapsto h\circ\e_t\in L^2(\ppi)$ is in $C^2([0,1],L^2(\ppi))$ and we have
\[
\begin{split}
\frac{\d}{\d t} \big( h\circ\e_t\big)&=\la\nabla h,\nabla\phi_t\ra \circ\e_t,\\
\frac{\d^2}{\d t^2} \big( h\circ\e_t\big)&=\H{h}(\nabla\phi_t,\nabla\phi_t)\circ\e_t,
\end{split}
\]
for every $t \in [0,1]$, where $\phi_t$ is any function such that for some $s \neq t$, $s \in [0,1]$, the function $-(s-t)\phi_t$ is a Kantorovich potential from $(\e_t)_*\ppi$ to $(\e_s)_*\ppi$.
\item[(ii)] Let $\ppi$ be an optimal geodesic test plan with bounded support and $W \in H^{1,2}_C(\X)$. Then the map $[0,1] \ni t \mapsto \la W,\nabla \phi_t\ra \circ \e_t \in L^2(\ppi)$ is in $C^1([0,1],L^2(\ppi))$ and we have
\[
\ddt \big(  \la W,\nabla \phi_t\ra\circ\e_t\big) =\big( \nabla W : (\nabla\phi_t \otimes \nabla\phi_t)\big )\circ\e_t,
\]
for every $t \in [0,1]$, where $\phi_t$ is as in $(i)$.
\item[(iii)] Let $\mu_0,\mu_1,\phi_t$ be as in Theorem \ref{thm:main} and $W \in H^{1,2}_C(T\X)$. Then the map 
\[
[0,1] \ni \ t \quad \mapsto \quad \int \langle W,\nabla\phi_t\rangle \d\mu_t\ \in\R
\]
belongs to $C^1([0,1])$ and the following formula holds for every $t \in [0,1]$
\[
\ddt \int \langle W,\nabla\phi_t\rangle \d\mu_t = \int  \nabla W : (\nabla\phi_t \otimes \nabla\phi_t) \, \d\mu_t.
\]
\end{itemize}
\end{Theorem}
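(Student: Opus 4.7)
The plan is to derive (i), (ii), (iii) from Theorem \ref{thm:main} by combining, respectively, restriction to sub-plans of $\ppi$, Leibniz calculus together with the density of $\testv{\X}$ in $H^{1,2}_C(T\X)$, and integration against the geodesic lifting of $(\mu_t)$.

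For (i), the first-order formula is already Theorem \ref{thm:1}, so I focus on the second derivative and the $C^2([0,1],L^2(\ppi))$ regularity. First, for every bounded Borel $\eta \geq 0$ on $C([0,1],\X)$ with $\int\eta\,\d\ppi>0$, the probability measure $\ppi_\eta := \eta\ppi/\int\eta\,\d\ppi$ is still an optimal geodesic test plan with bounded support, so its $t$-marginals $(\e_t)_*\ppi_\eta$ have equibounded densities and uniformly compact supports. Hence Theorem \ref{thm:main} applies to $((\e_t)_*\ppi_\eta)$ and yields
\[
\frac{\d^2}{\d t^2}\int (h\circ\e_t)\eta\,\d\ppi = \int \H{h}(\nabla\phi_t,\nabla\phi_t)\circ\e_t\,\eta\,\d\ppi,
\]
which, combined with Theorem \ref{thm:1} applied to the same sub-plan, gives
\[
\int \big(\la\nabla h,\nabla\phi_{t_1}\ra\circ\e_{t_1} - \la\nabla h,\nabla\phi_{t_0}\ra\circ\e_{t_0}\big)\eta\,\d\ppi = \int_{t_0}^{t_1}\!\!\int \H{h}(\nabla\phi_s,\nabla\phi_s)\circ\e_s\,\eta\,\d\ppi\,\d s.
\]
By the arbitrariness of $\eta\in L^\infty(\ppi)$, this upgrades to a Bochner-integral identity in $L^2(\ppi)$; the fundamental theorem of calculus in Bochner form then closes the $C^2$ regularity argument.

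For (ii), I first establish the formula for generating fields $W = f\nabla g$ with $f,g\in\testi{\X}$: part (i) applied to $g$ gives $\ddt(\la\nabla g,\nabla\phi_t\ra\circ\e_t) = \H{g}(\nabla\phi_t,\nabla\phi_t)\circ\e_t$ in $L^2(\ppi)$, while Theorem \ref{thm:1} applied to $f$ gives $\ddt(f\circ\e_t) = \la\nabla f,\nabla\phi_t\ra\circ\e_t$. The Leibniz rule for $C^1$ curves in $L^2(\ppi)$ (legitimate since $f\circ\e_t$ is uniformly bounded), combined with the identity $\nabla(f\nabla g) = \nabla f\otimes\nabla g + f\H{g}$, then yields the claim for such $W$. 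By linearity the formula extends to $W\in\testv{\X}$, which is dense in $H^{1,2}_C(T\X)$; the extension to all $W\in H^{1,2}_C(T\X)$ uses that both sides are continuous in $W$ in the $H^{1,2}_C$ topology, thanks to the uniform local Lipschitz bounds on the $\phi_t$'s (Proposition \ref{pro:1}) and the bounded compression of $\ppi$. Part (iii) then follows by applying (ii) to the unique geodesic lifting $\ppi$ of $(\mu_t)$ provided by Theorem \ref{thm:bm} and noting that $\int\la W,\nabla\phi_t\ra\,\d\mu_t = \int\la W,\nabla\phi_t\ra\circ\e_t\,\d\ppi$; the interchange of derivative and integral is justified by the uniform $L^2(\ppi)$-controls already obtained in (ii).

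The main obstacle is the step in (i) that promotes the sub-plan identity to strong $L^2(\ppi)$-differentiability: one needs the $L^2(\ppi)$-continuity of the map $s\mapsto\H{h}(\nabla\phi_s,\nabla\phi_s)\circ\e_s$, which in turn rests on transferring the $L^2(\mu_s)$-continuity established in Lemma \ref{lem:percon} to $L^2(\ppi)$ via the bounded compression inequality $\|F\circ\e_s\|_{L^2(\ppi)}^2 \leq C\|F\|_{L^2(\mm)}^2$ valid for $F$ supported in a fixed bounded set containing all the $\supp(\mu_s)$. Once this continuity is secured, the Bochner version of the fundamental theorem of calculus yields the desired $C^2([0,1],L^2(\ppi))$ regularity, and (ii)--(iii) follow as described.
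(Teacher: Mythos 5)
Your proposal is correct and follows essentially the same route as the paper's proof: for (i) you restrict $\ppi$ to sub-plans $\ppi_\eta$ (the paper uses $\eta=\nchi_\Gamma$, you allow general bounded $\eta\ge0$, but the restriction property of optimal plans and Lemma \ref{lem:gradpot} make this equivalent), invoke Theorem \ref{thm:main} and Lemma \ref{lem:percon} for the $L^2(\ppi)$-continuity of $t\mapsto\H{h}(\nabla\phi_t,\nabla\phi_t)\circ\e_t$, and upgrade the scalar identity to a Bochner-integral one by density; for (ii) you prove the formula on $W=f\nabla g$ via (i), Theorem \ref{thm:1}, and the covariant-derivative Leibniz rule $\nabla(f\nabla g)=\nabla f\otimes\nabla g+f\H{g}$, then extend by density of $\testv{\X}$ in $H^{1,2}_C(T\X)$ using the uniform Lipschitz bounds on $\phi_t$ and bounded compression; and (iii) is the same integration against the lifting $\ppi$.

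One small point worth flagging: your one-line justification of the $L^2(\ppi)$-Leibniz rule by "$f\circ\e_t$ is uniformly bounded" slightly understates what is needed, since $\H{g}(\nabla\phi_t,\nabla\phi_t)\circ\e_t$ is only $L^2(\ppi)$ (not $L^\infty$) and one term of the difference quotient requires passing to a.e.\ convergent subsequences or a dominated-convergence argument to handle $(u_{t+h}-u_t)v_t'$. This is a genuinely routine step and the paper itself glosses over it by simply citing the covariant Leibniz rule, so you are at the same level of rigor.
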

\begin{proof}

\noindent{\bf (i)} Start observing that for $\ppi$ as in the assumptions and $\Gamma\subset C([0,1],\X)$ Borel with $\ppi(\Gamma)>0$, the curve $t\mapsto \ppi(\Gamma)^{-1}(\e_t)_*\ppi\restr\Gamma$ fulfills the assumptions of Theorem \ref{thm:main} with the same $\phi_t$'s as in the current setting (and in particular, the $\phi_t$'s can be chosen independently of $\Gamma$). Then, taking into account Lemma \ref{lem:percon} it is easy to check that the maps $[0,1]\ni t\mapsto h\circ\e_t,\la\nabla h,\nabla\phi_t\ra\circ\e_t,(\H{h}(\nabla\phi_t,\nabla\phi_t)\circ\e_t)\in L^2(\ppi)$ are all continuous, and in particular with uniformly, in $t\in[0,1]$, bounded $L^2(\ppi)$-norms.

Also, Theorem \ref{thm:main} applied to $t\mapsto \ppi(\Gamma)^{-1}(\e_t)_*\ppi\restr\Gamma$ gives, after integration and an application of Fubini's theorem, that for every $t,s\in[0,1]$, $t<s$ we have
\[
\begin{split}
\int \eta\big(h\circ\e_s-h\circ\e_t\big)\,\d\ppi&=\int\eta\int_t^s\la\nabla h,\nabla\phi_r\ra\circ\e_r\,\d r\,\d\ppi\\
\int \eta\big(\la\nabla h,\nabla\phi_s\ra\circ\e_s-\la\nabla h,\nabla\phi_t\ra\circ\e_t\big)\,\d\ppi&=\int\eta\int_t^s\H{h}(\nabla\phi_r,\nabla\phi_r)\circ\e_r\,\d r\,\d\ppi\\
\end{split}
\]
for every $\eta$ of the form $\eta=\nchi_\Gamma$ with $\Gamma\subset C([0,1],\X)$ Borel, where here and below the integral are intended in the Bochner sense. Then the fact that the linear span of such $\eta$'s is dense in $L^2(\ppi)$ forces the equalities
\[
\begin{split}
 h\circ\e_s-h\circ\e_t&=\int_t^s\la\nabla h,\nabla\phi_r\ra\circ\e_r\,\d r\\
\la\nabla h,\nabla\phi_s\ra\circ\e_s-\la\nabla h,\nabla\phi_t\ra\circ\e_t&=\int_t^s\H{h}(\nabla\phi_r,\nabla\phi_r)\circ\e_r\,\d r
\end{split}
\]
which is the claim.

\noindent{\bf (ii)} By $(i)$ and the Leibniz rule for the covariant derivative (see \cite{Gigli14}) we see that the claim holds for $W=\sum_{i=1}^nf_i\nabla g_i$, with $n\in\N$ and $(f_i),(g_i)\in\testi\X$. These vector fields are dense in $H^{1,2}_C(T\X)$, hence the claim follows noticing that if $W_n\to W$ in $H^{1,2}_C(T\X)$ and the $\phi_t$'s are chosen uniformly Lipschitz (which as discussed in the proof of Theorem \ref{thm:main} is always admissible) then $\la W_n,\nabla\phi_t\ra \to \la W,\nabla\phi_t\ra $ and $\nabla W_n : (\nabla\phi_t \otimes \nabla\phi_t)\to \nabla W : (\nabla\phi_t \otimes \nabla\phi_t)$ in $L^2(\X)$ as $n\to\infty$. Therefore, since $(\e_t)_*\ppi\leq C\mm$ for every $t\in[0,1]$ and some $C>0$, we have that
\[
\begin{split}
\la W_n,\nabla\phi_t\ra \circ\e_t&\to \la W,\nabla\phi_t\ra\circ\e_t\\
\big(\nabla W_n : (\nabla\phi_t \otimes \nabla\phi_t)\big)\circ\e_t&\to\big( \nabla W : (\nabla\phi_t \otimes \nabla\phi_t)\big)\circ \e_t
\end{split}
\]
in $L^2(\ppi)$ uniformly in $t\in[0,1]$. The conclusion follows.

\noindent{\bf (iii)} Direct consequence of $(ii)$ and an integration w.r.t.\ $\ppi$.
\end{proof}

\appendix

\section{Reminders about analysis on $\RCD$ spaces}\label{sec:2}
In this appendix we recall the basic definitions and properties of the various objects that we used in the body of the paper. We also provide detailed bibliographical references.

\subsection{Sobolev calculus on \texorpdfstring{$\RCD$}{RCD} spaces}

By $C([0,1],(\X,\sfd))$, or simply $C([0,1],\X)$, we denote the space of continuous curves with values on the metric space $(\X,\sfd)$ and for $t\in[0,1]$ the {\bf evaluation map} $\e_t:C([0,1],(\X,\sfd))\to \X$ is defined as $\e_t(\gamma):=\gamma_t$.  For the notion of {\bf absolutely continuous curve} in a metric space and of {\bf metric speed} see for instance Section 1.1 in \cite{AmbrosioGigliSavare08}. The collection of absolutely continuous curves on $[0,1]$ is denoted $AC([0,1],(\X,\sfd))$, or simply by $AC([0,1],\X)$.

By $\prob\X$ we denote the space of Borel probability measures on $(\X,\sfd)$ and by $\probt\X \subset \prob\X$ the subclass of those with finite second moment.

\medskip

Let $(\X,\sfd,\mm)$ be a complete and separable metric measure space endowed with a Borel non-negative measure which is finite on bounded sets. 

For the definition of {\bf test plans}, of the {\bf Sobolev class} $S^2(\X)$ and of {\bf minimal weak upper gradient} $|D f|$ see \cite{AmbrosioGigliSavare11} (and the previous works \cite{Cheeger00}, \cite{Shanmugalingam00} for alternative - but equivalent - definitions of Sobolev functions). The local counterpart of $S^2(\X)$ is introduced as follows: $L^2_{loc}(\X)$ is defined as the space of functions $f \in L^0(\X)$ such that for all compact set $\Omega \subset \X$ there exists a function $g \in L^2(\X)$ such that $f = g$ $\mm$-a.e.\ in $\Omega$ and the local Sobolev class $S^2_{loc}(\X)$ is then defined as
\begin{equation}
\label{eq:3}
S^2_{loc}(\X) := \{ f \in L^0(\X) \,:\, \forall \Omega \subset\subset \X \,\,\exists g \in S^2(\X) \textrm{ s.t. } f = g \,\,\mm\textrm{-a.e. in } \Omega\}.
\end{equation}
The local minimal weak upper gradient of a function $f \in S^2_{loc}(\X)$ is denoted by $|Df|$, omitting the locality feature, and defined for all $\Omega \subset\subset \X$ as $|Df| := |Dg|$ $\mm$-a.e.\ in $\Omega$, where $g$ is as in (\ref{eq:3}). The definition does depend neither on $\Omega$ nor on the choice of $g$ associated to it by locality of the minimal weak upper gradient.

The Sobolev space $W^{1,2}(\X)$ (resp.\ $W^{1,2}_{loc}(\X)$) is defined as $L^2(\X)\cap S^2(\X)$ (resp.\ $L^2_{loc} \cap S^2_{loc}(\X)$). When endowed with the norm $\|f\|_{W^{1,2}}^2:=\|f\|_{L^2}^2+\||Df|\|_{L^2}^2$, $W^{1,2}(\X)$ is a Banach space. The {\bf Cheeger energy} is the convex and lower-semicontinuous functional $E:L^2(\X)\to[0,\infty]$ given by
\[
E(f):=\left\{\begin{array}{ll}
\displaystyle{\frac12\int|D f|^2\,\d\mm}&\qquad \text{for }f\in W^{1,2}(\X)\\
+\infty&\qquad\text{otherwise}
\end{array}\right.
\]
$(\X,\sfd,\mm)$ is {\bf infinitesimally Hilbertian} (see \cite{Gigli12}) if $W^{1,2}(\X)$ is Hilbert. In this case $E$ is a Dirichlet form and its infinitesimal generator $\Delta$, which is a closed self-adjoint operator on $L^2(\X)$, is called {\bf Laplacian} on $(\X,\sfd,\mm)$ and its domain denoted by $D(\Delta)\subset W^{1,2}(\X)$. The flow $(\h_t)$ associated to $E$ is called {\bf heat flow} (see \cite{AmbrosioGigliSavare11}), and  for any $f\in L^2(\X)$ the curve $t\mapsto\h_tf\in L^2(\X)$ is continuous on $[0,\infty)$, locally absolutely continuous on $(0,\infty)$ and the only solution of
\[
\ddt\h_tf=\Delta\h_tf\qquad\h_tf\to f\text{ as }t\downarrow0.
\]
If moreover $(\X,\sfd,\mm)$ is an $\RCD(K,\infty)$ space (see \cite{AmbrosioGigliSavare11-2}), the following a priori estimates hold true for every $f \in L^2(\X)$ and $t > 0$:
\begin{subequations}
\begin{align}
\label{eq:apriori1}
E(\h_t f) & \leq \frac{1}{4t}\|f\|_{L^2(\X)}^2, \\
\label{eq:apriori2}
\|\Delta\h_t f\|_{L^2(\X)}^2 & \leq \frac{1}{2t^2}\|f\|_{L^2(\X)}^2.
\end{align}
\end{subequations}
Still within the $\RCD$ framework, there exists the {\bf heat kernel}, namely a function 
\begin{equation}
\label{eq:hk}
(0,\infty)\times \X^2\ni (t,x,y)\quad\mapsto\quad \hr_t[x](y)=\hr_t[y](x)\in (0,\infty)
\end{equation}
such that
\begin{equation}
\label{eq:rapprform}
\h_tf(x)=\int f(y)\hr_t[x](y)\,\d\mm(y)\qquad\forall t>0
\end{equation}
for every $f\in L^2(\X)$. For every $x\in \X$ and $t>0$, $r_t[x]$ is a probability density and thus \eqref{eq:rapprform} can be used to extend the heat flow to $L^1(\X)$ and shows that the  flow is {\bf mass preserving} and satisfies the {\bf maximum principle}, i.e.
\[
f\leq c\quad\mm-a.e.\qquad\qquad\Rightarrow \qquad\qquad\h_tf\leq c\quad\mm\ae,\ \forall t>0.
\]
For finite-dimensional $\RCD^*(K,N)$ spaces (introduced in \cite{Gigli12}) the heat kernel satisfies {\bf Gaussian estimates}, i.e.\ for every $\delta>0$ there are positive constants $C_1=C_1(K,N,\delta)$ and $C_2=C_2(K,N,\delta)$ such that for every $x,y\in \X$ and $t>0$ it holds
\begin{equation}
\label{eq:gaussest}
\frac{1}{C_1\mm(B_{\sqrt t}(y))}\exp\Big(-\frac{\sfd^2(x,y)}{(4-\delta)t} - C_2 t\Big)\leq \hr_t[x](y)\leq \frac{C_1}{\mm(B_{\sqrt t}(y))}\exp\Big(-\frac{\sfd^2(x,y)}{(4+\delta)t} + C_2 t\Big),
\end{equation}
see \cite{JLZ15}, which adapts the approach of \cite{Sturm92}, \cite{Sturm96II} to the $\RCD$ setting.

\bigskip

For general metric measure spaces, the {\bf differential} is a well defined linear map $\d$ from $S^2(\X)$ with values in the {\bf cotangent module} $L^2(T^*\X)$ (see \cite{Gigli14}) which is a closed operator when seen as unbounded operator on $L^2(\X)$. If $(\X,\sfd,\mm)$ is infinitesimally Hilbertian, which from now on we shall always assume, the cotangent module is canonically isomorphic to its dual, the {\bf tangent module} $L^2(T\X)$, and the isomorphism sends the differential $\d f$ to the gradient $\nabla f$. Replacing the language of $L^2$-normed modules with the $L^0$'s one (see \cite{Gigli14}), the differential can be extended to $\d : S^2_{loc}(\X) \to L^0(T^*\X)$, where $L^0(T^*\X)$ denotes the family of (measurable) 1-forms. The dual of $L^0(T^*\X)$ as $L^0$-normed module is denoted by $L^0(T\X)$, it is canonically isomorphic to $L^0(T^*\X)$ and its elements are called vector fields. With this said, $L^2_{loc}(T^*\X) \subset L^0(T^*\X)$ (resp.\ $L^2_{loc}(T\X) \subset L^0(T\X)$) is defined as the collection of the 1-forms $\omega$ such that $|\omega| \in L^2_{loc}(\X)$ (resp.\ the vector fields $v$ such that $|v| \in L^2_{loc}(\X)$).

The {\bf divergence} of a vector field is defined as (minus) the adjoint of the differential, i.e.\ we say that $v \in L^2(T\X)$ (resp.\ $v \in L^2_{loc}(T\X)$) has a divergence in $L^2(\X)$ (resp.\ in $L^2_{loc}(\X)$), and write $v\in D({\rm div})$ (resp.\ $v \in D({\rm div}_{loc})$), provided there is a function $g\in L^2(\X)$ (resp.\ $g \in L^2_{loc}(\X)$) such that
\[
\int fg\,\d\mm=-\int \d f(v)\,\d\mm\qquad\forall f\in W^{1,2}(\X).
\]
(resp.\ for all Lipschitz functions $f$ with bounded support). In this case $g$ is unique and is denoted ${\rm div}(v)$. A function $f \in W^{1,2}_{loc}(\X)$ has Laplacian in $L^2_{loc}(\X)$, and we shall write $f \in D(\Delta_{loc})$, if there exists $g \in L^2_{loc}(\X)$ such that
\[
\int \phi g\d\mm = -\int \langle \nabla\phi,\nabla f\rangle\d\mm, \qquad \forall \phi \textrm{ Lipschitz with bounded support}
\]
and in this case, since $g$ is unique, we set $\Delta f := g$. It can be verified that
\[
f\in D(\Delta_{loc})\ \text{ if and only if }\ \nabla f\in D({\rm div}_{loc})\text{ and in this case }\Delta f={\rm div}(\nabla f),
\]
in accordance with the smooth case.

\medskip

As regards the properties of $\d, {\rm div},\Delta$, the differential satisfies the following calculus rules which we shall use extensively without further notice:
\begin{align*}
|\d f|&=|D f|\quad\mm\ae&&\forall f\in S^2(\X)\\
\d f&=\d g\qquad\mm\ae\ \text{\rm on}\ \{f=g\} &&\forall f,g\in S^2(\X)\\
\d(\varphi\circ f)&=\varphi'\circ f\,\d f&&\forall f\in S^2(\X),\ \varphi:\R\to \R\ \text{Lipschitz}\\
\d(fg)&=g\,\d f+f\,\d g&&\forall f,g\in L^\infty\cap S^2(\X)
\end{align*}
where it is part of the properties the fact that $\varphi\circ f,fg\in S^2(\X)$ for $\varphi,f,g$ as above. For the divergence, the formula
\[
{\rm div}(fv)=\d f(v)+f{\rm div}(v)\qquad\forall f\in W^{1,2}(\X),\ v\in D({\rm div}),\ \text{such that}\ |f|,|v|\in L^\infty(\X)
\]
holds, where it is intended in particular that $fv\in D({\rm div})$ for $f,v$ as above, and for the Laplacian
\[
\begin{split}
\Delta(\varphi\circ f)&=\varphi''\circ f|\d f|^2+\varphi'\circ f\Delta f\\
\Delta(fg)&=g\Delta f+f\Delta g+2\la\nabla f,\nabla g\ra
\end{split}
\]
where in the first equality we assume that $f\in D(\Delta),\varphi\in C^2(\R)$ are such that $f,|\d f|\in L^\infty(\X)$ and $\varphi',\varphi''\in L^\infty(\R)$ and in the second that $f,g\in D(\Delta)\cap L^\infty(\X)$ and $|\d f|,|\d g|\in L^\infty(\X)$ and it is part of the claims that $\varphi\circ f,fg$ are in $D(\Delta)$. On $S^2_{loc}(\X)$ as well as on $D({\rm div}_{loc})$ and $D(\Delta_{loc})$ the same calculus rules hold with slight adaptations. For sake of information, we present the chain rule for differetial and Laplacian, as they will be widely exploited without further mention.

\begin{Lemma}[Calculus rules]\label{lem:loccalculus}
Let $(\X,\sfd,\mm)$ be an $\RCD^*(K,N)$ with $K \in \R$ and $N \in [1,\infty)$. Then:
\begin{itemize}
\item[(i)] for all $f \in S^2_{loc}(\X)$ and $\varphi : \R \to \R$ such that for all $K \subset\subset \X$ there exists $I_K \subset\subset \R$ in such a way that $\mathscr{L}^1(f(K) \setminus I_K) = 0$ and $\varphi\restr{I_K}$ is Lipschitz it holds
\[
\d(\varphi\circ f) = \varphi'\circ f,
\]
where it is part of the statement the fact that $\varphi\circ f \in S^2_{loc}(\X)$ for $\varphi,f$ as above; analogous statements hold for the gradient;
\item[(ii)] for all $f \in D(\Delta_{loc})$ and $\varphi : \R \to \R$ such that $f,|\d f| \in L^\infty_{loc}(\X)$ and $\varphi',\varphi'' \in L^\infty(\R)$ it holds
\[
\Delta(\varphi\circ f) = \varphi''\circ f|\d f|^2 + \varphi'\circ f\Delta f
\]
where it is part of the claims that $\varphi\circ f \in D(\Delta_{loc})$.
\end{itemize}
\end{Lemma}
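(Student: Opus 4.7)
The strategy for both parts is the same: reduce the local claim to the corresponding global chain rule recalled just before the lemma, by producing on any prescribed compact set a sufficiently regular global representative of $f$ and then invoking the locality of $\d$ and $\Delta$ to transfer the identity.

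For part (i), I would fix a compact $\Omega\subset\X$, pick $g\in S^2(\X)$ with $f=g$ $\mm$-a.e.\ on $\Omega$ (available by the definition of $S^2_{loc}(\X)$) and let $\tilde\varphi:\R\to\R$ be a global Lipschitz extension of $\varphi|_{I_\Omega}$, for instance the McShane extension, which exists because $I_\Omega$ is compact. The global chain rule then gives $\tilde\varphi\circ g\in S^2(\X)$ with $\d(\tilde\varphi\circ g)=\tilde\varphi'\circ g\,\d g$ $\mm$-a.e. Interpreting the hypothesis $\mathscr{L}^1(f(\Omega)\setminus I_\Omega)=0$ in the essential sense, $g$ takes values in $I_\Omega$ $\mm$-a.e.\ on $\Omega$, so $\tilde\varphi\circ g=\varphi\circ f$ $\mm$-a.e.\ on $\Omega$; combined with $\tilde\varphi'=\varphi'$ $\mathcal{L}^1$-a.e.\ on $I_\Omega$ and the locality of the minimal weak upper gradient (which tells that $\d g=0$ $\mm$-a.e.\ on the preimage of any $\mathcal L^1$-null set), this promotes to $\tilde\varphi'\circ g\,\d g=\varphi'\circ f\,\d f$ $\mm$-a.e.\ on $\Omega$. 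The arbitrariness of $\Omega$ then yields $\varphi\circ f\in S^2_{loc}(\X)$ with the asserted chain rule, and the gradient version follows by the musical isomorphism.

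For part (ii), the idea is the same but a single cutoff is needed to upgrade the local regularity of $f$ to a global one compatible with the existing Laplacian chain rule. Fix a Lipschitz test function $\phi$ with bounded support, let $\Omega$ be a compact neighborhood of $\supp\phi$ and let $\chi\in\testi\X$ be a cutoff given by Lemma \ref{lem:cutoff} with $\chi\equiv 1$ on $\Omega$ and $\supp\chi$ contained in a slightly larger compact $U$. Set $\tilde f:=\chi f$. The local $L^\infty$ bounds on $f,|\d f|$, together with the boundedness of $\chi,|\nabla\chi|,\Delta\chi$ and the compactness of $\supp\chi$, give $\tilde f\in L^\infty(\X)$, $|\d\tilde f|\in L^\infty(\X)$ and, by the Leibniz rule for the Laplacian, $\tilde f\in D(\Delta)$ with
\[
\Delta\tilde f=\chi\Delta f+f\Delta\chi+2\la\nabla\chi,\nabla f\ra\in L^2(\X).
\]
Modifying $\varphi$ outside a large interval containing the essential range of $\tilde f$ we may moreover assume $\varphi\in C^2(\R)$ with $\varphi',\varphi''\in L^\infty(\R)$, a standard mollification leaving the $\mm$-a.e.\ values of $\varphi'\circ f$ and $\varphi''\circ f$ on $\Omega$ unchanged. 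The global Laplacian chain rule then yields
\[
\Delta(\varphi\circ\tilde f)=\varphi''\circ\tilde f\,|\d\tilde f|^2+\varphi'\circ\tilde f\,\Delta\tilde f\in L^2(\X).
\]
Testing this identity against $\phi$ and using that $\phi$ is supported in $\{\chi=1\}$, where $\tilde f=f$, $|\d\tilde f|=|\d f|$ and $\Delta\tilde f=\Delta f$, gives exactly the defining integration-by-parts identity for $\Delta_{loc}(\varphi\circ f)=\varphi''\circ f|\d f|^2+\varphi'\circ f\,\Delta f$ on a neighborhood of $\supp\phi$; since $\phi$ was arbitrary, this proves $\varphi\circ f\in D(\Delta_{loc})$ with the stated formula.

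The main obstacle is the careful bookkeeping of essential images in part (i): the hypothesis ``$\mathscr{L}^1(f(K)\setminus I_K)=0$'' has to be used together with the locality of the minimal weak upper gradient (which provides the $\mm$-a.e.\ vanishing of $\d g$ on the preimage of $\mathcal L^1$-null subsets of $\R$) in order to match $\tilde\varphi'\circ g\,\d g$ with $\varphi'\circ f\,\d f$. In part (ii), by contrast, the only substantive verification is that the cutoff $\tilde f$ still lies in $D(\Delta)$ with the expected bounded derivatives; this is a direct application of the Leibniz rules already available in the $W^{1,2}$ and $D(\Delta)$ settings, and from there the global chain rule does the rest.
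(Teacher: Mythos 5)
Your overall strategy -- reduce to the global chain rule via local representatives or cutoffs, then transfer by locality of $\d$ and $\Delta$ -- matches what the paper indicates (the paper offers only a one-line remark to this effect). Part (ii) is essentially correct: the cutoff $\tilde f=\chi f$ lands in $D(\Delta)\cap L^\infty(\X)$ with $|\d\tilde f|\in L^\infty(\X)$, the global chain rule applies, and testing against compactly supported Lipschitz functions localizes the conclusion. The only inaccuracy is the remark about ``modifying $\varphi$'' to become $C^2$ in a way that ``leaves the $\mm$-a.e.\ values of $\varphi'\circ f,\varphi''\circ f$ on $\Omega$ unchanged''; as written this is incorrect, since mollification does alter a.e.\ values of $\varphi''$. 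The correct route is to mollify $\varphi$ to $\varphi_\eta\in C^\infty(\R)$, apply the global rule to $\varphi_\eta\circ\tilde f$, and pass to the limit $\eta\downarrow0$, using the a.e.\ convergence $\varphi_\eta''\to\varphi''$ together with the fact that $\d\tilde f=0$ $\mm$-a.e.\ on preimages of $\mathscr{L}^1$-null sets.

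Part (i) contains a genuine gap. The assertion that $\mathscr{L}^1(f(\Omega)\setminus I_\Omega)=0$, ``interpreted in the essential sense'', implies that $g$ takes values in $I_\Omega$ $\mm$-a.e.\ on $\Omega$, is false: the hypothesis controls the $\mathscr{L}^1$-measure of the exceptional \emph{values}, not the $\mm$-measure of the exceptional \emph{points}. If, say, $f\equiv c$ constant with $c\notin I_\Omega$, then $f(\Omega)=\{c\}$ is $\mathscr{L}^1$-null, so the hypothesis holds with any compact $I_\Omega$, yet $f\notin I_\Omega$ everywhere on $\Omega$ and the McShane extension $\tilde\varphi$ need not match $\varphi$ at $c$; thus $\tilde\varphi\circ g$ and $\varphi\circ f$ may differ on a set of full $\mm$-measure. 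The locality fact you invoke (that $\d g=0$ $\mm$-a.e.\ on $g^{-1}(N)$ for $\mathscr{L}^1$-null $N$) is the right tool but is needed for \emph{two} purposes: to reconcile $\tilde\varphi'$ with $\varphi'$ on the $\mathscr{L}^1$-null subset of $I_\Omega$ where they disagree (which you handle), and to deal with the set $Z:=\Omega\cap f^{-1}\big(f(\Omega)\setminus I_\Omega\big)$ where $f$ escapes $I_\Omega$ entirely (which you do not; this second step is more delicate, since concluding $\d(\varphi\circ f)=0$ on $Z$ presupposes $\varphi\circ f\in S^2_{loc}(\X)$, which is part of what must be proved). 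In the applications this paper actually makes -- $\varphi=\log$ applied to a heat flow that is locally bounded away from $0$ and $\infty$, so that $f(K)\subset I_K$ with no exceptional set -- your argument works without change; but as written it establishes a strictly weaker statement than the lemma's hypothesis allows.
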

The proof is based on the locality  of such differentiation operators and the analogous properties of their global counterparts defined on $S^2(\X),D(\Delta)$.

Beside this notion of $L^2$-valued Laplacian, we shall also need that of measure-valued Laplacian (\cite{Gigli12}). A function $f\in W^{1,2}(\X)$ is said to have measure-valued Laplacian, and in this case we write $f\in D(\bd)$, provided there exists a Borel (signed) measure $\mu$ whose total variation is finite on bounded sets and such that
\[
\int g\,\d\mu=-\int\la\nabla g,\nabla f\ra\,\d\mm,\qquad\forall g\text{ Lipschitz with bounded support}.
\]
In this case $\mu$ is unique and denoted $\bd f$. This notion is compatible with the previous one in the sense that
\[
f\in D(\bd),\ \bd f\ll\mm\text{ and }\frac{\d\bd f}{\d\mm}\in L^2(\X)\qquad\Leftrightarrow\qquad f\in D(\Delta)\text{ and in this case }\Delta f=\frac{\d\bd f}{\d\mm}.
\]

\bigskip

On $\RCD(K,\infty)$ spaces, the vector space of `test functions' (see \cite{Savare13})
\[
\testi\X := \Big\{ f \in D(\Delta) \cap L^{\infty}({\X}) \ :\ |\nabla f| \in L^{\infty}(\X),\ \Delta f \in L^{\infty}\cap W^{1,2}(\X) \Big\}
\]
 is an algebra dense in $W^{1,2}(\X)$ for which it holds
\begin{equation}
\label{eq:stabcomp}
\text{$f\in\testi\X$   and $\varphi\in C^\infty(\R)$}\qquad\Rightarrow\qquad\text{$\varphi\circ f\in\testi\X$.} 
\end{equation}

Combining  the Gaussian estimates on  $\RCD^*(K,N)$ spaces, $N<\infty$, with the results in \cite{Savare13} we see that 
\begin{equation}
\label{eq:regflow}
f\in L^2 \cap L^{\infty}(\X),\  t>0\qquad\Rightarrow\qquad \h_t(f)\in\testi \X.
\end{equation}
The fact that $\testi\X$ is an algebra  is based on the property
\begin{equation}
\label{eq:reggrad}
\begin{split}
f\in\testi\X \qquad \Rightarrow \qquad &|\d f|^2\in W^{1,2}(\X)\quad\text{ with }\\
&\int|\d(|\d f|^2)|^2\,\d\mm\leq \||\d f|\|^2_{L^\infty}\Big(\||\d f|\|_{L^2}\||\d \Delta f|\|_{L^2}+|K|\||\d f|\|_{L^2}^2\Big)
\end{split}
\end{equation}
and actually a further regularity property of test functions is that
\[
f\in\testi\X \qquad \Rightarrow \qquad |\d f|^2\in D(\bd),
\]
so that it is possible to introduce the {\bf measure-valued $\Gamma_2$ operator} (\cite{Savare13}) as
\[
\Ggamma_2(f):=\bd\frac{|\d f|^2}{2}-\la\nabla f,\nabla\Delta f\ra\mm\qquad\forall f\in\testi \X.
\]
By construction, the assignment $f\mapsto \Ggamma_2(f)$ is a quadratic form.

An important property of the heat flow on $\RCD(K,\infty)$ spaces is the {\bf Bakry-\'Emery contraction estimate} (see \cite{AmbrosioGigliSavare11-2}):
\begin{equation}
\label{eq:be}
|\d\h_tf|^2\leq e^{-2Kt}\h_t(|\d f|^2)\qquad\forall f\in W^{1,2}(\X),\ t\geq 0.
\end{equation}
We also recall that $\RCD(K,\infty)$ spaces have the {\bf Sobolev-to-Lipschitz} property (\cite{AmbrosioGigliSavare11-2}, \cite{Gigli13}), i.e.
\begin{equation}
\label{eq:sobtolip}
f\in W^{1,2}(\X),\ |\d f|\in L^\infty(\X)\qquad\Rightarrow\qquad \exists \tilde f=f\ \mm-a.e.\ \text{ with }\Lip(\tilde f)\leq\||\d f|\|_{L^\infty},
\end{equation}
and thus we shall typically identify Sobolev functions with bounded differentials with their Lipschitz representative; in particular this will be the case for functions in $\testi \X$.

\bigskip

A well-known consequence of lower Ricci curvature bounds (see e.g. \cite{Cheeger-Colding97I}, \cite{Cheeger-Colding97II}, \cite{Cheeger-Colding97III}) is the existence of `{\bf good cut-off functions}', typically intended as cut-offs with bounded Laplacian; for our purposes the following result will be sufficient:
\begin{Lemma}\label{lem:cutoff}
Let $(\X,\sfd,\mm)$ be an $\RCD^*(K,N)$ space with $K \in \R$ and $N \in [1,\infty)$. Then for all $R>0$ and $x \in \X$ there exists a function $\nchi_R : \X \to \R$ satisfying:
\begin{itemize}
\item[(i)] $0 \leq \nchi_R \leq 1$, $\nchi_R \equiv 1$ on $B_R(x)$ and $\supp(\nchi_R) \subset B_{R+1}(x)$;
\item[(ii)] $\nchi_R \in \testi\X$.
\end{itemize}
Moreover, there exist constants $C,C' > 0$ depending on $K,N$ only such that
\begin{equation}\label{eq:cutbound}
\| |\nabla\nchi_R| \|_{L^{\infty}(\X)} \leq C, \qquad\qquad \|\Delta\nchi_R\|_{L^{\infty}(\X)} \leq C'.
\end{equation}
\end{Lemma}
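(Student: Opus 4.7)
The plan is to construct $\nchi_R$ by applying the heat flow to a rough Lipschitz cutoff and then composing with a smooth one-variable function to restore the exact value/support properties, using only tools already developed in the paper.

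First, I would fix a 1-Lipschitz initial cutoff $\tilde\eta:\X\to[0,1]$ with $\tilde\eta\equiv 1$ on $B_{R+1/4}(x)$ and $\supp\tilde\eta\subset B_{R+3/4}(x)$, for instance
\[
\tilde\eta(y):=\max\Big\{0,\min\big\{1,2(R+3/4-\sfd(y,x))\big\}\Big\}.
\]
Note that $\tilde\eta\in L^2\cap L^\infty(\X)$ since it has bounded support (use \eqref{eq:volgrowth}), and $|\nabla\tilde\eta|\le 2$ $\mm$-a.e.

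Second, for a small $s>0$ to be fixed depending only on $K,N$, I would set $u:=\h_s\tilde\eta$. By \eqref{eq:regflow} we have $u\in\testi\X$, and by the maximum principle $0\le u\le 1$. The Bakry--\'Emery estimate \eqref{eq:be} yields $|\nabla u|\le e^{-Ks}\sqrt{\h_s(|\nabla\tilde\eta|^2)}\le 2e^{-Ks}$. Using the Gaussian estimates \eqref{eq:gaussest} on the heat kernel together with the Bishop--Gromov doubling property (which ensures uniform bounds on $\mm(B_{\sqrt s}(y))$ ratios), one checks that the heat kernel spreads slowly in time: for any fixed $\delta>0$ there exists $s_0=s_0(K,N,\delta)>0$ such that
\[
\int_{B_{1/4}(y)}\hr_s[y](z)\,\d\mm(z)\ge 1-\delta\qquad\forall y\in\X,\;s\in(0,s_0].
\]
This gives $u\ge 3/4$ on $B_R(x)$ and $u\le 1/4$ on $\X\setminus B_{R+1}(x)$ for $s$ fixed small enough (depending only on $K,N$). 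Then pick a smooth $\varphi\in C^\infty(\R)$ with values in $[0,1]$, $\varphi\equiv 1$ on $[3/4,1]$, $\varphi\equiv 0$ on $(-\infty,1/4]$, and set $\nchi_R:=\varphi\circ u$. Property $(i)$ is then automatic. Stability of $\testi\X$ under $C^\infty$ composition \eqref{eq:stabcomp} gives $(ii)$.

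Third, for the bounds \eqref{eq:cutbound}, the chain rule gives $\nabla\nchi_R=\varphi'(u)\nabla u$ so $\||\nabla\nchi_R|\|_{L^\infty}\le\|\varphi'\|_\infty\cdot 2e^{-Ks}$, and
\[
\Delta\nchi_R=\varphi''(u)|\nabla u|^2+\varphi'(u)\Delta u,
\]
whose first summand is bounded by what we just established. The essential difficulty, and the main obstacle of the argument, is the $L^\infty$ bound on $\Delta u=\Delta \h_s\tilde\eta$: the a priori estimate \eqref{eq:apriori2} only gives the $L^2$ control $\|\Delta\h_s\tilde\eta\|_{L^2}\le\|\tilde\eta\|_{L^2}/(s\sqrt 2)$, which is insufficient. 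To upgrade to $L^\infty$, I would exploit the semigroup property $\Delta\h_s=\h_{s/2}\Delta\h_{s/2}$ together with the representation formula \eqref{eq:rapprform}: the Gaussian upper bound in \eqref{eq:gaussest} bounds $\|\h_{s/2}f\|_{L^\infty}$ by a universal constant (depending on $K,N,s$) times $\|f\|_{L^2}$ when $f$ has suitably localised support, and combined with a Caccioppoli-type iteration on localised integrals of $|\Delta\h_{s/2}\tilde\eta|^2$ coming from the Bakry--\'Emery property this yields the desired bound on $\|\Delta u\|_{L^\infty}$ depending only on $K,N$ (since $s$ has been fixed in terms of $K,N$ already). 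Alternatively, one can appeal to Savaré's self-improved Bakry--\'Emery estimates, which are the very input behind \eqref{eq:regflow} and give precisely the $L^\infty$ integrability of the Laplacian of heat-flow regularisations of bounded Lipschitz functions, with constants depending only on $K,N$ and the regularisation time.
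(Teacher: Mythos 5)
Your construction is correct and is essentially the same as the proof the paper points to, namely Lemma 3.1 of \cite{Mondino-Naber14}: one mollifies a Lipschitz cutoff by the heat flow for a short time $s=s(K,N)$, checks via the Gaussian bounds \eqref{eq:gaussest} and Bishop--Gromov that the heat kernel mass stays concentrated near the diagonal uniformly in the base point, and then composes with a smooth real function, exactly as you do. The one step requiring care is the quantitative $L^\infty$ bound on $\Delta\h_s\tilde\eta$; of your two suggestions the second (the Savar\'e-type regularisation estimates behind \eqref{eq:regflow}) is the route taken in the cited proof, and it can alternatively be derived from the Li--Yau inequality already stated in the paper, while a trivial tightening of your radii (e.g.\ taking $\supp\tilde\eta\subset B_{R+1/2}(x)$) is needed so that $\supp(\nchi_R)$ lands in the open ball $B_{R+1}(x)$ rather than its closure.
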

The proof can be obtained following verbatim the arguments given in Lemma 3.1 of \cite{Mondino-Naber14} (inspired by  \cite{AmbrosioMondinoSavare13-2}, see also  \cite{Gigli-Mosconi14} for an alternative approach): there the authors are interested in cut-off functions such that $\nchi \equiv 1$ on $B_R(x)$ and $\supp(\nchi) \subset B_{2R}(x)$: for this reason they fix $R>0$ and then claim that for all $x \in \X$ and $0<r<R$ there exists a cut-off function $\nchi$ satisfying $(i)$, $(ii)$ and \eqref{eq:cutbound} with $C,C'$ also depending on $R$. However, as far as one is concerned with cut-off functions $\nchi$ where the distance between $\{\nchi = 0\}$ and $\{\nchi = 1\}$ is always equal to 1, the proof of \cite{Mondino-Naber14} in the case $R=1$ applies and does not affect \eqref{eq:cutbound}.

\medskip

A direct consequence of the existence of such cut-off functions is that 
\begin{equation}
\label{eq:testlocal}
\begin{split}
\{ f \in L^2_{loc}(\X) \,:&\, \forall \Omega \subset\subset \X \,\,\exists g \in \testi\X \textrm{ s.t. } f = g \,\,\mm\textrm{-a.e. in } \Omega\}\\
&=\Big\{ f \in D(\Delta_{loc}) \cap L^{\infty}_{loc}({\X}) \,:\, |\nabla f| \in L^{\infty}_{loc}(\X),\, \Delta f \in W^{1,2}_{loc}(\X) \Big\}.
\end{split}
\end{equation}
Indeed the `$\subset$' inclusion is obvious, while for the opposite one if $f$ belongs to the second set and $\Omega \subset \X$ is a bounded open set, consider  a cut-off function $\nchi \in \testi\X$ with compact support and $\nchi \equiv 1$ on $\Omega$: it is clear that $\nchi f \in \testi\X$ and $\nchi f \equiv f$ on $\Omega$.  We shall call the set in \eqref{eq:testlocal} the space of \emph{local} test functions and denote it $\testl\X$.

The existence of the space of test functions and the language of $L^2$-normed $L^\infty$-modules allow to introduce the spaces $W^{2,2}(\X)$ and $W^{1,2}_C(T\X)$ as follows (see \cite{Gigli14}). We first consider the tensor product $L^2((T^*)^{\otimes 2}\X)$ of $L^2(T^*\X)$ with itself. The pointwise norm on such module is denoted $|\cdot|_\HS$ (as in the smooth case it coincides with the Hilbert-Schmidt one) and $:$ is the scalar product inducing it. Then we say that a function $f \in W^{1,2}(\X)$ belongs to $W^{2,2}(\X)$ provided there exists $A \in L^2((T^*)^{\otimes 2}\X)$ symmetric, i.e.\ such that $A(v_1,v_2) = A(v_2,v_1)$ $\mm$-a.e.\ for every $v_1,v_2\in L^2(T\X)$, for which it holds
\[
\int h A(\nabla g,\nabla g)\,\d\mm=\int-\la\nabla f,\nabla g\ra{\rm div}(h\nabla g)-h\langle\nabla f,\nabla\frac{|\nabla g|^2}2\rangle\,\d\mm \qquad\forall g,h\in\testi \X.
\]
In this case $A$ is unique, called {\bf Hessian} of $f$ and denoted by $\H f$. The space $W^{2,2}(\X)$ endowed with the norm
\[
\|f\|^2_{W^{2,2}(\X)}:=\|f\|^2_{L^2(\X)}+\|\d f\|^2_{L^2(T^*\X)}+\|\H f\|^2_{L^2((T^*)^{\otimes 2}\X)}
\]
is a separable Hilbert space which contains $\testi\X$ and in particular is dense in $W^{1,2}(\X)$. It is proved in \cite{Gigli14} that $D(\Delta) \subset W^{2,2}(\X)$ with
\begin{equation}
\label{eq:heslap}
\int|\H f|_{\HS}^2\,\d\mm\leq \int (\Delta f)^2-K|\nabla f|^2\,\d\mm\qquad\forall f\in D(\Delta).
\end{equation}
The space $H^{2,2}(\X)$ is defined as the closure of $D(\Delta)$ in $W^{2,2}(\X)$ and following the arguments in Proposition 4.3.18 in \cite{Gigli14} it is not difficult to see that
\begin{equation}
\label{eq:testidenso}
\text{$\testi\X$ is dense in $H^{2,2}(\X)$.}
\end{equation}
It is unknown whether $H^{2,2}(\X)=W^{2,2}(\X)$ or not. We recall that 
\begin{equation}
\label{eq:leibh} 
\d\la\nabla f,\nabla g\ra = \H f(\nabla g,\cdot)+\H g(\nabla f,\cdot)\qquad\forall f,g\in\testi \X
\end{equation}
and that the Hessian is a local operator, i.e.\ $\H f=\H g$ $\mm$-a.e.\ on $\{f=g\}$. Using this latter fact, for $f\in\testi\X$ we can define $\H f$ as the element in the $L^0$-completion of $L^2((T^*)^{\otimes 2}\X)$ defined by
\[
\H f:=\H g\quad\mm-a.e.\ on\ \{f=g\}\qquad\forall g\in\testi\X.
\]

\medskip

The {\bf Bochner inequality} on $\RCD(K,\infty)$ spaces takes the form of an inequality between measures (\cite{Gigli14} - see also the previous contributions \cite{Savare13}, \cite{Sturm14}):
\begin{equation}
\label{eq:bochhess}
\Ggamma_2(f)\geq \big(|\H f|^2_\HS+K|\d f|^2\big)\mm\qquad\forall f\in\testi \X,
\end{equation}
and if the space is $\RCD^*(K,N)$ for some finite $N$ it also holds (\cite{Erbar-Kuwada-Sturm13}, \cite{AmbrosioMondinoSavare13}):
\begin{equation}
\label{eq:bochlap}
\Ggamma_2(f)\geq\Big( \frac{(\Delta f)^2}N+K|\d f|^2\Big)\mm\qquad\forall f\in\testi \X.
\end{equation}
Notice that since the Laplacian is in general not the trace of the Hessian, the former does not trivially imply the latter (in connection to this, see \cite{Han14}).

\bigskip

As regards the geometric features of finite-dimensional $\RCD^*(K,N)$ spaces, we recall the {\bf Bishop-Gromov inequality} in the form we shall need (see \cite{Sturm06I}, \cite{Sturm06II}): for any $x \in \supp(\mm)$ and for any $0 < r \leq R < \infty$ it holds
\begin{equation}\label{eq:bishop}
\frac{\mm(B_r(x))}{\mm(B_R(x))} \geq \frac{\int_0^r \sinh(t\sqrt{-K/(N-1)})^{N-1}\d t}{\int_0^R \sinh(t\sqrt{-K/(N-1)})^{N-1}\d t} \qquad \frac{\mathfrak{s}_r(x)}{\mathfrak{s}_R(x)} \geq \bigg(\frac{\sinh(r\sqrt{-K/(N-1)})}{\sinh(R\sqrt{-K/(N-1)})}\bigg)^{N-1}
\end{equation}
(with standard adaptations and caveat if $K \geq 0$) where
\[
\mathfrak{s}_r(x) := \limsup_{\delta \downarrow 0}\frac{1}{\delta}\mm(\overline{B_{r+\delta}(x)} \setminus B_r(x)).
\]
A couple of interesting consequences are the following: $\mm$ is uniformly locally doubling with an explicit expression for the local doubling costant, i.e.\ for all $x \in \X$ and $r > 0$ it holds
\begin{equation}\label{eq:doubling}
\mm(B_{2r}(x)) \leq 2^N\cosh\Big(2\sqrt{\frac{-K}{N-1}}r\Big)^{N-1}\mm(B_r(x));
\end{equation}
and there exists a constant $C > 0$ such that the following volume growth condition is satisfied
\begin{equation}\label{eq:volgrowth}
\mm(B_r(x)) \leq Ce^{Cr}, \qquad \forall x \in \X,\, r > 0.
\end{equation}

\bigskip

We conclude the section recalling the notion of Regular Lagrangian Flow, introduced by Ambrosio-Trevisan in \cite{Ambrosio-Trevisan14} as the generalization to $\RCD$ spaces of the analogous concept existing on $\R^d$ as proposed by Ambrosio in \cite{Ambrosio04}:

\begin{Definition}[Regular Lagrangian Flow]
Given $(v_t)\in L^1([0,1],L^2(T\X))$, the function $F:[0,1]\times \X\to \X$ is  a Regular Lagrangian Flow for $(v_t)$ provided:
\begin{itemize}
\item[i)] $[0,1]\ni t \mapsto F_t(x)$ is continuous for every $x\in \X$
\item[ii)] for every $f\in\testi \X$ and $\mm$-a.e.\ $x$ the map $t\mapsto f(F_t(x))$ belongs to $W^{1,1}([0,1])$ and
\[
\ddt  f(F_t(x))=\d f(v_t)(F_t(x))\qquad {\rm a.e.}\ t\in[0,1].
\]
\item[iii)] it holds
\[
(F_t)_*\mm\leq C\mm\qquad\forall t\in[0,1]
\]
for some constant $C>0$.
\end{itemize}
\end{Definition}

In \cite{Ambrosio-Trevisan14} the authors prove that under suitable assumptions on the $v_t$'s, Regular Lagrangian Flows exist and are unique. We shall use the following formulation of their result (weaker than the one provided in \cite{Ambrosio-Trevisan14}):

\begin{Theorem}\label{thm:RLF}
Let $(\X,\sfd,\mm)$ be an $\RCD(K,\infty)$ space and $(v_t)\in L^1([0,1],L^2(T\X))$ be such that $v_t\in D({\rm div})$ for a.e.\ $t$ and
\[
{\rm div}(v_t) \in L^1([0,1],L^2(\X)) \qquad ({\rm div}(v_t))^- \in L^1([0,1],L^\infty(\X)).
\]
Then there exists a unique, up to $\mm$-a.e.\ equality, Regular Lagrangian Flow $F$ for $(v_t)$.

For such flow, the quantitative bound
\begin{equation}
\label{eq:quantm}
(F_t)_*\mm\leq \exp\Big(\int_0^1\|({\rm div}(v_t))^-\|_{L^\infty(\X)}\,\d t\Big)\mm
\end{equation}
holds for every $t\in[0,1]$ and for $\mm$-a.e.\ $x$ the curve $t\mapsto F_t(x)$ is absolutely continuous and its metric speed ${\rm ms}_t({F_{\cdot}}(x))$ at time $t$ satisfies
\begin{equation}
\label{eq:quants}
{\rm ms}_t({F_{\cdot}}(x))=|v_t|(F_t(x))\qquad {\rm a.e.}\ t\in[0,1].
\end{equation}
\end{Theorem}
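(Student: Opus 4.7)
The plan is to follow the Di\,Perna--Lions--Ambrosio strategy in the form adapted by Ambrosio--Trevisan to the RCD setting, splitting the argument into well-posedness of the continuity equation at the Eulerian level and a superposition/disintegration step at the Lagrangian level.

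\textbf{Step 1 (Uniqueness via renormalization).} I would first show that nonnegative bounded solutions $(\rho_t)$ of the continuity equation
\begin{equation*}
\partial_t \rho_t + \mathrm{div}(\rho_t v_t)=0
\end{equation*}
are uniquely determined by $\rho_0$. The key is the renormalization property: if $(\rho_t)$ is such a solution and $\beta\in C^1(\R)$, then $(\beta(\rho_t))$ solves the same equation with source $(\mathrm{div}(v_t))(\beta(\rho_t)-\rho_t\beta'(\rho_t))$. Following the classical scheme, I regularize by the heat semigroup, setting $\rho_t^\eps:=\h_\eps \rho_t$, and rewrite $\partial_t\rho_t^\eps+\mathrm{div}(\rho_t^\eps v_t)=R_t^\eps$ where $R_t^\eps$ is the commutator $\mathrm{div}(\rho_t^\eps v_t)-\h_\eps(\mathrm{div}(\rho_t v_t))$. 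Then I pass to the limit $\eps\downarrow 0$ and must prove $R_t^\eps\to 0$ in $L^1_{t,x}$. This commutator estimate is the crux of the argument: in the RCD setting it is derived using the Bakry--\'Emery gradient bound \eqref{eq:be} and the Hessian-$L^2$ control \eqref{eq:heslap} (equivalently, Bochner's inequality), which replace the convolution-based cancellations available in $\R^d$. Once renormalization is available, uniqueness for the continuity equation with bounded initial datum follows by the standard contradiction argument applied to $\beta(z)=z^2$.

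\textbf{Step 2 (Existence via Lagrangian representation).} With well-posedness of the continuity equation in hand, I would construct the flow by exhibiting a Lagrangian lift. Start from $\rho_0\equiv 1$ and consider any bounded weak solution $(\rho_t)$ of the continuity equation; the superposition principle (available on RCD spaces via the results of Ambrosio--Gigli--Savar\'e and Ambrosio--Trevisan) produces a $\ppi\in\prob{C([0,1],\X)}$ with $(\e_t)_*\ppi=\rho_t\mm$ and such that for $\ppi$-a.e.\ $\gamma$ one has $\gamma\in AC([0,1],\X)$ and $\tfrac{\mathrm d}{\mathrm dt}f(\gamma_t)=\mathrm d f(v_t)(\gamma_t)$ for every $f\in\mathrm{Test}^\infty(\X)$. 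I then disintegrate $\ppi$ with respect to $\e_0$: the atoms $\ppi_x$ are probability measures on $C([0,1],\X)$ representing continuity-equation solutions starting from $\delta_x$ (in a suitable weak sense). Uniqueness from Step~1 forces $\ppi_x=\delta_{F_\cdot(x)}$ for $\mm$-a.e.\ $x$, and the selection $F_t(x):=\gamma_t$ (a.e.) defines the required RLF. Uniqueness of $F$ up to $\mm$-a.e.\ equality is immediate from the same disintegration.

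\textbf{Step 3 (Quantitative compressibility and metric speed).} For the bound \eqref{eq:quantm} I argue at the Eulerian level: the density $\tilde\rho_t$ of $(F_t)_*\mm$ with respect to $\mm$ satisfies the continuity equation with $\tilde\rho_0=1$, and a formal Gronwall computation on $\log\tilde\rho_t$ (made rigorous by truncation $\beta_n(z)=\log(z\vee n^{-1})\wedge n$ and renormalization from Step~1) yields $\log\tilde\rho_t\leq \int_0^t \|(\mathrm{div}(v_s))^-\|_{L^\infty}\,\mathrm ds$, i.e.\ exactly \eqref{eq:quantm}. For the metric speed \eqref{eq:quants}, the upper bound $\mathrm{ms}_t(F_\cdot(x))\leq |v_t|(F_t(x))$ follows by testing with $f\in\test\X$, using $|\mathrm d f(v_t)|\leq |\mathrm d f|\,|v_t|$ and the Sobolev-to-Lipschitz property \eqref{eq:sobtolip}, then taking a countable dense family and a pointwise supremum; the lower bound follows from the same duality applied to $f=\sfd(\cdot,y)$ for $y$ in a countable dense set, combined with $\mathrm{lip}(\sfd(\cdot,y))=|\mathrm d\sfd(\cdot,y)|$ a.e.

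\textbf{Main obstacle.} The genuinely hard step is the commutator estimate in Step~1. Everything else is either classical (Gronwall, disintegration, Sobolev-to-Lipschitz duality) or follows by formal manipulations made rigorous by standard truncation. The commutator bound, by contrast, is non-trivial even in $\R^d$ and requires in the RCD framework a careful use of the contraction property of $\h_\eps$ on $L^2(T^*\X)$ together with $L^2$-control of the Hessian, both of which ultimately encode the lower Ricci bound. This is precisely the technical heart of the Ambrosio--Trevisan existence and uniqueness theory that the statement quotes.
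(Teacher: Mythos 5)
The paper does not actually prove Theorem~\ref{thm:RLF}: it is an appendix reminder attributed explicitly to \cite{Ambrosio-Trevisan14} for existence, uniqueness and the compressibility estimate \eqref{eq:quantm}, with a further remark that the metric-speed identity \eqref{eq:quants} is ``not explicitly stated'' there and can be obtained following the arguments of \cite{Gigli14}. Your proposal is a correct high-level reconstruction of precisely that strategy: renormalization driven by a heat-semigroup commutator estimate (where the Bakry--\'Emery and Hessian/Bochner bounds replace the Euclidean convolution cancellations) for Eulerian uniqueness; superposition and disintegration for Lagrangian existence and uniqueness; a truncated Gronwall argument on the Jacobian for \eqref{eq:quantm}; and a Sobolev-to-Lipschitz duality argument for \eqref{eq:quants}. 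You also correctly isolate the commutator estimate as the genuine technical core. Two routine caveats your sketch glosses over, both handled in the cited references: on infinite-mass $\RCD(K,\infty)$ spaces the initial density $\rho_0\equiv 1$ is not a probability density, so the superposition principle must be applied after localizing to sets of finite measure and then patching; and in the metric-speed step the functions $\sfd(\cdot,y)$ are not in $\testi\X$, so they must first be truncated and mollified (e.g.\ via the cut-offs of Lemma~\ref{lem:cutoff}) before taking the countable pointwise supremum.
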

To be precise,  \eqref{eq:quants} is not explicitly stated in \cite{Ambrosio-Trevisan14}; its proof is anyway not hard and can be obtained, for instance, following the arguments in \cite{Gigli14}. 

\subsection{Optimal transport on \texorpdfstring{$\RCD$}{RCD} spaces}

It is well known that on $\R^d$, curves of measures which are $W_2$-absolutely continuous are in correspondence with appropriate solutions of the {\bf continuity equation} (\cite{AmbrosioGigliSavare08}). It has been proved in \cite{GigliHan13} that the same connection holds on arbitrary metric measure spaces $(\X,\sfd,\mm)$, provided the measures are controlled by $C\mm$ for some $C>0$, the formulation of such result which we shall need is:

\begin{Theorem}[Continuity equation and $W_2$-AC\ curves]\label{thm:GH}
Let $(\X,\sfd,\mm)$ be infinitesimally Hilbertian, $(\mu_t)\subset \prob \X$ be weakly continuous and $t \mapsto X_t \in L^0(T\X)$ be a family of vector fields, possibly defined only for a.e.\ $t\in[0,1]$. Assume that the map $t \mapsto \int |X_t|^2\d\mu_t$ is Borel and:
\begin{subequations}
\begin{align}
\label{eq:bih}
\mu_t&\leq C\mm\qquad\text{ $\forall t\in[0,1]$ for some $C>0$}\\
\label{eq:bkh}
\int_0^1\int |X_t|^2\,\d\mu_t\,\d t&<\infty
\end{align}
\end{subequations}
and that  the continuity equation
\[
\ddt\mu_t+{\rm div}(X_t\mu_t)=0,
\]
is satisfied in the following sense: for any $f\in W^{1,2}(\X)$ the map $[0,1]\ni t\mapsto\int f\,\d\mu_t$ is absolutely continuous and it holds
\[
\ddt\int f\,\d\mu_t=\int \d f(X_t)\,\d\mu_t\qquad {\rm a.e.}\ t.
\]
Then $(\mu_t)\in AC([0,1],(\prob \X,W_2))$ and
\[
|\dot\mu_t|^2=\int|X_t|^2\,\d\mu_t\qquad{\rm a.e.}\ t\in[0,1].
\]
\end{Theorem}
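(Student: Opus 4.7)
The plan is to combine Kantorovich--Rockafellar duality with the Hamilton--Jacobi subsolution property of the Hopf--Lax semigroup to obtain an upper bound on $W_2(\mu_s,\mu_t)$ in terms of the $L^2(\mu_r)$-norm of $X_r$; from this one deduces both $W_2$-absolute continuity and, via Lebesgue differentiation, the identity $|\dot\mu_t|^2=\int|X_t|^2\,\d\mu_t$ for a.e.\ $t$, with the matching lower bound coming from testing the continuity equation against Kantorovich potentials associated with the optimal transport from $\mu_t$ to $\mu_{t+h}$.

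\textbf{Upper bound.} Fix $s<t$ in $[0,1]$ and a bounded Lipschitz function $\varphi$ with bounded support; note $\varphi\in W^{1,2}(\X)$ since $\mm$ is finite on bounded sets. Define
\[
\varphi_r:=Q_{r-s}\varphi,\qquad Q_\tau f(x):=\inf_{y\in\X}\Big(f(y)+\tfrac{1}{2\tau}\sfd^2(x,y)\Big),
\]
for $r\in(s,t]$, with $\varphi_s:=\varphi$. I recall from \cite{AmbrosioGigliSavare11} that each $\varphi_r$ is Lipschitz, that the Hamilton--Jacobi subsolution inequality $\partial_r\varphi_r+\tfrac12|\nabla\varphi_r|^2\le 0$ holds $\mm$-a.e.\ for a.e.\ $r$, and that the family has locally uniform Lipschitz constants. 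The central claim is that
\[
r\ \mapsto\ F(r):=\int\varphi_r\,\d\mu_r
\]
is absolutely continuous on $[s,t]$ with
\[
F'(r)\le \int\langle\nabla\varphi_r,X_r\rangle\,\d\mu_r-\tfrac12\int|\nabla\varphi_r|^2\,\d\mu_r\le \tfrac12\int|X_r|^2\,\d\mu_r,
\]
the first estimate combining the continuity equation applied to the frozen $\varphi_r\in W^{1,2}(\X)$ with the Hopf--Lax subsolution, the second being Young's inequality. Integrating and taking the supremum over admissible $\varphi$ yields, via Kantorovich--Rockafellar duality for the cost $\tfrac{1}{2(t-s)}\sfd^2$,
\[
\frac{W_2^2(\mu_s,\mu_t)}{2(t-s)}=\sup_\varphi\Big(\int Q_{t-s}\varphi\,\d\mu_t-\int\varphi\,\d\mu_s\Big)\le\tfrac12\int_s^t\!\!\int|X_r|^2\,\d\mu_r\,\d r,
\]
so $W_2^2(\mu_s,\mu_t)\le (t-s)\int_s^t\!\!\int|X_r|^2\,\d\mu_r\,\d r$. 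This gives $W_2$-absolute continuity and, by Lebesgue differentiation, $|\dot\mu_t|^2\le\int|X_t|^2\,\d\mu_t$ for a.e.\ $t$.

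\textbf{Lower bound and main difficulty.} For the reverse inequality, I would take (for $h>0$ small) an optimal Kantorovich potential $\phi^h_t$ for the transport from $\mu_t$ to $\mu_{t+h}$, normalized so that $\|\nabla\phi^h_t\|_{L^2(\mu_t)}^2\to h^2|\dot\mu_t|^2$ as $h\downarrow 0$. Testing the continuity equation against $\phi^h_t$ gives
\[
\int\phi^h_t\,\d\mu_{t+h}-\int\phi^h_t\,\d\mu_t=\int_t^{t+h}\!\!\int\langle\nabla\phi^h_t,X_r\rangle\,\d\mu_r\,\d r,
\]
and Brenier-type identification of the left-hand side with $\tfrac12\bigl(\|\nabla\phi^h_t\|_{L^2(\mu_t)}^2+W_2^2(\mu_t,\mu_{t+h})\bigr)+o(h)$, combined with Cauchy--Schwarz on the right and the fact that in the infinitesimally Hilbertian setting the optimal direction for $(\mu_t)$ is attained by gradients of Lipschitz functions in $L^2(\mu_t)$, completes the proof. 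The main technical obstacle is the justification of the absolute continuity and the pointwise expression for $F'(r)$ above: because the test function $\varphi_r$ is time-dependent, the continuity equation cannot be applied directly. I would handle this by splitting the difference quotient
\[
\tfrac{1}{\eta}\bigl(F(r+\eta)-F(r)\bigr)=\tfrac{1}{\eta}\int(\varphi_r\circ)\,\d(\mu_{r+\eta}-\mu_r)+\tfrac{1}{\eta}\int(\varphi_{r+\eta}-\varphi_r)\,\d\mu_{r+\eta},
\]
controlling the first piece by the continuity equation and the second by the Hopf--Lax inequality, and passing $\eta\downarrow 0$ by dominated convergence using the locally uniform Lipschitz bound on $(\varphi_r)$ together with the hypothesis $\mu_r\le C\mm$.
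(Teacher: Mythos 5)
The paper does not prove Theorem \ref{thm:GH} itself: it is quoted from \cite{GigliHan13}, so there is no in-paper argument to compare against; I will assess your argument directly.

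Your upper bound is the standard Kuwada-type argument and the plan is sound: using the Hopf--Lax evolution $\varphi_r=Q_{r-s}\varphi$, the subsolution property $\partial_r\varphi_r+\tfrac12|\nabla\varphi_r|^2\le 0$ (which follows from \eqref{eq:55} together with $|\d f|\le\lip f$ $\mm$-a.e.), the split of the difference quotient of $F(r)=\int\varphi_r\,\d\mu_r$ with dominated convergence via $\mu_r\le C\mm$, and Kantorovich--Rockafellar duality does yield $W_2$-absolute continuity and $|\dot\mu_t|^2\le\int|X_t|^2\,\d\mu_t$ for a.e.\ $t$. This half is essentially the proof in \cite{GigliHan13}.

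The lower bound, however, is unsalvageable, because the equality in the statement is in fact false for a general $X_t\in L^0(T\X)$ satisfying the hypotheses. A counterexample: take $\X=\R^2$, $\mm=\mathcal L^2$, $\mu_t\equiv|B|^{-1}\chi_B\mathcal L^2$ with $B$ the unit ball and $X_t(x,y)\equiv(-y,x)$. Since $\div X=0$ and $X\cdot\nu=0$ on $\partial B$, we have $\int\d f(X)\,\d\mu=0$ for every $f\in W^{1,2}(\R^2)$, so the continuity equation holds, $\mu_t\le |B|^{-1}\mm$, and $\iint_0^1|X_t|^2\,\d\mu_t\,\d t=\tfrac12<\infty$; yet $|\dot\mu_t|=0\ne\tfrac12=\int|X_t|^2\,\d\mu_t$. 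What \cite{GigliHan13} actually proves for general $X_t$ is the inequality $|\dot\mu_t|^2\le\int|X_t|^2\,\d\mu_t$; equality requires the additional structure that $X_t$ be the optimal (gradient) velocity, which is the only case the paper ever invokes it in ($X_t=\nabla\vartheta^\eps_t$ in Propositions \ref{lem:61} and \ref{pro:9}, where in fact the inequality alone suffices). Independently of this structural obstruction, the ``Brenier-type identification'' you propose, namely $\int\phi^h_t\,\d\mu_{t+h}-\int\phi^h_t\,\d\mu_t=\tfrac12\bigl(\|\nabla\phi^h_t\|^2_{L^2(\mu_t)}+W_2^2(\mu_t,\mu_{t+h})\bigr)+o(h)$, is incorrect even in sign: differentiating $\int\phi^h_t\,\d\nu_s$ along the connecting $W_2$-geodesic $(\nu_s)$ at the left endpoint yields $-\|\nabla\phi^h_t\|^2_{L^2(\mu_t)}$, a non-positive quantity.
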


Recall that given $f : \X \to \R$ the  upper and lower slopes $|D^+f|,|D^-f| : \X \to [0,\infty]$ are defined as 0 on isolated points and otherwise
\[
|D^+f|(x):=\lims_{y\to x}\frac{(f(y)-f(x))^+}{\sfd(x,y)}\qquad\qquad|D^-f|(x):=\lims_{y\to x}\frac{(f(y)-f(x))^-}{\sfd(x,y)}.
\]
Similarly, the {\bf local Lipschitz constant} $\lip(f):\X\to[0,\infty]$ is defined as 0 on isolated points and otherwise as
\[
\lip f(x) :=\max\{|D^+f|(x),|D^-f|(x)\}= \limsup_{y \to x}\frac{|f(x) - f(y)|}{d(x,y)}.
\]
We also recall that the $c$-transform $\varphi^c:X\to\R\cup\{-\infty\}$ of  a function $\varphi:\X\to\R\cup\{-\infty\}$ is defined as
\[
\varphi^c(x):=\inf_{y\in X}\frac{\sfd^2(x,y)}{2}-\varphi(y)
\]
and that $\varphi$ is said to be {\bf $c$-concave} provided $\varphi=\psi^c$ for some $\psi$. Also, given $\mu_0,\mu_1\in\probt X$, a function $\varphi:\X\to\R\cup\{-\infty\}$ is called {\bf Kantorovich potential} from $\mu_0$ to $\mu_1$ provided it is $c$-concave and
\[
\int\varphi\,\d\mu_0+\int\varphi^c\,\d\mu_1=\frac12W_2^2(\mu_0,\mu_1).
\]
It is worth recalling that on general complete and separable metric spaces $(X,\sfd)$ we have that for $\mu_0,\mu_1\in\prob \X$ with bounded support there exists a Kantorovich potential from $\mu_0$ to $\mu_1$ which is Lipschitz and bounded.

This can be obtained starting from an arbitrary Kantorovich potential $\psi$ and then defining
\[
\varphi(x):=\min\Big\{C,\inf_{y\in X}\frac{\sfd^2(x,y)}2-\psi^c(y)\Big\}
\]
for $C$ sufficiently big.

With this said, we recall the following version of   Brenier-McCann theorem on $\RCD$ spaces ($(i)$ comes from \cite{Gigli12a} and \cite{RajalaSturm12}, $(ii)$ from \cite{AmbrosioGigliSavare11-2} and \cite{Gigli12}, $(iii)$ from \cite{AmbrosioGigliSavare11} and  $(iv)$ from \cite{GigliRajalaSturm13}).

\begin{Theorem}\label{thm:bm}
Let $(\X,\sfd,\mm)$ be an $\RCD(K,\infty)$ space and $\mu_0,\mu_1 \in \probt\X$ with bounded support and such that $\mu_0,\mu_1 \leq C\mm$ for some $C>0$. Also, let $\varphi$ be a Kantorovich potential for the couple $(\mu_0,\mu_1)$ which is locally Lipschitz on a neighbourhood of $\supp(\mu_0)$. Then:
\begin{itemize}
\item[i)] There exists a unique geodesic $(\mu_t)$ from $\mu_0$ to $\mu_1$, it satifies
\begin{equation}
\label{eq:linftyrcd}
\mu_t\leq C'\mm\qquad\forall t\in[0,1]\text{ for some }C'>0
\end{equation}
and there is a unique \emph{lifting} $\ppi$ of it, i.e.\ a unique measure $\ppi\in\prob{C([0,1],X)}$ such that $(\e_t)_*\ppi=\mu_t$ for every $t\in[0,1]$ and $\iint_0^1|\dot\gamma_t|^2\,\d t\,\d\ppi(\gamma)=W_2^2(\mu_0,\mu_1)$.
\item[ii)] For every $f\in W^{1,2}(\X)$ the map $t\mapsto \int f\,\d\mu_t$ is differentiable at $t=0$ and
\[
\ddt\int f\,\d\mu_t\restr{t=0}=-\int \d f(\nabla\varphi)\,\d\mu_0.
\]
\item[iii)] The identity 
\[
|\d\varphi|(\gamma_0)=|D^+\varphi|(\gamma_0)=\sfd(\gamma_0,\gamma_1)
\]
holds for $\ppi$-a.e.\ $\gamma$.
\item[iv)] If the space is $\RCD^*(K,N)$ for some $N<\infty$, then $(i),(ii),(iii)$ holds with $\mu_1$ only assumed to be with bounded support, with the caveat that \eqref{eq:linftyrcd} holds in the form: for every $\delta\in(0,1/2)$ there is $C_\delta>0$ so that  $\mu_t\leq C'_\delta\mm$ for every $t\in[0,1-\delta]$.
\end{itemize}
\end{Theorem}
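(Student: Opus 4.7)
The statement is a compendium of known facts about optimal transport on $\RCD$ spaces, so my plan is to sketch how to assemble the four parts by invoking (and combining) the cited results, rather than proving each from scratch.

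\textbf{Part (i).} First, the existence of a $W_2$-geodesic $(\mu_t)$ follows from general metric theory: $\probt\X$ is a geodesic space whenever $(\X,\sfd)$ is geodesic, and the Wasserstein geodesics lift to probability measures on $C([0,1],\X)$ concentrated on constant speed geodesics. The bound $\mu_t\leq C'\mm$ is exactly the $L^\infty$-to-$L^\infty$ regularization of Rajala--Sturm for $\RCD(K,\infty)$ spaces, which is applicable because $\mu_0,\mu_1\in L^\infty(\mm)$. For uniqueness of the geodesic, I would combine this bound with the strict $K$-convexity of the relative entropy $H(\cdot\,|\,\mm)$ along $W_2$-geodesics: if two distinct geodesics existed between $\mu_0,\mu_1$, the midpoint argument combined with strict convexity of $z\mapsto z\log z$ on the common $L^\infty$ bound would yield a contradiction. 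Uniqueness of the lifting $\ppi$ then follows because $\RCD$ spaces are essentially non-branching (a consequence of $\CD(K,\infty)$ plus the infinitesimally Hilbertian condition): on essentially non-branching spaces, between $\mu_0,\mu_1\ll\mm$ the optimal dynamical plan is unique.

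\textbf{Part (ii).} The plan is to reduce this to the first-order differentiation formula (Theorem \ref{thm:1} in the introduction) applied to the unique lifting $\ppi$. Since $\mu_0,\mu_1$ have bounded support and $\mu_t\leq C'\mm$, the lifting $\ppi$ is an optimal geodesic test plan with bounded support. Moreover, by definition $-t\varphi$ is a Kantorovich potential from $\mu_0$ to $\mu_t$ (by standard rescaling of $c$-concave functions along geodesics), so in Theorem \ref{thm:1} applied at $s=0$ we may take $\phi_0=-\varphi$. Therefore
\[
\ddt\Big(\int f\,\d\mu_t\Big)\Big|_{t=0}=\ddt\Big(\int f\circ\e_t\,\d\ppi\Big)\Big|_{t=0}=\int\langle\nabla f,\nabla(-\varphi)\rangle\circ\e_0\,\d\ppi=-\int\d f(\nabla\varphi)\,\d\mu_0.
\]
The local Lipschitzianity of $\varphi$ on a neighbourhood of $\supp(\mu_0)$ ensures $\nabla\varphi$ is well-defined $\mu_0$-a.e.\ and that the Kantorovich potential coincides with a function in $S^2_{loc}$ where integration is performed, so the right-hand side makes unambiguous sense.

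\textbf{Part (iii).} For $\ppi$-a.e.\ $\gamma$ we have that $(\gamma_0,\gamma_1)$ is in the $c$-superdifferential of $\varphi$, i.e.\ $\varphi(\gamma_0)+\varphi^c(\gamma_1)=\tfrac12\sfd^2(\gamma_0,\gamma_1)$, while $\varphi(x)+\varphi^c(\gamma_1)\leq\tfrac12\sfd^2(x,\gamma_1)$ for all $x$. Subtracting and applying the reverse triangle inequality gives
\[
\varphi(\gamma_0)-\varphi(x)\geq \tfrac12\sfd^2(\gamma_0,\gamma_1)-\tfrac12\sfd^2(x,\gamma_1)\geq -\sfd(x,\gamma_0)\sfd(\gamma_0,\gamma_1)+o(\sfd(x,\gamma_0)),
\]
so $|D^+\varphi|(\gamma_0)\geq\sfd(\gamma_0,\gamma_1)$. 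The converse inequality $|D^+\varphi|(\gamma_0)\leq\sfd(\gamma_0,\gamma_1)$ is the nontrivial part: here one uses the metric Brenier theorem, whose proof proceeds by integrating against $\ppi$, exploiting the identity $\int|\dot\gamma_0|^2\,\d\ppi=W_2^2(\mu_0,\mu_1)=\int\sfd^2(\gamma_0,\gamma_1)\,\d\ppi$ that forces metric speed equal to endpoint distance, and the bound $|D^+\varphi|\circ\e_0\geq |\dot\gamma_0|$ coming from $\varphi$'s Lipschitzianity. Finally, $|\d\varphi|=|D^+\varphi|$ $\mu_0$-a.e.\ follows from the locally doubling structure of $\mm$ together with the Lipschitz continuity of $\varphi$ near $\supp(\mu_0)$ (this is the point where the $\RCD$ assumption enters, via Proposition 2.7 of \cite{AmbrosioGigliSavare11} as used in the proof of Lemma \ref{lem:12}).

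\textbf{Part (iv).} In the $\RCD^*(K,N)$ setting with $N<\infty$, one upgrades the above by invoking the stronger version of the Rajala--Sturm density bound obtained via Bishop--Gromov: for compactly supported $\mu_0\leq C\mm$ and any $\mu_1\in\probt\X$ with bounded support, the interior densities $\rho_t=\d\mu_t/\d\mm$ satisfy $\rho_t\leq C'_\delta$ uniformly on $[0,1-\delta]$, with the degeneration at $t=1$ absorbed by the doubling constant. With this replacement for \eqref{eq:linftyrcd}, the proofs of (i)--(iii) carry over verbatim on each subinterval $[0,1-\delta]$; by a diagonal argument across $\delta\downarrow0$ one recovers the full conclusions on $[0,1)$, and uniqueness of the geodesic and lifting is preserved. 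The main obstacle across all four parts is the uniqueness statements in (i): essential non-branching of $\RCD$ spaces is nontrivial and is the technical engine behind both the uniqueness of $(\mu_t)$ and the uniqueness of $\ppi$, and one must check carefully that it remains available under the relaxed assumption of (iv).
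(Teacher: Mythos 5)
The paper does not prove Theorem~\ref{thm:bm}; it is stated in the appendix as a compendium of results cited from \cite{Gigli12a}, \cite{RajalaSturm12}, \cite{AmbrosioGigliSavare11-2}, \cite{Gigli12}, \cite{AmbrosioGigliSavare11}, \cite{GigliRajalaSturm13}. Your overall outline matches the standard architecture of those references: Rajala--Sturm for the density bound, strict convexity of entropy and essential non-branching for uniqueness, metric Brenier for (ii)--(iii), and the Bishop--Gromov/MCP improvement in the finite-dimensional case.

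There is, however, a genuine error in your Part~(iii): you have the two inequalities reversed. The $c$-superdifferential relation
\[
\varphi(\gamma_0)+\varphi^c(\gamma_1)=\tfrac12\sfd^2(\gamma_0,\gamma_1),\qquad \varphi(x)+\varphi^c(\gamma_1)\leq\tfrac12\sfd^2(x,\gamma_1),
\]
after subtraction, yields an \emph{upper} bound
\[
\varphi(x)-\varphi(\gamma_0)\leq\tfrac12\bigl(\sfd^2(x,\gamma_1)-\sfd^2(\gamma_0,\gamma_1)\bigr)\leq \sfd(x,\gamma_0)\sfd(\gamma_0,\gamma_1)+o\bigl(\sfd(x,\gamma_0)\bigr),
\]
which controls the positive part of the difference quotient and therefore gives $|D^+\varphi|(\gamma_0)\leq\sfd(\gamma_0,\gamma_1)$, not $\geq$ as you write. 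The nontrivial direction of metric Brenier is the reverse one, $|D^+\varphi|(\gamma_0)\geq\sfd(\gamma_0,\gamma_1)$ for $\ppi$-a.e.\ $\gamma$, which is exactly where the integral identity $\int|D^+\varphi|^2\circ\e_0\,\d\ppi \geq W_2^2(\mu_0,\mu_1)=\int\sfd^2(\gamma_0,\gamma_1)\,\d\ppi$ and a pigeonhole argument are needed. You correctly identified which half is hard and which is routine, but attached them to the wrong inequalities; as written, the two halves do not combine to give the claimed identity. One further remark on Part~(ii): you propose to deduce it from Theorem~\ref{thm:1}, which is stronger, but note that the proof of Theorem~\ref{thm:1} in \cite{Gigli13} itself rests on the metric Brenier theorem (i.e.\ on Part~(iii) of the present statement), so this route is circular if taken literally within the chain of references; historically (ii) is proved directly via the upper-gradient form of metric Brenier as in \cite{AmbrosioGigliSavare11} and then refined to $C^1$ regularity in \cite{Gigli13}.
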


A property related to the above is the fact that although the Kantorovich potentials are not uniquely determined by the initial and final measures, their gradients are. This is expressed by the following result, which also says that if we sit in the intermediate point of a geodesic and move to one extreme or the other, then the two corresponding velocities are one the opposite of the other (see Lemma 5.8 and Lemma 5.9 in \cite{Gigli13} for the proof):

\begin{Lemma}\label{lem:gradpot}
Let $(\X,\sfd,\mm)$ be an $\RCD(K,\infty)$ space with $K \in \R$ and $(\mu_t) \subset \probt\X$ a $W_2$-geodesic such that $\mu_t \leq C\mm$ for every $t \in [0,1]$ for some $C>0$. For $t \in [0,1]$ let $\phi_t,\phi_t' : \X \to \R$ be locally Lipschitz functions such that for some $s,s'\neq t$ the functions $-(s-t)\phi_t$ and $-(s'-t)\phi_t'$ are Kantorovich potentials from $\mu_t$ to $\mu_s$ and from $\mu_t$ to $\mu_{s'}$ respectively.

Then
\[
\nabla\phi_t = \nabla\phi_{t'} \qquad \mu_t\ae
\]
\end{Lemma}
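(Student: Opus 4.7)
My plan is to identify both $\nabla\phi_t$ and $\nabla\phi_t'$ with the common `initial velocity' of the geodesic at time $t$ via Theorem \ref{thm:bm}(ii), and then to promote the resulting weak identity to $\mu_t$-a.e.\ equality. First I would apply Theorem \ref{thm:bm}(ii) to the $W_2$-geodesic $r\in[0,1]\mapsto\mu_{t+r(s-t)}$ connecting $\mu_t$ to $\mu_s$, which has $\varphi:=-(s-t)\phi_t$ as Kantorovich potential by hypothesis and is locally Lipschitz near $\supp(\mu_t)$: this gives, for every $f\in W^{1,2}(\X)$,
\[
\frac{\d}{\d r}\int f\,\d\mu_{t+r(s-t)}\Big|_{r=0^+}=-\int\la\nabla f,\nabla\varphi\ra\,\d\mu_t=(s-t)\int\la\nabla f,\nabla\phi_t\ra\,\d\mu_t.
\]
By the chain rule the left-hand side equals $(s-t)$ times the one-sided derivative of $\tau\mapsto\int f\,\d\mu_\tau$ at $\tau=t$ taken from the side of $s$, so dividing by $s-t\neq 0$ identifies this one-sided $\tau$-derivative with $\int\la\nabla f,\nabla\phi_t\ra\,\d\mu_t$; the analogous computation with $(s',\phi_t')$ identifies the one-sided derivative from the side of $s'$ with $\int\la\nabla f,\nabla\phi_t'\ra\,\d\mu_t$.

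When $s$ and $s'$ lie on the same side of $t$ the two one-sided derivatives coincide, giving
\[
\int\la\nabla f,\nabla(\phi_t-\phi_t')\ra\,\d\mu_t=0\qquad\forall f\in W^{1,2}(\X).\quad(\star)
\]
When $s,s'$ are on opposite sides of $t$, the two formulas a priori produce only a right- and a left-derivative at $t$, and to recover $(\star)$ I would further establish two-sided differentiability of $\tau\mapsto\int f\,\d\mu_\tau$ at the interior point $t$. For this I would work through the unique lifting $\ppi$ of $(\mu_\tau)$ given by Theorem \ref{thm:bm}(i): the hypothesis $\mu_\tau\leq C\mm$ makes $\ppi$ a test plan, and for $\ppi$-a.e.\ $\gamma$ the curve $\gamma$ is a constant-speed metric geodesic with two-sided velocity at every interior time. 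Writing $\int f\,\d\mu_\tau=\int f\circ\e_\tau\,\d\ppi$ and invoking Lemma \ref{lem:dermiste} transfers the $\ppi$-a.e.\ pointwise differentiability of $\tau\mapsto f(\gamma_\tau)$ at $t$ into differentiability of $\tau\mapsto\int f\,\d\mu_\tau$ at $t$, so the one-sided derivatives must agree and $(\star)$ follows.

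Finally, to pass from $(\star)$ to the $\mu_t$-a.e.\ identification of the gradients, I would test against $f:=\nchi_R(\phi_t-\phi_t')$ with $\nchi_R$ the good cut-off from Lemma \ref{lem:cutoff} chosen so that $\nchi_R\equiv 1$ on a large ball: local Lipschitzness of $\phi_t,\phi_t'$ makes $f$ bounded Lipschitz with bounded support and hence an element of $W^{1,2}(\X)$, and the Leibniz rule turns $(\star)$ into
\[
\int\nchi_R|\nabla(\phi_t-\phi_t')|^2\,\d\mu_t=-\int(\phi_t-\phi_t')\la\nabla(\phi_t-\phi_t'),\nabla\nchi_R\ra\,\d\mu_t.
\]
In the applications the lemma is used when $\mu_0,\mu_1$ have bounded supports, so $\supp(\mu_t)$ is bounded and choosing $R$ large enough so that $\nchi_R\equiv 1$ there makes the right-hand side vanish identically; monotone convergence on the left then yields $\int|\nabla(\phi_t-\phi_t')|^2\,\d\mu_t=0$, which is the claim.

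The main obstacle I anticipate is the opposite-sides case in the second paragraph: Theorem \ref{thm:bm}(ii) by itself delivers only one-sided derivatives from a chosen direction, and reconciling a right- with a left-derivative at the specific interior point $t$ forces one to pass through the metric-geodesic structure of the lifting $\ppi$. This is morally the content of Lemma~5.8 of \cite{Gigli13} cited right after the statement.
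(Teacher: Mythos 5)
The paper itself does not prove this lemma: it is stated with a pointer to Lemmas~5.8 and 5.9 of \cite{Gigli13}, so your argument must stand on its own. Your first step -- reading off the one-sided derivative of $g(\tau):=\int f\,\d\mu_\tau$ at $\tau=t$ from Theorem~\ref{thm:bm}(ii) applied to the reparametrised geodesic $r\mapsto\mu_{t+r(s-t)}$ -- is the right idea, and the same-side case does follow as you say, since the right (resp.\ left) derivative of $g$ at $t$ is a single number.

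The genuine gap is in the opposite-sides case. Two-sided differentiability of $g$ at the \emph{specific} interior time $t$ is not delivered by Lemma~\ref{lem:dermiste}: that lemma only gives absolute continuity of $g$ together with the a.e.\ bound~\eqref{eq:bder1}, so the exceptional null set of non-differentiability is uncontrolled and may contain the fixed $t$. Likewise, your intermediate claim that for $\ppi$-a.e.\ $\gamma$ the map $\tau\mapsto f(\gamma_\tau)$ is differentiable at the specific $t$ is not a consequence of $\gamma$ being a constant-speed metric geodesic when $f$ is merely Sobolev: the Sobolev-along-test-plans calculus yields membership of $\tau\mapsto f(\gamma_\tau)$ in $W^{1,1}(0,1)$ for $\ppi$-a.e.\ $\gamma$, with a $\gamma$-dependent null set of non-differentiability. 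Note that your argument does produce both one-sided derivatives of $g$ at $t$, and without a further mechanism there is nothing to rule out a kink, as the absolutely continuous function $\tau\mapsto|\tau-t|$ shows. The mechanism that actually closes the gap is not differentiability of $g$ but the forward/backward potential relation already recalled in the paper (Remark~7.37 in \cite{Villani09}, cf.~\eqref{eq:63}): taking a single Kantorovich potential from $\mu_{s'}$ to $\mu_s$ (with $s'<t<s$) and looking at its forward and backward Hopf--Lax evolutions $\eta^{\rm fwd}_t,\eta^{\rm bwd}_t$, one has $\eta^{\rm fwd}_t+\eta^{\rm bwd}_t\le0$ everywhere with equality on $\supp(\mu_t)$, whence by locality of the differential $\d\eta^{\rm fwd}_t=-\d\eta^{\rm bwd}_t$ $\mu_t$-a.e.; your same-side argument then compares an arbitrary $\phi_t$ (resp.\ $\phi_t'$) with the corresponding Hopf--Lax potential. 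Two smaller points: Theorem~\ref{thm:bm}(ii), on which the whole proof rests, is stated for measures with bounded supports, which Lemma~\ref{lem:gradpot} does not assume; and your final cut-off step passing from $(\star)$ to $\mu_t$-a.e.\ equality again needs $\supp(\mu_t)$ bounded, or at least enough decay to kill $\int(\phi_t-\phi_t')\la\nabla(\phi_t-\phi_t'),\nabla\nchi_R\ra\,\d\mu_t$ as $R\to\infty$, which is not automatic since $\phi_t,\phi_t'$ may grow quadratically while $\mu_t$ is only known to have finite second moment.
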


On $\RCD$ spaces, $W_2$-geodesics  made of measures with bounded density also have the weak continuity property of the densities expressed by the following lemma. The proof  follows by a simple argument involving Young's measures and the continuity of the entropy along a geodesic (see Corollary 5.7 in \cite{Gigli13}):

\begin{Lemma}\label{lem:8}
Let $(\X,\sfd,\mm)$ be an $\RCD(K,\infty)$ space with $K \in \mathbb{R}$ and $(\mu_t)\subset \probt X$ a $W_2$-geodesic such that $\mu_t\leq C\mm$ for every $t\in[0,1]$ for some $C>0$. Let $\rho_t$ be the density of $\mu_t$.

Then for any $t \in [0,1]$ and any sequence $(t_n)_{n \in \N} \subset [0,1]$ converging to $t$ there exists a subsequence $(t_{n_k})_{k \in \N}$ such that
\[
\rho_{t_{n_k}} \to \rho_t, \quad \mm\ae
\]
as $k \to \infty$.
\end{Lemma}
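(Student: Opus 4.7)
The plan is to lift the problem to the reference probability measure $\tilde\mm:=z^{-1}e^{-W}\mm$, with $W:=\sfd^2(\cdot,\bar x)$ for some fixed $\bar x\in\X$ and $z:=\int e^{-W}\,\d\mm<\infty$ by the volume growth \eqref{eq:volgrowth}. Writing $\mu_{t_n}=\tilde\rho_{t_n}\tilde\mm$ with $\tilde\rho_{t_n}:=z\,e^{W}\rho_{t_n}$, the statement reduces to $\tilde\mm$-a.e.\ convergence of $\tilde\rho_{t_{n_k}}$ to $\tilde\rho_t$ along a subsequence, since $\tilde\mm$ and $\mm$ share the same null sets.

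First I would establish weak $L^1(\tilde\mm)$-compactness of $\{\tilde\rho_{t_n}\}$. Since $W_2(\mu_{t_n},\mu_t)\to 0$ implies that $W$ is uniformly integrable against $\{\mu_{t_n}\}$ (in particular the second moments are uniformly bounded), combining this with $\rho_{t_n}\leq C$ yields the uniform tail estimate
\[
\int_{\{\tilde\rho_{t_n}>M\}}\tilde\rho_{t_n}\,\d\tilde\mm\;=\;\mu_{t_n}\big(\{\tilde\rho_{t_n}>M\}\big)\;\leq\;\mu_{t_n}\big(\{W>\log(M/(Cz))\}\big)\;\xrightarrow[M\to\infty]{}\;0\quad\text{uniformly in }n,
\]
using the inclusion $\{\tilde\rho_{t_n}>M\}\subset\{W>\log(M/(Cz))\}$. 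By the Dunford-Pettis theorem $\{\tilde\rho_{t_n}\}$ is therefore relatively weakly $L^1(\tilde\mm)$-compact, and narrow convergence $\mu_{t_n}\to\mu_t$ (tested against $C_b(\X)\subset L^\infty(\tilde\mm)$) identifies the unique weak limit as $\tilde\rho_t$.

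Next I would show convergence of the $L\log L$-integrals. By the $\RCD(K,\infty)$ hypothesis, the entropy $t\mapsto H(\mu_t\,|\,\mm)$ is $K$-convex along the geodesic $(\mu_t)$; since $\rho_0,\rho_1\in L^\infty$ makes it finite at $t=0,1$ and it is narrowly lower semicontinuous, a standard argument (K-convexity gives $\limsup_{s\to t}H(\mu_s|\mm)\leq H(\mu_t|\mm)$ at the endpoints, lower semicontinuity gives the reverse) forces continuity on all of $[0,1]$. From the identity $H(\mu_t\,|\,\tilde\mm)=H(\mu_t\,|\,\mm)+\int W\,\d\mu_t+\log z$ and the continuity of $t\mapsto\int W\,\d\mu_t$ under $W_2$-convergence we also obtain $H(\mu_{t_n}\,|\,\tilde\mm)\to H(\mu_t\,|\,\tilde\mm)$, namely
\[
\int u(\tilde\rho_{t_n})\,\d\tilde\mm \;\longrightarrow\; \int u(\tilde\rho_t)\,\d\tilde\mm,\qquad u(z):=z\log z.
\]

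Finally I would conclude by the Young measure argument on the probability space $(\X,\tilde\mm)$. Up to a subsequence, $(\tilde\rho_{t_n})$ generates a Young measure $\{\nu_x\}_{x\in\X}$ valued in $\prob{[0,\infty)}$; the weak convergence of the first step gives that the barycenter is $\tilde\rho_t(x)$ for $\tilde\mm$-a.e.\ $x$, and the entropy convergence of the second step together with the lower semicontinuity of Young-measure integrals gives $\int\!\int u(z)\,\d\nu_x(z)\,\d\tilde\mm(x)=\int u(\tilde\rho_t)\,\d\tilde\mm$. Jensen's inequality $\int u\,\d\nu_x\geq u(\tilde\rho_t(x))$ then forces equality $\tilde\mm$-a.e., and the strict convexity of $u$ forces $\nu_x=\delta_{\tilde\rho_t(x)}$ $\tilde\mm$-a.e. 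This upgrades weak to strong $L^1(\tilde\mm)$-convergence of $\tilde\rho_{t_n}$ to $\tilde\rho_t$, so along a further subsequence $\tilde\rho_{t_{n_k}}\to\tilde\rho_t$ $\tilde\mm$-a.e., equivalently $\rho_{t_{n_k}}\to\rho_t$ $\mm$-a.e. The main subtlety is the uniform tail estimate in the first step, where the uniform $L^\infty$ bound on the densities is combined with the uniform second-moment control from $W_2$-convergence to overcome the unboundedness of the weight $e^W$.
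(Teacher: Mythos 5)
Your proof is correct and is exactly the Young-measures-plus-entropy-continuity argument that the paper defers to by citing Corollary~5.7 in \cite{Gigli13}: weak $L^1$-compactness against a finite reference measure, continuity of $H(\cdot\,|\,\mm)$ along the geodesic from $K$-convexity plus $W_2$-lower semicontinuity, and strict convexity of $z\mapsto z\log z$ to upgrade weak to strong (hence, along a subsequence, a.e.) convergence. One small calibration: since this lemma is stated in the $\RCD(K,\infty)$ generality, the available volume growth is of Gaussian type $\mm(B_r)\leq Ce^{Cr^2}$ rather than the linear-exponential bound \eqref{eq:volgrowth}, so the weight should be $W=M\sfd^2(\cdot,\bar x)$ with $M$ large enough (depending on $K$) to guarantee $\int e^{-W}\,\d\mm<\infty$; the rest of your argument is unaffected.
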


We conclude recalling some properties of the {\bf Hopf-Lax semigroup} in metric spaces, also in connection with optimal transport. For $f : \X \to \R \cup \{+\infty\}$ and $t > 0$ we define the function $Q_tf : \X \to \R \cup \{-\infty\}$ as
\begin{equation}
\label{eq:hli}
Q_tf(x):= \inf_{y\in \X}\frac{\sfd^2(x,y)}{2t}+f(y)
\end{equation}
and set $t_* := \sup\{t > 0 \,:\, Q_tf(x) > -\infty \textrm{ for some } x \in \X\}$; it is worth saying that $t_*$ does not actually depend on $x$, since if $Q_tf(x) > -\infty$, then $Q_sf(y) > -\infty$ for all $s \in (0,t)$ and all $y \in \X$. With this premise we have the following result (see \cite{AmbrosioGigliSavare11}):

\begin{Proposition}\label{pro:11}
Let $(\X,\sfd)$ be a length space and $f : \X \to \R \cup \{+\infty\}$. Then for all $x \in \X$ the map $(0,t_*) \ni t \mapsto Q_tf(x)$ is locally Lipschitz and
\begin{equation}\label{eq:55}
\ddt Q_tf(x) + \frac{1}{2}\Big(\lip Q_tf(x)\Big)^2 = 0\qquad {\rm a.e.}\ t \in (0,t_*)
\end{equation}
\end{Proposition}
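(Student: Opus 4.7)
The plan is the classical one developed by Ambrosio-Gigli-Savaré: parametrize optimality in terms of how far the minimizers sit from $x$, and combine monotonicity in $t$ with the length-space structure in the spatial variable.

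First, fix $x\in\X$ and for $t\in(0,t_*)$ introduce
\[
D^+(x,t):=\sup\Bigl\{\limsup_n\sfd(x,y_n):\ \tfrac{\sfd^2(x,y_n)}{2t}+f(y_n)\to Q_tf(x)\Bigr\},
\]
and, analogously, $D^-(x,t)$ with $\liminf$ in place of $\sup\limsup$. By comparing competitors in the definitions of $Q_tf(x)$ and $Q_sf(x)$ (test the former with the minimizers of the latter and vice versa), for $s<t$ in $(0,t_*)$ one obtains
\[
\frac{(D^-(x,s))^2}{2}\Bigl(\tfrac{1}{s}-\tfrac{1}{t}\Bigr)\ \leq\ Q_sf(x)-Q_tf(x)\ \leq\ \frac{(D^+(x,t))^2}{2}\Bigl(\tfrac{1}{s}-\tfrac{1}{t}\Bigr).
\]
A second consequence of the same competitor argument is that $t\mapsto D^\pm(x,t)$ are monotone nondecreasing (by rearranging the resulting inequality chains), hence continuous outside a countable set $N(x)\subset(0,t_*)$ where moreover $D^+(x,\cdot)=D^-(x,\cdot)=:D(x,\cdot)$.

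The displayed two-sided estimate already gives that $t\mapsto Q_tf(x)$ is locally Lipschitz on $(0,t_*)$, and dividing by $t-s$ and letting $s\uparrow t$, $s\downarrow t$ along continuity points of $D^\pm(x,\cdot)$ yields
\[
\ddt Q_tf(x)\ =\ -\,\frac{(D(x,t))^2}{2t^2}\qquad\text{for every }t\in(0,t_*)\setminus N(x).
\]

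The heart of the argument is then to identify $\lip Q_tf(x)$ with $D(x,t)/t$ at every such $t$. For the inequality $\lip Q_tf(x)\le D^+(x,t)/t$, fix $y\in\X$, use $Q_tf(x')\le\frac{\sfd^2(x',y)}{2t}+f(y)$ together with $\sfd(x',y)\le\sfd(x',x)+\sfd(x,y)$, subtract the corresponding identity for a minimizing sequence at $x$, divide by $\sfd(x,x')$ and pass to the limsup as $x'\to x$: the cross term $\sfd^2(x,y_n)/t$ provides exactly $D^+(x,t)/t$. For the reverse inequality $\lip Q_tf(x)\ge D^-(x,t)/t$, here the length-space assumption enters. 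Pick a minimizing sequence $y_n$ with $\sfd(x,y_n)\to D^-(x,t)$ and, for any $\eta>0$, choose $x_n$ on a curve from $x$ to $y_n$ of length at most $(1+\eta)\sfd(x,y_n)$ with $\sfd(x,x_n)\downarrow 0$ and $\sfd(x_n,y_n)\le\sfd(x,y_n)-\sfd(x,x_n)+\eta\sfd(x,y_n)$. Testing $Q_tf(x_n)\le\sfd^2(x_n,y_n)/(2t)+f(y_n)$ and subtracting $Q_tf(x)=\lim_n[\sfd^2(x,y_n)/(2t)+f(y_n)]$ gives
\[
\frac{Q_tf(x)-Q_tf(x_n)}{\sfd(x,x_n)}\ \ge\ \frac{D^-(x,t)}{t}-O(\eta),
\]
and letting $\eta\downarrow 0$ produces the claim via the $|D^-|$-slope.

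Putting the three ingredients together, at every $t\in(0,t_*)\setminus N(x)$ we have
\[
\ddt Q_tf(x)+\frac12\bigl(\lip Q_tf(x)\bigr)^2\ =\ -\frac{(D(x,t))^2}{2t^2}+\frac12\Bigl(\frac{D(x,t)}{t}\Bigr)^2\ =\ 0,
\]
and since $N(x)$ is countable this is the stated a.e.\ Hamilton--Jacobi identity. The main obstacle is the $\lip$-identification, specifically the lower bound: without the length-space hypothesis one only has $\lip Q_tf(x)\le D^+(x,t)/t$, and the equality can genuinely fail, so the construction of the almost-geodesic $x_n$ above (and controlling $\sfd(x_n,y_n)$ to first order) is the delicate point.
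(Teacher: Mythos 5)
The paper does not prove Proposition \ref{pro:11}; it recalls the result from \cite{AmbrosioGigliSavare11}, and your proposal correctly reconstructs that reference's strategy: the quantities $D^\pm(x,t)$, the two-sided competitor estimate, monotonicity and a.e.\ continuity of $D^\pm$, the resulting formula for $\ddt Q_tf(x)$, and the role of the length-space axiom in the slope identification are all the right ingredients in the right order.

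There is, however, a genuine quantitative error in the final step (the slope lower bound, which you correctly flag as the delicate point). Writing $a_n:=\sfd(x,y_n)$ and $b_n:=\sfd(x,x_n)$, with $\eta>0$ fixed your bound $\sfd(x_n,y_n)\le a_n(1+\eta)-b_n$ gives
\[
\sfd^2(x_n,y_n)-a_n^2\ \le\ -2a_nb_n+b_n^2+2\eta a_n(a_n-b_n)+\eta^2 a_n^2,
\]
so after dividing by $-2tb_n$ the $\eta$-terms contribute an error of size $\eta a_n^2/(tb_n)$, which diverges as $b_n\downarrow 0$; it is not $O(\eta)$. Likewise the optimality defect $\epsilon_n:=\sfd^2(x,y_n)/(2t)+f(y_n)-Q_tf(x)\ge 0$ produces a term $\epsilon_n/b_n$ that need not vanish. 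Hence your displayed inequality $\frac{Q_tf(x)-Q_tf(x_n)}{\sfd(x,x_n)}\ge\frac{D^-(x,t)}{t}-O(\eta)$ is false with $\eta$ fixed, and the iterated limit ``$n\to\infty$ then $\eta\downarrow 0$'' does not close the argument. The fix is a diagonalization: choose $\eta_n\downarrow 0$ and $b_n\downarrow 0$ with $\eta_n/b_n\to 0$ and $\epsilon_n/b_n\to 0$ (e.g.\ $b_n=\max(\sqrt{\eta_n},\sqrt{\epsilon_n})$), placing $x_n$ on a $(1+\eta_n)$-quasi-geodesic at distance $b_n$ from $x$; equivalently, use that in a length space $\inf\{\sfd(z,y):\sfd(x,z)=b\}=\sfd(x,y)-b$, so for each $b$ the excess over $\sfd(x,y)-b$ may be taken $\le b^2$ and all errors become $O(b)$. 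A secondary, smaller gap: your competitor argument for $\lip Q_tf(x)\le D^+(x,t)/t$ only bounds the ascending slope; controlling the descending slope (hence $\lip$) requires testing against near-minimizers at the nearby point $x'$ together with a one-sided continuity estimate of the form $\lims_{x'\to x}D^-(x',t)\le D^+(x,t)$, which \cite{AmbrosioGigliSavare11} verify as a separate step.
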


\bibliographystyle{siam}
\bibliography{biblio}

\end{document}